\documentclass{article}
\usepackage{graphicx} % Required for inserting 
\usepackage{amssymb}
\usepackage{amsmath}
\usepackage{verbatim}

\usepackage{empheq}

\usepackage{xurl}
\usepackage{hyperref}

\usepackage{tikz}
\usetikzlibrary{calc,arrows.meta,intersections,through,decorations.markings}

\usepackage[
    backend=biber,% bibtex
    url=false,
    eprint=true,
    maxbibnames=120,
    giveninits=true,
    block=space, 
    style=numeric,
    citestyle=numeric,
]{biblatex}
\numberwithin{equation}{section}

\newtheorem{theorem}{Theorem}[section]
\newtheorem{lemma}[theorem]{Lemma}
\newtheorem{prop}[theorem]{Proposition}
\newtheorem{corollary}[theorem]{Corollary}
\newtheorem{defi}[theorem]{Definition}
\newtheorem{cond}[theorem]{Condition}

\newtheorem{rem}[theorem]{Remark}
\newtheorem{note}[theorem]{Note}
\newtheorem{example}[theorem]{Example}

\newenvironment{geometricinterpretation}
{\par
 \noindent\textbf{Geometric interpretation (optional).}\itshape}
{\par}

\newcommand{\RR}{\mathbb{R}}
\newcommand{\C}{\mathbb{C}}
\newcommand{\Z}{\mathbb{Z}}
\newcommand{\N}{\mathbb{N}}
\newcommand{\T}{{\mathbb{T}}}
\newcommand{\F}{\mathcal{F}}

\newcommand{\dx}{\,dx}
\newcommand{\dy}{\,dy}

\newcommand{\la}{\lambda}

\newcommand{\te}{\theta}
\newcommand{\al}{\alpha}

\newcommand{\ep}{\varepsilon}

\newcommand{\Dcal}{\mathcal{D}}
\newcommand{\Pcal}{\mathcal{P}}
\newcommand{\Lcal}{\mathcal{L}}

\newcommand{\pa}{\partial}

\newcommand{\om}{\omega}

\newcommand{\g}{\gamma}

\newcommand{\intpi}{\int_{-\pi}^\pi}

\newcommand{\sgn}{{\rm sgn}}
\renewcommand{\Re}{{\rm Re}\thinspace}
\renewcommand{\Im}{{\rm Im}\thinspace}
\newcommand{\spa}{{\rm span}\thinspace}

\renewcommand{\emptyset}{\varnothing}

\newcommand{\PU}{P_{\mathcal{U}_1}}
\newcommand{\PV}{P_{\mathcal{V}_1}}
\newcommand{\PUl}[1]{P_{\mathcal{U}_1(#1)}}
\newcommand{\PVl}[1]{P_{\mathcal{V}_1(#1)}}

\newcommand{\norm}[1]{\Vert#1\Vert}
\newcommand{\bnorm}[1]{\big\Vert#1\big\Vert}

\newcommand{\scalednorm}[1]{\left\Vert#1\right\Vert}

\newcommand{\ti}[1]{\widetilde{#1}}
\newcommand{\ha}[1]{\widehat{#1}}

\newif\ifhideproofs 
\ifhideproofs

\else   
    \newenvironment{proof}{
    \begin{trivlist} \item[] {\bf Proof:}
    }{\hfill $\Box$\end{trivlist}}
\fi %________________________________________________|

\newenvironment{proofsth}[1]{\begin{trivlist} \item[] {\textbf{Proof of #1:}}}{\hfill $\Box$
                       \end{trivlist}} % Used

\newcommand{\proofcompassi}{
\begin{trivlist}
\item[] {\bf Proof:}
 The proof is computer-assisted and the code can be found in the supplementary material. We refer to the Appendix~\ref{sec:compassisted} for details about the implementation.
\hfill $\Box$ \end{trivlist}}

\title{Linear instability of a Burgers--Hilbert traveling wave}

\author{\'Angel Castro, Javier G\'omez-Serrano and Miguel M.G. Pascual-Caballo}           

\newcommand{\D}{\mathbb{D}}

\newcommand{\XL}{{L^2(\T)}}
\newcommand{\XLinf}{{{L^{\infty}(\T)}}}

\newcommand{\XH}[1]{{H^{#1}(\T)}}
\newcommand{\XHhol}[1]{{\mathcal{H}^{#1}}}
\newcommand{\XHholZMi}[1]{{\dot{\mathcal{H}}^{#1 }}}

\newcommand{\XHholo}{{\mathcal{H}}}
\newcommand{\XHholZM}{{\dot{\mathcal{H}}}}

\newcommand{\XHeven}[1]{{H^{#1}_{\rm even}}}
\newcommand{\XHodd}[1]{{H^{#1}_{\rm odd}}}
\newcommand{\XHevod}[1]{{H^{#1}_{\textrm{odd or (even)}}}}

\newcommand{\XLIpi}{{L^2([-\pi,\pi])}}

\newcommand{\XLinfinterval}{{L^\infty([-\pi,\pi])}}

\newcommand{\LOpExisTwap}{{
L_{\twap}
}}

\newcommand{\LOpExisTwapInv}{L^{-1}_{\twap}}

\newcommand{\LOpExisTw}{{
L_\tw
}}

\newcommand{\LOpExis}{{
\Lcal^{\rm ap}_{\rm exis}
}}

\newcommand{\LOpExisHol}{{
\Lcal_{\rm exis}^{\rm ap,\, hol}
}}

\newcommand{\LinearOpStability}[1]{{
\Lcal_{\rm stab}^{#1}
}}

\newcommand{\LStabthe}{\Lcal^{\rm ap}_\te}
\newcommand{\LStabtheTW}{\Lcal_\te}
\newcommand{\LOpStabHol}[1]{{
\Lcal^{\rm ap,\, hol}_{\te,#1}
}}

\newcommand{\DiffOpExpNonExp}{{
\Dcal_\te
}}

\newcommand{\TopExis}{{
T_{\rm exis}
}}
\newcommand{\TopStab}{{
T^\te_{\rm stab}
}}

\newcommand{\twap}{v^{\rm ap}}
\newcommand{\tw}{v}
\newcommand{\twdiff}{\om}
\newcommand{\twxi}{\xi_{\twap}}
\newcommand{\Procomp}{P_{ub}}

\newcommand{\EigenValue}{-\la}
\newcommand{\Eigdiff}{{\eta}}
\newcommand{\laap}{{\la^{\rm ap}}}

\newcommand{\EigenVecto}{f_{\EigenValue}}
\newcommand{\Eigveap}{f^{\rm ap}}
\newcommand{\Fvrmap}{{F_v^{\rm ap}}}

\newcommand{\Eigres}{
\xi_{\Eigveap}
}

\newcommand{\PiExis}{{
\Pi
}}

\newcommand{\MExisHol}{{
M^{\rm hol}_{\rm exis}
}}
\newcommand{\MExisTor}{{
M^{\rm tor}_{\rm exis}
}}
\newcommand{\MExisTorApprox}{{
M^{\rm thin}_{\rm exis}
}}

\newcommand{\MStabHol}{{
M^{\rm hol}_{\rm stab}
}}
\newcommand{\MStabTor}{{
M^{\rm tor}_{\rm stab}
}}
\newcommand{\MStabTorApprox}{{
M^{\rm thin}_{\rm stab}
}}

\newcommand{\II}{
{I\!I}
}
\newcommand{\III}{
{I\!I\!I}
}

\newcommand{\Iexis}{
I^{\rm exis}
}

\newcommand{\IIexis}{
\II^{\rm exis}
}

\newcommand{\Istab}{
I^{\rm stab}
}

\newcommand{\IIIexis}{
\III^{\rm exis}
}

\newcommand{\IIIstab}{
\III^{\rm stab}
}

\newcommand{\IIstab}{
\II^{\rm stab}
}

\makeatletter

\newcommand{\setvar}[2]{%
  \expandafter\def\csname var@#1\endcsname{#2}%
}

\newcommand{\getvar}[1]{%
  \csname var@#1\endcsname
}

\makeatother

\setvar{speed}{1.109643839000099}
\setvar{Nexplicit}{1000}
\setvar{la_reg}{1.035i}
\setvar{la_app}{0.142128757204011-0.0707987316896479i}

\setvar{resi_exis_L2}{8.00338\cdot 10^{-14}}
\setvar{Lexis_inv}{3.13328\cdot 10^4}
\setvar{rad_exis}{2.50805\cdot 10^{-9}}
\setvar{lip_exis}{3\cdot 10^{-4}}

\setvar{Dthe_op}{6.78140\cdot 10^{-9}}
\setvar{fvap_stab_L2_sq}{0.526963}
\setvar{fvap_stab_H1_sq}{4.54832}
\setvar{resi_stab_L2_sq}{1.60881\cdot 10^{-20}}
\setvar{Lstab_inv}{1014.41}
\setvar{rad_stab}{10^{-3}}
\setvar{lip_stab}{0.12}
\setvar{lare}{0.142127}

\setvar{svd1_exis}{6.82641\cdot 10^{-4}}

\setvar{beta_max}{19.9056}
\setvar{kappa1}{4.03105}
\setvar{beta_mod}{12107.9}
\setvar{kappa2}{77441.4}
\setvar{JH1H2}{354.490}
\setvar{hkH2}{1.02110\cdot 10^6}

\setvar{svd_stab1}{10^{-7}}
\setvar{svd_stab2}{1.31011\cdot 10^{-3}}

\setvar{kappa3}{14.0518}
\setvar{kappa4p}{524.203}
\setvar{kappa4m}{782.958}

\setvar{J1p}{42.8011}
\setvar{J1m}{47.8870}

\setvar{hkL2p}{9.61596}
\setvar{hkL2m}{9.59380}
\setvar{hkH1p}{247.212}
\setvar{hkH1m}{283.513}

\setvar{vthep}{0.637881}
\setvar{vthem}{0.770135}

\setvar{Jfvu1}{4.54415\cdot 10^{-2}}

\setvar{radii_roots}{10^{-41}}

\setvar{github_link}{
\url{https://github.com/MiguelMGPascualCaballo/bhtw}
}
\setvar{zenodo_link}{
\url{https://doi.org/10.5281/zenodo.19250315}
}
\begin{document}

\maketitle

\begin{abstract}
    We study the stability of traveling wave solutions to the Burgers--Hilbert equation on $\mathbb{T}$ in the regime of small frequency $\omega$ and large wave speed $c$. For $\omega = 3$ and $c \approx 1.1$, we show that the linearized operator around these solutions has an eigenvalue with negative real part, indicating spectral instability. Our approach is computer-assisted: we reduce the problem to a finite-dimensional system and solve it rigorously using interval arithmetic. The Burgers--Hilbert equation arises as a quadratic approximation of the vortex patch problem for the two-dimensional Euler equations. In this setting, our results point to the instability of threefold symmetric V-states.
\end{abstract}

\tableofcontents

\section{Introduction}
In \cite{MW}, Marsden and Weinstein introduced the Burgers--Hilbert (BH) equation,
\begin{equation}\label{BHEQN}
    \left\{
    \begin{array}{l}
        \pa_t u+Hu+u\pa_x u=0,\quad u:\T\times[0,\infty)\to\RR,\\
        u(x,0)=u_0(x),
    \end{array}
    \right.
\end{equation}
as a quadratic approximation of the vortex patch problem for the 2D Euler equations.

 A traveling wave $\tw:\T\to\RR$, with speed $c\in\RR$, is a solution $u$ of \eqref{BHEQN} of the form
\begin{align*}
    u(x,t)=\tw(x+ct),
\end{align*}
and hence
\begin{equation}\label{eqn_BH_TW}
    c\pa_x \tw+H\tw+\tw\pa_x\tw=0.
\end{equation}
We impose the additional condition 
\begin{align*}
    \intpi \tw(x)\dx=0.
\end{align*}

In this paper, we focus on the stability of traveling waves of \eqref{BHEQN} which leads to studying the spectral properties of the operator $L_vf:=c\pa_xf+Hf+\pa_x(vf),$ which will also be denoted $L_v:=c\pa_x+H+\pa_x(v\,\cdot)$.

\vspace{0.5cm}

We briefly review some of the main results concerning \eqref{BHEQN}. In \cite{BielloHunter}, the validity of \eqref{BHEQN} for describing surface waves on a planar discontinuity in vorticity in two-dimensional inviscid flow was studied. Later, in \cite{HMRSJZ}, it was proved that the approximation remains valid on cubically nonlinear timescales. 

Several works have addressed the question of whether the Hilbert transform can prevent the formation of singularities in \eqref{BHEQN}. Numerical simulations were carried out in \cite{BielloHunter,Hunter2016,KleinSaut2015}, loss of $C^{1+\delta}$ regularity was proved in \cite{CastroCordobaGancedo2010}, shock formation was established in \cite{SautWang}, and the existence of asymptotically self-similar shocks was shown in \cite{Yang}. 

Bressan and Nguyen established the global existence of weak solutions in \cite{BressanNguyen2014} for initial data $f_0\in L^2(\RR)$, with $f(x,t)\in L^\infty(\RR)\cap L^2(\RR)$ for all $t > 0$. In \cite{BressanZhang2017}, piecewise continuous solutions are constructed locally in time. Such solutions were shown to be unique in \cite{KrupaVasseur2020}.

Branches of traveling waves $\tw_n$, bifurcating from zero, were constructed in \cite{Hunter2016}. These solutions are $C^\infty$, $\frac{2\pi}{n}$-periodic, and have speeds $c_n=\frac{1}{n}$. Later, in \cite{CCZ2}, it was shown that these solutions are analytic. By introducing the asymptotic expansions
\begin{align}\label{cvexpan}
    v(x)=\ep \cos(x)+\sum_{k=2}^\infty u_k(x)\ep^k, && 
    c=1+\sum_{k=1}^\infty c_k \ep^k,
\end{align}
it was shown in \cite{CCZ2} that the branch of solutions could be extended beyond $\ep=0.22$ and that the series \eqref{cvexpan} cannot converge for $\ep>\frac2e$. The existence of a maximal wave with speed $c\in [1.1,1.121]$ and with an $x\log(|x|)$ singularity at the origin was proved in \cite{DahGose2023}.

The stability of these traveling waves was also studied in \cite{CCZ2}. In particular, the authors showed that near the beginning of the branches, namely, for small $\ep$ in \eqref{cvexpan}, the eigenvalues of the operator $c\pa_x+H+\pa_x(v\,\cdot)$ are purely imaginary. Moreover, this phenomenon was used to enhance the lifespan of perturbations of a traveling wave. For lifespan enhancement of perturbations of the zero solution, see \cite{HunterIfrim2012} and \cite{HunterIfrimTataruWong2015}. 

The main results of this paper are Theorems \ref{thmexistencetw} and \ref{theo:stab}.
Taken together, they imply the following:

\begin{theorem}\label{theo:compboth}
    There exist a speed $\ti{c}$ and a real, smooth, even, and zero-mean function $\ti{\tw}\neq 0$, with minimal period $\frac{2\pi}{3}$, solving \eqref{eqn_BH_TW}, such that the linear operator $\ti{c}\pa_x+H+\pa_x(\ti{\tw}\,\cdot)$ has eigenvalues with both negative and positive real parts.
\end{theorem}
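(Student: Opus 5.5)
Theorem \ref{theo:compboth} follows from the two main results, Theorem \ref{thmexistencetw} (existence of the traveling wave) and Theorem \ref{theo:stab} (existence of an unstable eigenvalue), plus a symmetry argument for the spectrum. The plan has three steps.

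\textbf{Step 1: existence of the wave.} I would construct $\ti{\tw}$ as a perturbation of an explicit approximate solution $\twap$ (a trigonometric polynomial with modes in $3\Z$, speed $\ti{c}\approx \getvar{speed}$). Write $\tw=\twap+\twdiff$ and solve for $\twdiff$ by a Newton--Kantorovich fixed point in even, zero-mean, $\frac{2\pi}{3}$-periodic Sobolev functions. This uses three computer-assisted inputs:
\begin{itemize}
\item a small residual $\xi_{\twap}$, of size about $\getvar{resi_exis_L2}$;
\item a bound on the inverse of the linearization $\LOpExisTwap$, about $\getvar{Lexis_inv}$;
\item a Lipschitz bound on the quadratic term $\pa_x(\twdiff^2)/2$.
\end{itemize}
To gain derivatives, the wave speed must be handled as an unknown, or the equation rewritten via the holomorphic extension. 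Together these give a contraction in a ball of radius about $\getvar{rad_exis}$. Smoothness then follows by bootstrapping, since $c+\tw$ stays away from zero so the equation is non-degenerate. Minimal period $\frac{2\pi}{3}$ and $\ti{\tw}\neq0$ follow because the first Fourier mode of $\twap$ is far larger than the radius.

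\textbf{Step 2: an eigenvalue with negative real part.} This is Theorem \ref{theo:stab}. Take an approximate eigenpair $(\Eigveap,\laap)$ with $\laap\approx\getvar{la_app}$, computed numerically for the operator $L_{\twap}$. Set up a second fixed point in $(\Eigdiff,\la)$ for $L_{\tw}f=\EigenValue f$, with a normalization fixing one coefficient. The perturbation $L_{\tw}-L_{\twap}=\pa_x(\twdiff\,\cdot)$ has size controlled by the radius from Step 1. The contraction yields a genuine eigenvalue within $\getvar{rad_stab}$ of $\laap$, and since $\Re \laap\approx 0.142$ is much larger than that radius, the real part has a definite sign.

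\textbf{Step 3: the opposite sign.} I would get the eigenvalue of the other sign from a Hamiltonian-type symmetry of $L_v$. If $v$ is even, let $(Rf)(x)=f(-x)$. Then $R\pa_x=-\pa_x R$, $RH=-HR$, and $R\pa_x(v\,\cdot)=-\pa_x(v R\,\cdot)$, so $RL_vR=-L_v$. Hence if $L_vf=\mu f$, then $L_v(Rf)=-\mu Rf$. Moreover, because $L_v$ is real, $\bar\mu$ is also an eigenvalue. Therefore $\mu$, $\bar\mu$, $-\mu$ and $-\bar\mu$ all belong to the spectrum, and an eigenvalue with $\Re\mu\neq0$ forces eigenvalues with both signs of real part.

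\textbf{Main obstacle.} The hard part is Step 2's bound on $(L_{\twap}-\laap)^{-1}$ restricted to the complement of the approximate eigenvector. $L_v$ is a first-order transport operator, so its inverse does not gain derivatives. To control the tail modes I would use the fact that $c+\tw$ stays away from zero, conjugating by a change of variables (or an integrating factor) that turns the principal part into $\pa_x$ with a constant coefficient. I would then treat the low modes by a finite matrix computation in interval arithmetic, and the high modes by a perturbative Neumann-series estimate.
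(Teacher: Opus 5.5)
Your three steps are exactly how the paper proves Theorem~\ref{theo:compboth}. It combines Theorem~\ref{thmexistencetw} (a $2\pi$-periodic wave at speed $c=\getvar{speed}$, rescaled to $\tw^3(x)=\frac13\tw(3x)$ at speed $c/3$) with Theorem~\ref{theo:stab}. The reflection $x\mapsto -x$, i.e.\ your identity $RL_vR=-L_v$, then supplies the eigenvalue of opposite sign.

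The genuine difference lies inside Steps~1--2, in how the linear inverses are certified. The paper first uses Lemma~\ref{lemma:unfold_stability} to reduce the eigenproblem to the Bloch operator $\mathcal{L}_{1/3}$ on $2\pi$-periodic functions, in the potential formulation $w=\pa_x f$. It handles the near-kernel through the Lyapunov--Schmidt-type choice of $\Eigdiff(f)$ in Definition~\ref{def:eta_func}, not through a normalization. It then never truncates. Since $\twap$ is a trigonometric polynomial, $[H,\twap]$ has finite rank, so on the Hardy components $F^\pm$ both $L_{\twap}$ and $\LStabthe-\laap$ become one or two scalar first-order ODEs. These are solved exactly by Duhamel with integrating factors $\Pi^\pm$, coupled only through finitely many Taylor coefficients. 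Fuchsian analysis at the roots of $z^NP(z)$ in $\D$ guarantees that the circle solutions are traces of holomorphic functions. Invertibility then reduces to the finite matrix $\MStabTor(\laap)$, whose $\sigma_1,\sigma_2$ are certified.

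Your straightening, Galerkin and Neumann-tail scheme is more generic, since it needs no finite-rank or Fuchsian structure. Its cost is that after the change of variables $H$ is no longer a Fourier multiplier, so its matrix entries and norm must be enclosed rigorously. Moreover, $c+\twap$ is only bounded below by $1/\getvar{beta_max}$ (the wave is nearly cusped), so the diffeomorphism is badly distorted. Against a restricted inverse of size about $10^3$, the cutoff needed to dominate the tail would likely be very large. The paper's route has no tail; its only approximation error is in the piecewise-polynomial ODE solutions, controlled by residuals and Duhamel (Appendix~\ref{sec:ode}).

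Several side claims in your sketch need correcting.
\begin{itemize}
\item A $\frac{2\pi}{3}$-periodic profile on this branch travels at $c/3\approx 0.37$, not at $\getvar{speed}$. Build the $2\pi$-periodic wave and rescale.
\item The speed need not be an unknown. The only obstruction to inverting $L_{\twap}$ is the translational kernel $\pa_x\tw$, which is odd and is excluded by posing $L_{\twap}:\XHeven{1}\to\XHodd{0}$.
\item The claim that the inverse of the transport operator does not gain derivatives is incorrect. Because $c+\twap$ is bounded below, solutions of $(L-\laap)f=g$ with $g\in L^2$ have $\pa_x f\in L^2$, and Proposition~\ref{prop:stab:lininv} gives $\norm{f}_{\XH{1}}\le\getvar{Lstab_inv}\norm{g}_{\XL}$ on the good subspace. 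Your Step~2 fixed point needs exactly this gain, since $L_\tw-L_{\twap}=\pa_x(\twdiff\,\cdot)$ costs one derivative. What is true is that $\tw\pa_x$ is not small relative to $c\pa_x$ at high frequency; that is the real reason to straighten.
\item Pose the eigenproblem on $L^2(\T)$, not in the $\frac{2\pi}{3}$-periodic space of Step~1; the paper works in the modes $\equiv 1 \bmod 3$, i.e.\ $\te=\frac13$. The unstable mode is subharmonic, and the co-periodic sector is the $n=1$ problem, which the paper conjectures to be stable.
\end{itemize}
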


Let us emphasize that the stability of traveling waves for related models was considered by Maspero and Radakovic in \cite{MasperoRadakovic2025}. They work on $\RR$ instead of $\T$ and consider the equation
\begin{align}\label{maspero}
    \pa_tu +u \pa_xu +\mathcal{M}\pa_xu=0,
\end{align}
where $\ha{\mathcal{M}}=m(\xi)$, with $m(\xi)\in C^r(\RR)$, $r\geq 3$, real-valued, even, satisfying, for some $m\geq -1$, that $C_1\langle\xi\rangle^m\leq |m(\xi)|\leq C_2\langle\xi\rangle^m$, where $\xi\gg1$ and $\inf_{n\in \N^+-\{1\}}|m(n)-m(1)|>c_0$.

The stability of small traveling waves of \eqref{maspero} is studied in \cite{MasperoRadakovic2025} when subjected to perturbations with very small frequency (modulational instability).  Roughly speaking, this corresponds to the analysis of the stability of $\frac{2\pi}{m}$-periodic traveling waves for very large $m$ in our setting. Maspero and Radakovic completely characterize the spectrum near the origin of the complex plane of the operator $c\pa_x +\mathcal{M}\pa_x+\pa_x(v\,\cdot)$, and, in particular, find an unstable eigenvalue (see also \cite{johnson2013stability,hur2014modulational,hur2015modulational,hur2016modulational,angulo2017stability}). In contrast, we focus on a large-amplitude traveling wave and small values of $m$, focusing only on the case $m=3$.

\subsection{Computational aspects of the proof}

We emphasize that part of the proof of Theorem~\ref{theo:compboth} is computer-assisted. There has been an emergence of proofs using those techniques in the last decade. The main idea is to substitute floating point numbers on a computer by rigorous bounds, which are then propagated through every operation that the computer performs taking into account any error made throughout the process. 

More concretely, we use the computer as part of the proof in the following settings:
\begin{itemize}
    \item Bounds for the approximate traveling wave and the approximate eigenfunction, including estimates for their residuals and norms. See Lemmas~\ref{lemma:residue_exis_L2}, \ref{lemma:norm_fv}, and \ref{lemma:eignot0}.

    \item Non-degeneracy checks, including the non-vanishing of the Fourier coefficients of the traveling wave approximation in Lemma~\ref{lemma:twapprox_nonzero_coeffs}, validated enclosures of the roots of the associated polynomial $z^N P(z)$ in Lemma~\ref{lemma:enclose_poly_roots}, and the exclusion of certain integer obstructions in Lemmas~\ref{lemma:Proots1} and \ref{lemma:Proots2}.

    \item Bounds for certain explicit norms involving $\beta=(c+\twap)^{-1}$. See Lemmas~\ref{lemma:betamax}, \ref{lemma:kappa1}, \ref{lemma:betamod_L2}, \ref{lemma:kappa2}, \ref{lemma:kappa3}, and \ref{lemma:kappa4}. 

    \item Upper bounds for the sum of explicit quantities, obtained in Lemmas~\ref{lemma:hk_funcs_exis} and \ref{lemma:hk:stab}.

    \item Working with piecewise polynomial approximations of certain explicit functions, and deriving estimates for the approximation errors. These estimates are obtained in Lemmas~\ref{lemma:ode_exis}, \ref{lemma:ode_stab:regu}, \ref{lemma:ode_stab:sing}, and \ref{lemma:ode_stab:Fv}.

    \item Singular value estimates for the finite-dimensional matrices $\MExisTor$ and $\MStabTor$, which are used throughout the argument. In particular, they enter the proofs of Lemmas~\ref{lemma:exis_singval}, \ref{lemma:stab_is_regu}, \ref{lemma:stab_singval}, and \ref{lemma:prod_fv_u}.
\end{itemize}

In a broader context (computer-assisted proofs in PDE), the reduction of the proof of existence to a fixed-point argument has also been particularly successful in the context of \textit{radii polynomials}, developed in \cite{vandenBerg-Lessard:chaotic-braided-solutions-swift-hohenberg,Day-Lessard-Mischaikow:validated-continuation-equilibria-pde,Gameiro-Lessard-Mischaikow:validated-continuation-large-parameter-ranges-pde} and later used in  \cite{vandenBerg-Breden-Lessard-vanVeen:periodic-orbits-ns,Castelli-Gameiro-Lessard:rigorous-numerics-ill-posed-pde}.

There are several other works that incorporate computer-assisted proofs.  We highlight the following representative examples: the Euler--Poisson system \cite{Guo-Hadzic-Jang-Schrecker:gravitational-collapse-stars-self-similar}, extreme Stokes waves \cite{Kobayashi:global-uniqueness-stokes}, the De Gregorio model \cite{Chen-Hou-Huang:blowup-degregorio}, the SQG equation \cite{Castro-Cordoba-GomezSerrano:global-smooth-solutions-sqg}, and Boussinesq-type models, including applications to the 3D Euler equations \cite{ElgindiPasqualotto2025_invertibility_boussinesq,Chen-Hou:nearly-self-similar-blowup-boussinesq-euler-analysis,Chen-Hou:nearly-self-similar-blowup-boussinesq-euler-numerics,Chen-Hou:singularity-formation-3d-euler-smooth-data}. Further applications include Whitham-type equations \cite{Cadiot:proofs-existence-stability-capillary-gravity-whitham}, the Kuramoto--Sivashinsky equation \cite{Arioli-Koch:cap-stationary-ks,Figueras-DeLaLLave:cap-periodic-orbits-kuramoto,Gameiro-Lessard:periodic-orbits-ks,Figueras-Gameiro-Lessard-DeLaLLave:framework-cap-invariant-objects,Zgliczynski:periodic-orbit-kuramoto,Zgliczynski-Mischaikow:rigorous-numerics-kuramoto}, compressible Euler and Navier--Stokes equations \cite{Buckmaster-CaoLabora-GomezSerrano:implosion-compressible}, and the incompressible Navier--Stokes equations \cite{Arioli-Gazzola-Koch:uniqueness-bifurcation-ns,Bedrossian-PunshonSmith:chaos-stochastic-2d-galerkin-ns}.

The supplementary material, including the code, is available at \getvar{github_link}. Larger data files are available at \getvar{zenodo_link}.

\subsection{The vortex patch problem}
Let us briefly discuss the motion of vortex patches for the 2D Euler equations, which provides the main motivation for our study. These solutions correspond to vorticities of the form
\begin{align*}
    \om({\bf x},t)=\left\{\begin{array}{cc}\varpi&{\bf x}\in \Omega(t)\\0 & {\bf x}\in \Omega^c(t)\end{array}\right.,
\end{align*}
where $\Omega(t)$ is a time-dependent domain transported by the flow. It was first proved by Chemin in \cite{Chemin1993,Chemin1995} that if the boundary of $\Omega(0)$ is $C^{1+\delta}$, then there exists a global patch solution to the 2D Euler equations whose boundary remains of class $C^{1+\delta}$. A different proof of this result can be found in \cite{BertozziConstantin1993}. 

The existence of traveling waves for the BH equation is closely related to the existence of V-states for the vortex patch problem. A V-state is a simply connected patch of vorticity whose evolution consists of a rigid rotation with constant angular velocity. The properties of such solutions have been extensively studied in the mathematical literature. Their existence was first observed numerically in \cite{DeemZabusky1978} and later proved analytically by Burbea \cite{Burbea1982} and Hmidi, Mateu, and Verdera in \cite{HmidiMateuVerdera2013}. In fact, there exist branches of $m$-fold symmetric solutions bifurcating from the circle at angular velocity $\Omega_m=\varpi\frac{m-1}{2m}$ for every $m\geq 2$. $C^\infty$ regularity of the boundary was established in \cite{HmidiMateuVerdera2013}, and analyticity in \cite{CastroCordobaGomezSerrano2016}. There is a huge literature dealing with patches for 2D Euler, including many extensions of these results. For further references, see \cite{Fraenkel2000, GomezSerranoParkShiYao2021, Hmidi2015trivial, HassainiasMasmoudiWheeler2020, delaHozHassainiaHmidiMateu2016, delaHozHmidiMateuVerdera2016, HmidiMateu2016degenerate, CaoLaiZhan2021, CaoWanWangZhan2021, ChoiJeong2022, ElgindiJeong2023, ElgindiJeong2020, Garcia2020karman, Garcia2021choreography, GarciaHaziot2023, GarciaHmidiSoler2020, HassainiaHmidi2021, HassainiaWheeler2022, HmidiMateu2017, Turkington1985, Park2022, GomezSerranoParkShi2025,ElgindiJo2025,garcia2025dynamicsvortexcapsolutions,GarciaHmidiMateu2024,RaduStevenson2025}. More recently, KAM theory has been applied to construct quasi-periodic solutions, see \cite{CrouseillesFaou2013, EncisoPeraltaSalasTorres2023, BaldiMontalto2021, BertiHassainiaMasmoudi2023, HassainiaRoulley2025, HassainiaHmidiRoulley2024, HassainiaHmidiMasmoudi2025,HassainiaHmidiRoulley2024desingularization,garcia2026timeperiodicleapfroggingvortexrings}. Interval arithmetic has already been used for patches. In fact, the existence of non-convex V-states was proved in \cite{CastroGomezSerrano2025NonConvex} (see also \cite{DahGose2023}).

A classical example of a V-state are Kirchhoff ellipses \cite{Kirchhoff1874}, which correspond to the branch $m=2$. The ellipses rotate with angular velocity
\begin{align*}
    \Omega=\varpi \frac{\la}{2(1+\la)},
\end{align*}
where $\la$ denotes the aspect ratio. 

Remarkably, thanks to the explicit form of these solutions, Guo, Hallstrom, and Spirn in \cite{GuoHallstromSpirn2004} proved that for some $\lambda$, the Kirchhoff ellipses become unstable. In particular, as part of the proof, they showed that the operator arising from the linearization of the boundary evolution equation for a patch around a rotating ellipse, that is, the analogue of the operator $L_v$, has an eigenvalue with negative real part.

However, the validity of this result has not been verified for higher symmetries. That is, it is expected that for $m\geq 3$, the corresponding branches of solutions also become unstable at some point, but no such result is currently available. For more information on the properties of Kirchhoff ellipses, see \cite{CastroCordobaGomezSerrano2016,HmidiMateu2016degenerate,HmidiMateu2016Kirchhoff,WangXuZhou2022} .

In this paper, we focus on the stability of $\frac{2\pi}{3}$-periodic traveling waves for the BH equation, which correspond to threefold symmetric V-states for the 2D Euler equations.

\subsection{Organization of the paper}
The paper is organized as follows. In Section~\ref{sec:tw_exis}, we introduce a traveling wave with speed given by \eqref{val:speed} and postpone Proposition~\ref{prop:inv_exis} for later proof. In Section~\ref{sec:tw_stab}, we prove the existence of an eigenvalue with nonzero real part, while postponing Lemmas~\ref{lemma:stab_ballintoitself}--\ref{lemma:real_eigen_control}. In Section~\ref{sec:prop:inv_exis}, we establish Proposition~\ref{prop:inv_exis}, and in Section~\ref{sec:prop:inv_stab} we prove Lemmas~\ref{lemma:stab_ballintoitself}--\ref{lemma:real_eigen_control}.

Appendix~\ref{sec:clasineq} contains classical estimates on the torus, Appendix~\ref{sec:BracketHiCo} presents computations involving the Hilbert transform, and Appendix~\ref{sec:polpaly} describes an explicit polynomial arising in the analysis. Appendix~\ref{sec:ode} describes the approach to certain explicit functions, and Appendix~\ref{sec:compassisted} details the computer-assisted arguments and the supplementary data.

\subsection{Functional setting and notation}

We consider complex-valued functions. When needed, we will explicitly indicate that a function is real-valued.

In this paper, we will use two conventions for the $L^2$ norm, defined by
\begin{align*}
    \norm{f}_\XL^2:=\sum_{k\in\Z}|\ha{f}_k|^2,&&
    \norm{f}_\XLIpi^2:=\intpi |f(x)|^2\dx,
\end{align*}
where 
\begin{align*}
    \ha{f}_k:=\frac{1}{2\pi}\intpi f(x)e^{-ikx}\dx.
\end{align*}

Throughout the paper, we will work with functions $f \in C^1([-\pi,\pi])$ which are not necessarily periodic, and hence not in $C(\T)$. In this case, we write $\norm{\pa_x f}_\XLIpi$ to emphasize that we consider the classical derivative of $f$ on $(-\pi,\pi)$. 

If $f$ admits a weak derivative on the torus belonging to $L^2(\T)$, we instead write $\norm{\pa_x f}_\XL$.

Moreover, the $\norm{\cdot}_{\XLIpi}$ and $\norm{\cdot}_{\XL}$ differ by a multiplicative constant. We have,
\begin{align*}
    \norm{f}_\XL^2 = 
    \frac{1}{2\pi} 
    \intpi 
        |f(x)|^2\dx.
\end{align*}

We denote by $\XH{s}$ the Sobolev space defined as the closure of $C^\infty(\T)$ with respect to the norm
\begin{align*}
    \norm{f}_\XH{s}^2 := 
    \frac{1}{2\pi} 
    \intpi |f(x)|^2 + |\Lambda^sf(x)|^2\dx
    =
    \sum_{k\in\Z} (1+|k|^{2s}) |\ha{f}_k|^2
\end{align*}

In the next section, we work with the following Sobolev spaces
\begin{align*}
    \XHeven{s}&=\big\{ f\in \XH{s}:\, f(x)=f(-x) \,\, \text{a.e. in $\T$} \,\, \text{and}\,\, \ha{f}_0=0\big\},
    \\
    \XHodd{s}&=\big\{ f\in \XH{s}:\, f(x)=-f(-x) \,\, \text{a.e. in $\T$}\big\}.
\end{align*}
We emphasize that both $\XHeven{s}$ and $\XHodd{s}$ consist of zero-mean functions. 

We endow these spaces with the following norm
\begin{align*}
    \norm{f}_{H_{\text{odd (or even)}}^s}^2:= \sum_{k\geq 1}
    k^{2s} |\ha{f}_k|^2=\frac{1}{4\pi}\int_{-\pi}^\pi |\Lambda^sf(x)|^2\dx,
\end{align*}

A function $f\in\XHeven{s}$ can be represented by
\begin{align*}
    f(x)=\sum_{k\geq 1} a_k \cos(kx),
\end{align*}
and then
\begin{align*}
    \norm{f}_\XHeven{s}^2=\frac{1}{4}\sum_{k\geq 1}k^{2s}|a_k|^2.
\end{align*}

The operator $\pa_x^{-1}:\, H^k_{\text{odd (or even)}}\to H^{k+1}_{\text{even (or odd)}}$ is given by
\begin{align*}
    \pa_x^{-1}f(x)=
    \int_0^x f(y)\dy 
    -
    \frac{1}{2\pi}\intpi \int_{0}^x f(y)\dy \dx.
\end{align*}

$\XHholZM$ denotes the space of holomorphic functions on the unit disk $\D$ that vanish at zero. We note that if $F\in \XHholZM$ then $F$ admits the expansion
\begin{align}\label{taylorexpansion}
    F(z)=\sum_{k\geq 1}F_k z^k
\end{align}
with $F_k\in \mathbb{C}.$

In addition, we introduce the Hardy spaces 
\begin{align}\label{def:XHholZMi}
    \XHholZMi{s}
    =\{ F\in \XHholZM:\, \left. F\right|_\T\in \XH{s}\},
\end{align}
where $\left.F\right|_\T= F(e^{ix})$. These spaces are endowed with the norm
\begin{align*}
    \norm{F}_\XHholZMi{s}^2:=
    \sum_{k\geq 1}|F_k|^2 k^{2s}=
    \sum_{k\geq 1}|\ha{f}_k|^2 k^{2s}.
\end{align*}
We will use the following notation  
\begin{align*}
    f^{+} := \tfrac{1}{2}(f + i Hf),&&
    f^{-} := \tfrac{1}{2}(f^\sharp + i Hf^\sharp),
\end{align*}
where $f^{\sharp}(x):=f(-x)$. 
\begin{comment}
It will also be convenient to use 
\begin{align*}
    \Pcal^\pm f:=f^{\pm}
\end{align*}
since sometimes we will need to apply $\Pcal^\pm$ to equations rather than functions.
\end{comment}

Notice that for $f\in\XH{s}$, with $s>\frac{1}{2}$, the Fourier series of $f^+$ and $f^-$ are
\begin{align*}
    f^+(x) = \sum_{k\geq 1} \ha{f}_k e^{ikx},&&
    f^-(x) = \sum_{k\geq 1} \ha{f}_{-k} e^{ikx}.
\end{align*}

Furthermore, replacing $e^{ix}$ by $z$, we also find holomorphic extensions $F^+,F^-\in
\XHholZMi{s}$ of $f^+$ and $f^-$ to the complex unit disk,
\begin{align}
    F^+(z) = \sum_{k\geq 1} \ha{f}_k z^k,&&
    F^-(z) = \sum_{k\geq 1} \ha{f}_{-k} z^k.
    \label{def:F+F-}
\end{align}

\begin{rem}
    The definition in \eqref{def:F+F-} will slightly change in Definition~\ref{def:FplusFminu} from Section~\ref{sec:stab:invert}.
\end{rem}

For every $F\in\XHholZMi{s}$, there exists $f\in \XHodd{s}$ such that $f^+=F|_\T$. This can be checked by taking $f(x)=F|_\T(x)-F|_\T(-x)$ and the following computation
\begin{align*}
    Hf=HF|_\T(x)-HF|_\mathbb{T}^\sharp(x)=HF|_\T(x)+HF|_\mathbb{T}(-x)=-i(F|_\T(x)+F|_\T(-x)).
\end{align*}

For $n\geq 2$ and $l=1,...,n-1$, we will also use the following Fourier projectors
\begin{align*}
    \ha{\Pcal_{l,n}f}_k=\ha{f}_{nk+l}.
\end{align*}

We view vectors as \textit{column vectors}. In addition, for each $v\in\C^d$, $v^\ast$ will be $\overline{v}^t$, that is a \textit{row vector} whose entries are the complex conjugates of those of $v$. In this way, we can write, for $v_1$, $v_2\in \C^d$, the scalar product $(v_1,v_2)$ as $v_1^\ast v_2$.

Finally, given a matrix $A\in \C^{d\times d}$, we denote by $\sigma_1(A)\leq \sigma_2(A)\leq \cdots \leq \sigma_d(A)$ its singular values, arranged in \textit{increasing order}. This convention differs from the more common decreasing ordering.

\section{Traveling wave existence}\label{sec:tw_exis}

In this section, we prove the existence of a smooth traveling wave  $\tw$ for the Burgers--Hilbert equation \eqref{BHEQN}, traveling  at a fixed speed $c$. In particular, we will take 
\begin{align}\label{val:speed}
    c := \getvar{speed}.
\end{align}
More precisely, we shall prove the following theorem.
\begin{theorem}\label{thmexistencetw}
    There exists a smooth, even, real, and zero-mean traveling wave $\tw\neq 0$ of \eqref{BHEQN} with speed $c$. In addition, $\tw$ is not $\frac{2\pi}{n}$-periodic for any $n\geq 2$. Since $\frac{1}{n}(\tw(nx),\,c)$ also solves \eqref{eqn_BH_TW},  we obtain traveling waves of minimal period $\frac{2\pi}{n}$, for every $n\in\N^+$.
\end{theorem}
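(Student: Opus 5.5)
We will prove the theorem with a computer-assisted fixed-point argument around an explicit approximate solution. First we fix a trigonometric polynomial $\twap(x)=\sum_{k=1}^{N}a_k\cos(kx)$ with real coefficients, with $N=\getvar{Nexplicit}$, computed numerically from Newton iterations on \eqref{eqn_BH_TW} at the speed \eqref{val:speed}. We write $\tw=\twap+\twdiff$ with $\twdiff\in\XHeven{s}$. The even, zero-mean class is invariant under the equation: $H$ maps even functions to odd ones, and $\pa_x$ does the same. So it is natural to integrate \eqref{eqn_BH_TW} once, using $\pa_x^{-1}$, and view the problem as an equation
\begin{align*}
    F(\tw):=c\tw+\pa_x^{-1}H\tw+\tfrac12\tw^2-\tfrac{1}{4\pi}\intpi\tw^2\dx=0
\end{align*}
in even zero-mean functions. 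Expanding around $\twap$ gives
\begin{align*}
    \LOpExisTwap\twdiff=-\twxi-\tfrac12\big(\twdiff^2-\textstyle\frac{1}{2\pi}\intpi\twdiff^2\big),
\end{align*}
where $\LOpExisTwap\twdiff=(c+\twap)\twdiff+\pa_x^{-1}H\twdiff-\text{mean}$ and $\twxi=F(\twap)$ is the residual.

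The next step is to invert $\LOpExisTwap$ on $\XHeven{s}$, which is the content of the postponed Proposition~\ref{prop:inv_exis}, with the bound $\norm{\LOpExisTwapInv}\le\getvar{Lexis_inv}$. The rough idea behind that proposition is that $c+\twap$ stays bounded away from zero, with $\beta=(c+\twap)^{-1}$ controlled as in the lemmas on $\beta$. Then $\LOpExisTwap=(c+\twap)(I+\beta\pa_x^{-1}H)$, and $\beta\pa_x^{-1}H$ is compact, being smoothing of order one. The high-mode part of this operator is small, and the low modes form a finite matrix $\MExisTor$ whose smallest singular value is verified rigorously. Granting this, we define $T(\twdiff)=-\LOpExisTwapInv\big(\twxi+Q(\twdiff)\big)$, with $Q$ the quadratic term. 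We have the rigorous residual bound $\norm{\twxi}\le\getvar{resi_exis_L2}$ from Lemma~\ref{lemma:residue_exis_L2}, and the quadratic term satisfies $\norm{Q(\twdiff)}\lesssim\norm{\twdiff}^2$ because the norm is an algebra. With these, $T$ maps the ball of radius $r=\getvar{rad_exis}$ into itself and is a contraction with Lipschitz constant at most $\getvar{lip_exis}$. Banach's theorem then gives a unique $\twdiff$, hence a solution $\tw$ that is even, real (the map preserves real functions) and zero-mean.

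Smoothness follows by bootstrapping. Since $c+\tw\ge c-\norm{\twap}_{L^\infty}-Cr>0$, equation \eqref{eqn_BH_TW} can be rewritten as $\pa_x\tw=-(c+\tw)^{-1}H\tw$. If $\tw\in H^s$, the right-hand side lies in $H^s$, so $\tw\in H^{s+1}$, and iterating gives $\tw\in C^\infty$.

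It remains to show that $\tw\neq0$ and that $\tw$ is not $\frac{2\pi}{n}$-periodic. We use Lemma~\ref{lemma:twapprox_nonzero_coeffs}, which states that $|\ha{\twap}_k|$ exceeds $r$ for certain $k$; it suffices that this holds for $k=1$. Since $|\ha{\twdiff}_1|\le\norm{\twdiff}\le r$, we get $\ha{\tw}_1\neq0$. This gives $\tw\neq0$. It also rules out $\frac{2\pi}{n}$-periodicity for every $n\ge2$, since a $\frac{2\pi}{n}$-periodic function only has nonzero Fourier coefficients at multiples of $n$. Finally, we check that $\frac1n(\tw(nx),c)$ solves \eqref{eqn_BH_TW}. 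Set $w(x)=\frac1n\tw(nx)$. Then $\pa_xw=\tw'(nx)$, and $H$ commutes with dilation by the integer $n$ on $\T$, so $Hw=\frac1n(H\tw)(nx)$. Hence $c\pa_xw+Hw+w\pa_xw=\tfrac1n\big[nc\tw'+H\tw+n\tw\tw'\big](nx)$. This is not literally $\tfrac1n\cdot0$. The correct scaling therefore has to be verified against the paper's convention: the form $\frac1n(\tw(nx),c)$ presumably means speed $c/n$ with amplitude $\frac1n$, which gives $\tfrac1n[c\tw'+H\tw+\tw\tw'](nx)=0$. The minimal period is then exactly $\frac{2\pi}{n}$, because a smaller period would force $\tw$ to have a nontrivial period $\frac{2\pi}{m}$, which was excluded.

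The main obstacle is the inverse bound for $\LOpExisTwap$. The other steps are routine once rigorous residual and coefficient bounds are available. The inverse bound, in contrast, requires controlling an infinite-dimensional operator whose conditioning is poor, as reflected in the large constant $\getvar{Lexis_inv}$. It needs a careful splitting into the finite matrix $\MExisTor$ plus tail estimates, and the verified singular value bound $\getvar{svd1_exis}$ for that matrix.
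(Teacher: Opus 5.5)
Your proof is correct and is essentially the paper's: the same contraction around $v^{\rm ap}$ built on the granted bound of Proposition~\ref{prop:inv_exis} and on the residual and quadratic estimates, then the first Fourier mode to get $v\neq 0$ and exclude period $2\pi/n$, the bootstrap via $\partial_x v=-(c+v)^{-1}Hv$, and the rescaling $\bigl(\frac{1}{n}v(nx),\frac{c}{n}\bigr)$, which is exactly how Section~\ref{sec:tw_stab} reads it (your once-integrated formulation carries over all constants unchanged, since $\partial_x^{-1}:H^0_{\rm odd}\to H^1_{\rm even}$ is an isometry in the paper's norms). Some justifications need repair: Lemma~\ref{lemma:twapprox_nonzero_coeffs} only asserts $v^{\rm ap}_j\neq 0$, so the quantitative input should be the direct check $v^{\rm ap}_1>0.54$; positivity of $c+v$ should come from $c+v^{\rm ap}>1/19.9056$ (Lemma~\ref{lemma:betamax}) rather than from $c-\|v^{\rm ap}\|_{L^\infty}>0$, which is never verified and is delicate because the paper's lower bound for $c+v^{\rm ap}$ is only about $0.05$; and your sketch of Proposition~\ref{prop:inv_exis} is not the paper's route, which uses no low-mode/high-mode splitting or tail estimate but, because $[H,v^{\rm ap}]$ has finite rank, reduces exactly to a first-order ODE with $N$ unknown Fourier coefficients, handled by Fuchsian theory, Duhamel's formula and the matrix $M^{\rm tor}_{\rm exis}$.
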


Section~\ref{sec:tw_exis} is devoted to the proof of Theorem~\ref{thmexistencetw}. From now on, the speed $c$ will remain fixed.

First of all, let us clarify that the existence of a branch of traveling wave $\frac{2\pi}{n}$-periodic solutions bifurcating from zero can be proved by using the classical Crandall--Rabinowitz theorem (see \cite{CCZ2}).
Moreover, it is shown in \cite{CCZ2} that the branch exists on $[1,c^\ast)$, for some finite $c^\ast$.\footnote{Numerical simulations suggest that $c^\ast\simeq 1.01393404560815$.}
However, this value is not sufficient to find instabilities and we need to go further up to $c$. 

Our strategy for proving Theorem \ref{thmexistencetw} consists in showing that there exists a unique traveling wave $\tw$ close to an explicit approximation $\twap$ that we have constructed using numerical simulations. This is based on a fixed-point argument. 

We write $\tw$ as a perturbation of the explicit approximation $\twap$,
\begin{align}\label{def_tw_as_approx_plus_diff}
    \tw=\twap+\twdiff.
\end{align}

Here, $\twap$ is explicit and defined as
\begin{align}\label{tildev}
    \twap(x):=\sum_{j=1}^{N}\twap_j\cos(jx).
\end{align}
where the real coefficients $\twap_j$, $1\leq j\leq N$, with $N=\getvar{Nexplicit}$, can be found in the supplementary material. 
The graph of $c+\twap$ is shown in Figure~\ref{fig:twap}.

\begin{figure}[htbp]
    \centering
    \includegraphics[width=8.5cm]{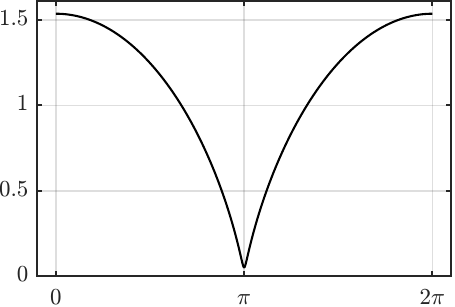}
    \caption{Graph of $c+\twap$.}
    \label{fig:twap}
\end{figure}

Plugging \eqref{def_tw_as_approx_plus_diff} into \eqref{eqn_BH_TW}, we obtain a new equation for $\twdiff$
\begin{align}
    0&=
    c\pa_x\tw+
    H\tw+
    \tw\pa_x\tw\nonumber\\
    &=
    (c\pa_x\twap+
    H\twap+
    \twap\pa_x\twap) +
    (c\pa_x\twdiff+
    H\twdiff+
    \twap\pa_x\twdiff+
    \twdiff\pa_x\twap) +
    \twdiff\pa_x\twdiff
    \nonumber\\
    &=
    (c\pa_x\twap+
    H\twap+
    \twap\pa_x\twap) +
    (c\pa_x\twdiff+
    H\twdiff+
    \pa_x(\twap\twdiff)) +
    \tfrac{1}{2}\pa_x(\twdiff^2)
    \label{eqn_traveling_diff_aux1}
\end{align}
We denote by
\begin{align*}
    \twxi&:=
    c\pa_x\twap+
    H\twap+
    \twap\pa_x\twap,\\
    \LOpExisTwap\twdiff&:=
    c\pa_x\twdiff+
    H\twdiff+
    \pa_x(\twap\twdiff),\\
    Q(\twdiff_1,\twdiff_2)&:=\tfrac{1}{2}\pa_x(\twdiff_1\twdiff_2).
\end{align*}
With this notation, equation \eqref{eqn_traveling_diff_aux1} can be written as
\begin{align}\label{eqn_traveling_diff}
    \twxi+
    \LOpExisTwap\twdiff+
    Q(\twdiff,\twdiff)=
    0.
\end{align}

In order to prove the existence of $\om\in\XHeven{1}$, one key step is to invert the linear operator $\LOpExisTwap$, so that we can write \eqref{eqn_traveling_diff} in the form 
\begin{align*}
    \om=-\LOpExisTwapInv
    \big(
    \twxi+Q(\om,\om)
    \big).
\end{align*}

Nevertheless, this may not be possible. Observe that, if $\tw$ is an even solution of equation \eqref{eqn_BH_TW}, taking a derivative, we get
\begin{align*}
    c\pa_x^2 \tw+
    H\pa_x \tw+\pa_x(\tw\pa_x\tw)=0.
\end{align*}
Then, $\pa_x\tw$ is an odd function in the kernel of the operator $\LOpExisTw$ given by
\begin{align}\label{Lv}
    \LOpExisTw f:=
    c\pa_x f+Hf+\pa_x(\tw f).
\end{align}
This may create some difficulties when inverting $\LOpExisTwap$. To avoid them, we will work in spaces with defined parity. Furthermore, we also impose that $\twdiff$ has mean zero, as $\twap$. This allows us to deduce that $\tw$ is also even and zero-mean.

We will be able to prove the following proposition which is one of the main ingredients in our proof of the existence of traveling waves.

\begin{prop}\label{prop:inv_exis}
    Let $\twap$ be the function given in \eqref{tildev} and 
    \begin{align*}
        \LOpExisTwap: \XHeven{1} \to \XHodd{0}
    \end{align*}
    the operator defined in \eqref{Lv}. Then, $\LOpExisTwap$ is invertible, and its inverse
    \begin{align*}
        \LOpExisTwapInv: \XHodd{0} \to \XHeven{1}
    \end{align*}
    satisfies
    \begin{align*}
        \norm{\LOpExisTwapInv}_{\XHodd{0}\to \XHeven{1}} < \getvar{Lexis_inv}.
    \end{align*}
\end{prop}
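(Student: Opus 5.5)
We will prove Proposition~\ref{prop:inv_exis} by factoring out the first-order part of $\LOpExisTwap$ and reducing the remainder to a compact perturbation that can be checked by computer. Since $\twap$ is even, $c\pa_x\twdiff+\pa_x(\twap\twdiff)=\pa_x\big((c+\twap)\twdiff\big)$. The first step is to verify, by interval arithmetic on the explicit trigonometric polynomial, that $c+\twap>0$ on $\T$ (Figure~\ref{fig:twap} suggests a positive minimum). We then set $\beta=(c+\twap)^{-1}$ and rewrite the problem. Given $g\in\XHodd{0}$, we must solve
\begin{align*}
    \pa_x\big((c+\twap)\twdiff\big)+H\twdiff=g .
\end{align*}
Applying $\pa_x^{-1}$, which maps odd zero-mean functions to even ones, gives $(c+\twap)\twdiff+\pa_x^{-1}H\twdiff=\pa_x^{-1}g+\text{const}$. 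The constant is fixed by the requirement $\ha{\twdiff}_0=0$. This turns the equation into a second-kind problem
\begin{align*}
    \twdiff+\beta\,\pa_x^{-1}H\twdiff-\text{(mean correction)}=\beta\,\pa_x^{-1}g ,
\end{align*}
posed in $\XHeven{0}$. Here $K:=\beta\pa_x^{-1}H$ is compact and smoothing by one derivative. Once $\twdiff\in\XHeven{0}$ is found, we recover $\twdiff\in\XHeven{1}$ by bootstrapping through the equation. Because $\|\pa_x^{-1}\|$ and norms of $\beta$ and $\pa_x\beta$ appear in this step, we bound them explicitly by computer.

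The second step is to show that $I+K$ (with the mean projection) is invertible and to bound its inverse. We split $K=K P_M+K(I-P_M)$, where $P_M$ is the Fourier projection onto modes $k\le M$. On the tail, $\|K(I-P_M)\|\lesssim \|\beta\|_{L^\infty}/M$ plus commutator terms, and this can be made small. On the finite block we form the Galerkin matrix $\MExisTor=P_M(I+K)P_M$. Its entries are computable because $\beta$ is approximated by a high-degree trigonometric polynomial with a rigorous error, for instance via the ODE/piecewise-polynomial enclosures mentioned in the introduction. We then bound its smallest singular value $\sigma_1$ from below with interval arithmetic. A standard Schur-complement or Neumann argument then gives invertibility of the full operator, with an inverse bound of the form
\[
\frac{C}{\sigma_1-\text{(off-diagonal and tail errors)}}.
\]
Combining this with the factor from $\beta\pa_x^{-1}$ and the $H^1$ bootstrap yields the constant $\getvar{Lexis_inv}$.

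The main obstacle is that the operator is close to singular. The odd kernel element $\pa_x\tw$ is excluded by parity, but the branch has already turned far past $c^\ast$ and close to the fold, so $\sigma_1$ is tiny (the observed $\getvar{svd1_exis}$). This forces a very large $M$ and extremely accurate enclosures of $\beta$ so that all truncation errors stay below $\sigma_1$. If the naive tail bound is too weak, I would first try to sharpen the off-diagonal estimate $P_M K(I-P_M)$ by exploiting the rapid decay of the Fourier coefficients of $\beta$, estimated through analyticity of $c+\twap$ in a strip. Only after that would I enlarge $M$.
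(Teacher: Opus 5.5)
Your route is correct in principle but genuinely different from the paper's. The reformulation is sound. On zero-mean functions $\pa_x^{-1}H=-\Lambda^{-1}$, and the integration constant disappears upon differentiating, so it is fixed by $\ha{\twdiff}_0=0$ (the mean of $\beta$ is positive). The problem $\twdiff=\beta(\Lambda^{-1}\twdiff+\pa_x^{-1}g+C)$ is then a second-kind equation: a compact one-derivative-smoothing perturbation plus a rank-one correction. Since $\norm{K-P_MKP_M}\to 0$ for compact $K$, a finite-section/Schur-complement argument must close for $M$ large, and the $H^1$ bootstrap is immediate.

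The paper never truncates. It passes to $\LOpExis$ via $\LOpExisTwap=\pa_x\LOpExis\pa_x^{-1}$ and then to $f^+$. Because $\twap$ is a trigonometric polynomial of degree $N$, the commutator $[H,\twap]$ has finite rank. The equation therefore becomes an exact first-order ODE $(c+\twap)\pa_x f^+=if^++\sum_{k\le N}\ha{f}_k g_k+g^+$ with only $N$ unknown coefficients. Duhamel's formula (the operator $J$, Lemma~\ref{lemma:J_L2L2_exis}) inverts the principal part exactly, and the consistency conditions give the exact $(N+1)\times(N+1)$ matrix $\MExisTor$. Fuchsian/monodromy theory at the $N$ roots of $z^NP(z)$ in $\D$ then turns nonsingularity of that matrix into a genuine periodic, positive-frequency solution.

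The paper thus gets a matrix of fixed size $N+1$ and no tail estimate at all. It pays for this with the monodromy argument and the rigorous integration of $N+1$ ODEs. You avoid all of that, but your compact part $\beta\Lambda^{-1}$ has infinite rank, and this costs you in several places:
\begin{itemize}
\item The truncation error decays only like $\norm{\beta}_{L^\infty}/M$, with $\norm{\beta}_{L^\infty}\approx 20$.
\item The Galerkin matrix is dense, and its entries need $\ha{\beta}_k$, which decay only geometrically because $\beta$ is sharply peaked (Lemma~\ref{lemma:betamax}).
\item The lower-left block $(I-P_M)KP_M$ is small only through that decay; a crude Schur bound needs $M\gtrsim\norm{\beta}_{L^\infty}^2/\sigma_1$.
\item The $H^1$ bootstrap brings in $\pa_x\beta=-\beta^2\pa_x\twap$, which is large near the crest.
\end{itemize}

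Two corrections. First, the number $\getvar{svd1_exis}$ bounds $\sigma_1^2$, not $\sigma_1$, of the paper's $\MExisTor$, whose entries are Fourier coefficients of the Duhamel solutions $h_k=Jg_k$. It says nothing about the conditioning of your block $P_M(I+K)P_M$, so reusing that name and the ``close to the fold'' reading are unfounded. Second, the constant $\getvar{Lexis_inv}$ comes from the paper's particular chain $C_1/\sqrt{2\pi}+C_2$. Your scheme would produce its own constant, and you would still have to certify that it lies below the stated one.
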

\begin{proof}
    Section~\ref{sec:prop:inv_exis} is devoted to proving Proposition~\ref{prop:inv_exis}.
\end{proof}

We establish all the estimates required to apply the fixed-point argument in Proposition~\ref{PropTravelingWaveExistence}.
We want to show that there exists $\om$ satisfying equation \eqref{eqn_traveling_diff}. Using \eqref{eqn:LOpExis1}, the problem is equivalent to finding a fixed point for $\TopExis$, where
\begin{align}\label{def:ContOpExis}
    \TopExis \om :=
    - 
    \LOpExisTwapInv\big(
        \twxi
        +
        Q(\om,\om)
    \big).
\end{align}
 Therefore, we aim to obtain bounds for $\twxi$ and $Q$ to be able to apply Proposition~\ref{PropTravelingWaveExistence}.

We first compute a bound for $\twxi$.
\begin{lemma}\label{lemma:residue_exis_L2}
Let $\twxi$ be the odd function given by
\begin{align*}
    \twxi:=
    c\pa_x\twap
    +
    H\twap
    +
    \twap\pa_x \twap.
\end{align*}
Then, $\twxi$ satisfies
\begin{align*}
    \norm{\twxi}
    _\XHodd{0}
    <
    \getvar{resi_exis_L2}.
\end{align*}
\end{lemma}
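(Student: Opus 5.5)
Since $\twap$ is a trigonometric polynomial of degree $N=\getvar{Nexplicit}$, the residual $\twxi$ is a trigonometric polynomial of degree at most $2N$, and its norm can be computed exactly. The plan is to write down its sine coefficients explicitly and then evaluate the resulting finite sum with interval arithmetic.

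We write $\twap(x)=\sum_{j=1}^N a_j\cos(jx)$ with $a_j=\twap_j$. The linear part is immediate. Since $H\cos(jx)=\sin(jx)$ for $j\geq1$, we get
\begin{align*}
c\pa_x\twap+H\twap=\sum_{j=1}^N (1-cj)\,a_j\sin(jx).
\end{align*}
For the nonlinear part we use $\twap\pa_x\twap=\tfrac12\pa_x\big((\twap)^2\big)$ together with the product formula $\cos(jx)\cos(lx)=\tfrac12(\cos((j+l)x)+\cos((j-l)x))$. This gives
\begin{align*}
(\twap)^2=b_0+\sum_{k=1}^{2N} b_k\cos(kx),\qquad b_k=\tfrac12\sum_{j+l=k}a_ja_l+\tfrac12\sum_{|j-l|=k}a_ja_l ,
\end{align*}
with indices $1\leq j,l\leq N$ and $k\geq1$. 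Differentiating, $\twap\pa_x\twap=-\tfrac12\sum_{k=1}^{2N}k\,b_k\sin(kx)$.

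Combining the two parts, $\twxi=\sum_{k=1}^{2N} s_k\sin(kx)$ with
\begin{align*}
s_k=(1-ck)\,a_k\,\mathbf{1}_{k\leq N}-\tfrac12\,k\,b_k .
\end{align*}
Hence $\twxi$ is odd, real and zero-mean. Moreover $\ha{(\twxi)}_k=s_k/(2i)$, so with the convention of $\XHodd{0}$,
\begin{align*}
\norm{\twxi}_\XHodd{0}^2=\frac14\sum_{k=1}^{2N}|s_k|^2 .
\end{align*}

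The estimate then reduces to a rigorous evaluation of this finite sum. The coefficients $a_j$ are taken as exact rationals (or as thin intervals) from the supplementary data, and $c$ is given by \eqref{val:speed}. We compute $b_k$ as an interval-valued discrete convolution; an FFT with rigorous error bounds would work, but a direct $O(N^2)$ convolution in interval arithmetic is also feasible for $N=1000$. We then form $s_k$, sum $|s_k|^2/4$, and check that the upper endpoint is below $(\getvar{resi_exis_L2})^2$.

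The main obstacle is numerical cancellation rather than anything analytic. Each $s_k$ is a difference of terms of size $O(1)$ that cancel to roughly $10^{-14}$. Naive double-precision intervals accumulate about $N\cdot 10^{-16}$ error per coefficient, which may swamp the bound. I would first try multiprecision interval arithmetic (for example $256$-bit MPFR/Arb balls) for the convolution and the sum, so that the rounding errors fall well below $10^{-20}$. Since no truncation tail appears, the final bound is then the computed enclosure.
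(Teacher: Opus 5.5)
Your proposal is correct and takes essentially the same route as the paper. There, $\twxi$ is also computed exactly at the level of Fourier coefficients: the derivative, the Hilbert transform and the product of the cosine series of $\twap$ are evaluated with no truncation tail, in $200$-bit ball arithmetic. The $\XHodd{0}$ norm is then read off from these finitely many coefficients, as the full $L^2(\T)$ norm divided by $\sqrt{2}$. This is equivalent to your formula $\norm{\twxi}_{\XHodd{0}}^2=\frac14\sum_{k=1}^{2N}|s_k|^2$, and your use of high-precision balls to handle the cancellation matches the paper.
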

\proofcompassi

\begin{lemma}\label{Lemma_Bound_Exis_Q}The following estimate holds
    \begin{align*}
        \norm{Q}_{\XHeven{1}\times\XHeven{1}\to\XHodd{0}}\leq 2^{1/2}\zeta(2)^{1/2}.
    \end{align*}
\end{lemma}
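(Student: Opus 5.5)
The plan is to work entirely on the Fourier side and apply a weighted Cauchy--Schwarz inequality to the convolution. A cruder route through $\norm{g}_{L^\infty}$ only gives the constant $2\zeta(2)^{1/2}$, so the extra factor $2^{-1/2}$ has to come from summing the Cauchy--Schwarz weight exactly.

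\textbf{Setting up.} Let $f,g\in\XHeven{1}$ and put $h=Q(f,g)=\tfrac12\pa_x(fg)$, which is odd. Since $fg$ is even, $|\ha{(fg)}_k|=|\ha{(fg)}_{-k}|$, and therefore
\begin{align*}
\norm{h}_{\XHodd{0}}^2=\frac14\sum_{k\ge1}k^2|\ha{(fg)}_k|^2=\frac18\sum_{k\in\Z}k^2|\ha{(fg)}_k|^2 .
\end{align*}
Because $f$ and $g$ have zero mean, $\ha{(fg)}_k=\sum_{j\neq0,k}\ha f_j\,\ha g_{k-j}$. We write
\begin{align*}
k=\frac{k}{j(k-j)}\cdot j(k-j)=\Big(\frac1j+\frac1{k-j}\Big)\, j(k-j).
\end{align*}

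\textbf{Cauchy--Schwarz in $j$.} For each fixed $k$, Cauchy--Schwarz gives
\begin{align*}
k^2|\ha{(fg)}_k|^2\le S_k\sum_{j\neq0,k} j^2|\ha f_j|^2\,(k-j)^2|\ha g_{k-j}|^2,\qquad S_k:=\sum_{j\neq0,k}\Big(\frac1j+\frac1{k-j}\Big)^2 .
\end{align*}
Summing over $k$ and using parity,
\begin{align*}
\sum_{k}\sum_j j^2|\ha f_j|^2(k-j)^2|\ha g_{k-j}|^2=\Big(\sum_{j\in\Z}j^2|\ha f_j|^2\Big)\Big(\sum_{m\in\Z}m^2|\ha g_m|^2\Big)=4\norm f_{\XHeven{1}}^2\norm g_{\XHeven{1}}^2 .
\end{align*}
Hence $\norm{h}^2_{\XHodd{0}}\le\tfrac12\big(\sup_k S_k\big)\norm f^2_{\XHeven{1}}\norm g^2_{\XHeven{1}}$. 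It therefore suffices to prove $\sup_k S_k\le 4\zeta(2)$; this bound (with $k=0$ included) gives exactly $2^{1/2}\zeta(2)^{1/2}$.

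\textbf{Main step: computing $S_k$.} This is the delicate point, since the naive bound $(a+b)^2\le2a^2+2b^2$ gives $8\zeta(2)$, which is too large. Instead we expand the square. The two diagonal sums each equal $2\zeta(2)-1/k^2$. For the cross term, use $\frac{2}{j(k-j)}=\frac2k\big(\frac1j+\frac1{k-j}\big)$ when $k\neq0$. The sum $\sum_{j\neq0,k}\frac1j$, taken symmetrically (it converges absolutely once combined with the other piece), equals $-\frac1k$, since only the unpaired term $j=-k$ survives. The same value is obtained for the other piece. This gives
\begin{align*}
S_k=4\zeta(2)-\frac{2}{k^2}-\frac{4}{k^2}<4\zeta(2),\qquad k\neq0 .
\end{align*}
The $k=0$ term carries the factor $k^2=0$ and is therefore irrelevant. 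Care is needed to justify the rearrangement of the conditionally convergent harmonic pieces; I would handle it by pairing the terms $j$ and $k-j$ before summing.
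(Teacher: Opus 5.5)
Your argument is correct, and it takes a genuinely different route from the paper.

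\textbf{The paper's route.} The paper writes $\norm{Q(\om,\om)}_{\XHodd{0}}=\frac12\norm{\om^2}_{\XHeven{1}}$ and cites the algebra bound of Corollary~\ref{coro:apriori:BanachAlg:H1Even}. That corollary is proved in physical space, using three ingredients:
\begin{itemize}
\item the product rule;
\item the inequality $|a+b|^2\le2|a|^2+2|b|^2$;
\item the embedding $\norm{f}_{L^\infty}^2\le4\zeta(2)\norm{f}_{\XHeven{1}}^2$ of Lemma~\ref{lemma:apriori:LInfty_H1odd}.
\end{itemize}
That chain actually yields $2\cdot4\zeta(2)+2\cdot4\zeta(2)=16\zeta(2)$, not the $8\zeta(2)$ written in the corollary. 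It therefore gives only $\norm{Q}\le2\zeta(2)^{1/2}$, which is precisely the ``cruder route'' you anticipated. Since the embedding constant is sharp, that route cannot do better.

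\textbf{What your route buys.} Your Fourier-side Cauchy--Schwarz with the exact weight $k/(j(k-j))$ keeps the cancellation that the physical-space argument discards. The cross term $\sum_{j\neq0,k}2/(j(k-j))=-4/k^2$ is negative, so $S_k=4\zeta(2)-6/k^2<4\zeta(2)$. This value is correct; for example $S_1=4\zeta(2)-6\approx0.580$. It gives the stated constant $2^{1/2}\zeta(2)^{1/2}$ for the full bilinear form, which is equivalent to the $8\zeta(2)$ claimed in the corollary.

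So your proof is what genuinely supports the lemma as stated. This matters downstream. With the constant $2\zeta(2)^{1/2}$, the numerical closure of Proposition~\ref{PropTravelingWaveExistence} does not go through with the given bounds: neither the invariance of the ball of radius $2.50805\cdot10^{-9}$ nor the Lipschitz bound $3\cdot10^{-4}$ holds. A slightly larger radius would fix this.

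\textbf{The conditionally convergent step.} You can avoid rearrangement altogether. Let $H_k:=\sum_{m=1}^k\frac1m$. For $k\ge1$, split $\sum_{j\neq0,k}\bigl(\frac1j+\frac1{k-j}\bigr)$ into the ranges $1\le j\le k-1$, $j>k$ and $j<0$. The first block equals $2H_{k-1}$, and the other two each telescope to $-H_k$. The sum is therefore $-2/k$, and the case $k<0$ follows from the substitution $j\mapsto-j$.
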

\begin{proof}
    We have
    \begin{align*}
        \norm{Q(\om,\om)}_{\XHodd{0}}=
        \frac{1}{2}\norm{\om^2}_{\XHeven{1}}\leq 
        2^{1/2}\zeta(2)^{1/2}\norm{\om}_\XHeven{1}^2,
    \end{align*}
    where we have used the estimate in Corollary~\ref{coro:apriori:BanachAlg:H1Even}.
\end{proof}

\begin{prop}\label{PropTravelingWaveExistence}
    The operator $\TopExis:\XHeven{1}\to \XHeven{1}$ is a contraction on
    \begin{align*}
        X:=\{g\in \XHeven{1}:\ \norm{g}_\XHeven{1}\leq 
        \getvar{rad_exis}
        \}.
    \end{align*}
    Consequently, there exists a traveling wave $\tw$ with speed $c$ for the Burgers--Hilbert equation. Moreover, the difference between $\tw$ and our approximation $\twap$, denoted by $\twdiff := \tw - \twap$, satisfies
    \begin{align*}
        \norm{\twdiff}_{\XHeven{1}}
        <
        \getvar{rad_exis}.
    \end{align*}
\end{prop}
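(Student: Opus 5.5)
We apply the Banach fixed-point theorem to $\TopExis$ on the closed ball $X$ of radius $r:=\getvar{rad_exis}$ in $\XHeven{1}$. Write $M:=\getvar{Lexis_inv}$ for the bound of Proposition~\ref{prop:inv_exis}, $\epsilon:=\getvar{resi_exis_L2}$ for the residual bound of Lemma~\ref{lemma:residue_exis_L2}, and $q:=2^{1/2}\zeta(2)^{1/2}$ for the bound on $Q$ from Lemma~\ref{Lemma_Bound_Exis_Q}.

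First, $\TopExis$ is well defined from $\XHeven{1}$ to itself. The residual $\twxi$ is odd and zero-mean because $\twap$ is an even trigonometric polynomial. For $\om\in\XHeven{1}$, $Q(\om,\om)=\tfrac12\pa_x(\om^2)$ is the derivative of an even function, so it is odd. Both therefore lie in $\XHodd{0}$, and Proposition~\ref{prop:inv_exis} maps them back into $\XHeven{1}$.

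Next, $\TopExis$ maps $X$ into itself. For $\om\in X$,
\begin{align*}
\norm{\TopExis\om}_\XHeven{1}\le M\big(\epsilon+q\,r^2\big).
\end{align*}
Since $M\epsilon\approx 2.5077\cdot10^{-9}$ and $Mqr^2\approx 3.6\cdot 10^{-13}$, the sum is below $r$. This check must be done in interval arithmetic, because the margin in $M\epsilon$ is small; $r$ was evidently chosen just above $M\epsilon$.

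For the contraction, use bilinearity and symmetry of $Q$:
\begin{align*}
Q(\om_1,\om_1)-Q(\om_2,\om_2)=Q(\om_1+\om_2,\om_1-\om_2).
\end{align*}
Lemma~\ref{Lemma_Bound_Exis_Q} is stated for the diagonal, but its proof goes through Corollary~\ref{coro:apriori:BanachAlg:H1Even}, the algebra estimate $\tfrac12\norm{fg}_\XHeven{1}\le q\norm{f}\norm{g}$. That estimate applies verbatim to products $fg$, so $Q$ is bounded by $q$ as a bilinear form. Hence
\begin{align*}
\norm{\TopExis\om_1-\TopExis\om_2}_\XHeven{1}\le 2Mqr\,\norm{\om_1-\om_2}_\XHeven{1},
\end{align*}
with $2Mqr\approx 2.9\cdot10^{-4}<\getvar{lip_exis}<1$.

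Banach's theorem then gives a unique fixed point $\twdiff\in X$. Setting $\tw=\twap+\twdiff$ solves \eqref{eqn_traveling_diff}, hence \eqref{eqn_BH_TW}, and $\norm{\twdiff}_\XHeven{1}<r$, with strictness coming from the strict self-mapping estimate. The main obstacle is that the polarized bilinear bound must really be justified through the Corollary rather than read off from the diagonal statement of the Lemma.
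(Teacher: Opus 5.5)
Your proof is correct and is essentially the paper's argument: Banach's theorem on $X$ with the self-map bound $M(\epsilon+qr^2)<r$ and Lipschitz constant $2Mqr$. The only cosmetic difference is that you write $Q(\om_1,\om_1)-Q(\om_2,\om_2)=Q(\om_1+\om_2,\om_1-\om_2)$, whereas the paper telescopes through $Q(\om_2,\om_1)$; both give the same constant.

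The bilinear bound is not really an obstacle, since polarizing the diagonal estimate (parallelogram law plus the rescaling $(f,g)\mapsto(tf,t^{-1}g)$) gives the bilinear bound with the same $q$. A caution that applies equally to the paper: the last step in the proof of Corollary~\ref{coro:apriori:BanachAlg:H1Even}, combined with Lemma~\ref{lemma:apriori:LInfty_H1odd}, actually yields $2\cdot 4\zeta(2)+2\cdot 4\zeta(2)=16\zeta(2)$, not $8\zeta(2)$. With that constant, $Mqr^2\approx 5\cdot 10^{-13}$ exceeds the available room $r-M\epsilon\approx 3.7\cdot 10^{-13}$, so the stated radius depends on the sharper constant, although a marginally larger radius would still close the argument.
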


\begin{proof}
    We first prove that $\TopExis(X) \subset X$, and then we derive an upper bound smaller than $1$ for the Lipschitz constant $\ell_\TopExis$ of $\TopExis$ on $X$. These two facts imply that $\TopExis$ is a contraction on $X$. Since $X \neq \emptyset$ is a closed subset of the complete metric space $\XHeven{1}$, it follows that $\TopExis$ admits a fixed point $\twdiff$ in $X$.

    We now verify both assumptions. Observe that
    \begin{align}
        \norm{\TopExis \om}_{\XHeven{1}}&=
        \norm{\LOpExisTwapInv(\twxi+Q(\om,\om))}_{\XHeven{1}}
        \nonumber \\
        &\leq 
        \norm{\twxi+Q(\om,\om)}_{\XHodd{0}}
        \norm{\LOpExisTwapInv}
        _{\XHodd{0}\to\XHeven{1}}
        \nonumber \\
        &\leq 
        \big(\norm{\twxi}_{\XHodd{0}}+
        \norm{\om}_{\XHeven{1}}^2
        \norm{Q}_{\XHeven{1}\times\XHeven{1}\to\XHodd{0}}\big)
        \norm{\LOpExisTwapInv}
        _{\XHodd{0}\to\XHeven{1}}.
        \label{InequalityExistenceNormContinuity}
    \end{align}
    
    We have shown in Lemma~\ref{lemma:residue_exis_L2}, Lemma~\ref{Lemma_Bound_Exis_Q}, and Proposition~\ref{prop:inv_exis} that
    \begin{subequations}\label{eqn:subs_lemmas_contexis}
    \begin{align}
        \norm{\twxi}
        _{\XHodd{0}}&\leq
        \getvar{resi_exis_L2},
        \label{InequalityExistenceNormContinuity_explicit_norms_xi}
        \\
        \norm{Q}
        _{\XHeven{1}\times\XHeven{1}\to\XHodd{0}}&\leq
        2^{1/2}\zeta(2)^{1/2},
        \label{InequalityExistenceNormContinuity_explicit_norms_Q}
        \\
        \norm{\LOpExisTwapInv}
        _{\XHodd{0}\to\XHeven{1}}&<
        \getvar{Lexis_inv}.
        \label{InequalityExistenceNormContinuity_explicit_norms_L}
    \end{align}\end{subequations}
    
    Now, plugging \eqref{eqn:subs_lemmas_contexis} into \eqref{InequalityExistenceNormContinuity}, we obtain for every $\om\in X$ that 
    \begin{align}
        \norm{
            \TopExis
            \om
        }_\XHeven{1}
        <
        \getvar{rad_exis}.
        \label{ExistenceContractive1}
    \end{align}
    Consequently, we deduce that $\TopExis X\subset X$.

    We now derive a bound for the Lipschitz constant $\ell_\TopExis$. Let $\om_1, \om_2 \in X$, then
    \begin{align}
        \TopExis \om_1-\TopExis \om_2 &= 
        \LOpExisTwapInv[
        \twxi+
        Q(\om_2,\om_2)
        ]-
        \LOpExisTwapInv[
        \twxi+
        Q(\om_1,\om_1)
        ]\nonumber\\
        &= 
        \LOpExisTwapInv[
        Q(\om_2,\om_2)-
        Q(\om_1,\om_1)]\nonumber\\
        &= 
        \LOpExisTwapInv[
        Q(\om_2,\om_2)-Q(\om_2,\om_1)]+
        \LOpExisTwapInv[
        Q(\om_2,\om_1)-Q(\om_1,\om_1)].
        \label{EqnExistenceLipschitzAuxiliar}
    \end{align}
    
    We denote by
    \begin{align*}
        I_1:=\LOpExisTwapInv[
        Q(\om_2,\om_2)-Q(\om_2,\om_1)],&&
        I_2:=\LOpExisTwapInv[
        Q(\om_2,\om_1)-Q(\om_1,\om_1)].
    \end{align*}
    Observe that
    \begin{equation}\label{EqnExistenceLipschitzAuxiliar2}
        \left.\begin{array}{l}
        \norm{I_1}_\XHeven{1}\leq 
        \norm{\LOpExisTwapInv}
        _{\XHodd{0}\to\XHeven{1}}
        \norm{Q}
        _{\XHeven{1}\times\XHeven{1}\to\XHodd{0}}
        \norm{\om_2}_{\XHeven{1}}\norm{\om_2-\om_1}_{\XHeven{1}},\\
        \norm{I_2}
        _\XHeven{1}\leq 
        \norm{\LOpExisTwapInv}
        _{\XHodd{0}\to\XHeven{1}}
        \norm{Q}
        _{\XHeven{1}\times\XHeven{1}\to\XHodd{0}}
        \norm{\om_1}_{\XHeven{1}}\norm{\om_2-\om_1}_{\XHeven{1}}.
        \end{array}\right\}
    \end{equation}

    Plugging \eqref{InequalityExistenceNormContinuity_explicit_norms_Q} and \eqref{InequalityExistenceNormContinuity_explicit_norms_L} into \eqref{EqnExistenceLipschitzAuxiliar2}, we derive from \eqref{EqnExistenceLipschitzAuxiliar} that
    \begin{align*}
        \norm{\TopExis \om_1-\TopExis \om_2}_\XHeven{1}
        &<
        \getvar{lip_exis}
        \norm{\om_2-\om_1}_\XHeven{1}.
    \end{align*}

    Consequently, we found a bound for $\ell_\TopExis$ smaller than $1$,
    \begin{align*}
        \ell_\TopExis<
        \getvar{lip_exis}
        <1.
    \end{align*}
\end{proof}

We now conclude the proof of Theorem~\ref{thmexistencetw}.
First, observe that $\tw$ is $2\pi$-periodic, but not $\frac{2\pi}{n}$-periodic for any $n \geq 2$. 
To see this, if this were the case, then we would have
\begin{align*}
     \frac{1}{\pi} \intpi 
     \tw(x) \cos(x) \dx = 0.
\end{align*}
However, we compute
\begin{align*}
    \frac{1}{\pi}\intpi \tw(x)\cos(x)\dx
    &= \frac{1}{\pi}\intpi \twap(x)\cos(x)\dx 
    + \frac{1}{\pi}\intpi \twdiff(x)\cos(x)\dx\\
    &\geq
    \twap_1 - 
    \frac{1}{\sqrt{\pi}}
    \norm{\om}_\XLIpi > 
    0.54 - 
    2\cdot
    \getvar{rad_exis}
    >0,
\end{align*}
where we have used that the first cosine coefficient of $\twap$ satisfies $\twap_1>0.54$ and the Cauchy--Schwarz inequality.
Hence the nonvanishing first Fourier mode of $\tw$ implies that the minimal period is $2\pi$. In addition, since $\twap_1> 0.54$, we also have
\begin{align*}
    \norm{\tw}_\XHeven{1} \geq
    \norm{\twap}_\XHeven{1} - \norm{\twdiff}_\XHeven{1} >
    \frac{1}{2}\twap_1
    -
    \getvar{rad_exis} > 0,
\end{align*}
so that $\tw \neq 0$.

Second, in Proposition~\ref{PropTravelingWaveExistence}, we show the existence of a fixed point $\twdiff$ of $\TopExis$ in $X\subset\XHeven{1}$. Hence, $\tw = \twap + \twdiff \in \XHeven{1}$ solves \eqref{eqn_BH_TW}.
    
However, this traveling wave is in fact smooth.
This follows from Lemma~\ref{lemma:betamax}, which provides an upper bound for $\beta := \frac{1}{c+\twap}$. Therefore,
\begin{align*}
    c+\twap > \frac{1}{\getvar{beta_max}}.
\end{align*}

In particular,
\begin{align*}
    c+\tw \geq c+\twap - \norm{\twdiff}_\XLinf >
    \frac{1}{\getvar{beta_max}} - 2\zeta(2)^{1/2}\getvar{rad_exis}>
    0,
\end{align*}
where we have used Lemma~\ref{lemma:apriori:LInfty_H1odd}.

Since $\tw \in \XHeven{1}$ solves \eqref{eqn_BH_TW}, we have
\begin{align*}
    \pa_x \tw = -\frac{1}{c+\tw} H\tw.
\end{align*}
A bootstrap argument then shows that all derivatives of $\tw$ belong to $L^2$, and hence are continuous.

Finally, notice that the traveling wave $\tw$ is real-valued, since $\TopExis$ is a real operator.

\section{Traveling wave stability}\label{sec:tw_stab}
In Section~\ref{sec:tw_exis}, we proved that there exists a traveling wave $\tw$ with speed $c$ for the Burgers--Hilbert equation \eqref{BHEQN}, where $c$ is given by \eqref{val:speed}.

Here, we plug in the ansatz
\begin{align*}
    u(x,t)=\tw(x+ct) + w(x+ct,t)
\end{align*}
into \eqref{BHEQN}. Therefore, we obtain the following equation for $w$
\begin{align*}
    \pa_t w + w\pa_x w +\pa_x(\tw w)+Hw+c \pa_x w=0
\end{align*}
Integrating, we find
\begin{align*}
    \pa_t f +
    \frac{1}{2}(\pa_x f)^2+
    \tw\pa_x f+ 
    Hf+
    c\pa_x f=
    C(t).   
\end{align*}
where $\pa_x f = w$.
If we want to consider $f$ to have mean zero, we simply take
\begin{align*}
    C(t)=\frac{1}{2\pi}
    \intpi
    \Big(\frac{1}{2} 
    \big(\pa_x f(x)\big)^2 + \tw\pa_x f(x)\Big)\dx.
\end{align*}
At the linear level, we get that
\begin{align}
    \label{eqn:lindyn}
    \pa_t f+ 
    \LinearOpStability{c,\tw} f=0, && \LinearOpStability{c,\tw} f:=(c+\tw)\pa_x f+ Hf -\frac{1}{2\pi}\intpi \tw(x)\pa_x f(x)\dx.
\end{align}

\begin{rem}
    $\LOpExis$, defined in \eqref{def:LOpExis}, and $\LinearOpStability{c,\twap}$ have the same expression, but we restrict $\LOpExis$ to odd functions.
\end{rem}

From $\tw$ we can construct new traveling waves $\tw^n$. Indeed, $\tw^n(x):=\frac{1}{n}\tw(nx)$ is a solution of \eqref{eqn_BH_TW} with speed $\frac{c}{n}$.

The main theorem of the paper is as follows
\begin{theorem}
\label{theo:stab}
    The operator $\LinearOpStability{c/3,\tw^3}$ admits an eigenvalue $\EigenValue$ with negative real part and a smooth associated eigenfunction $\EigenVecto$.

    Hence, $(\EigenValue, \pa_x \EigenVecto)$ is an eigenpair of $L_\tw$.
\end{theorem}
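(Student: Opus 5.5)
We prove Theorem~\ref{theo:stab} with the same validated-numerics fixed-point strategy used for Theorem~\ref{thmexistencetw}, now applied to the eigenvalue problem. The unknown is the pair $(\la,f)$, and we work near an explicit approximate eigenpair $(\laap,\Eigveap)$. Here $\Eigveap$ is a trigonometric polynomial and $\laap=\getvar{la_app}$, obtained numerically for the rescaled problem.

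\textbf{Reduction to a $2\pi$-periodic problem.} Set $f(x)=g(3x)$. Since $\tw^3(x)=\frac13\tw(3x)$, the eigenvalue equation $\LinearOpStability{c/3,\tw^3}f=\EigenValue f$ becomes a problem for $g$ on $\T$ involving $(c+\tw)\pa_x g$, $H$, and the mean term. The eigenfunction we seek does not live only on the modes $3k$. Writing $f$ in the Fourier projectors $\Pcal_{l,3}$, the problem splits into Bloch-type sectors. We pick the sector (say $l=1$) in which the numerics show an unstable eigenvalue. Neither evenness nor oddness is preserved here, which is why the two-sided holomorphic splitting $F^+,F^-$ of the paper is needed.

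\textbf{Fixed-point setup.} We normalize the eigenfunction, for instance $(f,\Eigveap)=(\Eigveap,\Eigveap)$, and write $\la=\laap+\Eigdiff$ and $f=\Eigveap+u$. We also replace $\tw$ by $\twap+\twdiff$. The eigenvalue equation then has four pieces:
\begin{itemize}
\item the residual $\Eigres$ of the approximate pair;
\item a linear operator in $(\Eigdiff,u)$, namely $\LStabthe u+\Eigdiff\,\Eigveap$ together with the normalization constraint;
\item a perturbation term $\twdiff\pa_x(\cdot)$ coming from the exact wave;
\item the quadratic term $\Eigdiff u$.
\end{itemize}
We need three estimates:
\begin{enumerate}
\item a computer-assisted $L^2$ bound on the residual, as in Lemma~\ref{lemma:residue_exis_L2};
\item a bound on the $\twdiff$-term, using $\norm{\twdiff}_{\XHeven{1}}<\getvar{rad_exis}$ from Proposition~\ref{PropTravelingWaveExistence} and the $H^1$ norm of $\Eigveap$;
\item an inverse bound for the bordered linear operator $(\Eigdiff,u)\mapsto(\LStabthe u+\Eigdiff\Eigveap,\ \text{constraint})$.
\end{enumerate}
With these, the map $T^\te_{\rm stab}$ is a contraction on a small ball, radius about $\getvar{rad_stab}$. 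The fixed point gives the eigenpair with $|\Eigdiff|\le \getvar{rad_stab}$. Since $\Re\laap\approx\getvar{lare}$ is much larger than this radius, the eigenvalue $\EigenValue$ has negative real part. Its complex conjugate, and the symmetry $x\mapsto -x$ of this Hamiltonian-type problem, then give the positive real part claimed in Theorem~\ref{theo:compboth}.

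\textbf{Main obstacle: inverting $\LStabthe$.} The operator is of transport type, $(c+\twap)\pa_x$ plus lower order, so it is not a compact perturbation of something diagonal. We would factor out the transport part using $\beta=(c+\twap)^{-1}$, which is bounded by Lemma~\ref{lemma:betamax}. Solving $(c+\twap)\pa_x u+(\laap-\text{shift})u=h$ exactly is a first-order ODE, handled with the explicit piecewise-polynomial enclosures of Appendix~\ref{sec:ode}. The Hilbert-transform part is treated as a perturbation, using the $F^\pm$ holomorphic splitting. This reduces invertibility to a finite-dimensional matrix $\MStabTor$, whose smallest singular value we bound rigorously, plus a tail controlled by the commutator estimates of Appendix~\ref{sec:BracketHiCo}.

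The delicate points in this step are:
\begin{itemize}
\item the non-periodicity of the ODE solutions, which is resolved by a root condition on the polynomial $z^NP(z)$ (Lemmas~\ref{lemma:enclose_poly_roots}--\ref{lemma:Proots2});
\item keeping the constants $\kappa_i$ small enough that the contraction closes.
\end{itemize}

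\textbf{Smoothness.} Once the fixed point exists, $f$ is smooth by bootstrapping: $\pa_x f=(c/3+\tw^3)^{-1}(\EigenValue f-Hf+\text{const})$, with $c+\tw>0$. Finally, differentiating the eigenvalue equation shows that $(\EigenValue,\pa_x\EigenVecto)$ is an eigenpair of $L_\tw$ with the corresponding rescaled wave.
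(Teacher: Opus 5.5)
\textbf{Overall.} Your architecture is the paper's. It consists of the Bloch reduction to the sector $\te=\frac13$ via Lemma~\ref{lemma:unfold_stability}, which yields the twisted operator $\LStabtheTW$ with the extra terms $i\te(c+\tw)f$ and $-i\ha{f}_0$. You then perturb around $(\laap,\Eigveap)$ with $\twap$ in place of $\tw$, and bound $\Eigres$ and $\DiffOpExpNonExp=\LStabtheTW-\LStabthe$ using Proposition~\ref{PropTravelingWaveExistence}. Finally, you invert $\LStabthe-\laap I$ through the $F^\pm$ splitting, Duhamel solves and the root conditions on $z^NP(z)$, close a contraction, and bootstrap. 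Your bordered normalization $(u,\Eigveap)=0$ is a legitimate alternative to the paper's Lyapunov--Schmidt-type device. There, $\Eigdiff(f)$ is chosen so that $u_1(\laap)^\ast{\bf b}$ of the right-hand side vanishes (range in $W_{\laap}$), and one inverts under the side condition $\PVl{\laap}{\bf c}=0$.

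\textbf{The gap: the inversion step.} You reduce it to $\MStabTor$, ``whose smallest singular value we bound rigorously''. That quantity cannot be bounded below. $\Eigveap$ is an approximate null vector of $\LStabthe-\laap I$ (with $\norm{\Eigres}_\XL^2<1.61\cdot10^{-20}$), so $\MStabTor(\laap)$ is essentially singular. The paper only shows $\sigma_1^2<10^{-7}$ and never proves $\sigma_1\neq0$. What is actually controlled is $\sigma_2^2>1.31\cdot10^{-3}$ (Lemma~\ref{lemma:stab_singval}) together with the transversality $|u_1(\laap)^\ast{\bf b}(\Fvrmap^+,\Fvrmap^-)|>4.5\cdot10^{-2}$ (Lemma~\ref{lemma:prod_fv_u}).

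For your route to work you must also border the finite-dimensional system. Concretely:
\begin{enumerate}
\item Adjoin the unknown $\Eigdiff$ with column ${\bf b}(\Fvrmap^+,\Fvrmap^-)$, and the normalization row, which only involves the modes $-N,\dots,N-1$ already stored in ${\bf c}$.
\item Certify a lower bound on the smallest singular value of that bordered matrix.
\item Re-prove, for the bordered problem, the Fuchsian solvability criterion and the kernel injection (Proposition~\ref{prop:stab_hol}, Lemma~\ref{lemma:stab_tor}).
\end{enumerate}
If certified, a nonsingular bordered matrix at $\laap$ itself would let you skip the paper's most delicate step. Since $\det\MStabTor(\laap)$ may vanish, the paper needs analyticity in $\la$ (Lemma~\ref{lemma:detMstabholAn}) and a certified nonzero determinant at $\la=1.035i$ (Lemma~\ref{lemma:stab_is_regu}) to get isolated zeros. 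It then passes to the limit $\la\to\laap$ through the continuous regularized pseudoinverse $M^\dagger_{\rm reg}$ (Lemmas~\ref{lemma:svd:stable} and \ref{lemma:stab_tor_degen}). None of this is in your proposal, and it is where the real work lies.

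\textbf{Two smaller corrections.}
\begin{itemize}
\item There is no ``tail''. On the holomorphic components $H$ is an exact multiplier, and since $\twap$ is a trigonometric polynomial, $[H,\twap]$ has finite rank (Appendix~\ref{sec:BracketHiCo}). So the reduction to a $(2N+2)$-dimensional system is exact, and the only errors are $h_k-h_k^{\rm ap}$ in the matrix entries (Appendix~\ref{sec:ode}). Treating $H$ as a perturbation of a nearly singular operator would not close.
\item Your signs are reversed. In sector $l=1$ with $\Re\laap>0$, the fixed point gives $\la$ with $\Re\la>0.142127$. This is an eigenvalue of $\LinearOpStability{c/3,\tw^3}$ with eigenfunction $e^{ix}f(3x)$. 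The eigenvalue $\EigenValue$ with negative real part required by the theorem comes from the reflection $x\mapsto -x$, which maps sector $l=1$ to $l=2$. Complex conjugation does not change the sign of the real part.
\end{itemize}
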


A numerical approximation of $\EigenVecto$ is shown in Figure~\ref{fig:eige}.

\begin{figure}[htbp]
    \centering
    \includegraphics[width=8.5cm]{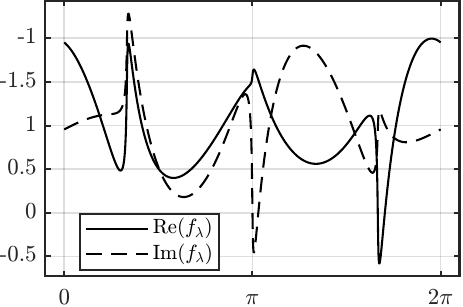}
    \caption{Numerical approximation of $\EigenVecto$.}
    \label{fig:eige}
\end{figure}

\begin{rem}
    If $f$ is an eigenfunction of $\LinearOpStability{c/3,\tw^3}$ associated with the eigenvalue $\lambda$, then the function $x \mapsto f(-x)$ is also an eigenfunction of $\LinearOpStability{c/3,\tw^3}$ associated with the eigenvalue $-\lambda$.
\end{rem}

\begin{rem}
    Notice that $\LinearOpStability{c/3,\tw^3}$ is a real operator. Therefore, if $f$ is an eigenfunction associated with the eigenvalue $\lambda$ and $\Re(\lambda)<0$, then
    \begin{align*}
        (t,x)\mapsto\Re\bigl(e^{-\lambda t} f(x)\bigr)
    \end{align*}
    is an unstable solution of \eqref{eqn:lindyn}.
\end{rem}

\begin{rem}
    Let us emphasize that we cannot guarantee that the solution $\big(\frac{c}{3},\tw^3\big)$ belongs to the branch of $\frac{2\pi}{3}$-periodic functions bifurcating from $0$. However, nonrigorous numerical evidence indicates that this is the case.
\end{rem}
The rest of Section~\ref{sec:tw_stab} is devoted to proving Theorem~\ref{theo:stab}.

In the following lemma, we analyze the Fourier structure of $\LinearOpStability{c/n,\tw^n}$.

\begin{lemma}\label{lemma:unfold_stability}
    Let $n\in\N$ with $n\geq 2$ and let $f,g:\T\to\C$ be smooth functions with
    \begin{align*}
        \ha{f}_{nk}=\ha{g}_{nk}=0,\ \forall k\in\Z.
    \end{align*}

    Let $V:\T\to\C$ be a smooth function and $V^n:=\tfrac{1}{n}V(nx)$.
    
    Then, $f$ and $g$ satisfy
    \begin{align}
    \label{eqn:unfold_aux0}
        g=V^n\pa_x f+ Hf -\frac{1}{2\pi}\intpi V^n(x)\pa_x f(x)dx
    \end{align}
    
    if and only if for every $1\leq l\leq n-1$, $f^{l,n}:=\Pcal_{l,n} f$ and $g^{l,n}:=\Pcal_{l,n} g$ with $\te:=\tfrac{l}{n}$, satisfy 
    \begin{align}
    \label{eqn:unfold_}
        g^{l,n}= V \pa_x f^{l,n} + i\te V f^{l,n} + H f^{l,n} - i\ha{f^{l,n}}_0.
    \end{align}
\end{lemma}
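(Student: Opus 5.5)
The plan is to prove the equivalence by a direct Fourier computation, splitting every function on $\T$ into residue classes of its frequencies modulo $n$. Using the projectors $\Pcal_{l,n}$, the hypothesis $\ha{f}_{nk}=0$ gives the decomposition
\begin{align*}
    f(x)=\sum_{l=1}^{n-1} e^{ilx} f^{l,n}(nx),
    \qquad f^{l,n}(y)=\sum_{k\in\Z}\ha{f}_{nk+l}e^{iky},
\end{align*}
and similarly for $g$. Since the frequency sets $n\Z+l$ are disjoint, two functions with no modes in $n\Z$ coincide if and only if all their $\Pcal_{l,n}$ projections coincide for $1\leq l\leq n-1$. So it suffices to compute $\Pcal_{l,n}$ of each term on the right-hand side of \eqref{eqn:unfold_aux0}, and to check that the class $l=0$ of that right-hand side vanishes.

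\textbf{Transport term.} Differentiating the decomposition gives
\begin{align*}
    \pa_x f(x)=\sum_{l=1}^{n-1} e^{ilx}\big(n(\pa_y f^{l,n})(nx)+il f^{l,n}(nx)\big).
\end{align*}
Multiplying by $V^n(x)=\tfrac1n V(nx)$ gives
\begin{align*}
    V^n\pa_x f=\sum_{l=1}^{n-1} e^{ilx}\big(V\pa_y f^{l,n}+i\te V f^{l,n}\big)(nx),
    \qquad \te=\tfrac{l}{n}.
\end{align*}
The factor $(\cdot)(nx)$ only carries frequencies in $n\Z$, so multiplying by $e^{ilx}$ keeps each summand inside the class $n\Z+l$. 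Hence $\Pcal_{l,n}(V^n\pa_x f)=V\pa_y f^{l,n}+i\te V f^{l,n}$. In particular $V^n\pa_x f$ has no modes in $n\Z$. Its mean is therefore zero, so the integral term in \eqref{eqn:unfold_aux0} vanishes.

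\textbf{Hilbert transform term.} $H$ is the multiplier $-i\,\sgn(m)$. For $m=nk+l$ with $1\leq l\leq n-1$, we have $\sgn(nk+l)=\sgn(k)$ when $k\neq 0$, while $\sgn(l)=1$ when $k=0$. Therefore $\ha{(Hf)}_{nk+l}=-i\,\sgn(k)\ha{f^{l,n}}_k$ for $k\neq0$ and $=-i\ha{f^{l,n}}_0$ for $k=0$. This is exactly the statement $\Pcal_{l,n}Hf=Hf^{l,n}-i\ha{f^{l,n}}_0$, since $H$ annihilates the zero mode. Moreover $Hf$ has no modes in $n\Z$.

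\textbf{Conclusion and main difficulty.} Combining the two computations, the right-hand side of \eqref{eqn:unfold_aux0} has no modes in $n\Z$, and its $\Pcal_{l,n}$-component is the right-hand side of \eqref{eqn:unfold_}. Since $g$ also has no modes in $n\Z$, equation \eqref{eqn:unfold_aux0} holds if and only if \eqref{eqn:unfold_} holds for every $1\leq l\leq n-1$. The only delicate point is the bookkeeping in the Hilbert transform at the zero mode, namely that frequency $l$ (with $k=0$) has positive sign. This sign is what produces the correction $-i\ha{f^{l,n}}_0$, together with the shift $i\te$ coming from the chain rule. Smoothness of $f$, $g$ and $V$ justifies all the manipulations of the Fourier series.
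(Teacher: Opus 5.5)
Your proof is correct and follows essentially the same route as the paper: both compare Fourier coefficients on each residue class $n\Z+l$, and the identity $\sgn(nk+l)=\sgn(k)+\delta_{k,0}$ produces the correction $-i\ha{f^{l,n}}_0$. The differences are cosmetic. You obtain the transport term by the chain rule applied to $f=\sum_{l}e^{ilx}f^{l,n}(nx)$, whereas the paper uses a convolution sum. You also explicitly check that the right-hand side has no modes in $n\Z$ (so the integral term vanishes), a point the paper's proof leaves implicit.
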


\begin{rem}
    We exclude the case $l = 0$. Its behavior is the same as for $n = 1$, and we currently have no results concerning its stability.
    However, we conjecture that the eigenvalues of $\tw$ with speed $1$ remain stable up to the critical speed.
\end{rem}

The numerical spectra in Figures~\ref{fig:twapLn} and~\ref{fig:twapLt} suggest that considering such a high value of the speed $c$ is necessary to detect the instability.

\begin{figure}[htbp]
    \centering
    \includegraphics[width=0.9\textwidth]{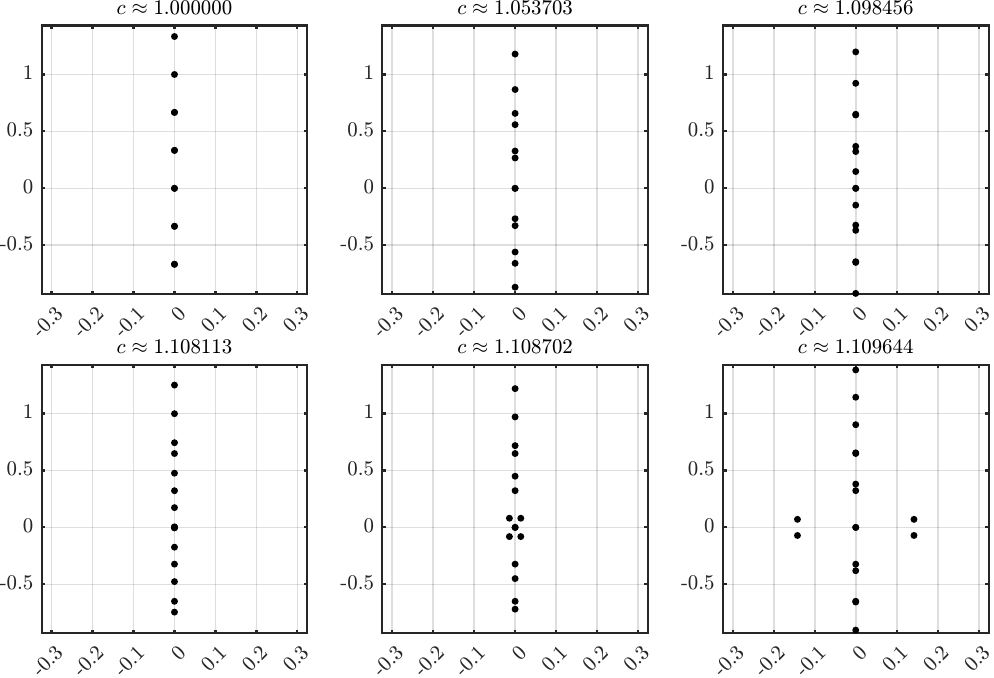}
    \caption{Numeric approximation of the spectrum of $\LinearOpStability{c/3,\tw^3}$ for different values of $c$.}
    \label{fig:twapLn}
\end{figure}

\begin{figure}[htbp]
    \centering
    \includegraphics[width=0.9\textwidth]{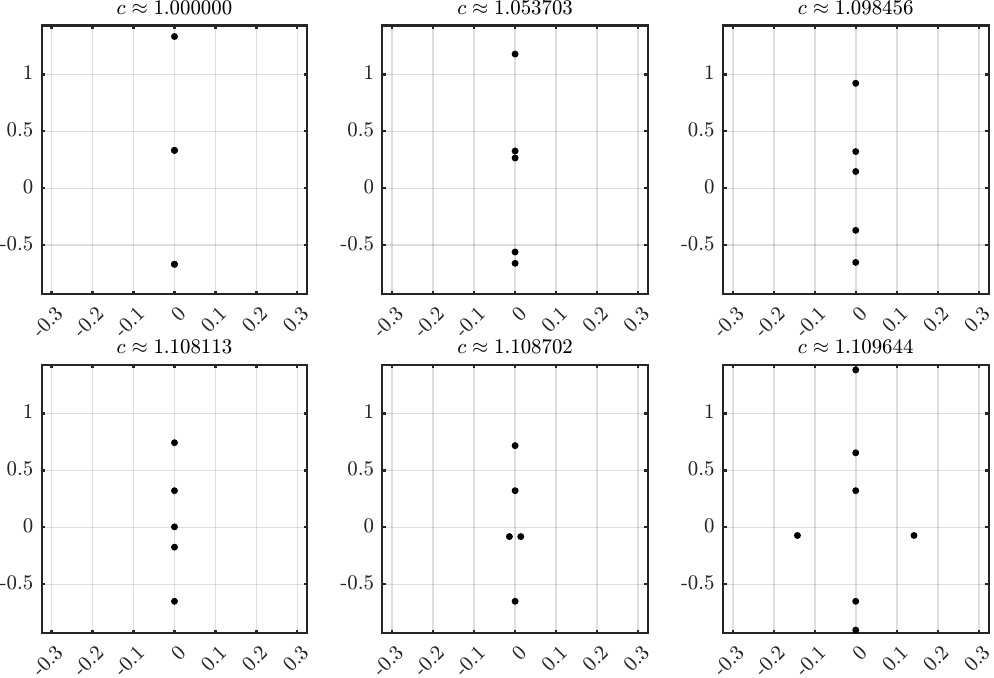}
    \caption{Numeric approximation of the spectrum of $\LinearOpStability{1/3}$ for different values of $c$.}
    \label{fig:twapLt}
\end{figure}

\begin{proof}
    We project \eqref{eqn:unfold_aux0} onto $e^{ikx}$ with $k\in\Z$ such that $n$ does not divide $k$,
    \begin{align}
        \ha{g}_k &=
        \frac{1}{2\pi} \intpi
        e^{-ikx}\big(
        V^n(x)\pa_x f(x)+
        Hf(x)
        \big)\dx
        \nonumber
        \\
        &=
        \frac{1}{2\pi} \intpi
        e^{-ikx}
        \frac{1}{n}
        \sum_{j\in\Z}
        \ha{V}_{j}e^{injx}
        \sum_{l\in\Z}
        il\ha{f}_le^{ilx}
        \dx
        -i\sgn(k)\ha{f}_k
        \nonumber
        \\
        &=
        \frac{i}{n}
        \sum_{j\in\Z}
        \sum_{l\in\Z}
        l
        \ha{V}_{j}
        \ha{f}_l
        \delta_{k,nj+l}
        -i\sgn(k)\ha{f}_k
        \nonumber
        \\
        &=
        \frac{i}{n}\sum_{j\in\Z} 
        (k-nj)\ha{V}_j\ha{f}_{k-nj}
        -i\sgn(k)\ha{f}_k.\label{eqn:unfold_aux1}
    \end{align}
    Fourier modes of $\Pcal_{l,n}f$ are defined as
    \begin{align*}
        \ha{\Pcal_{l,n}f}_k:=\ha{f}_{nk+l}.
    \end{align*}
    Then, fixing $\te:=\frac{l}{n}$, denoting by $f^\te:=\Pcal_{l,n}f$, and using identity \eqref{eqn:unfold_aux1}, we obtain
    \begin{align}
        \ha{g^{l,n}}_k&=
        \frac{i}{n}\sum_{j\in\Z} (nk+l-nj)\ha{V}_j\ha{f}_{nk+l-nj}
        -i\sgn(nk+l)\ha{f}_{nk+l}
        \nonumber
        \\
        &=
        i
        \sum_{j\in\Z} 
        (k-j+\te)\ha{V}_j \ha{f^{l,n}}_{k-j}
        -i\sgn(nk+l) \ha{f^{l,n}}_{k}
        \nonumber
        \\
        &=
        i
        \sum_{j\in\Z} 
        (j+\te)\ha{V}_{k-j} \ha{f^{l,n}}_{j}
        -i(\sgn(k) + \delta_{k,0}) \ha{f^{l,n}}_{k}.\label{eqn:unfold_aux3}
    \end{align}
    In the last step, we used that $\sgn(nk+l)-\sgn(k)=\delta_{k,0}$ since $1\leq l\leq n-1$.

    Finally, we note that \eqref{eqn:unfold_aux3} is the projection of \eqref{eqn:unfold_} onto $e^{ikx}$.
\end{proof}
By Lemma~\ref{lemma:unfold_stability} we can reduce our eigenvalue problem 
\begin{align*}
    \LinearOpStability{c/n,\tw^n}f=\la f
\end{align*}
to 
\begin{align}
    \label{Eigen_problem_theta}\mathcal{L}_\te f=\la f
\end{align}
where
\begin{align}\label{def:LStabtheTW}
     \LStabtheTW f := (c+\tw) \pa_x f + i\te (c+\tw) f + H f - i \ha{f}_0.
\end{align}

In order to find eigenpairs $(\la,f)$ for \eqref{Eigen_problem_theta}, we introduce a new operator, which approximates $\LStabtheTW$:
\begin{align}\label{def:LStabthe}
    \LStabthe f := (c+\twap) \pa_x f + i\te (c+\twap) f + H f - i\ha{f}_0.
\end{align}
The reason for introducing this operator is that it contains $\twap$ instead of $v$, and $\twap$ is known explicitly. We can then construct a suitable approximation $(\laap, \Eigveap)$ satisfying
\begin{align}\label{residual_stability}
    \LStabthe \Eigveap - \laap \Eigveap = 
    \Eigres.
\end{align}
Here $\laap$ is given by 
\begin{align}\label{def:laapvalue}
    \laap = \getvar{la_app},
\end{align}
and the expression of $\Eigveap$ can be found in the supplementary material. A numerical approximation of $\Eigveap$ is shown in Figure~\ref{fig:fvap}.

\begin{figure}[htbp]
    \centering
    \includegraphics[width=8.5cm]{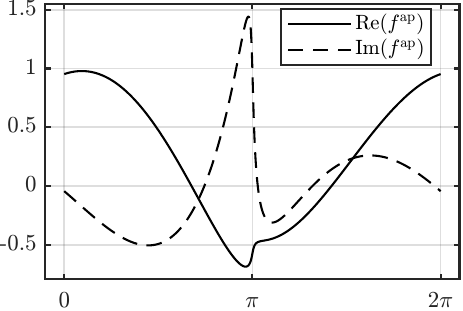}
    \caption{Numerical approximation of $\Eigveap$.}
    \label{fig:fvap}
\end{figure}

We now denote 
\begin{align*}
    \DiffOpExpNonExp:=\LStabtheTW-\LStabthe.
\end{align*}
Introducing the following ansatz into \eqref{Eigen_problem_theta}: 
\begin{align*}
    \la \mapsto \laap+\Eigdiff,&&
    f \mapsto \Eigveap+f,
\end{align*}
we obtain
\begin{align}\label{eqn:aux_cont_op2}
    0=
    (\LStabthe \Eigveap - \laap \Eigveap)+
    (\LStabthe f - 
    \laap f) +
    \DiffOpExpNonExp
    (\Eigveap+f) -
    \Eigdiff (\Eigveap+f).
\end{align}

Hence, from \eqref{eqn:aux_cont_op2} and \eqref{residual_stability}, we finally obtain
\begin{equation}\label{residual_equation}
    (\LStabthe - 
    \laap I) f = 
    (\Eigdiff I -\DiffOpExpNonExp)
    (\Eigveap+f) - \Eigres.
\end{equation}

\subsection{Fixed point argument to prove linear instability}

In this section, we derive the estimates required to apply the fixed-point argument in Proposition~\ref{prop:twStab}. Our goal is to show the existence of $\Eigdiff$ and $f$ satisfying the equation~\eqref{residual_equation}. This problem can be reformulated as finding a fixed point of the operator $\TopStab$, defined by
\begin{align}\label{def:ContOpStab}
    \TopStab f :=
    (\LStabthe - 
    \laap I)^{-1}\!\big[(\Eigdiff(f) I -\DiffOpExpNonExp)
    (\Eigveap+f) - \Eigres\big].
\end{align}
where $\Eigdiff(f):\XH{1}\to\C$ will be defined later in Section~\ref{sec:eta_estimates}. 

The operator $(\LStabthe - \laap I)^{-1}$ is nearly non-invertible. However, with a particular choice of  $\Eigdiff$ in terms of $f$, we can overcome this problem. With this choice, $\TopStab$, with $\te=\frac{1}{3}$, will become a contraction on an appropriate closed ball of $\XH{1}$.

Accordingly, we aim to obtain bounds for $\DiffOpExpNonExp$, $\Eigveap$, $\Eigres$, and $(\LStabthe - \laap I)^{-1}$, and to verify that the latter operator is indeed well defined.

\begin{lemma}\label{lemma:DiffOpExpNonExp} The following estimate holds
    \begin{align*}
        \norm{\DiffOpExpNonExp}_{\XH{1}\to\XL}\leq 
        2\zeta(2)^{1/2}(1+\te^2)^{1/2}
        \norm{\tw-\twap}_\XHeven{1}.
    \end{align*}
\end{lemma}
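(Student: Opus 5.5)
Write $\twdiff=\tw-\twap$. From \eqref{def:LStabtheTW} and \eqref{def:LStabthe}, the terms $Hf$ and $-i\ha{f}_0$ cancel in the difference, and the constant $c$ drops out. What remains is the multiplication-type operator
\begin{align*}
    \DiffOpExpNonExp f = \twdiff\,\pa_x f + i\te\,\twdiff\, f = \twdiff\,(\pa_x f + i\te f).
\end{align*}
So the plan is to bound $\twdiff$ in $L^\infty$ and the factor $\pa_x f + i\te f$ in $\XL$.

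For the $L^\infty$ part, take absolute values pointwise:
\begin{align*}
    \norm{\DiffOpExpNonExp f}_\XL \le \norm{\twdiff}_\XLinf \norm{\pa_x f + i\te f}_\XL .
\end{align*}
Since $\twdiff\in\XHeven{1}$, Lemma~\ref{lemma:apriori:LInfty_H1odd} (the same estimate used earlier to show $c+\tw>0$) gives
\begin{align*}
    \norm{\twdiff}_\XLinf\le 2\zeta(2)^{1/2}\norm{\twdiff}_\XHeven{1}.
\end{align*}
Sanity check: with $\twdiff=\sum_{k\ge1} a_k\cos(kx)$,
\begin{align*}
    \norm{\twdiff}_\XLinf\le\sum_k |a_k| \le \Big(\sum_k k^{-2}\Big)^{1/2}\Big(\sum_k k^2|a_k|^2\Big)^{1/2} = 2\zeta(2)^{1/2}\norm{\twdiff}_\XHeven{1},
\end{align*}
using $\norm{\twdiff}_\XHeven{1}^2=\frac14\sum_k k^2|a_k|^2$. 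This matches the constant in the statement.

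For the $\XL$ factor, work on the Fourier side: $\widehat{(\pa_x f+i\te f)}_k = i(k+\te)\ha f_k$. Apply Cauchy--Schwarz to $(k+\te) = (k,\te)\cdot(1,1)$ in the form
\begin{align*}
    |k+\te|^2 \le (1+\te^2)(k^2+1) \quad\text{equivalently}\quad |k\cdot 1+1\cdot\te|^2\le (k^2+1)(1+\te^2).
\end{align*}
Summing over $k$ gives
\begin{align*}
    \norm{\pa_x f+i\te f}_\XL^2\le(1+\te^2)\sum_k(1+k^2)|\ha f_k|^2=(1+\te^2)\norm{f}_\XH{1}^2.
\end{align*}
Combining this with the $L^\infty$ bound yields the claimed estimate.

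The only delicate point is checking that the constant in the $L^\infty$ embedding matches exactly with the even-space normalization, where the factor $1/4$ and the cosine coefficients interact. The Cauchy--Schwarz computation above confirms that it does.
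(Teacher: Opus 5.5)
Your proof is correct and follows the paper's argument. It uses the same factorization $\DiffOpExpNonExp f=(\tw-\twap)(\pa_x f+i\te f)$ and the same $L^\infty$ bound from Lemma~\ref{lemma:apriori:LInfty_H1odd}. The only difference is cosmetic: the paper bounds $\norm{\pa_x f}_\XL+\te\norm{f}_\XL$ by the triangle inequality and a Peter--Paul inequality with $\vartheta=\te^2$, whereas you apply Cauchy--Schwarz mode by mode, $|k+\te|^2\le(1+\te^2)(1+k^2)$, which gives the same constant.
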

\begin{proof}
    By definition, the operator $\DiffOpExpNonExp$ is given by
    \begin{align*}
        \DiffOpExpNonExp f&:=
        \LStabtheTW f-\LStabthe f\\
        &=
        \big[
        (c+\tw) \pa_x f + i\te (c+\tw) f + H f - i\ha{f}_0\big]
        -
        \big[
        (c+\twap) \pa_x f + i\te (c+\twap) f + H f - i\ha{f}_0\big]\\
        &=
        (\tw-\twap) (\pa_x f + i\te f).
    \end{align*}

    Taking norms,
    \begin{align*}
        \norm{\DiffOpExpNonExp f}_\XL\leq
        \norm{\tw-\twap}_\XLinf(
        \norm{\pa_x f}_\XL+
        \te 
        \norm{f}_\XL).
    \end{align*}

    Note that we can apply Peter--Paul inequality in the optimal way
    \begin{align*}
        (\norm{\pa_x f}_\XL+
        \te 
        \norm{f}_\XL)^2 = &
        \norm{\pa_x f}_\XL^2+
        \te^2\norm{f}_\XL^2+
        2\te\norm{\pa_x f}_\XL\norm{f}_\XL
        \\ 
        \leq &
        (1+\vartheta)\norm{\pa_x f}_\XL^2+
        (1+\vartheta^{-1})\te^2\norm{f}_\XL^2.
    \end{align*}
    Taking $\vartheta=\te^2$, we get
    \begin{align*}
        (\norm{\pa_x f}_\XL+
        \te 
        \norm{f}_\XL)^2 
        \leq 
        (1+\te^2) 
        (\norm{\pa_x f}_\XL^2 + \norm{f}_\XL^2)=
        (1+\te^2)
        \norm{f}_{\XH{1}}^2.
    \end{align*}

    Finally, by Lemma~\ref{lemma:apriori:LInfty_H1odd}
    \begin{align*}
        \norm{\DiffOpExpNonExp f}_\XL\leq
        2
    \zeta(2)^\frac{1}{2}
        (1+\te^2)^\frac{1}{2}
        \norm{\tw-\twap}_\XHeven{1}
        \norm{f}_{\XH{1}}.
    \end{align*}
\end{proof}

\begin{corollary}
\label{coro:DiffOpExpNonExp}
    For $\te=\frac{1}{3}$, we have
    \begin{align*}
        \norm{\DiffOpExpNonExp}_{\XH{1}\to\XL}\leq
        \getvar{Dthe_op}.
    \end{align*}
\end{corollary}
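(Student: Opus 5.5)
This is a direct specialization of Lemma~\ref{lemma:DiffOpExpNonExp} to $\te=\tfrac13$, combined with the bound on $\twdiff=\tw-\twap$ from Proposition~\ref{PropTravelingWaveExistence}. First, I invoke Lemma~\ref{lemma:DiffOpExpNonExp}, which gives
\begin{align*}
    \norm{\DiffOpExpNonExp}_{\XH{1}\to\XL}\leq 2\zeta(2)^{1/2}(1+\te^2)^{1/2}\norm{\tw-\twap}_\XHeven{1}.
\end{align*}
Then I substitute $\te=\tfrac13$, so that $(1+\te^2)^{1/2}=\sqrt{10}/3$. Finally, I use Proposition~\ref{PropTravelingWaveExistence}, which gives $\norm{\twdiff}_\XHeven{1}<\getvar{rad_exis}$.

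Putting these together, the operator norm is bounded by
\begin{align*}
    2\zeta(2)^{1/2}\cdot\frac{\sqrt{10}}{3}\cdot\getvar{rad_exis}.
\end{align*}
Since $\zeta(2)=\pi^2/6$, we have $2\zeta(2)^{1/2}=2\pi/\sqrt6\approx 2.5651$. Multiplying by $\sqrt{10}/3\approx1.05409$ gives $\approx 2.7038$. Multiplying this by $2.50805\cdot10^{-9}$ yields approximately $6.7813\cdot10^{-9}$, which lies just below the stated constant $\getvar{Dthe_op}$.

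The only delicate point is rounding. The margin between $\approx 6.7813\cdot 10^{-9}$ and $6.78140\cdot10^{-9}$ is small, so the product should be evaluated with outward-rounded interval arithmetic, as elsewhere in the paper, rather than by hand. In particular, the rounded value $\getvar{rad_exis}$ must be used as an upper bound; this is exactly what the strict inequality in Proposition~\ref{PropTravelingWaveExistence} provides. I do not expect any conceptual obstacle.
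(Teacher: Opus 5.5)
Your proposal is correct and matches the paper's proof exactly: Lemma~\ref{lemma:DiffOpExpNonExp} at $\te=\tfrac13$ combined with $\norm{\twdiff}_{\XHeven{1}}<\getvar{rad_exis}$ from Proposition~\ref{PropTravelingWaveExistence}. The product $\tfrac{2\pi}{\sqrt6}\cdot\tfrac{\sqrt{10}}{3}\cdot 2.50805\cdot 10^{-9}\approx 6.781397\cdot 10^{-9}$ does lie strictly below the stated constant $\getvar{Dthe_op}$.
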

\begin{proof} We apply Lemma~\ref{lemma:DiffOpExpNonExp} 
    and Proposition~\ref{PropTravelingWaveExistence}.
\end{proof}

\begin{lemma}
\label{lemma:norm_fv}
    The following estimates hold
    \begin{align*}
        \norm{\Eigveap}_\XL^2 <
        \getvar{fvap_stab_L2_sq},
        &&
        \norm{\Eigveap}_\XH{1}^2<
        \getvar{fvap_stab_H1_sq},&&
        \norm{\Eigres}_\XL^2<
        \getvar{resi_stab_L2_sq}.
    \end{align*}
\end{lemma}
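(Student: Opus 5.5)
This is a computer-assisted estimate. The plan is to reduce each of the three norms to a finite sum of explicitly computable Fourier coefficients, and to enclose those sums rigorously with interval arithmetic.

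\textbf{Norms of $\Eigveap$.} The approximate eigenfunction $\Eigveap$ is given in the supplementary material as a trigonometric polynomial
\begin{align*}
    \Eigveap(x)=\sum_{|k|\leq M}\ha{\Eigveap}_k e^{ikx},
\end{align*}
with finitely many complex coefficients, stored as exactly representable floating-point numbers or as intervals. Hence
\begin{align*}
    \norm{\Eigveap}_\XL^2=\sum_{|k|\leq M}|\ha{\Eigveap}_k|^2,
    &&
    \norm{\Eigveap}_\XH{1}^2=\sum_{|k|\leq M}(1+k^2)|\ha{\Eigveap}_k|^2,
\end{align*}
are finite sums. Evaluating them in interval arithmetic yields enclosures whose upper endpoints lie below $\getvar{fvap_stab_L2_sq}$ and $\getvar{fvap_stab_H1_sq}$.

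\textbf{Fourier structure of the residual.} The residual $\Eigres=\LStabthe\Eigveap-\laap\Eigveap$, with $\te=\frac13$, is also a trigonometric polynomial, because every ingredient acts explicitly on Fourier modes:
\begin{itemize}
    \item $\pa_x$ multiplies $\ha{f}_k$ by $ik$;
    \item $H$ multiplies $\ha{f}_k$ by $-i\,\sgn(k)$;
    \item $-i\ha{f}_0$ affects only the zero mode;
    \item multiplication by $c+\twap$ is a discrete convolution with the coefficients $\twap_j/2$ for $1\leq|j|\leq N$, together with $c$ at $j=0$.
\end{itemize}
Consequently $\Eigres$ is supported on modes $|k|\leq M+N$, and its coefficients are
\begin{align*}
    \ha{\Eigres}_k
    =\big(i(k+\te)c-i\,\sgn(k)-\laap\big)\ha{\Eigveap}_k
    -i\delta_{k,0}\ha{\Eigveap}_0
    +i\sum_{j}(k-j+\te)\,\tfrac{\twap_{|j|}}{2}\,\ha{\Eigveap}_{k-j},
\end{align*}
where the sum runs over $1\leq|j|\leq N$. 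Here $\sgn(0)=0$, and $\laap$ from \eqref{def:laapvalue} is entered as an exact interval.

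\textbf{Bounding the residual.} I would compute these coefficients with interval arithmetic. For efficiency I would use an FFT-based convolution with rigorous error control, or a direct convolution, since $N=\getvar{Nexplicit}$ and $M$ are moderate. I would then sum $|\ha{\Eigres}_k|^2$ in interval arithmetic and check that the upper endpoint is below $\getvar{resi_stab_L2_sq}$. No tail estimate is needed, because everything is a finite trigonometric polynomial.

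\textbf{Main obstacle.} The hard step is the residual bound. The target $\approx 10^{-20}$ is extremely small, while individual terms in the convolution are of order one. Cancellation must therefore be captured without interval overestimation swamping it. I would first try storing $\Eigveap$ and $\twap$ coefficients exactly and computing each $\ha{\Eigres}_k$ in high-precision ball arithmetic (e.g.\ Arb at $\sim 200$ bits). This keeps interval widths far below $10^{-10}$ per coefficient. If FFT is used, I would use a rigorously bounded FFT error, or fall back to exact direct convolution on the $O((M+N)N)$ terms.
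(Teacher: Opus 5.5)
Your proposal is correct and matches the paper's argument. The paper also computes $\norm{\Eigveap}_\XL^2$ and $\norm{\Eigveap}_\XH{1}^2$ directly from the finitely many stored Fourier coefficients (modes $-N\le k\le N-1$). It obtains $\Eigres$ by applying $\pa_x$, $H$, the zero-mode correction and the convolution with $c+\twap$ mode by mode in 200-bit ball arithmetic, then sums the squared moduli of the coefficients inside and outside the truncation window; your explicit formula for $\ha{\Eigres}_k$ is exactly what that computation evaluates.
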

\proofcompassi

\begin{prop}\label{prop:twStab}
    For $\theta=\frac{1}{3}$,
    the operator $\TopStab:\XH{1}\to\XH{1}$ is a contraction on
    \begin{align*}
        X:=\{g\in\XH{1}:\ \norm{g}_{\XH{1}}\leq
        \getvar{rad_stab}
        \}.
    \end{align*}
    Therefore, the operator $\LStabtheTW$ admits an eigenvalue $\la$ with positive real part. In fact,
    \begin{align*}
        \Re(\la) > \getvar{lare}.
    \end{align*}
\end{prop}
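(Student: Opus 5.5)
The plan mirrors the proof of Proposition~\ref{PropTravelingWaveExistence}: show that $\TopStab$ maps the ball $X$ of radius $\getvar{rad_stab}$ into itself, and that it is Lipschitz on $X$ with constant at most $\getvar{lip_stab}<1$. The Banach fixed-point theorem then gives $f\in X$ solving \eqref{residual_equation} with $\Eigdiff=\Eigdiff(f)$. Consequently
\begin{align*}
    \la:=\laap+\Eigdiff(f)
\end{align*}
is an eigenvalue of $\LStabtheTW$ with eigenfunction $\Eigveap+f$. This eigenfunction is nonzero because, by Lemma~\ref{lemma:norm_fv}, $\norm{\Eigveap}_\XL^2>\getvar{fvap_stab_L2_sq}$ is far larger than $\norm{f}_{\XH{1}}^2\le 10^{-6}$. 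Here I read Lemma~\ref{lemma:norm_fv} as giving two-sided computer-assisted bounds for $\norm{\Eigveap}_\XL$; this is an assumption.

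\textbf{Choice of $\Eigdiff(f)$.} The operator $\LStabthe-\laap I$ has a tiny singular value, so the right-hand side must be kept away from the corresponding near-kernel direction. Let $u$ be an approximate left singular vector, i.e. an approximate eigenfunction of the adjoint $(\LStabthe-\laap I)^\ast$; it should come from the singular value decomposition of the truncated matrix $\MStabTor$. Project the bracket in \eqref{def:ContOpStab} onto $u$ and set
\begin{align*}
    \Eigdiff(f):=\frac{\big( u,\, \DiffOpExpNonExp(\Eigveap+f)+\Eigres\big)}{\big(u,\,\Eigveap+f\big)},
\end{align*}
which makes the bracket orthogonal to $u$. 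The postponed lemmas are then used in the following form:
\begin{itemize}
    \item $(\LStabthe-\laap I)^{-1}$ is well defined on $u^\perp$, mapping into $\XH{1}$ with norm below $\getvar{Lstab_inv}$ (the regularity and singular-value lemmas for $\MStabTor$);
    \item the denominator $|(u,\Eigveap)|$ is bounded below, and hence $|(u,\Eigveap+f)|$ is bounded below on $X$ (Lemma~\ref{lemma:prod_fv_u}).
\end{itemize}

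\textbf{Self-mapping.} Corollary~\ref{coro:DiffOpExpNonExp} gives $\norm{\DiffOpExpNonExp}_{\XH{1}\to\XL}\le\getvar{Dthe_op}$, and Lemma~\ref{lemma:norm_fv} gives $\norm{\Eigres}_\XL\approx 1.3\cdot10^{-10}$. Together these yield
\begin{align*}
    |\Eigdiff(f)|\lesssim \frac{\getvar{Dthe_op}\,(\norm{\Eigveap}_{\XH{1}}+r)+\norm{\Eigres}_\XL}{|(u,\Eigveap)|-\norm{u}_\XL\, r},
\end{align*}
which is of order $10^{-8}$. Hence
\begin{align*}
    \norm{\TopStab f}_{\XH{1}}\le \getvar{Lstab_inv}\Big(\big(|\Eigdiff(f)|+\getvar{Dthe_op}\big)(\norm{\Eigveap}_\XL+r)+\norm{\Eigres}_\XL\Big),
\end{align*}
which is comfortably below $r=\getvar{rad_stab}$.

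\textbf{Contraction.} For the Lipschitz bound, split $\TopStab f_1-\TopStab f_2$ into three parts:
\begin{itemize}
    \item the $\DiffOpExpNonExp(f_1-f_2)$ term;
    \item the term $\Eigdiff(f_1)(f_1-f_2)$;
    \item the term $(\Eigdiff(f_1)-\Eigdiff(f_2))(\Eigveap+f_2)$.
\end{itemize}
The difference of quotients in the last term is controlled by the lower bound on the denominator, and each part is then multiplied by $\getvar{Lstab_inv}$. Assembling the estimates should give a Lipschitz constant below $\getvar{lip_stab}$.

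\textbf{Conclusion and main obstacle.} Finally, $\Re\la\ge\Re\laap-|\Eigdiff(f)|$, and $|\Eigdiff(f)|\ll 10^{-6}$, so $\Re\la>\getvar{lare}$ since $\Re\laap\approx 0.1421287$. The main obstacle is the invertibility step: proving that $\LStabthe-\laap I$ restricted to $u^\perp$ has an inverse bounded by $\getvar{Lstab_inv}$. Following Section~\ref{sec:prop:inv_exis}, I would attempt this by dividing by $c+\twap$ via $\beta$ and passing to the Hardy-space formulation. The resulting infinite tail is then handled with the $\kappa$ bounds, and the finite block $\MStabTor$ by interval-arithmetic singular values. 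The delicate point is separating the single small singular value $\sim 10^{-7}$ from the next one, $\sim 1.3\cdot10^{-3}$.
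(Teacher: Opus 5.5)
Your skeleton is the paper's: Banach's theorem on the ball of radius $\getvar{rad_stab}$, $\Eigdiff(f)$ chosen as a ratio of one linear functional so that the bracket lies in its kernel, the bound $\getvar{Lstab_inv}$ for the restricted inverse on that kernel, and then control of $\Re\la$ and of $\Eigveap+f\neq0$. However, several steps are wrong as written.

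First, Lemma~\ref{lemma:norm_fv} gives only \emph{upper} bounds, so your assumption $\norm{\Eigveap}_\XL^2>\getvar{fvap_stab_L2_sq}$ is not available. The paper uses instead the separate computer-assisted Lemma~\ref{lemma:eignot0}, $\norm{\Eigveap}_{\XH{1}}>\getvar{rad_stab}$. Alternatively, the lower bound on $|\Procomp(\Eigveap+f)|$, which you need anyway for $\Eigdiff(f)$ to be defined, already forces $\Eigveap+f\neq0$.

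Second, the functional cannot be the pairing with an \emph{approximate} adjoint eigenfunction. Proposition~\ref{prop:stab:lininv} controls the inverse only on $W_\laap=\ker\Procomp$, and only for the solution normalized by $\PVl{\laap}{\bf c}(i_Hf)=0$. Here $\Procomp=u_1^\ast(\laap)\circ{\bf b}\circ i_H$ is built from the \emph{exact} left singular vector of $\MStabTor(\laap)$. Any leftover component along $u_1$ would be divided by $\sigma_1$, which is not bounded below (it may even vanish). The $\XL$-Riesz representer of $\Procomp$ is an admissible $u$; an approximation of it is not.

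Likewise, in your Lipschitz step the three pieces do not lie in $W_\laap$ individually. You must apply the restricted inverse to their sum, and only then use the triangle inequality in $\XL$. The paper does the equivalent through $d_f\TopStab$ and $q_{d\Eigdiff}$.

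Your orders of magnitude are also off by a factor of about $100$, and this matters because the margins are thin. The only certified lower bound for the denominator is Lemma~\ref{lemma:prod_fv_u}, $|\Procomp(\Eigveap)|>\getvar{Jfvu1}$. This must be compared with $|\Procomp(g)|\le C_{J_0}\norm{g}_\XL$, where $C_{J_0}=(\getvar{kappa3})^{1/2}\approx3.75$. Hence $x_{\max}\approx1.21\cdot10^{-2}$. Your displayed bound, once normalized, is the paper's $q_\Eigdiff$, and it gives only $|\Eigdiff(f)|\le q_\Eigdiff(\getvar{rad_stab})\approx1.31\cdot10^{-6}$, not $10^{-8}$.

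Consequently $\norm{\TopStab f}_{\XH{1}}\lesssim9.8\cdot10^{-4}$ and $\Re\laap-|\Eigdiff(f)|\approx0.1421274$. Note that the $\DiffOpExpNonExp$ term needs $\norm{\Eigveap+f}_{\XH{1}}$, not the $\XL$ norm. Both conclusions survive, but with margins of about $2\%$ and $4\cdot10^{-7}$. So ``comfortably below $r$'' and ``$|\Eigdiff(f)|\ll10^{-6}$'' are false, and the argument closes only through the exact interval evaluation.

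In fact, with these constants the self-map bound never drops below $\getvar{Lstab_inv}\,q_\Eigdiff(0)\,\norm{\Eigveap}_\XL\approx8.9\cdot10^{-4}$, however small $r$ is. This is why the radius is essentially forced. Finally, the constants $\getvar{svd_stab1}$ and $\getvar{svd_stab2}$ in Lemma~\ref{lemma:stab_singval} bound $\sigma_1^2$ and $\sigma_2^2$, not the singular values themselves.
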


\begin{proof}   
    The proof is based on the following lemmas:
    \begin{lemma}
    \label{lemma:stab_ballintoitself}
        $T^\te_{\text{stab}}: \XH{1}\to\XH{1}$ is a continuous operator that maps 
        \begin{align*}
            X=\{f\in \XH{1}:\, \norm{f}_\XH{1}\leq \getvar{rad_stab}\}
        \end{align*}
        into itself.
    \end{lemma}
    
    \begin{proof} 
        The proof will be given in Section~\ref{sec:prop:inv_stab}.
    \end{proof}
    
    \begin{lemma}
    \label{lemma:stab_lipschitz}
    The following estimates hold 
        \begin{align*}
            \norm{
                \TopStab f_1
                -
                \TopStab f_2
            }_\XH{1}
            \leq 
            \getvar{lip_stab} 
            \norm{f_1-f_2}_\XH{1},
        \end{align*}
        for all $f_1, f_2\in X.$
    \end{lemma}
    \begin{proof}
        The proof will be given in Section~\ref{sec:prop:inv_stab}.
    \end{proof}
       
    Then, we can apply the Banach fixed-point theorem. Hence, there exists $f_0\in \XH{1}$, a fixed point for $\TopStab$, i.e.
    \begin{align*}
        (\LStabthe-\laap I)f_0=
        [(\Eigdiff(f_0)I-\DiffOpExpNonExp)
        (f_0+\Eigveap)-\Eigres]. 
    \end{align*}
    Consequently, $\la:=\laap+\Eigdiff(f_0)$ and $\Eigveap+f_0$ solve the equation \eqref{Eigen_problem_theta}. 

    In the following lemma, we compute a bound for $\Eigdiff(f_0)$ which ensures that $\la$ still has positive real part.
    \begin{lemma}\label{lemma:real_eigen_control}
        For all $f\in X$, it holds that
        \begin{align*}
            \Re(\laap) - |\Eigdiff(f)| > \getvar{lare}.
        \end{align*}
    \end{lemma}
    \begin{proof}
        The proof will be given in Section~\ref{sec:prop:inv_stab}. 
    \end{proof}

    Finally, the following lemma shows that $\Eigveap+f_0\neq 0$,
    \begin{lemma}\label{lemma:eignot0}
        It holds that
        \begin{align*}
            \norm{\Eigveap}_\XH{1} > \getvar{rad_stab}.
        \end{align*}
    \end{lemma}
    \proofcompassi
\end{proof}

\subsection{Conclusion of the proof of Theorem~\ref{theo:stab}}
    Proposition~\ref{prop:twStab} states that the operator $\LStabtheTW$ admits an eigenpair $(\la,f)$ with
    \begin{align*}
        \Re(\la) > \getvar{lare}.
    \end{align*}

    Applying Lemma~\ref{lemma:unfold_stability} with $n=3$ and $l=1$, hence $\te=\frac{1}{3}$, we deduce that $\la$ is an eigenvalue of $\LinearOpStability{c/3,\tw^3}$. We construct the associated eigenfunction $\ti{f}$ as follows
    \begin{align*}
        \ti{f}(x) = e^{ix}f(3x).
    \end{align*}
    Finally, substituting $x\mapsto -x$, we find that $\EigenValue$ is an unstable eigenvalue for $\LinearOpStability{c/3,\tw^3}$ with eigenfunction $\EigenVecto(x)=\ti{f}(-x)$. 

    Again, as at the end of Section~\ref{sec:tw_exis}, a bootstrap argument shows that $\EigenVecto$ is, in fact, smooth.
    
\section{Proof of Proposition \ref{prop:inv_exis}}
\label{sec:prop:inv_exis}
We define the operator $\LOpExis: \XHodd{2}\to \XHeven{1}$ as 
\begin{align}\label{def:LOpExis}
    \LOpExis f(x)=
    H\!f(x)+
    \big(c+\twap(x)\big)\pa_x f(x)
    -\frac{1}{2\pi}
    \int_{-\pi}^\pi
    \twap(x)\pa_x f(x)\dx.
\end{align}
Then, for any smooth function $f:\T\to\RR,$ we find that
\begin{align}\label{eqn:LOpExis1}
    \pa_x \LOpExis f=\LOpExisTwap \pa_x f.
\end{align}

Therefore,
\begin{align}\label{eqn:LOpExis2}
    \LOpExisTwap = \pa_x \LOpExis \pa_x^{-1}.
\end{align}

Let us explain a possible strategy to study the equation $\LOpExis f = g$. If we apply the Hilbert transform to the equation and consider the function $f^+ := \frac{1}{2} (f + i Hf)$, as we will see later, one can find a first-order ODE for $f^+$ which depends on $N$ Fourier coefficients of $f^+$. This equation can be easily integrated. Computing the Fourier transform of the solution, one can arrive at a linear system that must be satisfied by these Fourier coefficients. One may think that the problem has been reduced to showing that this system is invertible. 
However, there are additional conditions to check in order to guarantee that the function is periodic and all non-positive Fourier modes vanish. To overcome this problem, we will use a slightly different strategy that requires the introduction of a new operator.

The next lemma, Lemma~\ref{lemma:exis:td}, states that studying the invertibility of $\LOpExis$  is equivalent to solving a local problem in the unit complex disk.

We first define the operator $\LOpExisHol:\XHholZM\to\XHholZM$ as
\begin{align}\label{def:LOpExisHol}
    \LOpExisHol F (z):= izP(z)\pa_z F(z) - i F(z) 
        -\sum_{k=1}^N F_k\mathcal{G}_k(z),
\end{align}
where we are using \eqref{taylorexpansion}, and
$P(z)$ and $\mathcal{G}(F)(z)$ are given by
\begin{align}
    P(z)&:=\
    c+\frac{1}{2}\sum_{j=1}^N \twap_j\big(z^{j}+z^{-j}\big),
    \label{def:P_}\\
    \mathcal{G}_k(z) &:=        
        \frac{i}{2}
        k\twap_k-
        \frac{i}{2}\sum_{m=k+1}^N
        k\twap_m
        \big(z^{m-k}-z^{-(m-k)}\big).
        \label{def:Gkexis}
\end{align}

The operator $\LOpExisHol$ is well defined. Indeed, for any $F\in\XHholZM$, the expression in \eqref{def:LOpExisHol} defines a holomorphic function vanishing at the origin, since all terms involving non-positive powers of $z$ cancel out.

\begin{lemma}\label{lemma:exis:td}
    The operator $\LOpExisHol:\XHholZMi{2}\to\XHholZMi{1}$ is invertible if and only if 
    $\LOpExis:\XHodd{2}\to\XHeven{1}$ is invertible.

    Furthermore, assuming both are invertible, the following identity holds:
    \begin{align*}
        \norm{(\LOpExisHol)^{-1}}
        _{\XHholZMi{1}\to\XHholZMi{2}}
        =
        \norm{(\LOpExis)^{-1}}
        _{\XHeven{1}\to\XHodd{2}}.
    \end{align*}
\end{lemma}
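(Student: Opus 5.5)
The plan is to show that $\LOpExis$ and $\LOpExisHol$ are conjugate through two isometric isomorphisms, both given by restricting to positive Fourier modes. Once this is in place, invertibility and the equality of the norms of the inverses follow at once.

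\textbf{Step 1: the isometries.} Define $\Phi_s:\XHodd{s}\to\XHholZMi{s}$ by $f\mapsto F^+$, with $F^+(z)=\sum_{k\geq1}\ha{f}_kz^k$ as in \eqref{def:F+F-}. Define $\Psi_s:\XHeven{s}\to\XHholZMi{s}$ by the same formula, $g\mapsto G^+$.
\begin{itemize}
\item For odd $f$ we have $\ha{f}_{-k}=-\ha{f}_k$ and $\ha{f}_0=0$. For even zero-mean $g$ we have $\ha{g}_{-k}=\ha{g}_k$ and $\ha{g}_0=0$. So in both cases the function is determined by its modes with $k\geq1$, and both maps are injective.
\item Surjectivity is the remark after \eqref{def:F+F-} for odd functions. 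For even functions one takes $g(x)=G|_\T(x)+G|_\T(-x)$.
\item By the definitions of $\norm{\cdot}_{H^s_{\rm odd/even}}$ and $\norm{\cdot}_{\XHholZMi{s}}$, both norms equal $\sum_{k\geq1}k^{2s}|\ha{f}_k|^2$. Hence $\Phi_2$ and $\Psi_1$ are isometries.
\end{itemize}

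\textbf{Step 2: the intertwining identity.} The aim is to prove $\Psi_1\LOpExis f=\LOpExisHol\Phi_2 f$ for every $f\in\XHodd{2}$. By density it suffices to take trigonometric polynomials. I would compute the Fourier coefficient at each $k\geq1$ of the three terms of \eqref{def:LOpExis}.
\begin{itemize}
\item The term $\frac{1}{2\pi}\intpi\twap\pa_xf$ is a constant, so it contributes nothing to positive modes.
\item The term $Hf$ has positive part $-i\sum_{k\geq1}\ha{f}_ke^{ikx}$, which gives $-iF(z)$.
\item For $(c+\twap)\pa_xf$, write $c+\twap=P(e^{ix})$ with $P$ as in \eqref{def:P_}, and split $\pa_xf$ into positive and negative modes. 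The positive modes give $P(z)\,iz\pa_zF(z)$, but with the terms of non-positive degree removed. The negative modes $-ik\ha{f}_{-k}e^{-ikx}=ik\ha{f}_ke^{-ikx}$, for $1\leq k\leq N$, reach positive frequencies through the $e^{imx}$ components of $P$ with $m>k$.
\end{itemize}
Collecting these pieces, the negative-mode contributions together with the removed non-positive-degree part of $izP\pa_zF$ should reproduce exactly $-\sum_{k=1}^NF_k\mathcal G_k(z)$, with $\mathcal G_k$ as in \eqref{def:Gkexis}. This is consistent with the observation after \eqref{def:LOpExisHol} that all non-positive powers of $z$ cancel.

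\textbf{Step 3: conclusion.} Step 2 gives $\LOpExis=\Psi_1^{-1}\LOpExisHol\Phi_2$, and this also shows that $\LOpExisHol$ maps $\XHholZMi{2}$ into $\XHholZMi{1}$. Since $\Phi_2$ and $\Psi_1$ are bijective, one operator is invertible if and only if the other is. In that case $(\LOpExis)^{-1}=\Phi_2^{-1}(\LOpExisHol)^{-1}\Psi_1$. Because $\Phi_2$ and $\Psi_1$ are isometries, the operator norms of the two inverses coincide.

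The main obstacle is the bookkeeping in Step 2. One has to track the factors $\frac12$, the signs, and the factor $k$ in $\mathcal G_k$, and check precisely that the negative-mode contributions match the constant term $\frac i2k\twap_k$ and the $z^{\pm(m-k)}$ terms of $\mathcal G_k$. I would first carry this out on a single mode $f=\sin(kx)$ with $\twap=\cos(mx)$, and then extend by linearity.
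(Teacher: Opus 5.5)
Your proposal is correct and is essentially the paper's argument. The paper also uses the isometric bijections $f\mapsto f^+$ on $\XHodd{2}$ and $g\mapsto g^+$ on $\XHeven{1}$ (inverted by $F|_\T(x)\mp F|_\T(-x)$) to turn $\LOpExis f=g$ into $\LOpExisHol F^+=G^+$, and reads off the equality of norms from $\sum_{k\geq1}k^{2s}|\ha{f}_k|^2$. The only difference is how the intertwining is obtained: the paper applies $\frac12(I+iH)$ and the commutator formula of Corollary~\ref{coro:BracketHiTw:df}, whereas you count positive modes directly. Your count does close exactly as you predict: the folded negative modes and the discarded non-positive powers of $izP\pa_zF$ combine into $-\sum_{k=1}^N F_k\mathcal{G}_k$.
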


\begin{proof}
    First, we show that if $f \in \XHodd{2}$ satisfies 
    \begin{align}\label{eq:fg}
        Hf(x)+
        \big(c+\twap(x)\big)\pa_x f(x)
        -\frac{1}{2\pi}
        \intpi
        \twap(x)\pa_x f(x)\dx=g,
    \end{align}
    
    then $f^+$ also satisfies 
    \begin{equation}\label{eqn:exis:td2}
                \big(c+\twap(x)\big) 
                \pa_x f^+(x)
                =
                if^+(x)+\mathcal{J}(x) +g^+(x)
    \end{equation}
    with
    \begin{align*}
        \mathcal{J}(x)&:=
        \frac{i}{2}\sum_{j=1}^Nj\twap_j\ha{f^+}_j-
        \frac{i}{2}\sum_{j=1}^N\twap_j\sum_{m=1}^{j-1}
        m\ha{f^+}_m 
        (e^{i(j-m)x} - e^{-i(j-m)x}).
    \end{align*}
    
    From \eqref{eq:fg}, we obtain for $f^+=\frac{1}{2}(f+iHf)$ that
    \begin{equation}\label{eqn:exis:td12}
        -if^+(x)+
        \big(c+\twap(x)\big)
        \pa_x f^+(x)+
        \frac{i}{2}[H,\twap]\pa_x f(x)
        -\frac{1}{4\pi}
        \int_{-\pi}^\pi
        \twap(x)\pa_x f(x)\dx=g^+.
   \end{equation}

   We now compute the integral term in \eqref{eqn:exis:td12}:
   \begin{align*}
        \frac{1}{4\pi}
        \intpi
        \twap(x)\pa_x f(x)\dx
        &=
        -\frac{1}{4\pi}
        \int_{-\pi}^\pi
        \pa_x\twap(x) f(x)\dx\\
        &=
        \frac{1}{4\pi}
        \int_{-\pi}^\pi
        \sum_{j=1}^Nj\twap_j\sin(jx) f(x)\dx
    \end{align*}

    By construction, $f$ is odd, so $\ha{f}_k=-\ha{f}_{-k}$ and we obtain
    \begin{align}
    \label{eqn:exis:td22}
        \frac{1}{4\pi}
        \intpi
        \twap(x)
        \pa_x f(x)\dx
        =
        \frac{i}{2}
        \sum_{j=1}^N
        j \twap_j
        \ha{f^+}_j.
    \end{align}
    
    The computation of $[H,\twap]\pa_x f$ is given in Corollary~\ref{coro:BracketHiTw:df}. Hence, as $f$ is odd, computations yield
    \begin{align}
        \frac{i}{2}[H,\twap]\pa_x f(x)&=
        \frac{i}{2}\sum_{j=1}^N
        \twap_j
        \Big(-\frac{j}{2}(\ha{f}_j+\ha{f}_{-j})
        -\sum_{m=1}^{j-1}
        m(\ha{f}_me^{-i(j-m)x}+\ha{f}_{-m}e^{i(j-m)x})
        \Big)\nonumber\\
        &=
        \frac{i}{2}\sum_{j=1}^N
        \twap_j
        \sum_{m=1}^{j-1}
        m\ha{f}_m(e^{i(j-m)x}-e^{-i(j-m)x}).
        \label{eqn:exis:td1}
    \end{align}

    Expressions \eqref{eqn:exis:td1} and \eqref{eqn:exis:td22}, together with \eqref{eqn:exis:td12}, yield \eqref{eqn:exis:td2}.
    
    Conversely, if $F|_\T$, for $F\in\XHholZMi{2}$, satisfies  
    \begin{align}\label{eq:F|G|}
    (c+\twap(x)) \pa_x F|_\T(x)=
                iF|_\T(x)+\mathcal{J}(x) +G|_\T(x),
    \end{align}
    with
    \begin{align*}
            \mathcal{J}(x)&:=
            \frac{i}{2}
            \sum_{j=1}^N
            j\twap_j
            \ha{F|_\T}_j-
            \frac{i}{2}\sum_{j=1}^N\twap_j\sum_{m=1}^{j-1}
            m\ha{F|_\T}_m 
            (e^{i(j-m)x} - e^{-i(j-m)x}).        
    \end{align*}
    and $G\in \XHholZMi{1}$, then the function $f(x)=F|_\T(x)-F|_\T(-x)$ satisfies \eqref{eq:fg} with $g(x)=G|_\T(x)+G|_\T(-x)\in \XHeven{1}$. To check this, we just note that
    \begin{align*}
        \pa_x f(x)&=
        \pa_x F|_\T(x)+\pa_xF|_\T(-x),\\
        Hf(x)&=-i\big(F|_\T(x)+F|_\T(-x)\big),\\
        \mathcal{J}(x)+\mathcal{J}(-x)&=
        \frac{1}{2\pi}
        \intpi \twap(x) \pa_x f(x)\dx,
    \end{align*}
    where we have used \eqref{eqn:exis:td22} and $\ha{f^+}_n=\ha{f}_n=(\ha{F|_\T})_n$, for $n\geq 1$, in the identity for $\mathcal{J}$.
    
    Let us now assume that $\LOpExisHol$ is invertible and that $g\in\XHeven{1}$. We define $G\in\XHholZMi{1}$ as the analytic extension of $g^+$ to the interior of the unit disk. Then, there exists a unique $F\in\XHholZMi{2}$ such that $\LOpExisHol F =G$. 
    
    The restriction $F|_\T$ will satisfy \eqref{eq:F|G|} and therefore $f=F|_\T(x)-F|_\T(-x)\in \XHodd{2}$ solves 
    \begin{align*}
        \LOpExis f=G|_\T(x)+G|_\T(-x)=g(x).
    \end{align*}
    Furthermore, this solution must be unique. Otherwise, if there exists $f_1\neq f$ solving $\LOpExis f_1=g$, the holomorphic extension of $f_1^+$, $F_1$ satisfies $\LOpExisHol F_1 =G$ and then $F_1=F$, which yields $f_1=f.$
    
    Finally, let us assume that $\LOpExis$ is invertible and that $G\in\XHholZMi{1}$. Take $g=G|_\T(x)+G|_\T(-x)$ and solve 
    \begin{align*}
        \LOpExis f = g,
    \end{align*}
    with $f\in\XHodd{2}$. Then $f^+$ solves \eqref{eqn:exis:td2} and its holomorphic extension $F\in\XHholZMi{2}$ solves $\LOpExisHol F=g^+=G.$ This solution must  also be unique since, if there exists another solution $F_1$, then $f_1=F_1|_\T(x)-F_1|_\T(-x)$ solves $\LOpExisHol f_1=g$ and therefore $f_1=f$, which implies $F_1=F.$

    Moreover, we have
    \begin{align*}
        \norm{g}_\XHeven{1}^2
        &= \sum_{k\geq 1}k^2|\ha{g}_k|^2
        = \norm{G}_{\XHholZMi{1}}^2,
        \\
        \norm{f}_\XHodd{2}^2
        &= \sum_{k\geq 1}k^4|\ha{f}_k|^2
        = \norm{F}_{\XHholZMi{2}}^2.
    \end{align*}
    Hence, the norms of both operators coincide.
\end{proof}

We study $\LOpExisHol$ instead of $\LOpExis$. 
In Section~\ref{sec:OpExis}, we analyze $\LOpExisHol$. 
First, in Section~\ref{sec:fuchs}, we establish some theoretical results in Fuchsian theory. 
Then, in Section~\ref{sec:exis:invert}, we prove that $\LOpExisHol$ admits an inverse when considered between appropriate functional spaces. 
In Section~\ref{sec:exis:bounds}, we derive suitable bounds. 
Finally, we combine these results to prove Proposition~\ref{prop:inv_exis} in Section~\ref{sec:exis:laststep}.

\subsection{The linear operator $\LOpExisHol$}
\label{sec:OpExis}

In order to study the operator $\LOpExisHol$, given in \eqref{def:LOpExisHol}, our first idea is to momentarily disregard the Taylor coefficients $F_j$ in expression \eqref{def:LOpExisHol} and replace them with general constants.

\begin{lemma}\label{lemma:exis:holo_implies}
    Let $a_k \in \C$ for $k = 1, \ldots, N$, and let $F\in\XHholZM$ and $G\in\XHholZM$.
    
    Assume that $F$ and $G$ satisfy
    \begin{align}\label{eqn:Exis_Holo_Implies_aux0}
        i z P(z) \pa_z F(z)
        = i F(z)
        + \frac{i}{2} \sum_{j=1}^N j \twap_j a_j
        -\frac{i}{2} \sum_{j=2}^N \twap_j
            \sum_{m=1}^{j-1}
            m \big(z^{j-m} - z^{-(j-m)}\big) a_m
        + G(z).
    \end{align}
    Then, for every $k = 1, \ldots, N$, we have
    \begin{align*}
        F_k:=\frac1{k!}\left.\frac{d^kF}{dz^k}\right|_{z=0} = a_k.
    \end{align*}
\end{lemma}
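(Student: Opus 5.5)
The plan is to compare Laurent coefficients on the punctured disk $0<|z|<1$. Both sides of \eqref{eqn:Exis_Holo_Implies_aux0} are holomorphic there, with at most a pole of order $N-1$ at the origin. Laurent expansions on an annulus are unique, so the coefficients of $z^{-p}$, $p=0,1,\dots,N-1$, must agree on both sides. The principal parts and constant terms involve only finitely many Taylor coefficients $F_1,\dots,F_N$, and matching them will produce a triangular linear system for the differences $F_m-a_m$.

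\emph{Left-hand side.} Using \eqref{def:P_} and \eqref{taylorexpansion}, I write
\begin{align*}
izP(z)\pa_zF(z)=ic\,zF'(z)+\frac{i}{2}\sum_{j=1}^N\twap_j\big(z^{j+1}F'(z)+z^{1-j}F'(z)\big).
\end{align*}
Only the terms $z^{1-j}F'(z)=\sum_{k\geq1}kF_kz^{k-j}$ contribute to nonpositive powers. For $p\geq 0$, the coefficient of $z^{-p}$ comes from $k=j-p\geq1$, which gives
\begin{align*}
\frac{i}{2}\sum_{j=p+1}^N\twap_j\,(j-p)F_{j-p}.
\end{align*}

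\emph{Right-hand side.} Since $F(0)=G(0)=0$, the terms $iF$ and $G$ contribute nothing to nonpositive powers. The constant $\frac{i}{2}\sum_j j\twap_ja_j$ gives the coefficient of $z^0$, which matches the $p=0$ formula above with $F$ replaced by $a$. In the double sum, the term $+\frac{i}{2}\twap_j\, m\,z^{-(j-m)}a_m$ contributes to $z^{-p}$ when $j-m=p\geq1$. This yields $\frac{i}{2}\sum_{j=p+1}^N\twap_j(j-p)a_{j-p}$, which is again the same expression with $F$ replaced by $a$.

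Setting $d_m:=m(F_m-a_m)$, I therefore obtain, for $p=0,\dots,N-1$,
\begin{align*}
\sum_{m=1}^{N-p}\twap_{m+p}\,d_m=0.
\end{align*}
This system is triangular. The equation for $p=N-1$ reads $\twap_N d_1=0$; the one for $p=N-2$ reads $\twap_{N-1}d_1+\twap_N d_2=0$; and in general the equation for $p=N-k$ determines $d_k$ with coefficient $\twap_N$ once $d_1,\dots,d_{k-1}$ are known. Hence, provided $\twap_N\neq0$, back substitution gives $d_1=\dots=d_N=0$, that is, $F_k=a_k$ for every $k=1,\dots,N$.

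The only genuinely nontrivial input is the non-degeneracy $\twap_N\neq 0$. I would take it from the computer-assisted check of nonvanishing Fourier coefficients of $\twap$ (Lemma~\ref{lemma:twapprox_nonzero_coeffs}); if needed, it can also be read off directly from the explicit coefficients in the supplementary material. The remaining point is routine bookkeeping of indices: the term $\frac{i}{2}\sum_j j\twap_j a_j$ must land exactly on the $p=0$ row, and the sign of the $-z^{-(j-m)}$ term must produce $+a_m$. I would check both carefully, since a sign or index error there would break the triangular structure.
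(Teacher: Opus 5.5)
Your proof is correct and is essentially the paper's argument. The paper multiplies the identity by $z^N$ and matches the Taylor coefficients of $z^k$, $k=1,\dots,N$; this is exactly your matching of the Laurent coefficients of $z^{-p}$ with $k=N-p$, and it gives the same triangular system with diagonal entries $k\,v^{\rm ap}_N$, so your remark that only $v^{\rm ap}_N\neq 0$ is needed (the paper cites nonvanishing of all coefficients) is accurate.
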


\begin{proof}
    We argue by comparing Taylor coefficients at $z=0$.

    Multiplying \eqref{eqn:Exis_Holo_Implies_aux0} by $z^N$ and using that $F(0)=G(0)=0$, we obtain
    \begin{align}\label{eqn:Exis_Holo_Implies_coeff0}
        i z^{N+1}P(z) \pa_z F(z)
        =\frac{i}{2} z^N \sum_{j=1}^N j \twap_j a_j
        +\frac{i}{2}\sum_{j=2}^N \twap_j\sum_{m=1}^{j-1} m z^{N-(j-m)}a_m
        +\mathcal{O}(z^{N+1}).
    \end{align}

    Expanding terms in \eqref{eqn:Exis_Holo_Implies_coeff0} yields
    \begin{align}\label{eqn:Exis_Holo_Implies_coeff1}
        \frac{i}{2}
        \sum_{j=1}^N 
        \sum_{l=0}^{j}
        \twap_j l F_l z^{N+l-j}
        = 
        \frac{i}{2} z^N \sum_{j=1}^N j \twap_j a_j
        +
        \frac{i}{2}\sum_{j=2}^N \twap_j\sum_{m=1}^{j-1} m z^{N-(j-m)}a_m
        +\mathcal{O}(z^{N+1}).
    \end{align}

    Fix $k\in\{1,\ldots,N-1\}$, apply $\frac{1}{k!}\frac{d^{k}}{dz^{k}}$ to \eqref{eqn:Exis_Holo_Implies_coeff1}, and evaluate at $z=0$. We have
    \begin{align}
        \frac{i}{2}
        \sum_{j=1}^N 
        \sum_{l=0}^{j-1}
        l\twap_j F_l \delta_{k,N+l-j}
        &= 
        \frac{i}{2}\sum_{j=2}^N 
        \twap_j\sum_{m=1}^{j-1} m a_m \delta_{k,N+m-j}
        \ \Rightarrow\nonumber
        \\
        0
        &= 
        \sum_{j=N+1-k}^N 
        (k+j-N)\twap_j (F_{k+j-N}-a_{k+j-N})
        \nonumber
        \\
        &= 
        \sum_{j=1}^{k} 
        j\twap_{j+N-k} (F_{j}-a_{j}).
        \label{eqn:Exis_Holo_Implies_coeff2}
    \end{align}

    We now use the following lemma
    \begin{lemma}\label{lemma:twapprox_nonzero_coeffs}
        It holds for every $j=1,\ldots,N$ that
        \begin{align*}
            \twap_j\neq 0.
        \end{align*}
    \end{lemma}
    \proofcompassi

    Since $\twap_j\neq 0$ for every $j=1,\ldots,N$ and \eqref{eqn:Exis_Holo_Implies_coeff2} is a triangular system, we conclude that $F_k=a_k$ for $k=1,\ldots,N-1$.

    Finally, we apply $\frac{1}{N!}\frac{d^N}{dz^N}$ to \eqref{eqn:Exis_Holo_Implies_coeff1} and evaluate at $z=0$: 
    \begin{align*}
        \frac{i}{2}
        \sum_{j=1}^N 
        j \twap_j F_j
        = 
        \frac{i}{2} \sum_{j=1}^N j  \twap_j a_j.
    \end{align*}

    Since $F_j=a_j$ for $j=1,\ldots,N-1$, we conclude that
    \begin{align*}
        0 = N\twap_N(F_N-a_N),
    \end{align*}
    and thus $F_N=a_N$ because $\twap_N\neq 0$.
\end{proof}

Our goal is to reduce the study of $\LOpExisHol$ to a finite dimensional problem. 
The next section is devoted to developing some tools we need to prove that the operator $\LOpExisHol:\XHholZMi{2}\to\XHholZMi{1}$ is one-to-one.

\subsubsection{Fuchsian theory}\label{sec:fuchs}
The tools in this subsection allow us to determine when the solutions of a singular first-order ODE are holomorphic in a simply connected domain $\Omega \subset \C$. We focus on the affine case. See \cite{ince1956ordinary,whittaker1996modern} for background on linear equations with regular singularities.

Let $\Omega \subset \C$ be open, connected, and simply connected, and let $A,B:\Omega\to\C$ be holomorphic (and continuous on $\overline{\Omega}$ when needed). Fix $z_0\in\Omega$. We study conditions on $A$ and $B$ ensuring that a solution $F$ of
\begin{align}\label{eqn:ODE_sing}
    (z - z_0)\pa_z F(z) + A(z) F(z) = B(z)
\end{align}
extends holomorphically across $z_0$.

For any $\om_0\in\Omega$, $\om_0\neq z_0$, equation \eqref{eqn:ODE_sing} is regular at $\om_0$ and admits a one-parameter family of holomorphic solutions in a neighborhood of $\om_0$. Any singular behavior can only occur at $z_0$. By analytic continuation, solutions defined away from $z_0$ can be extended along paths in $\Omega-\{z_0\}$, which allows us to analyze their behavior near $z_0$.
\begin{comment}
\begin{geometricinterpretation}
    Readers unfamiliar with this geometric language may safely skip this paragraph, since the rest of the subsection is self-contained.
    
    Geometrically, equations \eqref{eqn:ODE_sing} and \eqref{eqn:fuchs_crit_disk} define rank-one flat connections with regular singularities. Our non-integrality assumptions on the local exponents (namely $A(z_0)\notin\Z$ for \eqref{eqn:ODE_sing}, and $\frac{A(z_k)}{Q'(z_k)}\notin\Z$ for \eqref{eqn:fuchs_crit_disk}) imply that the local monodromy around each singular point is nontrivial: after one full turn, a nonzero solution of the homogeneous equation cannot return to its original value. Alternatively, the affine equation admits a unique local holomorphic solution near each singular point (Lemma~\ref{lemma:fuchs_sing_part}).
    
    Lemma~\ref{lemma:fuchs_criteria} characterizes when this canonical local solution extends to a global holomorphic solution on the unit disk with finitely many singular points. This happens if and only if the associated monodromy representation of $\pi_1(\D-\{z_1,\ldots,z_n\})$ is trivial, a condition that can be detected by transporting an initial value $c_0$ along the generating loops $\tau_k$. Equivalently, the functions $f_k$ encode parallel transport along $\tau_k$, and the conditions $f_k(2\pi+2)=c_0$ express trivial monodromy. We nevertheless present the proof in explicit analytic terms, avoiding additional geometric notation.
\end{geometricinterpretation}
\end{comment}

We now return to the analytic development. We begin with the regular (non-singular) case, which also illustrates the analytic continuation mechanism.

\begin{lemma}\label{lemma:fuchs_regular}
    Let $\Omega \subset \C$ be open and simply connected, and let $A,B$ be holomorphic on $\Omega$. Then for any $\om_0 \in \Omega$ and any $c_0\in\C$, there exists a unique holomorphic function $F:\Omega\to\C$ solving
    \begin{equation}\label{eqn:fuchs_regular}
    \left\{
    \begin{array}{l}
        \pa_z F + A F = B,\\[4pt]
        F(\om_0) = c_0.
    \end{array}
    \right.
    \end{equation}
\end{lemma}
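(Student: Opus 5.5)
The plan is the classical integrating-factor construction; simple connectivity of $\Omega$ is what makes it global and single-valued. First I will pick a holomorphic primitive $\mathcal{A}$ of $A$ on $\Omega$ with $\mathcal{A}(\om_0)=0$. Since $\Omega$ is open, connected and simply connected, Cauchy's theorem makes $\int_\gamma A(\zeta)\,d\zeta$ independent of the path $\gamma\subset\Omega$ from $\om_0$ to $z$. Hence
\begin{align*}
    \mathcal{A}(z):=\int_{\om_0}^{z} A(\zeta)\,d\zeta
\end{align*}
is well defined and holomorphic on $\Omega$, and $\pa_z\mathcal{A}=A$.

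Next I will multiply the equation by $e^{\mathcal{A}}$. For any holomorphic $F$,
\begin{align*}
    \pa_z\big(e^{\mathcal{A}}F\big)=e^{\mathcal{A}}(\pa_zF+AF),
\end{align*}
so \eqref{eqn:fuchs_regular} is equivalent to $\pa_z(e^{\mathcal{A}}F)=e^{\mathcal{A}}B$ together with $F(\om_0)=c_0$. The function $e^{\mathcal{A}}B$ is holomorphic on $\Omega$. By the same path-independence argument it has a holomorphic primitive vanishing at $\om_0$, which gives
\begin{align*}
    F(z):=e^{-\mathcal{A}(z)}\Big(c_0+\int_{\om_0}^{z}e^{\mathcal{A}(\zeta)}B(\zeta)\,d\zeta\Big).
\end{align*}
This $F$ is holomorphic on $\Omega$. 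Differentiating shows that it solves the ODE, and $F(\om_0)=c_0$ because $\mathcal{A}(\om_0)=0$.

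For uniqueness, let $F_1,F_2$ be two holomorphic solutions on $\Omega$ with the same initial value. Their difference $D$ satisfies $\pa_z(e^{\mathcal{A}}D)=0$ on the connected set $\Omega$, so $e^{\mathcal{A}}D$ is constant. That constant equals its value $0$ at $\om_0$, and since $e^{\mathcal{A}}$ never vanishes, $D\equiv0$.

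There is no genuine obstacle here. The only point needing care is that simple connectivity (together with connectedness) is used twice: once to define $\mathcal{A}$, and once to define the primitive of $e^{\mathcal{A}}B$ as single-valued functions. Without it, analytic continuation around a hole could produce monodromy, and a global solution might fail to exist. This is precisely the mechanism that will later matter in the singular case \eqref{eqn:ODE_sing}.
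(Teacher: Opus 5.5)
Your proposal is correct and is essentially the paper's proof: the paper's $F_h(z)=\exp\bigl(-\int_{\om_0}^{z}A(y)\,dy\bigr)$ is your $e^{-\mathcal{A}}$, and the paper reaches the same formula $F=c_0F_h+F_h\int_{\om_0}^{z}B/F_h$ via $\pa_z(F/F_h)=B/F_h$. Your uniqueness argument, that $e^{\mathcal{A}}D$ is constant on the connected set $\Omega$, spells out what the paper leaves implicit in that derivation.
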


\begin{proof}
    Define
    \begin{align*}
        F_h(z) := \exp\!\left(-\int_{\om_0}^{z} A(y) \dy\right),
    \end{align*}
    where the integral is taken along any smooth path in $\Omega$ joining $\om_0$ to $z$. Since $\Omega$ is simply connected and $A$ is holomorphic, the integral is path-independent and $F_h$ is well defined and holomorphic on $\Omega$, with $F_h(\om_0)=1$.

    Note that $\pa_z \!\left(\tfrac{1}{F_h}\right)=A\tfrac{1}{F_h}$. Multiplying \eqref{eqn:fuchs_regular} by $\tfrac{1}{F_h}$ gives
    \begin{align*}
        \frac{B}{F_h}
        = \pa_z F\frac{1}{F_h} + F\pa_z\!\bigg(\frac{1}{F_h}\bigg)
        = \pa_z\!\bigg(\frac{F}{F_h}\bigg).
    \end{align*}
    Integrating and using $F(\om_0)=c_0$ yields
    \begin{align*}
        F(z)=c_0F_h(z)+F_h(z)\int_{\om_0}^{z}\frac{B(y)}{F_h(y)}\dy.
    \end{align*}
    This function is holomorphic on $\Omega$ and is the unique solution to \eqref{eqn:fuchs_regular}.
\end{proof}

Although solutions to \eqref{eqn:ODE_sing} need not be holomorphic at $z_0$, there are special cases (for instance, the trivial solution $F= 0$ when $B= 0$).
Our goal is to give criteria that guarantee holomorphicity at the singular point.

\begin{lemma}\label{lemma:fuchs_sing_homo}
    Let $\Omega \subset \C$ be open and simply connected, and let $A:\Omega\to\C$ be holomorphic.
    Fix $z_0\in\Omega$ and consider the homogeneous equation
    \begin{align}\label{eqn:ODE_sing_hom}
        (z-z_0)\pa_z F_h(z)+A(z)F_h(z)=0.
    \end{align}
    Then for any simply connected set $U\subset\Omega-\{z_0\}$, every solution on $U$ is a complex multiple of
    \begin{align}\label{def:F_h}
        F_h(z):=(z-z_0)^{-A(z_0)}\psi(z),
    \end{align}
    where $\psi$ admits a holomorphic extension to $\Omega$ and satisfies $\psi(z_0)=1$, and where
    \begin{align*}
        (z-z_0)^{-A(z_0)}:=\exp\!\big(-A(z_0)\log(z-z_0)\big)
    \end{align*}
    is defined using a branch of $\log(z-z_0)$ on $U$.
    
    In particular, if $A(z_0)\notin \Z^- \cup \{0\}$, the only solution that extends holomorphically to $z_0$ is the trivial zero solution.
\end{lemma}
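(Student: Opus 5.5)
The plan is to factor out the leading behavior explicitly, integrating the logarithmic derivative of a solution and splitting off the singular part of the coefficient. Write $A(z)=A(z_0)+(z-z_0)\tilde A(z)$, where $\tilde A(z):=\frac{A(z)-A(z_0)}{z-z_0}$ has a removable singularity at $z_0$ and is therefore holomorphic on $\Omega$. Since $\Omega$ is simply connected, Lemma~\ref{lemma:fuchs_regular} (or simply a primitive) provides a holomorphic $\Phi$ on $\Omega$ with $\pa_z\Phi=\tilde A$ and $\Phi(z_0)=0$. Set
\begin{align*}
    \psi(z):=\exp(-\Phi(z)),
\end{align*}
which is holomorphic and nonvanishing on $\Omega$ with $\psi(z_0)=1$.

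Next, on a simply connected $U\subset\Omega-\{z_0\}$, fix a branch of $\log(z-z_0)$ and define $F_h$ by \eqref{def:F_h}. A direct computation gives
\begin{align*}
    (z-z_0)\pa_zF_h=-A(z_0)F_h-(z-z_0)\tilde A F_h=-A(z)F_h,
\end{align*}
so $F_h$ solves \eqref{eqn:ODE_sing_hom} on $U$. For uniqueness up to scalars, note that $F_h$ never vanishes on $U$. On $U$ the equation is regular, $\pa_zF+\frac{A}{z-z_0}F=0$, so if $F$ is any solution there, $\pa_z(F/F_h)=0$. Since $U$ is connected, $F=cF_h$. This is the same integrating-factor argument used in Lemma~\ref{lemma:fuchs_regular}.

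For the final claim, suppose a solution $F$ on a punctured neighborhood extends holomorphically to $z_0$. Take $U$ to be a slit disk around $z_0$, so that $F=cF_h$ there, and assume $c\neq0$. Then $(z-z_0)^{-A(z_0)}=F/(c\psi)$ extends holomorphically near $z_0$, because $\psi$ is nonvanishing near $z_0$. In particular it is single-valued, so continuation around a small loop must return the same value. Going around the loop multiplies $(z-z_0)^{-A(z_0)}$ by $e^{-2\pi i A(z_0)}$, which forces $A(z_0)\in\Z$. If $A(z_0)$ is a positive integer, then $(z-z_0)^{-A(z_0)}$ has a pole, a contradiction. Hence if $A(z_0)\notin\Z^-\cup\{0\}$, we must have $c=0$ and $F\equiv0$.

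The only delicate point is this last step: making the monodromy and branch bookkeeping rigorous. One clean way is to compare the values of $F/(c\psi)$ on the two sides of the slit. Alternatively, argue by orders: $\mathrm{ord}_{z_0}F\geq0$, and $-A(z_0)$ would have to equal this order, which would put $A(z_0)$ in $\Z^-\cup\{0\}$.
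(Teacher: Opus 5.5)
Your proposal is correct and follows essentially the same route as the paper: split $A(z)=A(z_0)+(z-z_0)A^{(1)}(z)$, take $\psi=\exp\bigl(-\int_{z_0}^{z}A^{(1)}(y)\,dy\bigr)$, and identify every solution on $U$ as a multiple of $(z-z_0)^{-A(z_0)}\psi$. Your quotient argument $\pa_z(F/F_h)=0$ and your order/monodromy justification of the final claim make explicit what the paper states in one line, namely that $(z-z_0)^{-A(z_0)}$ is holomorphic at $z_0$ if and only if $-A(z_0)\in\Z^+\cup\{0\}$.
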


\begin{proof}
    Write
    \begin{align*}
        A(z)=A(z_0)+(z-z_0)A^{(1)}(z),
    \end{align*}
    where $A^{(1)}$ is holomorphic on $\Omega$. Fix a simply connected $U\subset\Omega-\{z_0\}$ and a branch of $\log(z-z_0)$ on $U$. Solving by separation of variables gives
    \begin{align*}
        F_h(z)
        &=\exp\!\left(-\int_{\om_0}^{z}\frac{A(y)}{y-z_0}\dy\right)\\
        &=\exp\!\left(-A(z_0)\int_{\om_0}^{z}\frac{dy}{y-z_0}\right)
        \exp\!\left(-\int_{\om_0}^{z}A^{(1)}(y)\dy\right),
    \end{align*}
    where $\om_0\in U$ is fixed and all integrals are taken along paths in $U$. Using the chosen branch of $\log(z-z_0)$,
    \begin{align*}
        \int_{\om_0}^{z}\frac{dy}{y-z_0}=\log(z-z_0)-\log(\om_0-z_0).
    \end{align*}
    Hence
    \begin{align*}
        F_h(z)=C(z-z_0)^{-A(z_0)}\psi(z),
    \end{align*}
    where $C\in\C$ is constant and
    \begin{align}\label{def:psi_fuchs}
        \psi(z):=\exp\!\left(-\int_{z_0}^{z}A^{(1)}(y)\dy\right)
    \end{align}
    is holomorphic on $\Omega$ with $\psi(z_0)=1$ (the integral is path-independent in $\Omega$). The last assertion follows because $(z-z_0)^{-A(z_0)}$ is holomorphic at $z_0$ if and only if $-A(z_0)\in\Z^+\cup\{0\}$.
\end{proof}

We now turn to the affine equation \eqref{eqn:ODE_sing}. When $A(z_0)\notin\Z^-\cup\{0\}$, one can construct a holomorphic solution near $z_0$ by a convergent power series, and then extend it holomorphically to all of $\Omega$ by analytic continuation.

\begin{lemma}\label{lemma:fuchs_sing_part}
    Let $\Omega\subset\C$ be open and simply connected and let $A,B:\Omega\to\C$ be holomorphic. Fix $z_0\in\Omega$ and assume $A(z_0)\notin\Z^-\cup\{0\}$. Then there exists a holomorphic function $F_p:\Omega\to\C$ solving \eqref{eqn:ODE_sing}.
\end{lemma}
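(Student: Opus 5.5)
The proof has two steps: build a local holomorphic solution near $z_0$ by power series (Frobenius method), then extend it to all of $\Omega$ with Lemma~\ref{lemma:fuchs_regular}.

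\textbf{Step 1: formal series.} Pick $r>0$ with $\overline{D(z_0,r)}\subset\Omega$ and expand
\begin{align*}
    A(z)=\sum_{n\geq0}\alpha_n(z-z_0)^n,\qquad B(z)=\sum_{n\geq0}\beta_n(z-z_0)^n,
\end{align*}
both converging on $D(z_0,r)$. Seek $F_p(z)=\sum_{n\geq0}\phi_n(z-z_0)^n$. Inserting this into \eqref{eqn:ODE_sing} and matching the coefficient of $(z-z_0)^n$ gives the recursion
\begin{align*}
    (n+\alpha_0)\phi_n = \beta_n-\sum_{m=0}^{n-1}\alpha_{n-m}\phi_m,\qquad n\geq 0.
\end{align*}
Here $\alpha_0=A(z_0)$. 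By hypothesis $A(z_0)\notin\Z^-\cup\{0\}$, so $n+\alpha_0\neq0$ for every $n\geq0$, and the recursion determines all $\phi_n$ uniquely.

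\textbf{Step 2: convergence.} This is the main technical point. Choose $\rho<r$. Cauchy estimates give $|\alpha_n|\leq M\rho^{-n}$ and $|\beta_n|\leq M\rho^{-n}$. Since $|n+\alpha_0|\to\infty$ and never vanishes, $\delta:=\inf_{n\geq 0}|n+\alpha_0|/(n+1)$ is positive. Set $\psi_n:=\rho^n|\phi_n|$; the recursion then gives
\begin{align*}
    \delta(n+1)\psi_n\leq M\Big(1+\sum_{m<n}\psi_m\Big).
\end{align*}
I will dominate this by a majorant sequence. Let $S_n:=1+\sum_{m\leq n}\psi_m$. Then $S_n\leq S_{n-1}\big(1+\tfrac{M}{\delta(n+1)}\big)$, which gives polynomial growth $S_n\lesssim n^{M/\delta}$. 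Consequently $\psi_n$ grows at most polynomially, and $\sum_n\phi_n(z-z_0)^n$ converges on $D(z_0,\rho)$. Termwise differentiation then shows that $F_p$ solves \eqref{eqn:ODE_sing} there. An alternative is to apply the classical Cauchy majorant method directly to the recursion.

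\textbf{Step 3: extension to $\Omega$.} Take $\om_0\in D(z_0,\rho)$ with $\om_0\neq z_0$. On the simply connected set $\Omega-\{z_0\}$ locally the equation reads $\pa_zF+\tfrac{A}{z-z_0}F=\tfrac{B}{z-z_0}$, which is regular away from $z_0$. Since $\Omega-\{z_0\}$ is not simply connected, I would rather argue by analytic continuation along paths in $\Omega-\{z_0\}$, using the local uniqueness given by Lemma~\ref{lemma:fuchs_regular} on small discs.

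The continuation of $F_p$ along any loop around $z_0$ returns $F_p$. The reason is that the loop can be deformed inside $\Omega$ into a small circle within $D(z_0,\rho)$, where $F_p$ is already single-valued. More precisely, continuation around a loop changes a solution by a solution of the homogeneous equation, i.e.\ by a multiple of $F_h$ from Lemma~\ref{lemma:fuchs_sing_homo}. The monodromy around $z_0$ is trivial for $F_p$ because $F_p$ is holomorphic near $z_0$. Loops in the simply connected $\Omega$ that avoid $z_0$ are homotopic in $\Omega-\{z_0\}$ to powers of a small circle around $z_0$, so every monodromy is trivial.

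By the monodromy theorem, $F_p$ therefore extends to a single-valued holomorphic function on $\Omega-\{z_0\}$. It agrees with the series near $z_0$, so it is holomorphic on all of $\Omega$ and solves \eqref{eqn:ODE_sing} there by the identity principle.
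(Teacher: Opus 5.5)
Your proof is correct, but both halves take a different route from the paper. For the local solution, the paper does not run the Frobenius recursion with the full coefficient $A$. Instead it reuses the factor $\psi$ from Lemma~\ref{lemma:fuchs_sing_homo}, which satisfies $(z-z_0)\pa_z\psi=-(A(z)-A(z_0))\psi$. Then $F/\psi$ solves the constant-coefficient equation $(z-z_0)\pa_z G+A(z_0)G=B/\psi$, and its Taylor coefficients are simply those of $\psi_B=B/\psi$ divided by $k+A(z_0)$, so convergence on the same disk is immediate. Your recursion couples all coefficients through $\sum_{m<n}\alpha_{n-m}\phi_m$, which is why you need the discrete Gronwall bound $S_n\le\prod_{k\le n}\bigl(1+\tfrac{M}{\delta(k+1)}\bigr)\lesssim n^{M/\delta}$. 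That bound is sound, including $\delta>0$; your argument is longer but self-contained, and it does not rely on finding the integrating factor.

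For the extension, the paper avoids the monodromy theorem. It covers $\Omega$ by $D_\delta(z_0)$ and two simply connected slit domains $U_1,U_2$, which are images under a Riemann map of the disk slit along two opposite radii. It solves on each with Lemma~\ref{lemma:fuchs_regular}, using matching data at a point of $U_1\cap U_2\cap D_\delta(z_0)$, and glues using that $D_\delta(z_0)\cup(U_1\cap U_2)$ is connected. Your argument instead continues along paths in $\Omega-\{z_0\}$ and observes that the monodromy is trivial, because $\pi_1(\Omega-\{z_0\})\cong\Z$ is generated by a small circle on which $F_p$ is already single-valued. This is shorter and more conceptual, but it rests on the monodromy theorem and on that identification of the fundamental group, which itself uses that a simply connected planar domain is homeomorphic to a disk.

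Two cosmetic fixes. Delete the phrase calling $\Omega-\{z_0\}$ simply connected, which you immediately retract. Also say that loops are deformed within $\Omega-\{z_0\}$, not merely within $\Omega$.
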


\begin{proof}
    Let $\psi$ be as in \eqref{def:psi_fuchs} and set
    \begin{align*}
        \psi_B(z):=\frac{B(z)}{\psi(z)}.
    \end{align*}
    Since $\psi$ never vanishes, $\psi_B$ is holomorphic on $\Omega$. Choose $\delta>0$ such that $D_{\delta}(z_0)\subset\Omega$ and write the Taylor expansion
    \begin{align*}
        \psi_B(z)=\sum_{k\geq 0}\psi_{B,k}(z-z_0)^k
    \end{align*}
    on $D_{\delta}(z_0)$.

    Define, for $z\in D_{\delta}(z_0)$,
    \begin{align}\label{def:aff_fuchs_sol}
        F_p(z):=\psi(z)\sum_{k\geq  0}\frac{\psi_{B,k}}{k+A(z_0)}(z-z_0)^k.
    \end{align}
    Since $A(z_0)\notin\Z^-\cup\{0\}$, all denominators are nonzero and the series has the same radius of convergence as $\psi_B$. In particular, $F_p$ is holomorphic on $D_{\delta}(z_0)$.

    We claim that $F_p$ solves \eqref{eqn:ODE_sing} on $D_{\delta}(z_0)$.
    Using
    \begin{align*}
        (z-z_0)\pa_z\psi(z)=-(A(z)-A(z_0))\psi(z),
    \end{align*}
    we compute
    \begin{align*}
        (z-z_0) \pa_z F_p(z)&=
        (z-z_0)\Big[
        \pa_z \psi(z)
        \sum_{k\geq 0}\frac{\psi_{B,k}}{k+A(z_0)}(z-z_0)^{k}+
        \psi(z)
        \sum_{k\geq 1}\frac{\psi_{B,k}}{k+A(z_0)} k (z-z_0)^{k-1}
        \Big]\\ &= 
        -(A(z) - A(z_0))\psi(z)
        \sum_{k\geq 0}\frac{\psi_{B,k}}{k+A(z_0)}(z-z_0)^{k}+
        \psi(z)\sum_{k\geq 1}\frac{\psi_{B,k}}{k+A(z_0)} k (z-z_0)^{k}\\
        &=
        -A(z)F_p(z) + A(z_0)\psi(z)
        \sum_{k\geq 0}\frac{\psi_{B,k}}{k+A(z_0)}(z-z_0)^{k}+
        \psi(z)\sum_{k\geq 1}\frac{\psi_{B,k}}{k+A(z_0)} k (z-z_0)^{k}\\
        &=
        -A(z)F_p(z) + \psi(z) \Big(\psi_{B,0} +
        \sum_{k\geq 1}\psi_{B,k} (z-z_0)^{k}\Big)
        \\
        &=
        -A(z)F_p(z) + \psi(z) \psi_B(z)
        \\
        &=
        -A(z)F_p(z) + B(z).
    \end{align*}
    which is exactly \eqref{eqn:ODE_sing}.

    We define the open and simply connected sets $U_1,U_2\subset\Omega$ as follows.
    First, if $\Omega=\C$, we set
    \begin{align*}
        U_1&:=\{z_0+re^{ix}:\, 0<r,\, x\in(0,2\pi)\},\\
        U_2&:=\{z_0+re^{ix}:\, 0<r,\, x\in(-\pi,\pi)\}.
    \end{align*}

    Otherwise, if $\Omega\neq\C$, we apply the Riemann mapping theorem (see \cite[Chapter~6, \S1]{ahlfors1979complex}), so there exists a biholomorphic function $\varphi:\D\to\Omega$, with $\varphi(0)=z_0$. Then we define $U_1$ and $U_2$ by
    \begin{align*}
        U_1&:=\varphi\big(\{re^{ix}:\, 0<r<1,\, x\in(0,2\pi)\}\big),\\
        U_2&:=\varphi\big(\{re^{ix}:\, 0<r<1,\, x\in(-\pi,\pi)\}\big).
    \end{align*}
    In particular, $D_{\delta}(z_0)\cap U_1\cap U_2\neq\emptyset$, and $D_{\delta}(z_0)\cup (U_1\cap U_2)$ is connected.

    Consider any point $\om_0\in U_1\cap U_2 \cap D_\delta(z_0)-\{z_0\}$ and set $c_0:=F_p(\om_0)$. Since $U_1$ and $U_2$ are simply connected, and $\frac{A}{z-z_0}$ and $\frac{B}{z-z_0}$ are holomorphic in both domains, Lemma~\ref{lemma:fuchs_regular} yields unique holomorphic solutions
    \begin{align*}
        F_p^{(1)}:U_1\to\C,\qquad F_p^{(2)}:U_2\to\C,
    \end{align*}
    of \eqref{eqn:ODE_sing} satisfying $F_p^{(1)}(\om_0)=F_p^{(2)}(\om_0)=c_0$. By uniqueness, $F_p^{(j)}$ coincides with $F_p$ on $D_\delta(z_0)\cap U_j$. Consequently, $F_p^{(j)}$ admits a holomorphic extension to $D_\delta(z_0)\cup U_j$, which will be denoted again by $F_p^{(j)}$. These functions $F_p^{(1)}$ and $F_p^{(2)}$ coincide on $D_\delta(z_0)\cup (U_1\cap U_2)$ since they already coincide on $D_\delta(z_0)$ and $D_\delta(z_0)\cup (U_1\cap U_2)$ is connected.
    
    Therefore, $F_p^{(1)}$ and $F_p^{(2)}$ glue together to define a single holomorphic function, extending $F_p$ to $D_\delta(z_0)\cup U_1\cup U_2 =\Omega$, which solves \eqref{eqn:ODE_sing}.
\end{proof}

\begin{corollary}\label{coro:fuchs_particular}
    Let $\Omega,A,B,z_0$ be as in Lemma~\ref{lemma:fuchs_sing_part}. Then for any simply connected $U\subset\Omega-\{z_0\}$, every holomorphic solution $F$ of \eqref{eqn:ODE_sing} on $U$ can be written as
    \begin{align*}
        F(z)=C F_h(z)+F_p(z),
    \end{align*}
    where $C\in\C$, $F_h$ is given by \eqref{def:F_h}, and $F_p$ is the holomorphic solution given in Lemma~\ref{lemma:fuchs_sing_part}.
    In particular, there exists a unique holomorphic $F:\Omega\to\C$ solving \eqref{eqn:ODE_sing}, corresponding to $C=0$.
\end{corollary}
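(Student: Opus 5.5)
The plan is to reduce the statement to the two lemmas just proved: Lemma~\ref{lemma:fuchs_sing_homo} for the homogeneous part and Lemma~\ref{lemma:fuchs_sing_part} for a particular solution. The argument is linear. First, fix a simply connected $U\subset\Omega-\{z_0\}$ and a holomorphic solution $F$ of \eqref{eqn:ODE_sing} on $U$. By Lemma~\ref{lemma:fuchs_sing_part}, the function $F_p$ is holomorphic on all of $\Omega$ and solves \eqref{eqn:ODE_sing}, so its restriction to $U$ does too. The difference $F-F_p$ is then a holomorphic solution on $U$ of the homogeneous equation \eqref{eqn:ODE_sing_hom}. By Lemma~\ref{lemma:fuchs_sing_homo} it equals $C F_h$ for some $C\in\C$, with $F_h$ as in \eqref{def:F_h} (for the branch of $\log(z-z_0)$ chosen on $U$). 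This gives the decomposition $F=CF_h+F_p$.

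Existence in the ``in particular'' clause is immediate: $F_p$ itself is a holomorphic solution on $\Omega$, which is the case $C=0$. For uniqueness, suppose $F:\Omega\to\C$ is holomorphic and solves \eqref{eqn:ODE_sing}. Then $F-F_p$ is holomorphic on $\Omega$, including at $z_0$, and solves \eqref{eqn:ODE_sing_hom} there. Restricting to a small slit disk $U$ around $z_0$, the first part gives $F-F_p=CF_h=C(z-z_0)^{-A(z_0)}\psi$ on $U$. Since $A(z_0)\notin\Z^-\cup\{0\}$, the last assertion of Lemma~\ref{lemma:fuchs_sing_homo} says the only homogeneous solution that extends holomorphically to $z_0$ is zero.

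To make that step explicit: $\psi(z_0)=1$, so $\psi$ is nonvanishing near $z_0$. Hence $(z-z_0)^{-A(z_0)}$ would have to extend holomorphically to $z_0$ when $C\neq0$. This is impossible. If $\Re A(z_0)>0$ the function is unbounded near $z_0$. Otherwise it is multivalued: continuing it around a small circle multiplies it by $e^{-2\pi i A(z_0)}\neq1$, because $A(z_0)\notin\Z$. When $\Re A(z_0)\le0$ we have $A(z_0)\notin\Z$ by the hypothesis, so the multivaluedness argument applies. Therefore $C=0$ on $U$, and by the identity theorem (since $\Omega$ is connected) $F=F_p$ on $\Omega$.

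The only delicate point is the case analysis in the previous paragraph, i.e.\ that $(z-z_0)^{-A(z_0)}$ is holomorphic at $z_0$ only when $-A(z_0)\in\Z^+\cup\{0\}$. That fact is already part of the last assertion of Lemma~\ref{lemma:fuchs_sing_homo}, so I expect no real obstacle; the rest is routine linear bookkeeping.
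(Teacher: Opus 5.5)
Your proposal is correct and follows the paper's proof: you subtract $F_p$, apply Lemma~\ref{lemma:fuchs_sing_homo} to get $F-F_p=CF_h$ on $U$, and then use its last assertion to force $C=0$ when $F$ is holomorphic across $z_0$. Your additional unboundedness/monodromy case split and the identity-theorem step from a slit disk to all of $\Omega$ only make explicit what the paper leaves implicit.
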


\begin{proof}
    If $F$ solves \eqref{eqn:ODE_sing} on $U$, then $F-F_p$ solves the homogeneous equation \eqref{eqn:ODE_sing_hom} on $U$, hence $F-F_p=C F_h$ by Lemma~\ref{lemma:fuchs_sing_homo}.
    If $F$ extends holomorphically across $z_0$, then the homogeneous term must also extend holomorphically. Under $A(z_0)\notin\Z^-\cup\{0\}$, Lemma~\ref{lemma:fuchs_sing_homo} implies $C=0$. Uniqueness follows.
\end{proof}

Now assume $A(z_0)\notin\Z$. We give a useful criterion in terms of the restriction to a small circle around $z_0$.

\begin{lemma}\label{lemma:fuchs_chara}
    Let $\Omega\subset\C$ be open and simply connected and let $\ti{A},\ti{B}:\Omega\to\C$ be holomorphic. Fix $z_0\in\Omega$ and assume $\ti{A}(z_0)\notin\Z$.
    Let $\delta>0$ be such that $\overline{D}_{\delta}(z_0)\subset\Omega$, and define
    \begin{align*}
        \g(x)=z_0+\delta e^{i(x+x_0)},\quad x\in[0,2\pi],\ x_0\in\RR.
    \end{align*}
    Consider the ODE on $[0,2\pi]$:
    \begin{align}\label{eqn:ODE_circ}
        \pa_x f(x)+ i\ti{A}\big(\g(x)\big)f(x)=i\ti{B}\big(\g(x)\big).
    \end{align}
    Let $f$ be a solution of \eqref{eqn:ODE_circ} and set $c_0:=f(0)$. Let $F_p:\Omega\to\C$ be the unique holomorphic solution of \eqref{eqn:ODE_sing} given by Corollary~\ref{coro:fuchs_particular}, with $A=\ti{A}$ and $B=\ti{B}$.
    Then $f=F_p\circ\g$ if and only if $f(2\pi)=f(0)$.
\end{lemma}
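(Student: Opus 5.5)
First I check that composing with $\g$ turns \eqref{eqn:ODE_sing} into \eqref{eqn:ODE_circ}. Let $F$ be holomorphic near the circle and put $f=F\circ\g$. Since $\g'(x)=i(\g(x)-z_0)$, we get
\begin{align*}
    \pa_x f(x)=i(\g(x)-z_0)\pa_zF(\g(x)).
\end{align*}
So if $F$ solves \eqref{eqn:ODE_sing} with $A=\ti{A}$ and $B=\ti{B}$, then
\begin{align*}
    \pa_x f=i\big(\ti{B}(\g)-\ti{A}(\g)f\big),
\end{align*}
which is exactly \eqref{eqn:ODE_circ}. Hence $F_p\circ\g$ solves \eqref{eqn:ODE_circ}. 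Since $F_p$ is single valued on $\Omega$, it also satisfies $F_p\circ\g(2\pi)=F_p\circ\g(0)$. This gives the direction ``$\Rightarrow$''.

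For the converse, I describe all solutions of \eqref{eqn:ODE_circ}. This is a regular linear first-order ODE on $[0,2\pi]$, so its solutions are exactly $F_p\circ\g+C\,\phi$. Here $C\in\C$ and $\phi$ is a nonzero solution of the homogeneous equation $\pa_x\phi+i\ti{A}(\g)\phi=0$. I take
\begin{align*}
    \phi(x)=\exp\Big(-i\int_0^x\ti{A}(\g(s))\,ds\Big).
\end{align*}
Then $f-F_p\circ\g=C\phi$, and we have $C=0$ if and only if $f(0)=F_p(\g(0))$. Now suppose $f(2\pi)=f(0)$. Since $F_p\circ\g$ is periodic, this gives $C(\phi(2\pi)-\phi(0))=0$. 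It therefore suffices to show $\phi(2\pi)\neq\phi(0)=1$.

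This monodromy computation is the key step, and it is where $\ti{A}(z_0)\notin\Z$ enters. Using $ds=\frac{dy}{i(y-z_0)}$ along $\g$, the exponent becomes
\begin{align*}
    -i\int_0^{2\pi}\ti{A}(\g(s))\,ds=-\oint_{|y-z_0|=\delta}\frac{\ti{A}(y)}{y-z_0}\,dy=-2\pi i\,\ti{A}(z_0),
\end{align*}
by the residue theorem. This uses that $\ti{A}$ is holomorphic on $\overline{D}_\delta(z_0)\subset\Omega$. Hence $\phi(2\pi)=e^{-2\pi i\ti{A}(z_0)}$. This equals $1$ only if $\ti{A}(z_0)\in\Z$, which is excluded. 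So $\phi(2\pi)\ne 1$, hence $C=0$ and $f=F_p\circ\g$.

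Alternatively, one can argue through Lemma~\ref{lemma:fuchs_sing_homo}: $\phi$ is, up to a constant, $F_h\circ\g$ with $F_h=(z-z_0)^{-\ti{A}(z_0)}\psi$. Since $\psi$ is single valued, $F_h$ picks up the factor $e^{-2\pi i\ti{A}(z_0)}$ after one turn around $z_0$. The only care needed is with the orientation and parametrization of $\g$, including the shift $x_0$, and the shift does not affect the residue computation.
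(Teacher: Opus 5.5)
Your proof is correct. It has the same skeleton as the paper's proof: write $f=F_p\circ\g+C\cdot(\text{homogeneous solution})$, show that the homogeneous part has monodromy $e^{-2\pi i\ti{A}(z_0)}\neq 1$, and conclude $C=0$. Where you differ is in how you obtain that monodromy factor.

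The paper first lifts $f$ to a holomorphic solution $F=CF_h+F_p$ on the slit disk $U$. To do so it uses Corollary~\ref{coro:fuchs_particular}, choosing $C$ by matching at an interior point $x_1$ and then invoking uniqueness. It then uses the representation $F_h=(z-z_0)^{-\ti{A}(z_0)}\psi$ from Lemma~\ref{lemma:fuchs_sing_homo}, with an explicit branch of $\log(z-z_0)$, and reads off the jump of $F_h$ across the slit. That last step also needs $\psi$ to be nonvanishing.

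You never leave $[0,2\pi]$. You only use that the homogeneous solutions of a regular linear first-order ODE form a one-dimensional space. You then evaluate $\int_0^{2\pi}\ti{A}(\g(s))\,ds=2\pi\ti{A}(z_0)$ by Cauchy's integral formula, which is just the mean value property of $\ti{A}$ on the circle.

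Your route is shorter and more elementary. It avoids choosing branches of the logarithm, taking boundary limits on the two sides of the slit, and checking that $\psi$ does not vanish. The paper's route shows the obstruction as the local Fuchsian exponent $-\ti{A}(z_0)$ of $F_h$, the same structure behind Corollary~\ref{coro:fuchs_particular} and the gluing in Lemma~\ref{lemma:fuchs_criteria}. Your closing remark already identifies this correspondence: your $\phi$ is a constant multiple of $F_h\circ\g$.
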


\begin{proof}
    Let $F_p:\Omega\to\C$ be the unique holomorphic function solving \eqref{eqn:ODE_sing}, see Corollary~\ref{coro:fuchs_particular}, and define $f:=F_p\circ\g$. It follows that $f$ satisfies \eqref{eqn:ODE_circ} and
    \begin{align*}
        f(2\pi)=F_p(\g(2\pi))=F_p(\g(0))=f(0).
    \end{align*}
    
    Now assume that $f$ solves \eqref{eqn:ODE_circ} and satisfies $f(2\pi)=f(0)$. We define the open and simply connected set $U\subset \Omega - \{z_0\}$ given by
    \begin{align*}
        U:=\{z_0 + r e^{i(x+x_0)}:\ r\in(0,\delta),\ x\in(0,2\pi)\}.
    \end{align*}
    Fix the branch of $\log(z-z_0)$ on $U$ determined by
    \begin{align*}
        \log\bigl(re^{i(x+x_0)}\bigr)=\log(r)+i(x+x_0).
    \end{align*}
    With this choice, $\log(z-z_0)$ is holomorphic on $U$ and has well-defined boundary limits as $x\to0^+$ and $x\to2\pi^-$.

    By Corollary~\ref{coro:fuchs_particular} we find that  every solution to \eqref{eqn:ODE_sing} in $U$ is written as
    \begin{align*}
        F = C F_h + F_p,
    \end{align*}
    where $C\in\C$, $F_p:\Omega\to\C$ is holomorphic and $F_h:U\to\C$ is given in \eqref{def:F_h} with $A=\ti{A}$.
    
    Fix any $x_1\in(0,2\pi)$ and set $\om_1:=\g(x_1)=z_0+\delta e^{i(x_1+x_0)}\neq\g(0)$. Choose $C$ so that $F(\om_1) = f(x_1)$, and hence $f = F\circ\g$. Our goal is to show that this choice forces $C=0$, so that $F$ is the unique holomorphic solution given by Corollary~\ref{coro:fuchs_particular}.
    
    Since $F_p$ is holomorphic on a neighborhood of $\overline{D}_\delta(z_0)$, the function $F_p\circ\g$ extends continuously to $[0,2\pi]$ and satisfies $F_p(\g(2\pi))=F_p(\g(0))$. Consequently
    \begin{align*}
        0=f(2\pi)-f(0)& =
        \lim_{x\to 2\pi^-} F\big(z_0 + \delta e^{i(x+x_0)}\big) -
        \lim_{x\to 0^+} F\big(z_0 +  \delta e^{i(x+x_0)}\big)\\
        &= C \Big(
        \lim_{x\to 2\pi^-} F_h\big(z_0 + \delta e^{i(x+x_0)}\big)-
        \lim_{x\to 0^+} F_h\big(z_0 +  \delta e^{i(x+x_0)}\big)
        \Big).
    \end{align*}

    Therefore, it follows for every $z\in U$:
    \begin{align*}
        F_h(z)=(z-z_0)^{-\ti{A}(z_0)} \psi(z)=\exp\!\left(-\ti{A}(z_0)\int_{\om_1}^{z} \frac{dy}{y-z_0}\right)
        \psi(z),
    \end{align*}
    where the integral is defined along paths contained in $U$. 

    Indeed, working with $\g$ yields
    \begin{align*}
        0
        &= C \Big(
        \lim_{x\to 2\pi^-} F_h(z_0 + \delta e^{i(x+x_0)})-
        \lim_{x\to 0^+} F_h(z_0 + \delta e^{i(x+x_0)})
        \Big)\\
        &=
        C \left[\lim_{x\to 2\pi^-}
        \exp\!\left(-i\ti{A}(z_0)\int_{x_1}^{x} \,ds\right)
        -
        \lim_{x\to 0^+}
        \exp\!\left(-i\ti{A}(z_0)\int_{x_1}^{x} \,ds\right)\right]
        \psi(z_0+\delta e^{ix_0})
        \\
        &=
        C \big(e^{-i\ti{A}(z_0)2\pi}-1\big)
        \psi(z_0+\delta e^{ix_0}).
    \end{align*}
    
    By definition (see \eqref{def:psi_fuchs}), $\psi$ is the exponential of a holomorphic function. Moreover, since $\ti{A}(z_0) \notin \Z$ by assumption, we have $e^{-i \ti{A}(z_0) 2\pi} \neq 1$. Therefore, the only possibility is $C = 0$, and hence $F = F_p$ is holomorphic and $f = F_p \circ \g$.
\end{proof}

We introduce the following notation, which will be used in the next lemma. Let $z_0,z_1\in\C$ and let $r$ satisfy $0<r<|z_0-z_1|$. We define the \emph{racket set} associated to $z_0$, $z_1$ and $r$ by
\begin{align}\label{def:raqueta}
    R_{z_0,z_1}(r):=
    \big\{z\in\C:\, |z-z_1|=r\big\}\cup
    \big\{(1-x)z_0 + x\big(z_1 + r \tfrac{z_0-z_1}{|z_0-z_1|}\big):\, x\in[0,1]\big\}.
\end{align}
Note that the point
\begin{align*}
    z_1 + r \frac{z_0-z_1}{|z_0-z_1|}
\end{align*}
is the unique intersection point between the circle of center $z_1$ and radius $r$ and the line segment joining $z_0$ and $z_1$. For a geometric illustration of the set $R_{z_0,z_1}(r)$, see Figure~\ref{fig:raqueta}.

\begin{figure}[ht]
\centering
\begin{tikzpicture}[scale=3]
    
    \def\r{0.25}      % racket radius
    \def\d{1.3}      % distance |z0-z1|
    \def\te{30}    % angle arg(z1-z0) in degrees
    
    \coordinate (z0) at (0,0);
    \coordinate (z1) at (\te:\d);
    
    \coordinate (p) at ($(z1)+(\te+180:\r)$);
    
    \draw (z1) circle (\r);
    \draw (z0) -- (p);
    
    \draw[->, >=Stealth] (z1) -- ++(0:\r);
    
    \fill (z0) circle (0.6pt);
    \fill (z1) circle (0.6pt);

    \node[above] at ($(z1)+(0.5*\r,0)$) {$r$};
    \node[below left] at (z0) {$z_0$};
    \node[above left] at (z1) {$z_1$};

\end{tikzpicture}
\caption{The racket set $R_{z_0,z_1}(r)$.}
\label{fig:raqueta}
\end{figure}

The next lemma is technical and will be used to prove the main result of this section.

\begin{lemma}\label{lemma:raquetas}
    Let $\D\subset\C$ be the open unit disk and let $\{z_j\}_{j=1}^n\subset\D$ be $n$ distinct points. Then there exists a point $\om_0\in\partial\D$ and a collection of open and convex sets $\{C_j\}_{j=1}^n$ covering $\D$ such that
    \begin{align*}
        C_j \cap \{z_1,\ldots,z_n\}=\{z_j\}.
    \end{align*}
    
    Moreover, $\om_0$ is an accumulation point for every $C_j$.

    Additionally, there exists $r>0$ such that the racket set $R_{\om_0,z_j}(r)$ satisfies
    \begin{align*}
        R_{\om_0,z_j}(r)\subset C_j\cup\{\om_0\}\quad\text{for all } j
    \end{align*}
\end{lemma}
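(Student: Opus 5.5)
The plan is to build everything from rays issuing from a suitably chosen boundary point $\om_0\in\pa\D$. First choose $\om_0$. The $n$ points $z_j$ are distinct, and we want the directions $\arg(z_j-\om_0)$ to be pairwise distinct. Each pair $z_j\neq z_k$ determines a line. This line meets $\pa\D$ in at most two points, so only finitely many $\om_0\in\pa\D$ are excluded, and we may fix $\om_0$ outside this finite set. Then the angles
\begin{align*}
    \al_j:=\arg(z_j-\om_0)
\end{align*}
are distinct. Since $z_j\in\D$, they all lie in the open half-plane of directions pointing into $\D$, i.e. in an open interval $(\phi-\tfrac{\pi}{2},\phi+\tfrac{\pi}{2})$, where $\phi=\arg(-\om_0)$. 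Relabel so that $\al_1<\al_2<\cdots<\al_n$.

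Next, define the sets $C_j$ as open angular sectors with vertex $\om_0$, intersected with $\D$. Pick separating angles $\beta_0<\al_1<\beta_1<\al_2<\cdots<\al_n<\beta_n$, with $\beta_0=\phi-\tfrac{\pi}{2}$ and $\beta_n=\phi+\tfrac{\pi}{2}$. We want open sets that cover $\D$, including the points on the boundary rays $\arg(z-\om_0)=\beta_j$. To achieve this, enlarge slightly and set
\begin{align*}
    C_j:=\D\cap\{\om_0+\rho e^{is}:\ \rho>0,\ s\in(\beta_{j-1}-\ep,\beta_j+\ep)\},
\end{align*}
with $\ep>0$ small enough that $\al_{j\pm1}\notin(\beta_{j-1}-\ep,\beta_j+\ep)$. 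For $j=1$ and $j=n$ no enlargement beyond $\beta_0$ or $\beta_n$ is needed, since those directions already exit $\D$.

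Each sector of opening less than $\pi$ is convex, and its intersection with the convex set $\D$ is convex and open. Every $z\in\D$ satisfies $\arg(z-\om_0)\in(\beta_0,\beta_n)$, so the $C_j$ cover $\D$. Each $C_j$ contains exactly $z_j$, because $z_k\in C_j$ iff $\al_k$ lies in the $j$-th angular interval. Finally, $\om_0$ is the vertex of every sector, and every sector meets $\D$ in points arbitrarily close to $\om_0$, so $\om_0$ is an accumulation point of each $C_j$.

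Lastly, choose $r$. The segment from $\om_0$ to $z_j$, minus the point $\om_0$, lies on the ray of direction $\al_j$, which lies in the open sector. It also lies in $\D$: by strict convexity of the disk, the open segment between a boundary point and an interior point is interior. So the segment is contained in $C_j\cup\{\om_0\}$. The truncated segment in $R_{\om_0,z_j}(r)$ is a subset of it, so the same holds. For the circle, $z_j$ is an interior point of the open set $C_j$, so some disk $D_{r_j}(z_j)$ is contained in $C_j$. Take $r<\min_j\min(r_j,|z_j-\om_0|)$; then $\{|z-z_j|=r\}\subset C_j$ for every $j$, as required.

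The only delicate step is the choice of $\om_0$ making the directions distinct, together with the verification that the open segment stays inside $\D$. Both are elementary, and I expect no real obstacle.
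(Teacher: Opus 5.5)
Your proof is correct and follows the paper's argument. You choose $\om_0$ off the finitely many lines through pairs of points, order the $z_j$ by the direction of $z_j-\om_0$, take $C_j$ to be open sectors with vertex $\om_0$ intersected with $\D$, and pick $r$ so that $D_r(z_j)\subset C_j$. The paper lets $C_j$ span the directions strictly between those of $z_{j-1}$ and $z_{j+1}$, which gives the overlapping cover without your separating angles $\beta_j$ and $\ep$-enlargement; you, in turn, spell out the segment and accumulation-point checks that the paper leaves implicit.
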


\begin{proof}
    We first choose a boundary point $\om_0\in\partial\D$ which is not collinear with any pair of distinct points $z_j,z_k$.
    For each pair $j\neq k$, the line through $z_j$ and $z_k$ meets $\partial\D$ in at most two points, so the set of such \emph{bad} boundary points is finite. Fix $\om_0$ outside this set.
    
    For each $j=1,\ldots,n$, there exist unique numbers $t_j>0$ and $\te_j\in(0,\pi)$ such that
    \begin{align*}
        z_j=\om_0\bigl(1+i t_j e^{i\te_j}\bigr).
    \end{align*}
    Each $z_j\in\D$ satisfies $\Re(\overline{\om_0}z_j-1)<0$, so $\overline{\om_0} z_j-1$ has a unique polar form $\overline{\om_0}z_j-1=i t_j e^{i\te_j}$ with $t_j>0$ and $\te_j\in(0,\pi)$.

    By the choice of $\om_0$, the angles $\te_1,\ldots,\te_n$ are pairwise distinct. Relabel the points $z_j$ so that
    \begin{align}\label{eqn:th_sorted}
        0<\te_1<\te_2<\ldots<\te_n<\pi,
    \end{align}
    and set $\te_0:=0$ and $\te_{n+1}:=\pi$.
    
    For each $j=1,\ldots,n$, define the set $C_j$ as the intersection of an open cone with vertex at $\om_0$ and the $\D$:
    \begin{align}\label{def:Cj}
        C_j:=\bigl\{\om_0(1+i t e^{i\te}):\, t>0,\ \te\in(\te_{j-1},\te_{j+1})\bigr\}\cap\D.
    \end{align}
    These sets $\{C_j\}$ cover $\D$. Each $C_j$ is open and convex, as it is the intersection of open and convex sets, and contains $z_j$.
    
    Finally, let $\delta_j>0$ such that 
    \begin{align*}
        D_{\delta_j}(z_j)\subset C_j.
    \end{align*}

    We set $r:=\min_j\{\delta_j\}$, so that
    \begin{align*}
        R_{\om_0,z_j}(r)\subset C_j\cup\{\om_0\}\qquad\text{for all } j.
    \end{align*}

    For a geometric illustration of the set $C_j$ and $R_{\om_0,z_j}(r)$ for a fixed $j$, see Figure~\ref{fig:raquetaCj}.

    \begin{figure}[ht]
    \centering
    \begin{tikzpicture}[scale=3, >=Stealth]
    
        \def\r{0.25}
        \def\d{1.3}
        \def\theta{30}
        \def\thetar{-30}
        \def\thL{-5}
        \def\thR{53}
        \def\pazja{0.7}
        \def\pazjb{1.3}
        \def\RayLen{3}
        \def\rc{1.1}
        \def\thc{43}
        
        \coordinate (c) at (\thc:\rc);

        \def\Rclip{1.05}               % radio del clipeo
        
        \coordinate (w0) at (0,0);
        \coordinate (zj) at (\theta:\d);
        \coordinate (zja) at (\thL:{\pazja*\d});
        \coordinate (zjb) at (\thR:{\pazjb*\d});
        \coordinate (p) at ($(zj)+(\te+180:\r)$);
        
        \begin{scope}
            \clip (c) circle (\rc);
            \fill[black!7]
            (w0) -- (\thL:\RayLen) -- (\thR:\RayLen) -- cycle;
    
            \draw[thin] (w0) -- (\thL:\RayLen);
            \draw[thin] (w0) -- (\thR:\RayLen);
        \end{scope}
    
        \draw (c) circle (\rc);
        \draw (zj) circle (\r);
        \draw[thin] (w0) -- (p);

        \fill (w0) circle (0.6pt);
        \fill (zj) circle (0.6pt);
        \fill (zja) circle (0.6pt);
        \fill (zjb) circle (0.6pt);
        
        \node[below left] at (w0) {$\omega_0$};
        \node[above left] at (zj) {$z_j$};
        \node[below right] at (zja) {$z_{j-1}$};
        \node[above left] at (zjb) {$z_{j+1}$};
        \node[left] at ($(c)+(180-\thc:\rc*0.5)$) {\large $\D$};
        \node[above left] at ($(zj)+(\thc:\r*2.3)$) {$C_j$};
        \node[below left] at ($(zj)+(\thc-90:\r*1.3)$) {$R_{\om_0,z_j}(r)$};

    \end{tikzpicture}
    \caption{$R_{\om_0,z_j}(r)$ and $C_j$.}
    \label{fig:raquetaCj}
    \end{figure}
\end{proof}

In order to prove the main lemma of this section, we introduce the following notation. Let $a,b,c,d\in\RR$ with $a<b$ and $c<d$. Let $\g_1:[a,b]\to\C$ and $\g_2:[c,d]\to\C$ be continuous paths such that $\g_1(b)=\g_2(c)$. We denote by $\g_1\ast\g_2:[a,b+d-c]\to\C$ and by $\overline{\g}_1:[a,b]\to\C$ the continuous paths defined by
\begin{align}
    \g_1\ast\g_2(x) =
    \begin{cases}
    \g_1(x), & x\in[a,b],\\
    \g_2(x-b+c), & x\in[b,b+d-c],
    \end{cases}
    \qquad
    \overline{\g}_1(x) = \g_1(b+a-x), \quad \forall x\in[a,b].
    \label{def:path_opers}
\end{align}
This parametrization avoids re-scaling. Moreover, the operation $\ast$ is associative.\\

We now state the main result of Section~\ref{sec:fuchs}.

\begin{lemma}\label{lemma:fuchs_criteria}
    Let $\D$ denote the open unit disk, and let $\ti{Z}:=\{z_j\}_{j=1}^n\subset\D$ be a finite set of pairwise distinct points. Define
    \begin{align*}
        Q(z):=\prod_{j=1}^n (z-z_j).
    \end{align*}
    
    Let $A,B:\D\to\C$ be holomorphic functions extending continuously to $\overline{\D}$, and assume that
    \begin{align}\label{eqn:cond_proots_general}
        \frac{A(z_j)}{\pa_z Q(z_j)}\notin\Z
        \qquad \text{for all } j=1,\ldots,n.
    \end{align}
    
   Let $\om_0$, $\{C_j\}_{j=1}^n$, $r$, and $R_{\om_0,z_j}(r)$ be as in Lemma~\ref{lemma:raquetas}. For each $j=1,\ldots,n$, define the paths given by $\g_j:[0,1]\to\C$ and $\al_j:[0,2\pi]\to\C$ by
    \begin{align*}
        \g_j(x)
        &= (1-x)\om_0
           + x\Bigl(z_j + r\frac{\om_0-z_j}{|\om_0-z_j|}\Bigr),\\
        \al_j(x)
        &= z_j + r e^{ix}\frac{\om_0-z_j}{|\om_0-z_j|}.
    \end{align*}
    Using the notation of \eqref{def:path_opers}, set
    \begin{align}\label{def:tauj}
        \tau_j:=
        \g_j\ast\al_j\ast\overline{\g}_j:[0,2\pi+2]\to\C.
    \end{align}
    
    For each $j$, consider the problem on $(0,2\pi+2)-\{1,2\pi+1\}$ given by
    \begin{align}\label{eqn:fuchs_crit_raqueta}
        \frac{Q\circ\tau_j}{\pa_x\tau_j}\pa_x f_j
        + (A\circ\tau_j) f_j
        = B\circ\tau_j .
    \end{align}
    
    We also consider the equation on $\D$
    \begin{align}\label{eqn:fuchs_crit_disk}
        Q\pa_z F + AF = B .
    \end{align}

    Let $c_0\in\C$.
    For each $j=1,\ldots,n$ let $f_j:[0,2\pi+2]\to\C$ be the unique continuous function with $f_j(0)=c_0$, solving \eqref{eqn:fuchs_crit_raqueta}. Then there exists a holomorphic function $F:\D\to\C$, whose derivative extends continuously to $\overline{\D}$, solving \eqref{eqn:fuchs_crit_disk} and satisfying $F(\om_0)=c_0$ if and only if
    \begin{align*}
        f_j(2\pi+2)=c_0
        \qquad \text{for all } j=1,\ldots,n.
    \end{align*}
    Moreover, such a function $F$, if it exists, is unique.
\end{lemma}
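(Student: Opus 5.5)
The proof works locally on each convex set $C_j$ of Lemma~\ref{lemma:raquetas} and glues the local solutions together by uniqueness. On $C_j$ we write
\begin{align*}
    Q(z)=(z-z_j)\,Q_j(z), \qquad Q_j(z):=\prod_{k\neq j}(z-z_k).
\end{align*}
Since $C_j\cap\ti{Z}=\{z_j\}$, the polynomial $Q_j$ has no zeros in $C_j$. So on $C_j$ equation \eqref{eqn:fuchs_crit_disk} is equivalent to
\begin{align*}
    (z-z_j)\pa_z F+\ti{A}_jF=\ti{B}_j, \qquad \ti{A}_j:=A/Q_j,\quad \ti{B}_j:=B/Q_j,
\end{align*}
where $\ti{A}_j,\ti{B}_j$ are holomorphic on the simply connected set $C_j$. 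Moreover $\ti{A}_j(z_j)=A(z_j)/\pa_zQ(z_j)\notin\Z$ by \eqref{eqn:cond_proots_general}. Corollary~\ref{coro:fuchs_particular} (with $\Omega=C_j$) then gives a unique holomorphic solution $F_p^{(j)}$ on $C_j$. Every other solution on a simply connected subset of $C_j-\{z_j\}$ has the form $CF_h+F_p^{(j)}$, and $F_h$ has nontrivial monodromy $e^{-2\pi i\ti{A}_j(z_j)}\neq1$ around $z_j$.

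\textbf{Necessity.} Suppose a holomorphic $F$ on $\D$ solves \eqref{eqn:fuchs_crit_disk} with $F(\om_0)=c_0$ and $F'$ continuous up to $\overline{\D}$. Then $F$ extends continuously to $\om_0$. The functions $F\circ\tau_j$ are continuous on $[0,2\pi+2]$ and solve \eqref{eqn:fuchs_crit_raqueta} away from the corner points: this is the chain rule, since $\pa_x(F\circ\tau_j)=F'(\tau_j)\,\pa_x\tau_j$. At $x=0$ they take the value $c_0$. The ODE \eqref{eqn:fuchs_crit_raqueta} is regular along $\tau_j$ except possibly at $x=0,2\pi+2$, because $Q$ vanishes only at the $z_k$ and these are not on the racket. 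Hence the solution with initial value $c_0$ is unique, and $f_j=F\circ\tau_j$. In particular $f_j(2\pi+2)=F(\om_0)=c_0$.

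\textbf{Sufficiency.} This is the main part. I would fix $j$ and let $f_j$ be the solution along $\tau_j$. Let $x_1=1$, so $\tau_j(1)=\g_j(1)\in C_j$. Along $\g_j\subset C_j$, the restriction $f_j|_{[0,1]}$ is the restriction of the holomorphic solution $G_j$ on $C_j$, taken on a slit disc or a neighbourhood of the segment, determined by the value $f_j(1)$. In the notation of Corollary~\ref{coro:fuchs_particular}, $G_j=CF_h+F_p^{(j)}$ near the circle $\al_j$.

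Going around $\al_j$ and back along $\overline{\g}_j$, the continuation returns with the value $f_j(2\pi+2)$. Going back along $\overline{\g}_j$ is the same ODE with regular coefficients, so equal endpoint values at $\om_0$ force equal values at the racket junction $\g_j(1)$. We therefore get $f_j(2\pi+1)=f_j(1)$, and Lemma~\ref{lemma:fuchs_chara} (applied with $\delta=r$, $x_0=\arg(\om_0-z_j)$) yields $C=0$. Thus $f_j=F_p^{(j)}\circ\tau_j$, and in particular the continuous extension of $F_p^{(j)}$ to $\om_0$ equals $c_0$.

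\textbf{Gluing.} This is the step I expect to be the main obstacle. On an overlap $C_j\cap C_k$, both $F_p^{(j)}$ and $F_p^{(k)}$ are regular-ODE solutions, because the overlap contains no $z_l$ (each $z_l$ lies only in $C_l$). The overlap is a cone-shaped convex set with vertex $\om_0$. Both solutions tend to the same value $c_0$ at the vertex $\om_0$, so their difference is a homogeneous solution $c\exp(-\int A/Q)$ vanishing at $\om_0$.

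To conclude $c=0$, I need that the homogeneous solution does not vanish at $\om_0$. This uses continuity of $A$ and $Q$ up to $\om_0$ together with $Q(\om_0)\neq0$, so the exponent stays bounded near $\om_0$. I would verify this carefully, including that the limits exist along the segments $\g_j$; the regularity of $F'$ up to the boundary also follows from the regularity of the ODE near $\partial\D$. Having $c=0$, the local solutions agree on overlaps and glue to $F$ on $\D=\bigcup C_j$ with $F(\om_0)=c_0$.

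\textbf{Uniqueness.} The difference of two solutions is, on each $C_j$, a holomorphic solution of the homogeneous singular equation. By Lemma~\ref{lemma:fuchs_sing_homo} it vanishes, since $\ti{A}_j(z_j)\notin\Z$.
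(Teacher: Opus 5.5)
Your proposal is correct, and most of it matches the paper's proof. The necessity argument, the reversed-equation argument on $\overline{\g}_j$ giving $f_j(1)=f_j(2\pi+1)$, the use of Lemma~\ref{lemma:fuchs_chara} on $C_j$, and the uniqueness argument are the same. One small correction: the equation along $\tau_j$ is regular at $x=0$ and $x=2\pi+2$ as well, since $Q(\om_0)\neq0$, and that regularity is exactly what uniqueness from $x=0$ needs.

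The real difference is in the gluing.
\begin{itemize}
\item The paper builds one auxiliary solution $F^\ep$ on $V_\ep=D_\ep(\om_0)\cap\D$ from the integrating-factor formula, with $F^\ep(\om_0)=c_0$. It identifies $F^\ep$ with $F_j$ along $\g_j$ near $\om_0$ by one-dimensional uniqueness started at $\om_0$. It then extends the agreement to $C_j\cap V_\ep$ by the identity theorem, so all the $F_j$ agree with one common function near $\om_0$.
\item You instead compare $F_p^{(j)}$ and $F_p^{(k)}$ directly on $C_j\cap C_k$. You write their difference as $c\exp(-\int A/Q)$, a factor bounded away from zero near $\om_0$.
\end{itemize}

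Both routes rest on the same estimate, namely that $A/Q$ and $B/Q$ are bounded near $\om_0$. Your route, however, uses a bit more than you have shown. The sufficiency step gives $F_p^{(j)}(\g_j(x))\to c_0$ only along the segment $\g_j$. Your gluing needs the limit as $z\to\om_0$ from inside $C_j\cap C_k$, and $\g_j$ never enters that overlap, since it lies on the boundary ray of $C_{j\pm1}$.

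The missing fact is easy to supply. On the convex set $C_j\cap D_\ep(\om_0)$, the integrating-factor representation of $F_p^{(j)}$ has bounded integrands, so $F_p^{(j)}$ is uniformly continuous up to $\om_0$. Its limit from anywhere in the cone therefore equals the limit along $\g_j$, which is $c_0$. You should state this explicitly, because it is exactly what makes ``both solutions tend to $c_0$ at the vertex'' true.

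The paper's $F^\ep$ packages the same estimate into a single function with a prescribed value at $\om_0$. It then needs only uniqueness along segments issuing from $\om_0$, plus analytic continuation. Your route avoids the auxiliary function at the price of this cone-wise continuity.
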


\begin{rem}
    $\g_j$ and $\al_j$ have been defined so that
    \begin{align*}
        \g_j([0,1])\cup\al_j([0,2\pi]) = R_{\om_0,z_j}(r).
    \end{align*}
\end{rem}

\begin{proof}
    Assume first that there exists a holomorphic function $F:\D\to\C$, with derivative continuous on $\overline{\D}$, solving \eqref{eqn:fuchs_crit_disk} and satisfying $F(\om_0)=c_0$. Fix $j\in\{1,\ldots,n\}$ and define
    \begin{align*}
        g_j(x):=F(\tau_j(x)),
        \qquad x\in[0,2\pi+2].
    \end{align*}
    Then $g_j$ is continuous and, by change of variables along $\tau_j$, it satisfies \eqref{eqn:fuchs_crit_raqueta} on $(0,2\pi+2)-\{1,2\pi+1\}$. Since $g_j(0)=F(\om_0)=c_0=f_j(0)$, uniqueness of solutions to \eqref{eqn:fuchs_crit_raqueta} yields
    \begin{align*}
        g_j=f_j \quad \text{on } [0,2\pi+2].
    \end{align*}
    In particular,
    \begin{align*}
        f_j(2\pi+2)
        = g_j(2\pi+2)
        = F(\tau_j(2\pi+2))
        = F(\om_0)
        = c_0 ,
    \end{align*}
    which proves the necessity.
    
    Conversely, assume that for each $j=1,\ldots,n$, the continuous solution $f_j:[0,2\pi+2]\to\C$ of \eqref{eqn:fuchs_crit_raqueta} satisfies that $f_j(0)=f_j(2\pi+2)=c_0$. Fix $j$. We first claim that
    \begin{align*}
        f_j(1)=f_j(2\pi+1).
    \end{align*}
    To see this, on $(0,1)$ the equation reads
    \begin{align*}
        \frac{Q\circ\g_j}{\pa_x\g_j}\pa_x f_j
        + (A\circ\g_j)f_j
        = B\circ\g_j,
    \end{align*}
    while on $(2\pi+1,2\pi+2)$ it reads
    \begin{align*}
        \frac{Q\circ\overline{\g}_j}{\pa_x\overline{\g}_j}\pa_x f_j
        + (A\circ\overline{\g}_j)f_j
        = B\circ\overline{\g}_j.
    \end{align*}

    Because $\g_j([0,1])\subset R_{\om_0,z_j}(r)\subset C_j$, we know that $Q$ never vanishes on this curve. Hence both equations admit unique solutions once an initial condition is prescribed.
    
    Define $\phi_j^0:(0,1)\to\C$ by $\phi_j^0(x):=f_j(2\pi+2-x)$. A direct change of variables shows that $\phi_j^0$ satisfies the same ODE as $f_j$ on $(0,1)$. Since $\phi_j^0(0)=f_j(2\pi+2)=c_0=f_j(0)$, uniqueness yields $f_j=\phi_j^0$ on $(0,1)$, hence $f_j(1)=f_j(2\pi+1)$.
    
    By Lemma~\ref{lemma:raquetas}, we know that the set $C_j$ is convex, hence simply connected, and by the choice of $r$ we have
    \begin{align*}
        C_j\cap\{z_1,\ldots,z_n\}=\{z_j\},
    \end{align*}
    Here, we assume the points $z_j$ have been relabeled so \eqref{eqn:th_sorted} holds.
    
    Define $\phi_j^1:[0,2\pi]\to\C$ by $\phi_j^1(x):=f_j(x+1)$ and observe that $Q$ has exactly one zero in $C_j$, which is simple. Note that \eqref{eqn:cond_proots_general} is equivalent to
    \begin{align*}
        \lim_{z\to z_j} (z-z_j)\frac{A(z)}{Q(z)} = \frac{A(z_j)}{\pa_z Q(z_j)}\notin\Z\qquad \text{for all } j.
    \end{align*}

    Then, applying Lemma~\ref{lemma:fuchs_chara} with  $\Omega=C_j$ and $z_0=z_j$, and
    \begin{align*}
        \ti{A}(z) = A(z)
        \prod_{\substack{l=1\\l\neq j}}^n (z-z_l)^{-1}, 
        \qquad 
        \ti{B}(z) = B(z)
        \prod_{\substack{l=1\\l\neq j}}^n (z-z_l)^{-1},
    \end{align*}
    we obtain that the unique holomorphic function $F_j:C_j\to\C$ satisfies 
    \begin{align*}
        (F_j\circ\al_j)(x)
        = \phi_j^1(x)
        = f_j(x+1),
        \qquad x\in[0,2\pi],
    \end{align*}
    where $F_j$ is given by Corollary~\ref{coro:fuchs_particular} and solves \eqref{eqn:fuchs_crit_disk}.
    
    For every $\ep>0$, define
    \begin{align*}
        V_\ep:=D_{\ep}(\om_0)\cap\D.
    \end{align*}    
    Let $\ep>0$ be small enough so that $z_j\notin V_\ep$ for all $j$. In particular, $Q$ has no zeros on $V_\ep$. As in the proof of Lemma~\ref{lemma:fuchs_regular}, we solve \eqref{eqn:fuchs_crit_disk} on $V_\ep$ by integration. For $z\in V_\ep$, define
    \begin{align*}
        F_h^\ep(z)
        := \exp\!\left(
            -\int_{\om_0}^z \frac{A(y)}{Q(y)}\dy
        \right),
    \end{align*}
    and
    \begin{align*}
        F^\ep(z)
        := c_0 F_h^\ep(z)
        + F_h^\ep(z)\int_{\om_0}^z
          \frac{B(y)}{Q(y)F_h^\ep(y)}\dy,
    \end{align*}
    where the integrals are taken along any smooth path in $V_\ep$. Although $\om_0\not\in V_\ep$, these integrals are well defined because $A$ and $B$ are continuous at $\om_0$. 
    Since $V_\ep$ is simply connected and $A$ and $B$ holomorphic, the integrals are path independent, and $F^\ep$ is holomorphic on $V_\ep$ and solves \eqref{eqn:fuchs_crit_disk} there.
    
    For each $j$, the functions $F^\ep$ and $F_j$ coincide on $\g_j([0,1])\cap V_\ep$, since both solve the same initial value problem along $\g_j$ with initial condition $F(\om_0)=c_0$. By uniqueness,
    \begin{align*}
        F^\ep = F_j
        \quad \text{on } C_j\cap V_\ep.
    \end{align*}

    Consequently, each function $F_j$ extends holomorphically to the set $C_j\cup V_\ep$. We continue to denote this extension by $F_j$. Now let $j\neq k$. The set $V_\ep\cap (C_j\cup C_k)$ is connected, since every point of $C_j$ admits a path contained in $C_j\cup V_\ep$ joining it to $V_\ep$. Since $F_j$ and $F_k$ coincide on $V_\ep$, it follows by analytic continuation that they coincide on the whole set $V_\ep\cap (C_j\cup C_k)$. In particular, $F_k$ extends holomorphically to the domain of $F_j$, and vice versa.

    Hence there exists a holomorphic function $F:\D\to\C$ such that
    \begin{align*}
        F|_{C_j}=F_j,
        \qquad
        F|_{V_\ep}=F^\ep .
    \end{align*}
    
    Since $A$ and $B$ are continuous on $\overline{\D}$ and holomorphic on $\D$, we find that $F$ admits a continuous extension to $\overline{\D}$. Finally, the equation \eqref{eqn:fuchs_crit_disk} implies
    \begin{align*}
        \pa_z F(z)
        = \frac{B(z)-A(z)F(z)}{Q(z)},
        \qquad z\in\D.
    \end{align*}
    Because $Q$ has no zeros on $\partial\D$, the right-hand side extends continuously to $\partial\D$, and therefore $\pa_z F$ extends continuously to $\overline{\D}$. Uniqueness follows from Corollary~\ref{coro:fuchs_particular}.
\end{proof}

\subsubsection{Invertibility of $\LOpExisHol$}
\label{sec:exis:invert}
In order to show that $\LOpExisHol:\XHholZMi{2}\to\XHholZMi{1}$ is invertible, we first need to understand the roots of the polynomial $z^N P(z)$.

\begin{lemma}\label{lemma:Proots1}
    Let $P$ be the function defined in \eqref{def:P_}, and denote by $Z$ the set of roots of $z^N P(z)$ lying inside the unit disk $\D$.
    Then $Z$ contains exactly $N$ simple roots. Moreover, for every root $\al\in Z$, we have
    \begin{align}\label{eqn:auxProots1}
        A(\al)\prod_{\substack{\al'\in Z\\ \al'\neq \al}}
        (\al-\al')^{-1}
        \notin \Z,
    \end{align}
    where the function $A$ is defined by
    \begin{align*}
        A(z):=-2\frac{z^{N-1}}{\twap_N}\prod_{\al\in Z}(z-\al^{-1})^{-1}.
    \end{align*}
\end{lemma}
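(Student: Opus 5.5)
We combine a structural argument, coming from the symmetry of $P$, with a rigorous numerical enclosure of the roots (the computer-assisted Lemma~\ref{lemma:enclose_poly_roots}).

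\textbf{Step 1: root pairing.} Write $z^N P(z) = c z^N + \tfrac12\sum_{j=1}^N \twap_j (z^{N+j}+z^{N-j})$. This is a polynomial of degree $2N$ with leading coefficient $\tfrac12\twap_N\neq 0$, by Lemma~\ref{lemma:twapprox_nonzero_coeffs}. Its constant term is $\tfrac12\twap_N\neq0$, so $0$ is not a root. Since $P(z)=P(1/z)$, the roots are closed under $\al\mapsto\al^{-1}$.

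For $z=e^{ix}$ on the unit circle we have $P(e^{ix})=c+\twap(x)>1/\getvar{beta_max}>0$ by Lemma~\ref{lemma:betamax}. Hence no roots lie on $\partial\D$. The $2N$ roots therefore split into $N$ roots inside $\D$ and their $N$ inverses outside, counted with multiplicity.

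Simplicity of the roots is not automatic. I would obtain it from the validated enclosures of Lemma~\ref{lemma:enclose_poly_roots}: there we produce $2N$ pairwise disjoint discs of radius $\getvar{radii_roots}$, each certified (for instance via an interval Newton or Krawczyk test) to contain exactly one root. Disjointness gives $2N$ distinct roots, so all are simple, and exactly $N$ of the discs lie in $\D$.

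\textbf{Step 2: the quantity in \eqref{eqn:auxProots1}.} Factor
\begin{align*}
z^NP(z)=\tfrac12\twap_N\prod_{\al\in Z}(z-\al)\prod_{\al\in Z}(z-\al^{-1}).
\end{align*}
With $Q(z)=\prod_{\al\in Z}(z-\al)$, we get
\begin{align*}
A(z)/\pa_zQ \text{ at } \al \;=\; -z^{N-1}\Big/\bigl(\tfrac12\twap_N\,\pa_z Q(\al)\prod(\al-\al'^{-1})\bigr).
\end{align*}
This equals $-\al^{N-1}/\bigl(\tfrac{d}{dz}(z^NP)\bigr)(\al)$, and since $P(\al)=0$,
\begin{align*}
\frac{A(\al)}{\prod_{\al'\neq\al}(\al-\al')} = -\frac{1}{\al P'(\al)}.
\end{align*}
This quantity is precisely the local exponent of $izP\pa_zF-iF$ at $\al$, which is what Lemma~\ref{lemma:fuchs_criteria} needs. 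So the condition reduces to $\al P'(\al)\notin\{1/k: k\in\Z\setminus\{0\}\}$, with the case $k=0$ automatic.

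\textbf{Step 3: verification by interval arithmetic.} For each enclosing disc, evaluate $w_\al:=-1/(\al P'(\al))$ with interval arithmetic. It then suffices to check either $|\Im w_\al|>0$ on the enclosure, or, if $w_\al$ is nearly real, that its real interval contains no integer.

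\textbf{Main obstacle.} I expect the hard part to be the rigorous enclosure of the $2N=2000$ roots of a degree-$2000$ polynomial to the tiny radius $\getvar{radii_roots}$. This requires high-precision (multiprecision) interval evaluation of $z^NP$ and its derivative near each approximate root, because cancellations in the evaluation are severe. Given the enclosures, the remaining non-integrality checks are routine.
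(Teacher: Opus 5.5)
Your proposal is correct and follows essentially the paper's route. The validated enclosures of Lemma~\ref{lemma:enclose_poly_roots} together with the palindromic pairing give exactly $N$ simple roots in $\D$, and the quantity in \eqref{eqn:auxProots1} reduces to $-\al^{N-1}/\pa_z(z^NP)(\al)=-1/(\al P'(\al))$, whose interval enclosure is checked to avoid $\Z$. The only, harmless, difference is that the paper encloses just the $N$ roots inside $\D$ (its derivative bounds use $|z|\le 1$) and gets the remaining $N$ roots, and hence simplicity of all $2N$, from the inversion symmetry; combined with your Step~1 count, you therefore do not need any enclosures outside $\D$.
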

\proofcompassi

Lemma~\ref{lemma:fuchs_criteria} will be used to establish the invertibility of $\LOpExisHol$. We introduce some notation that will be used to prove the next proposition.

\begin{defi}\label{def:exis_hol_notation}
        
    Let $Z=\{z_j\}_{j=1}^N$ be the set of roots of $z^N P(z)$ lying inside the open unit disk, and let $\om_0$, $Q$, and the paths $\{\tau_j\}$ be as in Lemma~\ref{lemma:fuchs_criteria} with $\ti{Z}:=Z$.
    
    For $G\in\XHholo\cup\{\mathcal{G}_k:\ k=1,\ldots,N\}$ and $j=1,\ldots,N$, we define $J_{\tau_j}G$ as the unique continuous solution of \eqref{eqn:fuchs_crit_raqueta} with initial condition $J_{\tau_j}G(0)=0$ and coefficients
    \begin{align*}
        A(z) &:= -2\frac{z^{N-1}}{\twap_N}
        \prod_{\al\in Z}(z-\al^{-1})^{-1},\\
        B(z) &:=-2i\frac{z^{N-1}}{\twap_N}
        G(z)\prod_{\al\in Z}(z-\al^{-1})^{-1}.
    \end{align*}
    
    For each $j=1,\ldots,N$, we also define $\Pi_{\tau_j}$ as the unique continuous solution to \eqref{eqn:fuchs_crit_raqueta} with initial condition $\Pi_{\tau_j}(0)=1$ and
    \begin{align*}
        A(z) &:= -2\frac{z^{N-1}}{\twap_N}
        \prod_{\al\in Z}(z-\al^{-1})^{-1},\\
        B(z) &:= 0.
    \end{align*}
    
    Define $\g_{\om_0}(x):=\om_0 e^{ix}$ and
    \begin{align*}
        \beta_{\om_0} := \frac{1}{P\circ\g_{\om_0}}.
    \end{align*}
    
    For $G\in C(\T)$, we define $J_{\g_{\om_0}}G$ as the unique continuous
    solution with $J_{\g_{\om_0}}G(0)=0$ of
    \begin{align*}
        \pa_x J_{\g_{\om_0}}G
        =
        i\beta_{\om_0}J_{\g_{\om_0}}G
        +
        \beta_{\om_0}(G\circ\g_{\om_0}),
    \end{align*}
    and $\Pi_{\g_{\om_0}}$ as the unique continuous solution with $\Pi_{\g_{\om_0}}(0)=1$ of
    \begin{align*}
        \pa_x \Pi_{\g_{\om_0}}
        =
        i\beta_{\om_0}\Pi_{\g_{\om_0}}.
    \end{align*}

\end{defi}

At the end of the section, we prove that $\LOpExisHol:\XHholZMi{2}\to\XHholZMi{1}$ admits a bounded inverse. We will also need the following lemma, which provides a criterion for the invertibility of $\LOpExisHol$ in a different class of function spaces and is based on Lemma~\ref{lemma:fuchs_criteria}.

\begin{prop}\label{prop:exis_hol}
    Using the notation from Definition~\ref{def:exis_hol_notation} and $\mathcal{G}_k$ from~\eqref{def:Gkexis}, define the $(N+1)\times(N+1)$ matrix $\MExisHol$ by
    \begin{align}
        (\MExisHol)_{jk}=
        \begin{cases}
            J_{\tau_j}\mathcal{G}_k(2\pi+2)-\underbrace{J_{\tau_j}\mathcal{G}_k(0)}_{0}, & 
            1\leq j\leq N,\ 1\leq k\leq N,\\
            \Pi_{\tau_j}(2\pi+2)-\underbrace{\Pi_{\tau_j}(0)}_{1}, & 
            1\leq j \leq N,\ k = N+1,\\
            \displaystyle \frac{1}{2\pi}\intpi 
            J_{\g_{\om_0}}\mathcal{G}_k(x)\dx, & 
            j=N+1,\ 1\leq k\leq N,\\
            \displaystyle \frac{1}{2\pi}\intpi \Pi_{\g_{\om_0}}(x)\dx, & 
            j=N+1,\ k=N+1.
        \end{cases}
        \label{def:MExisHol}
    \end{align}

    Then the matrix $\MExisHol$ is nonsingular if and only if, for every holomorphic function $G:\D\to\C$ that is continuous on $\overline{\D}$ and satisfies $G(0)=0$, there exists a unique holomorphic function $F:\D\to\C$ whose derivative $\pa_z F$ is continuous on $\overline{\D}$, satisfying $F(0)=0$ and solving
    \begin{align}\label{eqn:exis_finite_hol_main}
        \LOpExisHol F = G.
    \end{align}

\end{prop}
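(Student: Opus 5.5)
The plan is to decouple the Taylor coefficients from the ODE, as suggested by Lemma~\ref{lemma:exis:holo_implies}. Given $G$, I replace $F_k$ in \eqref{def:LOpExisHol} by free constants $a=(a_1,\dots,a_N)\in\C^N$ and consider the affine ODE
\begin{align*}
izP(z)\pa_z F - iF = G + \sum_{k=1}^N a_k\mathcal{G}_k .
\end{align*}
Multiplying by $z^N$, and using that $z^NP(z)=\frac{\twap_N}{2}\prod_{\al\in Z}(z-\al)\prod_{\al\in Z}(z-\al^{-1})$, I rewrite it as $Q\pa_zF+AF=B$ with $Q=\prod_{\al\in Z}(z-\al)$. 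Here $A$ is as in Lemma~\ref{lemma:Proots1}, and $B=-2i\frac{z^{N-1}}{\twap_N}\prod(z-\al^{-1})^{-1}\,(G+\sum a_k\mathcal{G}_k)$. The roots being simple and in number $N$, and the non-integrality \eqref{eqn:auxProots1}, are exactly the hypotheses of Lemma~\ref{lemma:fuchs_criteria}. The $\mathcal{G}_k$ contain negative powers of $z$; these are compensated by the factor $z^{N-1}$, so $B$ is holomorphic and continuous up to $\partial\D$ (the points $\al^{-1}$ lie outside $\overline\D$).

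The unknowns are $(a,c_0)\in\C^{N+1}$, where $c_0=F(\om_0)$. By linearity and uniqueness for \eqref{eqn:fuchs_crit_raqueta}, the solution along $\tau_j$ starting at $c_0$ is
\begin{align*}
J_{\tau_j}G+\sum_k a_kJ_{\tau_j}\mathcal{G}_k+c_0\Pi_{\tau_j}.
\end{align*}
By Lemma~\ref{lemma:fuchs_criteria}, a holomorphic $F$ with $\pa_zF\in C(\overline\D)$ and $F(\om_0)=c_0$ exists, and is then unique, iff for each $j$
\begin{align*}
\sum_k a_k(\MExisHol)_{jk}+c_0(\MExisHol)_{j,N+1}=-J_{\tau_j}G(2\pi+2).
\end{align*}
It remains to impose $F(0)=0$. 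On the boundary circle $\g_{\om_0}(x)=\om_0e^{ix}$, the ODE reads $\pa_x(F\circ\g_{\om_0})=i\beta_{\om_0}(F\circ\g_{\om_0})+\beta_{\om_0}(\cdots)$, since $iz\pa_z=\pa_x$ there. Hence
\begin{align*}
F\circ\g_{\om_0}=J_{\g_{\om_0}}G+\sum a_kJ_{\g_{\om_0}}\mathcal{G}_k+c_0\Pi_{\g_{\om_0}}.
\end{align*}
By the mean value property (Cauchy's formula on the boundary, valid by continuity on $\overline\D$), $F(0)=\frac{1}{2\pi}\intpi F(\om_0e^{ix})\dx$. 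So $F(0)=0$ is the last row of the system. This yields an $(N+1)$-dimensional linear system with matrix $\MExisHol$ and right-hand side depending on $G$.

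Next I identify solutions of this system with solutions of \eqref{eqn:exis_finite_hol_main}. If $F$ solves the system, then $F(0)=0$, and $F$ satisfies \eqref{eqn:Exis_Holo_Implies_aux0} with $a_k$ in place of $F_k$. Lemma~\ref{lemma:exis:holo_implies} then gives $F_k=a_k$, so $\LOpExisHol F=G$. Conversely, any solution $F$ of \eqref{eqn:exis_finite_hol_main} yields a solution $(a,c_0)=((F_k),F(\om_0))$ of the system. Thus solutions of \eqref{eqn:exis_finite_hol_main} correspond bijectively to solutions of $\MExisHol(a,c_0)^t=b(G)$.

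The two directions then follow. If $\MExisHol$ is nonsingular, we get existence and uniqueness for every $G$. If it is singular, either uniqueness fails (for $G=0$ a nontrivial kernel vector gives a nonzero $F$; it is nonzero because $(a,c_0)\ne0$ is recovered from $F$) or existence fails for some $G$. For the latter I would need $b(G)$ to range over all of $\C^{N+1}$, or at least to leave the range of $\MExisHol$. The simplest route is uniqueness: a singular square matrix has a nontrivial kernel, which already contradicts uniqueness at $G=0$. So only injectivity needs an argument, and the main care is checking that a kernel vector genuinely gives a holomorphic $F$ with continuous derivative. The other delicate point is justifying the boundary ODE and the regularity of $B$ at $z=0$ and on $\partial\D$.
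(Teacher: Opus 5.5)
Your proposal is correct and follows essentially the same route as the paper. Both replace the $F_k$ by free constants $a_k$, reduce to $Q\pa_z F+AF=B$, and impose the closure conditions along the $\tau_j$ via Lemma~\ref{lemma:fuchs_criteria}. Both then add the mean-zero condition on $\g_{\om_0}$ as the last row, use Lemma~\ref{lemma:exis:holo_implies} to recover $a_k=F_k$, and argue the converse through the homogeneous case $G=0$.

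The only slip is verbal: the multiplier is $z^{N-1}$ (the paper uses $-iz^{N-1}$), not $z^N$. Your formulas for $A$ and $B$ already encode $z^{N-1}$, whereas multiplying by $z^N$ would introduce a spurious singular point at $z=0$.
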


\begin{proof}
    Let $G:\D\to\C$ be holomorphic in $\D$ and continuous on $\overline{\D}$, with $G(0)=0$. We study equation \eqref{eqn:exis_finite_hol_main} and seek a holomorphic function $F:\D\to\C$ whose derivative $\pa_z F$ is continuous on $\overline{\D}$ and satisfies $F(0)=0$.
        
    Writing the equation explicitly, we have
    \begin{align*}
        izP(z)\pa_z F(z) =
        iF(z) +
        \sum_{k=1}^N 
            F_k\mathcal{G}_k(z) 
            +
            G(z),
    \end{align*}
    where $F(z)=\sum_{k\geq 1} F_k z^k$.
    
    Multiplying by $-iz^{N-1}$, we obtain
    \begin{align}\label{eqn:exis_finite_hol_affine}
        z^N P(z)\pa_z F(z)=
        z^{N-1}F(z)-
        iz^{N-1}\Big(\sum_{k=1}^N F_k\mathcal{G}_k(z)+G(z)\Big).
    \end{align}
    
    Since $z^{N-1}\sum_{k=1}^N F_k \mathcal{G}_k$ is holomorphic in $\D$, the singular points of \eqref{eqn:exis_finite_hol_affine} inside $\D$ are precisely the roots
    \begin{align*}
        Z=\{z_j\}_{j=1}^N
    \end{align*}
    of $z^N P(z)$ lying in the open unit disk.

    We introduce auxiliary unknowns $a_1,\ldots,a_N\in\C$ and consider the following affine problem
    \begin{align}\label{eqn:exis_finite_hol_affine_coeffs}
        izP(z)\pa_z F(z)=
        iF(z)+\sum_{k=1}^N a_k \mathcal{G}_k(z)+G(z).
    \end{align}
    
    The equation \eqref{eqn:exis_finite_hol_affine_coeffs} reduces to a linear first-order equation of the form \eqref{eqn:fuchs_crit_disk} with
    \begin{align*}
        A(z) &:= -2\frac{z^{N-1}}{\twap_N}
        \prod_{\al\in Z}(z-\al^{-1})^{-1},\\
        B(z) &:=-2i\frac{z^{N-1}}{{\twap_N}}
        \Big(\sum_{k=1}^N a_k\mathcal{G}_k(z) + G(z)\Big)
        \prod_{\al\in Z}(z-\al^{-1})^{-1}.
    \end{align*}
    
    For each fixed $j=1,\ldots,N$, solving \eqref{eqn:exis_finite_hol_affine_coeffs} along $\tau_j$ yields a unique continuous solution $\phi_j$. This solution can be expressed as 
    \begin{align}\label{eqn:exis_finite_hol_decomp_k}
        \phi_j=
        F(\om_0)\Pi_{\tau_j}+
        \sum_{k=1}^N a_k J_{\tau_j}\mathcal{G}_k+
        J_{\tau_j}G.
    \end{align}

    Lemma~\ref{lemma:Proots1} implies that condition~\eqref{eqn:cond_proots_general} holds. Moreover, $A$ is continuous on $\overline{\D}$, since it is a finite product of terms of the form
    \begin{align*}
        (z-\al^{-1})^{-1}\qquad \text{where } |\al|<1.
    \end{align*}

    Therefore, Lemma~\ref{lemma:fuchs_criteria} applies, and the equation~\eqref{eqn:exis_finite_hol_affine_coeffs} admits a holomorphic solution $F$ in $\D$, with $\pa_z F$ continuous on $\overline{\D}$, if and only if each function $\phi_j$ satisfies the condition
    \begin{align}\label{eqn:exis_finite_hol_close}
        \phi_j(2\pi+2)=\phi_j(0),
        \qquad j=1,\ldots,N.
    \end{align}
    Using \eqref{eqn:exis_finite_hol_decomp_k}, these conditions yield $N$ affine linear equations in the unknowns $a_1,\ldots,a_N,F(\om_0)$.
    
    Restricting \eqref{eqn:exis_finite_hol_affine_coeffs} to $\g_{\om_0}$ yields the equation
    \begin{align*}
        \pa_x \phi = i \beta_{\om_0} \phi + \beta_{\om_0} 
        \Big[
        \sum_{k=1}^N
        a_k(\mathcal{G}_k\circ\g_{\om_0})+
        (G\circ\g_{\om_0})\Big].
    \end{align*}

    The solutions are of the form
    \begin{align*}
        \phi = \Pi_{\g_{\om_0}} F(\om_0) +
        \sum_{k=1}^N a_k J_{\g_{\om_0}} \mathcal{G}_k + 
        J_{\g_{\om_0}} G.
    \end{align*}
    
    Imposing the mean-zero condition
    \begin{align*}
        \frac{1}{2\pi}\int_0^{2\pi} \phi(x)\dx=0
    \end{align*}
    yields one additional affine linear equation.
    
    Collecting the $N$ closing conditions \eqref{eqn:exis_finite_hol_close} and the mean condition above, we obtain a linear system of $N+1$ equations in the $N+1$ unknowns $(a_1,\ldots,a_N,F(\om_0))$, whose coefficient matrix is exactly $\MExisHol$.

    If $\MExisHol$ is nonsingular, the system admits a unique solution $(a_1,\ldots,a_N,F(\omega_0))$. Consequently, there exists a unique holomorphic function $F:\D\to\C$ such that $\pa_z F\in C(\overline{\D})$,
    \begin{align*}
        F(0)=\frac{1}{2\pi}\int_0^{2\pi}F(e^{ix})\dx=0,
    \end{align*}
    and $F$ solves \eqref{eqn:exis_finite_hol_affine_coeffs}. By Lemma~\ref{lemma:exis:holo_implies}, the coefficients satisfy
    \begin{align*}
        a_k = F_k \qquad \text{for all } k,
    \end{align*}
    and therefore $F$ is the unique solution of \eqref{eqn:exis_finite_hol_main}.
    
    Conversely, if for every admissible $G$ there exists a unique solution $F$ of \eqref{eqn:exis_finite_hol_main}, then the homogeneous system associated with $\MExisHol$ admits only the trivial solution, and hence $\MExisHol$ is nonsingular.
\end{proof}

We have observed numerically that for large $N$ (our case applies), the singular values of $\MExisHol$ cluster near zero.

This behavior is problematic for our analysis using interval arithmetic techniques, since it becomes difficult to certify that the smallest singular value does not vanish.

Moreover, the norm of $(\MExisHol)^{-1}$, which we need to know to establish Proposition~\ref{prop:inv_exis}, would be extremely large. To overcome these difficulties, we introduce an alternative $(N+1)\times (N+1)$ matrix, denoted by $\MExisTor$, whose singular values accumulate near $1$. We then show that
\begin{align}\label{eqn:exis_tor_imp_hol}
    \det(\MExisTor)\neq 0\ \Rightarrow\
    \det(\MExisHol)\neq 0.
\end{align}
In addition, this new matrix allows us to compute an explicit bound for the inverse norm appearing in Proposition~\ref{prop:inv_exis}.

We introduce the following notation.
Let $\beta:\RR\to\RR$ and $\PiExis:\RR\to\C$ be defined by
\begin{align}
    \beta(x)&:=\frac{1}{c+\twap(x)},\label{def:beta}\\
    \PiExis(x)&:=\exp\!\left(i\int_0^x \beta(y)\dy\right)\label{def:PiExis}.
\end{align}

Both functions $\beta$ and $\PiExis$ are continuous on $\RR$, since $c+\twap(x)$ has no zeros on the real line, since $c+\twap(x) = P(e^{ix})$ and the polynomial $z^N P(z)$ does not vanish on the unit circle.

Let $J$ denote the operator defined by
\begin{align}
    \label{def:Jexis}
    Jg(x):= \PiExis(x) \int_0^x 
    \beta(y) \frac{g(y)}{\PiExis(y)}
    \dy.
\end{align}

We now introduce the new criterion. 

\begin{lemma}\label{lemma:exis_tor}
    Let $\beta$, $\PiExis$, and $J$ be defined in
    \eqref{def:beta}, \eqref{def:PiExis}, and \eqref{def:Jexis}, respectively.

    For every $k=1,\ldots,N$, define
    \begin{align}\label{def:gk_exis}
        g_k (x) := 
        \frac{i}{2}
        k\twap_k+
        \sum_{m=1}^{N-k}
        k \twap_{k+m}
        \sin(m x).
    \end{align}
    
    For each $k=1,\ldots,N+1$, set
    \begin{align}\label{def:hk}
        h_{k}&:=
        \begin{cases}
            J g_k,& 1 \leq k\leq N,\\
            \PiExis,& k = N+1.
        \end{cases}
    \end{align}

    Finally, define the square matrix $\MExisTor$ of order $N+1$ by
    \begin{align}
        (\MExisTor)_{jk}=
            (\ha{h_k})_{j-1} -\delta_{k,j-1},
            \qquad 1\leq j,k\leq N+1.
            \label{def:Mexistor}
    \end{align}
    
    Then the implication \eqref{eqn:exis_tor_imp_hol} holds.
    
\end{lemma}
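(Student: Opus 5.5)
We argue by contraposition: assuming $\det(\MExisHol)=0$, we will produce a nonzero vector in the kernel of $\MExisTor$. The link between the two matrices is that a kernel vector of $\MExisHol$ encodes a genuine holomorphic solution of the homogeneous problem on $\D$. Restricting that solution to $\T$ turns it into a combination of the functions $h_k$ from \eqref{def:hk}. The vanishing of its nonpositive Fourier modes, together with the identification of its first $N$ Taylor coefficients, is then exactly the statement that $\MExisTor$ annihilates the coefficient vector.

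\textbf{Step 1 (a nonzero holomorphic solution).} If $\MExisHol$ is singular, pick $(a_1,\ldots,a_N,c_0)\neq 0$ in its kernel. Consider \eqref{eqn:exis_finite_hol_affine_coeffs} with $G=0$, written in the form \eqref{eqn:fuchs_crit_disk}.
\begin{itemize}
\item The first $N$ rows of $\MExisHol$ say that the solutions $\phi_j$ along the rackets $\tau_j$, started at $c_0$, close up.
\item By Lemma~\ref{lemma:Proots1} and Lemma~\ref{lemma:fuchs_criteria}, there is therefore a holomorphic $F$ on $\D$ with $\pa_zF\in C(\overline{\D})$, $F(\om_0)=c_0$, solving $izP\pa_zF=iF+\sum_k a_k\mathcal{G}_k$.
\item The last row of $\MExisHol$ says $\frac{1}{2\pi}\int F(\om_0e^{ix})\,dx=0$, so $F(0)=0$.
\item Lemma~\ref{lemma:exis:holo_implies} then gives $a_k=F_k$ for $k=1,\ldots,N$.
\end{itemize}
Finally $F\not\equiv 0$: otherwise $a_k=F_k=0$ and $c_0=F(\om_0)=0$, contradicting the choice of a nonzero kernel vector.

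\textbf{Step 2 (restriction to the circle).} Set $f(x):=F(e^{ix})$, which is $C^1$ because $\pa_z F$ is continuous up to the boundary. On $\T$ we have $z\pa_zF=-i\pa_xf$ and $P(e^{ix})=c+\twap(x)$. A direct computation from \eqref{def:Gkexis}, using $z^{m-k}-z^{-(m-k)}=2i\sin((m-k)x)$, gives $\mathcal{G}_k(e^{ix})=g_k(x)$ as in \eqref{def:gk_exis}. Hence
\begin{align*}
\pa_x f=i\beta f+\beta\sum_{k=1}^N a_kg_k .
\end{align*}
By variation of constants with $\PiExis$ and $J$, this yields
\begin{align*}
f=\sum_{k=1}^{N+1}b_kh_k,\qquad b:=(a_1,\ldots,a_N,f(0)).
\end{align*}
The vector $b$ is nonzero: if $a=0$ and $f(0)=0$, uniqueness for this linear ODE forces $f\equiv0$, so $F\equiv0$ by the maximum principle, contradicting Step 1.

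\textbf{Step 3 (reading off $\MExisTor b=0$).} Because $F$ is holomorphic on $\D$, continuous on $\overline{\D}$, and $F(0)=0$, its boundary values satisfy $\ha{f}_j=F_j$ for $j\geq1$ and $\ha{f}_j=0$ for $j\leq 0$. We now compare coefficients row by row in \eqref{def:Mexistor}.
\begin{itemize}
\item Row $j=1$ (mode $0$): $\sum_k(\ha{h_k})_0b_k=\ha f_0=0$, and the $\delta$ term is absent since $\delta_{k,0}=0$.
\item Rows $2\leq j\leq N+1$ (mode $j-1$): $\sum_k(\ha{h_k})_{j-1}b_k=F_{j-1}=a_{j-1}=b_{j-1}=\sum_k\delta_{k,j-1}b_k$.
\end{itemize}
Thus $\MExisTor b=0$ with $b\neq0$, so $\det(\MExisTor)=0$, which proves \eqref{eqn:exis_tor_imp_hol}.

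The main care point is Step 1. We must check that the kernel conditions of $\MExisHol$ really reconstruct a holomorphic $F$ through Lemma~\ref{lemma:fuchs_criteria}, that this $F$ has the boundary regularity needed to identify Taylor and Fourier coefficients, and that it is nontrivial. By contrast, the algebra identifying $\mathcal{G}_k|_\T$ with $g_k$ is routine.
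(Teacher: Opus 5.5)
Your proposal is correct and follows essentially the same route as the paper: you obtain a nonzero holomorphic solution of the homogeneous problem, restrict it to $\T$, use Duhamel's formula to write $f=\sum_k b_kh_k$, and read off $\MExisTor b=0$ from the Fourier modes $0,\ldots,N$. Your Step 1 builds that solution directly from a kernel vector of $\MExisHol$ (via Lemma~\ref{lemma:fuchs_criteria}, the mean-value property and Lemma~\ref{lemma:exis:holo_implies}), which is more explicit than the paper's appeal to Proposition~\ref{prop:exis_hol}; the statement of that proposition alone does not directly hand you a nontrivial homogeneous solution.
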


\begin{comment}
\begin{rem}
    The matrix $\MExisTor$ comes from fixing the Fourier coefficients of a solution $f$ of \eqref{eqn:exis_finite_hol_main} restricted to the unit circle.
    
    More precisely, let $f:\RR\to\C$ satisfy
    \begin{align*}
        (c+\twap(x)) \pa_x f(x)
        = i f(x) + \sum_{k=1}^N g_k(x) \ha{f}_k  + g(x).
    \end{align*}
    We impose that $\ha{f}_k = a_k$ for $k=1,\ldots,N$ and that $\ha{f}_0=0$. However, these conditions do not ensure that $f$ is periodic, and hence that it corresponds to a genuine inverse.
    
    The matrix $\MExisHol$ in Proposition~\ref{lemma:exis_hol} is introduced only to guarantee that the function $f$ obtained in this way is indeed periodic. The rest of the argument is based on the analysis of the matrix $\MExisTor$.

\end{rem}
\end{comment}

\begin{proof}
    Assume that $\det(\MExisHol)=0$ and fix $G=0$. Then, by Proposition~\ref{prop:exis_hol}, there exists a nontrivial subspace $V$ of holomorphic functions on $\D$ with continuous derivative on $\overline{\D}$, such that for every $F\in V$ we have that $F$ (with $G=0$) satisfies \eqref{eqn:exis_finite_hol_main}. 

    Fix $0\neq F\in V$ and set the function $f:\RR\to\C$ by
    \begin{align*}
        f(x) := F(e^{ix}).
    \end{align*}

    A change of variables shows that $f$ satisfies
    \begin{align*}
        \bigl(c+\twap(x)\bigr)\pa_x f(x)=if(x)+
        \sum_{k=1}^N
        \ha{f}_kg_k(x).
    \end{align*}

    Therefore, using Duhamel's formula, we find that
    \begin{align}
        f(x)&=
            \PiExis(x)f(0)+
            \PiExis(x)\int_0^x
            \frac{\beta(y)}{\PiExis(y)}
            \sum_{k=1}^N 
            \ha{f}_k
            g_k(y)
            \dy
        \nonumber\\
        &=
        \PiExis(x)f(0)+
        \sum_{k=1}^N 
        \ha{f}_k 
        Jg_k(x).\label{eqn:exis_Duhamel}
    \end{align}
    
    We project \eqref{eqn:exis_Duhamel} onto the modes $e^{ijx},\ j=0,\ldots,N$,
    \begin{align*}
        \ha{f}_j = 
        f(0) (\ha{h_{N+1}})_j +
        \sum_{k=1}^N 
        \ha{f}_k (\ha{h_k})_j.
    \end{align*}

    Rearranging, we obtain for every $j=0,\ldots,N$ that
    \begin{align}\label{eqn:exis_TCLassPlusI_TCLass_aux2}
        0=f(0) (\ha{h_{N+1}})_j +
        \sum_{k=1}^N 
        \ha{f}_k ((\ha{h_k})_j-\delta_{k,j}).
    \end{align}

    Since $\ha{f}_0=0$, taking $j=0$ in \eqref{eqn:exis_TCLassPlusI_TCLass_aux2} yields
    \begin{align}\label{eqn:exis_TCLassPlusI_TCLass_aux3}
        0=f(0) (\ha{h_{N+1}})_0 +
        \sum_{k=1}^N 
        \ha{f}_k (\ha{h_k})_0.
    \end{align}
    
    Equations \eqref{eqn:exis_TCLassPlusI_TCLass_aux2} and \eqref{eqn:exis_TCLassPlusI_TCLass_aux3} are equivalent to 
    \begin{align*}
        \MExisTor
        \begin{pmatrix}
        \ha{f}_1\\ \vdots\\ \ha{f}_N\\ f(0)
        \end{pmatrix}=0.
    \end{align*}

    Finally, note that $f(0)=0$ and $\ha{f}_k=0$ for every $k=1,\ldots,N$ is not possible by equation \eqref{eqn:exis_Duhamel}. Hence, we have $\det(\MExisTor)=0$.
\end{proof}

\begin{lemma}\label{lemma:exis_singval}
    The smallest singular value $\sigma_1$ of $\MExisTor$ satisfies
    \begin{align*}
        \sigma_1^2>
        \getvar{svd1_exis}.
    \end{align*}
\end{lemma}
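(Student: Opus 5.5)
This is a computer-assisted bound: the entries of $\MExisTor$ are Fourier coefficients of explicit but non-closed-form functions $h_k$, namely $Jg_k$ and $\PiExis$. So the plan has two parts. First, obtain rigorous interval enclosures of the matrix. Second, certify a lower bound on $\sigma_1$ using a perturbation argument.

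\textbf{Step 1: enclose $\beta$, $\PiExis$ and the $h_k$.} Split $[-\pi,\pi]$ (in practice $[0,2\pi]$, since $h_k$ need not be periodic) into many small subintervals. On each subinterval, enclose $\beta=1/(c+\twap)$ by a polynomial plus a rigorous remainder. This is valid because $c+\twap$ is bounded below; compare Lemma~\ref{lemma:betamax}. Integrate to get a Taylor-model enclosure of $\int_0^x\beta$, and hence of $\PiExis$ and $1/\PiExis$.

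Each $h_k=Jg_k$ solves the linear ODE $\pa_x h=i\beta h+\beta g_k$ with $h(0)=0$. Propagate it piecewise with validated polynomial enclosures; this is the approach of Appendix~\ref{sec:ode}. This gives $h_k=p_k+e_k$, with $p_k$ piecewise polynomial and a rigorous bound on $\norm{e_k}_{\XLIpi}$.

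\textbf{Step 2: Fourier coefficients with error.} Compute $(\ha{p_k})_{j-1}$ exactly in interval arithmetic, integrating polynomial times exponential on each piece. Set $\widetilde M_{jk}=(\ha{p_k})_{j-1}-\delta_{k,j-1}$. Then $\MExisTor=\widetilde M+E$. By Bessel's inequality, column $k$ of $E$ has Euclidean norm at most $\norm{e_k}_{\XL}$, so $\norm{E}_2\le\norm{E}_F\le(\sum_k\norm{e_k}_\XL^2)^{1/2}$.

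\textbf{Step 3: certify $\sigma_1(\widetilde M)$.} Compute a floating-point SVD or approximate inverse $R$ of the midpoint matrix. Then bound $\norm{I-R\widetilde M}_2<1$ rigorously, for instance via the Frobenius norm in interval arithmetic. This gives
\[
\sigma_1(\widetilde M)\ge\frac{1-\norm{I-R\widetilde M}_2}{\norm{R}_2}.
\]
Alternatively, since the singular values cluster near $1$, one can certify positivity of $\widetilde M^\ast\widetilde M-s I$ by an interval Cholesky factorization, with $s$ slightly above $\getvar{svd1_exis}$.

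Finally, Weyl's inequality gives $\sigma_1(\MExisTor)\ge\sigma_1(\widetilde M)-\norm{E}_2$. Squaring this yields the claimed bound $\sigma_1^2>\getvar{svd1_exis}$.

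\textbf{Main obstacle.} The hard part is size and accuracy. With $N=1000$ the matrix is $1001\times1001$. The $N$ ODE solves and the roughly $10^6$ Fourier coefficients must be computed with errors small enough that $\norm{E}_2\ll\sigma_1\approx0.026$. This requires the remainders $e_k$ to be tiny uniformly in $k$, even where $\beta$ is large near the crest.

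To control this, I would first solve only for $\PiExis$ and $1/\PiExis$ accurately, and then compute $Jg_k$ via the Duhamel formula~\eqref{def:Jexis}. Because $g_k$ is a trigonometric polynomial, the integrand $\beta g_k/\PiExis$ is a fixed enclosure times explicit sines. All $k$ can therefore share one enclosure, and the $k$-dependence enters only through the coefficients $\twap_{k+m}$.
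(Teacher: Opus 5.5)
Your plan has the same architecture as the paper's proof. Both use piecewise-polynomial approximations $h_k^{\rm ap}$ with rigorous errors $\norm{h_k-h_k^{\rm ap}}_{\XLIpi}^2<\rho_k$, an approximate matrix $\MExisTorApprox$ with exactly integrated Fourier coefficients, and the column bound $\sum_j|R_{jk}|^2\le\rho_k/(2\pi)$ from Parseval/Bessel. Both then certify the approximate matrix and finish with a Weyl step.

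The tools differ in three places.
\begin{itemize}
\item The paper does not propagate validated enclosures along the ODE. It fits and glues the local polynomials, evaluates the $L^2$ norm of the residual $\xi_k$ exactly, and then uses $h_k-h_k^{\rm ap}=-J\xi_k$ with Lemma~\ref{lemma:J_L2L2_exis}. This gives exactly the $L^2$ control your Bessel step needs, without interval wrapping.
\item The paper perturbs the Gram matrix: $\la_{\min}(\MExisTor^\ast\MExisTor)\ge\la_{\min}(\MExisTorApprox^\ast\MExisTorApprox)-\norm{B}_F$, with $B_{jk}=r_jc_k+c_jr_k+r_jr_k$. You apply Weyl directly to singular values, which is sharper. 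You lose about $2\sigma_1\norm{r}_{\ell^2}$ in $\sigma_1^2$, where the paper loses about $2\norm{\MExisTorApprox}_F\norm{r}_{\ell^2}$ with $\norm{\MExisTorApprox}_F\approx\sqrt{N}$. So your route would tolerate much larger residuals $\rho_k$.
\item The paper bounds $\la_{\min}(\MExisTorApprox^\ast\MExisTorApprox)$ by Gershgorin applied to $V^\ast\MExisTorApprox^\ast\MExisTorApprox V$. Here $V$ is an exactly unitary matrix near the stored approximate singular vectors, obtained via Lemma~2.4 of \cite{gomez2021any}. Your interval Cholesky of $\MExisTorApprox^\ast\MExisTorApprox-sI$ is an equally good substitute.
\end{itemize}
Your Neumann-series variant also works in principle, but it needs a sharp bound on $\norm{R}_2$. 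With $\sigma_2,\dots,\sigma_{N+1}$ clustered near $1$, a Frobenius bound overestimates $\norm{R}_2$ by roughly $(1+N\sigma_1^2)^{1/2}\approx1.3$. That is probably too lossy to reach the stated constant.

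One concrete slip must be fixed: the parenthetical ``in practice $[0,2\pi]$''. The entries of $\MExisTor$ are $\ha{f}_j=\frac{1}{2\pi}\int_{-\pi}^{\pi}f(x)e^{-ijx}\dx$. Precisely because the $h_k$ are not periodic, you cannot change the window.

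Since $\beta$ and $g_k$ are $2\pi$-periodic, $h_k(x+2\pi)=h_k(x)+h_k(2\pi)\PiExis(x)$ for $k\le N$, and $\PiExis(x+2\pi)=\PiExis(2\pi)\PiExis(x)$. Hence coefficients over $[0,2\pi]$ produce the matrix $\MExisTor+wa^{t}$. Here $w$ collects the modes $0,\dots,N$ of $\frac{1}{2\pi}\int_{-\pi}^{0}\PiExis(y)e^{-ijy}\,dy$, and $a=(h_1(2\pi),\dots,h_N(2\pi),\PiExis(2\pi)-1)^{t}$.

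The condition $a^{t}c=0$ is exactly the periodicity condition $f(2\pi)=f(0)$. So that matrix would still give $\det\MExisHol\neq0$ through Lemma~\ref{lemma:exis_tor}. However, a rank-one modification has different singular values, so you would not have certified the stated inequality. Moreover, Section~\ref{sec:exis:laststep} combines this $\sigma_1$ with estimates built on $[-\pi,\pi]$ (Lemmas~\ref{lemma:J_L2L2_exis} and \ref{lemma:hk_funcs_exis}).

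The fix is to integrate from $0$ in both directions over $[-\pi,\pi]$, as Appendix~\ref{sec:ode_exis} does.
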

\proofcompassi

\begin{corollary}\label{coro:exis_singval} 
    For every holomorphic function $G:\D \to \C$ that is continuous on $\overline{\D}$ and satisfies $G(0)=0$, there exists a unique holomorphic function $F:\D \to \C$ with $F(0)=0$ and $\pa_z F$ continuous on $\overline{\D}$, such that~\eqref{eqn:exis_finite_hol_main} holds.
\end{corollary}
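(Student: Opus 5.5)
This corollary follows by chaining the results just established. Lemma~\ref{lemma:exis_singval} gives $\sigma_1(\MExisTor)^2>\getvar{svd1_exis}>0$, so $\MExisTor$ is nonsingular, i.e.\ $\det(\MExisTor)\neq 0$. Lemma~\ref{lemma:exis_tor} gives the implication \eqref{eqn:exis_tor_imp_hol}, hence $\det(\MExisHol)\neq 0$. Proposition~\ref{prop:exis_hol} then says that nonsingularity of $\MExisHol$ is equivalent to the existence and uniqueness statement we want. Its class of data and solutions is exactly the one in the corollary: $G$ holomorphic on $\D$, continuous on $\overline{\D}$, $G(0)=0$; and $F$ holomorphic with $F(0)=0$ and $\pa_z F\in C(\overline{\D})$. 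So the conclusion is immediate.

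In the write-up I will first record that $\sigma_1>0$ is equivalent to invertibility for a square matrix. I will then check that the hypotheses of Proposition~\ref{prop:exis_hol} are genuinely in force: it relies on Lemma~\ref{lemma:Proots1} (exactly $N$ simple roots of $z^NP(z)$ in $\D$ and the non-integrality condition \eqref{eqn:auxProots1}), which is computer-assisted and already available. I will also confirm that $P$ has no zeros on $\partial\D$, which is needed for the continuity of $A$ on $\overline{\D}$; this follows since $c+\twap>0$ by Lemma~\ref{lemma:betamax}, or directly from the root count.

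The only point needing care is the logical direction inside Lemma~\ref{lemma:exis_tor}, which is proven by contraposition. Suppose $\det(\MExisHol)=0$. Then uniqueness fails for $G=0$, giving a nonzero $F$ whose vector $(\ha f_1,\dots,\ha f_N,f(0))$ lies in $\ker\MExisTor$ and is nonzero. So $\det(\MExisTor)\ne0$ forces $\det(\MExisHol)\ne0$, as needed. I expect no real obstacle here; all the difficulty resides in the earlier computer-assisted singular value bound.
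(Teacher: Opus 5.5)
Your proposal is correct and follows the paper's own argument. Positivity of $\sigma_1$ gives $\det(\MExisTor)\neq 0$ (the paper phrases this as $|\det\MExisTor|^2=\prod_k\sigma_k^2>0$), Lemma~\ref{lemma:exis_tor} turns this into $\det(\MExisHol)\neq 0$, and Proposition~\ref{prop:exis_hol} then gives existence and uniqueness; the hypothesis checks you list (Lemma~\ref{lemma:Proots1}, no zeros of $P$ on $\partial\D$) are already handled inside the proof of Proposition~\ref{prop:exis_hol}, so you need not redo them.
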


\begin{proof}
    By definition, the singular values of $\MExisTor$ are the square roots of the eigenvalues of the Hermitian matrix
    \begin{align*}
        \MExisTor^\ast
        \MExisTor.
    \end{align*}
    
    Since $\sigma_1>0$, all singular values are positive and therefore
    \begin{align*}
        |\det(\MExisTor)|^2
        =
        \prod_{k=1}^{N+1} \sigma_k^2
        > 0.
    \end{align*}
    In particular, $\det(\MExisTor)\neq 0$.
    
    Lemma~\ref{lemma:exis_tor} then implies $\det(\MExisHol)\neq 0$, and Proposition~\ref{prop:exis_hol} applies.
\end{proof}

\subsubsection{Estimates of the explicit functions and operators}\label{sec:exis:bounds}

This section is devoted to proving explicit bounds for some functions that will allow us to obtain an estimate for the norm of $(\LOpExisHol)^{-1}$ from $\XHholZMi{1}$ to $\XHholZMi{2}$. We recall that the spaces $\XHholZMi{1}$ and $\XHholZMi{2}$ are defined in~\eqref{def:XHholZMi}.

We collect such bounds in Lemmas~\ref{lemma:betamax}--\ref{lemma:hk_funcs_exis}.

\begin{lemma}\label{lemma:betamax}
The function $\beta:\RR\to\RR$ given in \eqref{def:beta} satisfies
    \begin{align*}
        0 <  \beta(x)< \getvar{beta_max}\quad \text{for all }x\in[-\pi,\pi].
    \end{align*}
\end{lemma}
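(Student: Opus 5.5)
The statement is equivalent to a uniform lower bound $c+\twap(x)>1/\getvar{beta_max}$ on $[-\pi,\pi]$. Once that holds, $\beta>0$ follows at once, since $c+\twap$ is continuous, does not vanish, and is positive somewhere. Because $\twap$ is an explicit even trigonometric polynomial of degree $N=\getvar{Nexplicit}$ with coefficients from the supplementary material, I would prove the bound with a computer-assisted global minimization over $[0,\pi]$. By evenness it suffices to work on this half interval.

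\textbf{Partition and crude enclosure.} I partition $[0,\pi]$ into subintervals $I_m=[x_m,x_{m+1}]$. On each $I_m$ I enclose $c+\twap(I_m)$ by a first-order Taylor bound with interval arithmetic:
\begin{align*}
    c+\twap(x)\geq c+\twap(x_m^\ast)-\tfrac{|I_m|}{2}\,\sup_{I_m}|\pa_x\twap|,
\end{align*}
where $x_m^\ast$ is the midpoint of $I_m$. The global bound $\sup|\pa_x\twap|\le\sum_j j|\twap_j|$ is available, but a local interval evaluation of $\pa_x\twap$ on $I_m$ is sharper. I evaluate $\twap(x_m^\ast)$ rigorously by summing $\sum_j\twap_j\cos(jx_m^\ast)$ in interval arithmetic; Chebyshev-type recurrences for $\cos(jx)$ with interval enclosures would make this cheap.

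\textbf{Refinement near the minimum.} Figure~\ref{fig:twap} shows that $c+\twap$ is close to a cusp-like minimum near $x=\pm\pi$, where the wave is nearly maximal. Away from this region the crude bound above gives a margin far exceeding $1/\getvar{beta_max}$. Near the minimum I would refine adaptively, bisecting any interval whose enclosure fails to clear $1/\getvar{beta_max}$. If needed, I would use a second-order Taylor enclosure there, with $\sup_{I_m}|\pa_x^2\twap|$ computed by interval evaluation, so that the error term scales like $|I_m|^2$.

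\textbf{Main obstacle.} The hard step is this neighborhood of the minimum. There $\min(c+\twap)\approx 1/19.9$ is small, while the high Fourier modes make $\pa_x\twap$ and $\pa_x^2\twap$ large, of order $\sum_j j^2|\twap_j|$. The subintervals must therefore be very fine, and each evaluation costs $O(N)$ interval operations, so the cost of the subdivision is the real issue. The slack between the computed minimum and $1/\getvar{beta_max}$ is what makes the adaptive bisection terminate.

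An alternative I would keep in reserve uses the identity $c+\twap(x)=P(e^{ix})$ together with the fact, used for Lemma~\ref{lemma:Proots1}, that the roots of $z^NP(z)$ have been enclosed (Lemma~\ref{lemma:enclose_poly_roots}). Factoring $|z^NP(z)|=\tfrac{|\twap_N|}{2}\prod_\al|e^{ix}-\al|$ bounds $P(e^{ix})$ from below by products of distances from the circle to the roots. This is probably less sharp than the direct enclosure, so I would treat it only as a cross-check.
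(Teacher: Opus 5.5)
Your proposal is correct and is essentially the paper's computer-assisted argument: adaptive bisection with second-order Taylor interval enclosures, accepting a subinterval once its enclosure clears the bound. The only difference is cosmetic. You bound $c+\twap$ from below by $1/19.9056$, while the paper encloses $\beta$ itself (its derivatives obtained via Fa\`a di Bruno), verifies $\beta<19.9056$, and checks positivity in a separate run showing $-\beta<0$; the two formulations are equivalent.
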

\proofcompassi

\begin{lemma}\label{lemma:kappa1}
Let $\kappa_1$ be defined by
\begin{align*}
    \kappa_1(x):=\int_0^x \beta(y)^2\dy.
\end{align*}

Then
    \begin{align*}
        \norm{\kappa_1}_{L^1([0,\pi])}<
        \getvar{kappa1}.
    \end{align*}
\end{lemma}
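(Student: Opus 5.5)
This bound is established by a computer-assisted estimate in interval arithmetic, in the same spirit as Lemma~\ref{lemma:betamax}. Since $\beta>0$ by Lemma~\ref{lemma:betamax}, $\kappa_1$ is nonnegative and nondecreasing on $[0,\pi]$. Hence
\begin{align*}
    \norm{\kappa_1}_{L^1([0,\pi])}=\int_0^\pi \kappa_1(x)\dx=\int_0^\pi (\pi-y)\beta(y)^2\dy,
\end{align*}
where the last identity follows from Fubini. This reduces the statement to bounding a single explicit one-dimensional integral of a smooth function. No ODE solving is needed.

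To bound it rigorously, I would partition $[0,\pi]$ into subintervals $I_m=[x_m,x_{m+1}]$. On each $I_m$, I enclose $c+\twap(I_m)$ by evaluating the finite cosine sum \eqref{tildev} in interval arithmetic. A naive range evaluation with all $N=\getvar{Nexplicit}$ terms is too coarse, so I would use a first-order Taylor form with a derivative bound obtained from $\sum_j j|\twap_j|$. This gives an enclosure $[m_m,M_m]$ with $m_m>0$, consistent with Lemma~\ref{lemma:betamax}. Then $\beta^2\le m_m^{-2}$ on $I_m$, and
\begin{align*}
    \int_0^\pi(\pi-y)\beta^2\dy\le \sum_m m_m^{-2}\int_{I_m}(\pi-y)\dy,
\end{align*}
where the inner integrals are computed exactly.

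The main obstacle is the region where $c+\twap$ is small, near its minimum, which is about $1/\getvar{beta_max}$. There $\beta^2$ is of size about $400$ and varies rapidly. A crude piecewise-constant bound wastes most of the margin between the true value and $\getvar{kappa1}$. I would handle this with adaptive refinement: bisect the subintervals where $m_m^{-2}|I_m|$ dominates the sum until the total falls below the stated bound.

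If that still fails, I would switch to a higher-order enclosure, using a second-order Taylor model of $\beta^2$ on each subinterval with an interval bound on the remainder. The code in the supplementary material would carry out this verification. A lower bound, and any estimate of $\kappa_1$ on $[-\pi,0]$, are unnecessary: only this upper bound on $[0,\pi]$ is claimed.
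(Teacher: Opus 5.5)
Your reduction is the same as the paper's. Fubini turns $\norm{\kappa_1}_{L^1([0,\pi])}$ into $\int_0^\pi(\pi-y)\beta(y)^2\dy$, and the rest is a rigorous enclosure of that single integral, with no ODE solving. Where you differ is the quadrature.

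The paper represents $(\pi-x)\beta(x)^2$ with rigorous enclosures of its derivatives up to order four. These are built from the exact Fourier expansion of $(c+\twap)^2$ and a Fa\`a di Bruno formula for the reciprocal. It then runs adaptive two-point Gauss--Legendre quadrature with the remainder $\frac{r^5}{135}\pa_x^4 f(I)$ of \eqref{nummethod:gaussleg}. This gives a two-sided enclosure whose error is $O(r^5)$ per subinterval.

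Your upper Riemann sum is valid as written, since the weight $\pi-y$ is nonnegative and $c+\twap\ge m_m>0$ on $I_m$. It is one-sided by construction and needs only enclosures of $c+\twap$ and its first derivative. However, it converges only at first order: its excess over the true value is roughly $\sum_m \operatorname{osc}_{I_m}(\beta^2)\int_{I_m}(\pi-y)\dy$ plus the Taylor-form overestimate.

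The constant $\getvar{kappa1}$ is stated to six significant digits, like the paper's other computer-assisted constants. It is presumably a round-up of a sharp enclosure, so the available margin may be tiny. Your scheme could then require a very large number of subintervals, each with a $\getvar{Nexplicit}$-term cosine sum in ball arithmetic. It still closes in principle, since the upper sums converge to the integral from above. In practice, your higher-order fallback, or the paper's Gauss--Legendre rule, is the realistic way to certify this particular constant.
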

\proofcompassi

\begin{lemma}\label{lemma:betamod_L2}
The following estimate holds:
    \begin{align*}
        \norm{\pa_x\beta + i\beta^2}_\XLIpi^2
        <
        \getvar{beta_mod}.
    \end{align*}
\end{lemma}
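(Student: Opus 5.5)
This bound is a computer-assisted estimate of an explicit function, so the plan is to reduce $\norm{\pa_x\beta + i\beta^2}_\XLIpi^2$ to finitely many interval-arithmetic evaluations. The derivative can be written without dividing by anything new:
\begin{align*}
    \pa_x\beta = -\beta^2\,\pa_x\twap,
    \qquad\text{so}\qquad
    \pa_x\beta + i\beta^2 = \beta^2\bigl(i-\pa_x\twap\bigr),
\end{align*}
where $\pa_x\twap(x)=-\sum_{j=1}^N j\twap_j\sin(jx)$ is an explicit trigonometric polynomial. Since $\beta$ and $\twap$ are real, this gives
\begin{align*}
    \norm{\pa_x\beta + i\beta^2}_\XLIpi^2
    =\intpi \beta(x)^4\bigl(1+(\pa_x\twap(x))^2\bigr)\dx .
\end{align*}
The integrand is even, smooth and positive, so it suffices to integrate over $[0,\pi]$ and double the result.

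To bound the integral rigorously, I would split $[0,\pi]$ into subintervals $I_m$ and enclose the integrand on each one with interval arithmetic. On each $I_m$, I would evaluate $c+\twap$ and $\pa_x\twap$ either as interval Taylor models or through their Fourier sums with interval inputs, using the bound $|\pa_x\twap'|\le\sum j^2|\twap_j|$ to control the overestimation.

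Lemma~\ref{lemma:betamax} guarantees that $c+\twap>1/\getvar{beta_max}$, so the reciprocal is well defined on every subinterval. The bound then follows by summing $|I_m|\cdot\sup_{I_m}\beta^4(1+(\pa_x\twap)^2)$, or by using a higher-order quadrature with a rigorous remainder term.

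The main obstacle is near $x=0$. There Figure~\ref{fig:twap} shows $c+\twap$ dipping to roughly $1/20$, so $\beta^4$ reaches about $1.6\cdot10^5$ and varies extremely quickly. This is exactly why the final constant is as large as $\getvar{beta_mod}$.

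Crude interval enclosures in that region would wildly overestimate the integral. I would therefore use an adaptive, very fine mesh concentrated near $0$. Alternatively, I would use the piecewise-polynomial (ODE-based) approximations of $\beta$ from Appendix~\ref{sec:ode}, with their certified error bounds. In either case the cost of evaluating the degree-$N=\getvar{Nexplicit}$ trigonometric sums must be managed, for example by precomputing Chebyshev or Taylor models of $\twap$ on each subinterval.
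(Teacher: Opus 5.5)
Your proposal is correct and follows essentially the same route as the paper: the paper also uses that $\beta$ is real to write $|\pa_x\beta+i\beta^2|^2=(\pa_x\beta)^2+\beta^4=\frac{1+(\pa_x\twap)^2}{(c+\twap)^4}$, builds this integrand from Fourier representations of the numerator and denominator, and doubles the integral over $[0,\pi]$ by evenness. The rigorous enclosure there is obtained by the adaptive option you mention, namely bisection with the two-point Gauss--Legendre rule and its remainder $\frac{r^5}{135}\pa_x^4 f(I)$, where $I=[x_0-r,x_0+r]$ and interval arithmetic supplies the derivatives up to order four.
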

\proofcompassi

\begin{lemma}\label{lemma:kappa2}
    Let $\kappa_1$ as in Lemma~\ref{lemma:kappa1} and $\kappa_2$ be defined by
    \begin{align*}
        \kappa_2(x) :=
        \big[(\pa_x\beta(x))^2+\beta(x)^4\big]
        \kappa_1(x).
    \end{align*}

    Then
    \begin{align*}
        \norm{\kappa_2}_{L^1([0,\pi])}
        <
        \getvar{kappa2}.
    \end{align*}
    
\end{lemma}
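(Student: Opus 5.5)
This bound is a rigorous numerical quadrature of an explicit function, so I would prove it with interval arithmetic, as for Lemmas~\ref{lemma:kappa1} and~\ref{lemma:betamod_L2}. The integrand is
\begin{align*}
    \kappa_2(x)=\big[(\pa_x\beta(x))^2+\beta(x)^4\big]\kappa_1(x).
\end{align*}
Since $\twap$ is real, $\beta$ is real and positive by Lemma~\ref{lemma:betamax}. Hence $\kappa_1(x)=\int_0^x\beta^2\ge 0$ on $[0,\pi]$ and $\kappa_2\ge 0$ there, so $\norm{\kappa_2}_{L^1([0,\pi])}=\int_0^\pi \kappa_2(x)\dx$. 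The absolute value plays no role, and it suffices to produce a rigorous upper enclosure of this integral.

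I would partition $[0,\pi]$ into subintervals $I_m=[x_m,x_{m+1}]$, finer where $c+\twap$ is small. That region is near $x=0$, since Figure~\ref{fig:twap} shows the wave is steep there and $\beta$ reaches values close to $\getvar{beta_max}$. On each $I_m$ I need three enclosures.
\begin{itemize}
    \item $\beta$ and $\pa_x\beta=-\pa_x\twap\,\beta^2$. Both come from evaluating the trigonometric polynomials $\twap$ and $\pa_x\twap$, with their $N=\getvar{Nexplicit}$ explicit coefficients, in interval arithmetic on $I_m$. To limit overestimation I would use Taylor models or the mean value form rather than naive interval evaluation.
    \item $\kappa_1$. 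Since $\beta^2>0$, $\kappa_1$ is increasing, so $\kappa_1(x)\le\kappa_1(x_{m+1})$ on $I_m$. The values $\kappa_1(x_{m+1})$ are obtained cumulatively by adding rigorous enclosures of $\int_{I_k}\beta^2$ for $k\le m$. This reuses the machinery of Lemma~\ref{lemma:kappa1}, or the piecewise polynomial ODE approximations of Appendix~\ref{sec:ode}.
    \item The integral over $I_m$. I would bound $\int_{I_m}\kappa_2\le |I_m|\sup_{I_m}[(\pa_x\beta)^2+\beta^4]\,\kappa_1(x_{m+1})$, or use a higher-order validated quadrature with a rigorous remainder bound.
\end{itemize}
Summing over $m$ gives an interval whose upper endpoint must be below $\getvar{kappa2}$.

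The main obstacle is the region near the steep part of the wave. There $\pa_x\beta$ is very large, of order $\beta^2|\pa_x\twap|$ with $\beta\approx 20$, which explains why the constant is of order $10^4$–$10^5$. Crude interval bounds would then blow up the enclosure, and the fix I would try first is adaptive refinement of the partition there until the total is comfortably below the stated value. A secondary cost is that evaluating a 1000-term trigonometric sum on many subintervals is expensive. To control this I would precompute Chebyshev or Taylor expansions of $\twap$ on each cell with rigorous truncation errors, as the appendix does for the other explicit functions.
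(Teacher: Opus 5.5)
Your proposal is correct and is essentially the paper's proof: both certify the bound by adaptive validated quadrature. The paper's routine \texttt{triangular\_integral()} applies Fubini to write the integral as $\int_0^\pi \beta(y)^2\,\iota(y)\,dy$ with the decreasing tail $\iota(y)=\int_y^\pi\big[(\pa_x\beta)^2+\beta^4\big]\,dx$, and encloses this with a two-point Gauss--Legendre rule with fourth-derivative remainder; you instead keep the original order, use that $\kappa_1$ is increasing, and take a first-order cell bound. One minor correction to your heuristics: since $\twap_1>0$, the trough of $\twap$ (where $c+\twap$ is smallest and $\beta\approx 20$) should sit near $x=\pm\pi$ rather than $x=0$, though adaptive refinement would locate it regardless.
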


\proofcompassi

\begin{lemma}\label{lemma:J_L2L2_exis}
    Let $g\in\XLIpi$. Then
    \begin{align*}
        \norm{Jg}_\XLIpi^2 \leq
        \getvar{kappa1}
        \,
        \norm{g}_\XLIpi^2.
    \end{align*}
\end{lemma}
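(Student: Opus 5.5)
The plan is to use that $|\PiExis(x)|=1$ for real $x$, which holds because $\beta$ is real, so $\PiExis$ is a pure phase. This gives the pointwise bound
\begin{align*}
    |Jg(x)| \leq \Big|\int_0^x \beta(y)|g(y)|\dy\Big|.
\end{align*}
Then apply Cauchy--Schwarz on the segment between $0$ and $x$:
\begin{align*}
    |Jg(x)|^2 \leq \Big|\int_0^x \beta(y)^2\dy\Big|\,\Big|\int_0^x |g(y)|^2\dy\Big| \leq |\kappa_1(x)|\,\norm{g}_{L^2([0,x])}^2,
\end{align*}
where for $x<0$ both integrals run over $[x,0]$.

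Next integrate over $[-\pi,\pi]$, treating $[0,\pi]$ and $[-\pi,0]$ separately. On $[0,\pi]$ we have $\norm{g}_{L^2([0,x])}^2\leq \norm{g}_{L^2([0,\pi])}^2$, so
\begin{align*}
    \int_0^\pi |Jg|^2\dx\leq \norm{\kappa_1}_{L^1([0,\pi])}\,\norm{g}_{L^2([0,\pi])}^2 .
\end{align*}
On $[-\pi,0]$ the same argument gives the bound $\norm{\kappa_1}_{L^1([-\pi,0])}\norm{g}_{L^2([-\pi,0])}^2$.

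To handle the negative side, use that $\twap$ is even. Then $\beta$ is even, so $\kappa_1$ is odd, and hence $\norm{\kappa_1}_{L^1([-\pi,0])}=\norm{\kappa_1}_{L^1([0,\pi])}$. Adding the two halves,
\begin{align*}
    \norm{Jg}_\XLIpi^2\leq \norm{\kappa_1}_{L^1([0,\pi])}\big(\norm{g}_{L^2([-\pi,0])}^2+\norm{g}_{L^2([0,\pi])}^2\big)= \norm{\kappa_1}_{L^1([0,\pi])}\norm{g}_\XLIpi^2 .
\end{align*}
Lemma~\ref{lemma:kappa1} then bounds $\norm{\kappa_1}_{L^1([0,\pi])}$ by \getvar{kappa1}, which gives the stated estimate.

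I do not expect a real obstacle. The only points needing care are the sign conventions for $x<0$, where $\kappa_1(x)\le 0$ and we work with $|\kappa_1|$, and the evenness of $\beta$, which follows directly from the cosine expansion \eqref{tildev}. The fact that $\beta>0$ and finite (Lemma~\ref{lemma:betamax}) guarantees that all the integrals above are well defined.
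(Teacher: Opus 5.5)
Your argument is correct and is essentially the paper's proof: you use unimodularity of $\PiExis$, apply Cauchy--Schwarz on the segment between $0$ and $x$ to produce $\kappa_1(x)$, and use evenness of $\beta$ to transfer the bound to $[-\pi,0]$. The only difference is cosmetic: you bound $\norm{g}_{L^2([0,x])}^2\le\norm{g}_{L^2([0,\pi])}^2$ directly rather than exchanging the order of integration, and this yields the same constant $\norm{\kappa_1}_{L^1([0,\pi])}$.
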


\begin{proof}
    It suffices to prove the estimate for $g\in C(\T)$, since $C(\T)$ is dense in $\XLIpi$.
    
    We note that $|\PiExis(x)|=1$ for all $x\in\RR$. Hence,
    \begin{align}
        \norm{Jg}_\XLIpi^2
        \nonumber
        &= 
        \intpi
        \Big|\PiExis(x)
        \int_0^x\beta(y)
        \frac{g(y)}{\PiExis(y)}
        \dy\Big|^2
        \dx
        \nonumber
        \\
        &= 
        \intpi
        \Big|\int_0^x
        \beta(y)
        \frac{g(y)}{\PiExis(y)}
        \dy\Big|^2
        \dx
        \nonumber
        \\
        &\leq  
        \int_{0}^\pi
            \norm{\beta g}_{L^1([0,x])}^2
        \dx
        +
        \int_{-\pi}^0
            \norm{\beta g}_{L^1([x,0])}^2
        \dx.
        \label{eqn2:J_L2L2_exis:aux1}
    \end{align}

    We focus on the first integral. The computations for the second one are similar since $\beta$ is an even function. Applying the Cauchy--Schwarz inequality yields
    \begin{align}
        \int_{0}^\pi
        \norm{\beta g}_{L^1([0,x])}^2
        \dx & \leq 
        \int_{0}^\pi
        \norm{\beta}_{L^2([0,x])}^2
        \norm{g}_{L^2([0,x])}^2
        \dx
        \label{eqn2:J_L2L2_exis:aux111}.
    \end{align}

    Let $\kappa_1$ be the function in Lemma~\ref{lemma:kappa1}, we obtain
    \begin{align}
        \int_{0}^\pi
        \norm{\beta}_{L^2([0,x])}^2
        \norm{g}_{L^2([0,x])}^2
        \dx
        \nonumber 
        & = 
        \int_{0}^\pi
        \kappa_1(x)
        \int_0^x 
            |g(y)|^2
        \dy
        \dx
        \nonumber
        \\ & = 
        \int_{0}^\pi
        |g(y)|^2
        \int_y^\pi
        \kappa_1(x)
        \dx\dy\nonumber
        \\ & \leq 
        \norm{\kappa_1}_{L^1([0,\pi])}
        \norm{g}_{L^2([0,\pi])}^2
        \label{eqn2:J_L2L2_exis:aux112}
    \end{align}

    Combining \eqref{eqn2:J_L2L2_exis:aux111} and \eqref{eqn2:J_L2L2_exis:aux112}, we obtain
    \begin{align*}
        \int_{0}^\pi
        \norm{\beta g}_{L^1([0,x])}^2
        \dx \leq 
        \norm{\kappa_1}_{L^1([0,\pi])}
        \norm{g}_{L^2([0,\pi])}^2.
    \end{align*}

    Therefore, we get
    \begin{align}
        \norm{Jg}_\XLIpi^2
        &\leq 
        \norm{\kappa_1}_{L^1([0,\pi])}
        \big(
            \norm{g}_{L^2([0,\pi])}^2
            +
            \norm{g}_{L^2([-\pi,0])}^2
        \big)
        \nonumber
        \\
        &\leq 
        \norm{\kappa_1}_{L^1([0,\pi])}
        \norm{g}_\XLIpi^2.
        \label{eqn2:J_L2L2_exis:last}
    \end{align}

    Finally, we substitute the explicit estimate in Lemma~\ref{lemma:kappa1} into \eqref{eqn2:J_L2L2_exis:last} to obtain the bound of the statement.
\end{proof}

\begin{lemma}\label{lemma:J_H1H2_exis}
    Let $G\in\XHholZMi{1}$ and let $g:=G|_\T$. Then
    \begin{align*}
        \norm{\pa_x^2 Jg}_\XLIpi
        \leq 
        \getvar{JH1H2}
        \norm{\pa_x g}_\XLIpi.
    \end{align*}
\end{lemma}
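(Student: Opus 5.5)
The plan is to differentiate the Duhamel representation twice and absorb everything into the explicit quantities bounded in Lemmas~\ref{lemma:betamax}, \ref{lemma:betamod_L2} and \ref{lemma:kappa2}. Since $\pa_x\PiExis=i\beta\PiExis$, the definition \eqref{def:Jexis} gives
\begin{align*}
    \pa_x Jg = i\beta\, Jg + \beta g .
\end{align*}
Differentiating once more and substituting this identity back in, the terms regroup as
\begin{align*}
    \pa_x^2 Jg
    &= i\pa_x\beta\, Jg + i\beta\big(i\beta Jg+\beta g\big) + \pa_x\beta\, g + \beta\pa_x g
    \\
    &= (\pa_x\beta + i\beta^2)\, g + i(\pa_x\beta+i\beta^2)\, Jg + \beta\,\pa_x g .
\end{align*}
The whole point of this grouping is that the combination $\pa_x\beta+i\beta^2$ is exactly the one controlled in Lemma~\ref{lemma:betamod_L2}. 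Since $\beta$ is real, we also have $|\pa_x\beta+i\beta^2|^2=(\pa_x\beta)^2+\beta^4$, which is the weight appearing in $\kappa_2$. By Minkowski, I then bound the three terms separately in $\XLIpi$.

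\textbf{Third term.} Lemma~\ref{lemma:betamax} gives directly
\begin{align*}
    \norm{\beta\pa_x g}_\XLIpi\leq \getvar{beta_max}\,\norm{\pa_x g}_\XLIpi .
\end{align*}

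\textbf{First term.} I use $\norm{(\pa_x\beta+i\beta^2)g}_\XLIpi\leq \norm{\pa_x\beta+i\beta^2}_\XLIpi\norm{g}_{L^\infty}$. Because $G(0)=0$, the function $g$ has only positive Fourier modes, so by Cauchy--Schwarz
\begin{align*}
    \norm{g}_{L^\infty}\leq\sum_{k\geq1}|\ha g_k|\leq \zeta(2)^{1/2}\Big(\sum_{k\geq 1}k^2|\ha g_k|^2\Big)^{1/2}=\zeta(2)^{1/2}(2\pi)^{-1/2}\norm{\pa_x g}_\XLIpi .
\end{align*}
With Lemma~\ref{lemma:betamod_L2} this contributes $(\getvar{beta_mod}\,\zeta(2)/(2\pi))^{1/2}\approx 56.3$ times $\norm{\pa_x g}_\XLIpi$.

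\textbf{Second term.} I use the pointwise estimate $|Jg(x)|\leq\int_0^x\beta|g|$ (with the obvious modification for $x<0$), which holds because $|\PiExis|=1$. Cauchy--Schwarz then gives
\begin{align*}
    |Jg(x)|^2\leq\kappa_1(x)\norm{g}^2_{L^2([0,x])}\leq \kappa_1(x)\norm{g}^2_{L^2([0,\pi])}\qquad\text{for }x\in[0,\pi].
\end{align*}
Hence
\begin{align*}
    \int_0^\pi\big[(\pa_x\beta)^2+\beta^4\big]|Jg|^2\dx\leq\norm{\kappa_2}_{L^1([0,\pi])}\norm{g}^2_{L^2([0,\pi])}.
\end{align*}
The same bound holds on $[-\pi,0]$ with $\norm{g}^2_{L^2([-\pi,0])}$, by evenness of $\beta$. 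Adding the two halves yields $\norm{\kappa_2}_{L^1([0,\pi])}\norm{g}_\XLIpi^2$. Finally, Poincaré for zero-mean $g$ gives $\norm{g}_\XLIpi\leq\norm{\pa_x g}_\XLIpi$, so this term contributes $\getvar{kappa2}^{1/2}\approx 278.3$ times $\norm{\pa_x g}_\XLIpi$.

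Summing the three constants, $19.91+56.3+278.3$, gives a value just below $\getvar{JH1H2}$, which is the claim; the last digit is checked with interval arithmetic.

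The main obstacle is bookkeeping the constants tightly enough to land under $\getvar{JH1H2}$. Two points matter here. First, the splitting over the two half-intervals must use $\norm{g}_{L^2([0,\pi])}$ and $\norm{g}_{L^2([-\pi,0])}$ separately rather than the full norm on each half; the latter would cost a factor $\sqrt2$. Second, the normalizations $\XL$ versus $\XLIpi$ must be tracked correctly in the $L^\infty$ bound. A minor technical point is justifying the differentiation: it suffices to argue for smooth $G$ and conclude by density in $\XHholZMi{1}$.
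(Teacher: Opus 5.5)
Your proof is correct and follows the paper's argument essentially step by step. You use the same identity $\partial_x^2 Jg=\beta\,\partial_x g+(\partial_x\beta+i\beta^2)g+i(\partial_x\beta+i\beta^2)Jg$ and the same three-term split: the $L^\infty$ bound on $\beta$; the $(\pi/12)^{1/2}$ sup-norm embedding for positive-frequency $g$ together with Lemma~\ref{lemma:betamod_L2}; and the $\kappa_2$ half-interval estimate followed by Poincar\'e. The resulting constants sum to about $354.4899$, which lands just under the stated bound.
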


\begin{rem}
    Observe that $Jg$ is not necessarily continuous on $\T$, even if it is smooth on $[-\pi,\pi]$.
\end{rem}

\begin{proof}
    First, assume that $g=G|_\T\in C^1(\T)$, so that $\pa_x^2 Jg$ is well defined pointwise. We prove the estimate in this case. 
    Since smooth functions are dense in the space of traces of $\XHholZMi{1}$ and the norms involved depend continuously on $g$, it follows that the estimate extends to all $G\in\XHholZMi{1}$.

    Fix $G\in\XHholZMi{1}$ with $g:=G|_\T\in C^1(\T)$. Differentiating $Jg$ twice yields
     \begin{align*}
        \pa_x^2 Jg=
        \beta \pa_x g
        +
        (\pa_x\beta+i\beta^2) g 
        +
        (i\pa_x\beta-\beta^2) Jg,
    \end{align*}
    because $Jg$ satisfies $\pa_x Jg=\beta g+i\beta Jg$.
    
    We compute
    \begin{align}\label{eqn:J_H1H2_exis_split}
        \norm{\pa_x^2 Jg}_\XLIpi\leq I_1+I_2+I_3,
    \end{align}
    
    where
    \begin{align*}
        I_1:=
        \norm{\beta \pa_x g}_\XLIpi,
        &&
        I_2:=
        \norm{(\pa_x\beta+i\beta^2) g}_\XLIpi,
        &&
        I_3:=
        \norm{(i\pa_x\beta-\beta^2) Jg}_\XLIpi.
    \end{align*}

    We first bound $I_1$,
    \begin{align}
        I_1&=
        \norm{\beta \pa_x g}_\XLIpi
        \leq
        C_1
        \norm{\pa_x g}_\XLIpi
        \label{eqn:J_H1H2_exis_I1}
    \end{align}
    Here, $C_1:=\norm{\beta}_{\XLinf}$ is bounded by Lemma~\ref{lemma:betamax}.

    Let $C_2\in\RR$ be defined as
    \begin{align*}
        C_2&:=
        \Big(\frac{\pi}{12}\Big)^{1/2}
        \norm{\pa_x\beta+i\beta^2}_\XLIpi,
    \end{align*}
    The constant $C_2$ is bounded by Lemma~\ref{lemma:betamod_L2}. 
    
    Then, we bound $I_2$:
    \begin{align}
        I_2=
        \norm{(\pa_x\beta+i\beta^2) g}_\XLIpi
        \leq
        \norm{\pa_x\beta+i\beta^2}_\XLIpi
        \norm{g}_{\XLinf}
        \leq
        C_2
        \norm{\pa_x g}_\XLIpi.
        \label{eqn:J_H1H2_exis_I2}
    \end{align}
    Here, we have used Lemma~\ref{lemma:apriori:Infty_H1:hol_mean0} in the last step.
    
    Finally, we bound $I_3$. As in the proof of Lemma~\ref{lemma:J_L2L2_exis}, we use that $|\PiExis(x)|=1$ for all $x\in\RR$. Thus
    \begin{align*}
        I_3^2 \leq 
        \int_0^\pi 
            \big|i\pa_x\beta(x)-\beta(x)^2\big|^2 
            \norm{\beta g}_{L^1([0,x])}^2
        \dx
        +
        \int_{-\pi}^0 
            \big|i\pa_x\beta(x)-\beta(x)^2\big|^2 
            \norm{\beta g}_{L^1([x,0])}^2
        \dx.
    \end{align*}

    We study the first integral. Computations for the second one are similar since $\beta$ is even. 
    Let $\kappa_1$ and $\kappa_2$ be as in Lemma~\ref{lemma:kappa2}, applying the same strategy used in the proof of Lemma~\ref{lemma:J_L2L2_exis}, we obtain
    \begin{align}
        \int_0^\pi 
        \big|i\pa_x\beta(x)-\beta(x)^2\big|^2 
        \norm{\beta g}_{L^1([0,x])}^2
        \dx
        &=
        \int_0^\pi 
        \big[(\pa_x \beta(x))^2 + \beta(x)^4\big]
        \norm{\beta g}_{L^1([0,x])}^2
        \dx
        \nonumber
        \\
        &\leq 
        \int_0^\pi 
        \underbrace{
        \big[(\pa_x \beta(x))^2 + \beta(x)^4\big]
        \kappa_1(x)}_{\kappa_2(x)}
        \norm{g}_{L^2([0,x])}^2
        \dx
        \nonumber
        \\
       &\leq
        \int_0^\pi 
        |g(y)|^2 
        \int_y^\pi
        \kappa_2(x)
        \dx\dy
        \nonumber
        \\
        &\leq
        \norm{\kappa_2}_{L^1([0,\pi])}
        \norm{g}_{L^2([0,\pi])}^2.
        \label{eqn:J_L2L2_exis_I3}
    \end{align}

    Define $C_3$ by
    \begin{align*}
        C_3:=\norm{\kappa_2}_{L^1([0,\pi])}^{1/2}.
    \end{align*}
    The constant $C_3$ is bounded by Lemma~\ref{lemma:kappa2}.

    Therefore, we find that
    \begin{align}
        I_3^2
        &\leq
        \norm{\kappa_2}_{L^1([0,\pi])}
        \big(
        \norm{g}_{L^2([0,\pi])}^2
        +
        \norm{g}_{L^2([-\pi,0])}^2
        \big)
        \nonumber\\&
        \leq
        \norm{\kappa_2}_{L^1([0,\pi])}
        \norm{g}_\XLIpi^2
        \nonumber 
        \\
        &\leq
        C_3^2
        \norm{\pa_x g}_\XLIpi^2.
        \label{eqn2:J_H1H2_exis_I3}
    \end{align}
    In the last step, we used Parseval's identity in the following way:
    \begin{align*}
        \frac{1}{2\pi}
        \norm{g}_\XLIpi^2 =
        \sum_{k\geq 1} |\ha{g}_k|^2 \leq 
        \sum_{k\geq 1} k^2 |\ha{g}_k|^2 =
        \frac{1}{2\pi}
        \norm{\pa_x g}_\XLIpi^2,
    \end{align*}
    where we used that $G\in\XHholZM$, so that $\ha{g}_k=0$ for all $k\leq 0$.

    Therefore, substituting \eqref{eqn:J_H1H2_exis_I1}, \eqref{eqn:J_H1H2_exis_I2}, and \eqref{eqn2:J_H1H2_exis_I3} into \eqref{eqn:J_H1H2_exis_split}, we obtain
    \begin{align*}
        \norm{\pa_x^2 Jg}_\XLIpi
        &\leq 
        \big(C_1 + C_2 + C_3\big)
        \norm{\pa_x g}_\XLIpi.
    \end{align*}
\end{proof}

The next lemma is the last one needed before proving Proposition~\ref{prop:inv_exis} and is also computer-assisted.

\begin{lemma}\label{lemma:hk_funcs_exis}
    For each $k=1,\ldots,N+1,$ let $h_k$ be as in \eqref{def:hk}. Then the following estimate holds:
    \begin{align*}
        \sum_{k=1}^{N+1}
            \norm{\pa_x^2 h_k}_\XLIpi^2
            \leq
            \getvar{hkH2}.
    \end{align*}
\end{lemma}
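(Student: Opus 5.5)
The bound will be a computer-assisted estimate, so the plan is to turn each $\norm{\pa_x^2 h_k}_\XLIpi^2$ into something enclosable by interval arithmetic. We never bound $N+1$ unrelated integrals by brute force; we reduce everything to a few validated enclosures of explicit functions.

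\textbf{Pointwise formula for the second derivative.} For $k\le N$, $h_k=Jg_k$ satisfies $\pa_x h_k=\beta g_k+i\beta h_k$. Differentiating as in the proof of Lemma~\ref{lemma:J_H1H2_exis} gives
\begin{align*}
    \pa_x^2 h_k=\beta\pa_x g_k+(\pa_x\beta+i\beta^2)g_k+(i\pa_x\beta-\beta^2)h_k .
\end{align*}
For $k=N+1$, $\pa_x\PiExis=i\beta\PiExis$, so $\pa_x^2\PiExis=(i\pa_x\beta-\beta^2)\PiExis$ with $|\PiExis|=1$. Its contribution is therefore exactly $\norm{i\pa_x\beta-\beta^2}_\XLIpi^2=\norm{\pa_x\beta+i\beta^2}_\XLIpi^2$, which Lemma~\ref{lemma:betamod_L2} bounds.

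\textbf{Plan for $k\le N$.} Each $g_k$ is an explicit trigonometric polynomial with coefficients $k\twap_{k+m}$.
\begin{itemize}
\item[1.] Bound the first two terms using Lemma~\ref{lemma:betamax} and Lemma~\ref{lemma:betamod_L2}, together with $\norm{g_k}_\XLinf$ and $\norm{\pa_x g_k}_\XLIpi$ computed directly from the Fourier coefficients.
\item[2.] Summed over $k$, these give quantities like $\sum_k k^2\sum_m m^2|\twap_{k+m}|^2$, which are finite sums over the data and easy to evaluate rigorously.
\item[3.] For the third term, use the weighted estimate from \eqref{eqn:J_L2L2_exis_I3}, which gives $\norm{(i\pa_x\beta-\beta^2)Jg_k}_\XLIpi^2\le\norm{\kappa_2}_{L^1([0,\pi])}\norm{g_k}_\XLIpi^2$. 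Combine it with Lemma~\ref{lemma:kappa2}, using $(a+b+c)^2\le 3(a^2+b^2+c^2)$.
\end{itemize}
This route is fully rigorous. Its only computational content is the finite sums $\sum_k\norm{g_k}^2$ and $\sum_k\norm{\pa_xg_k}^2$, which are computed exactly in interval arithmetic from the stored coefficients $\twap_j$.

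\textbf{Main obstacle: sharpness.} The crude constants above, in particular $\beta_{\max}^2\approx 400$ and the $\kappa_2$ factor of about $7.7\cdot10^4$, may overshoot the target $\getvar{hkH2}$. If they do, I would compute $\pa_x^2h_k$ more directly. The idea is to evaluate $h_k$ via validated ODE integration of $\pa_x h=\beta g_k+i\beta h$ using the piecewise-polynomial enclosures of Appendix~\ref{sec:ode}, then integrate $|\pa_x^2h_k|^2$ with interval quadrature.

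To avoid $N$ separate ODE solves, use linearity in the coefficients $\twap_j$. Write $g_k=\tfrac{i}{2}k\twap_k+k\sum_m\twap_{k+m}\sin(mx)$, so that
\begin{align*}
    h_k=\tfrac{i}{2}k\twap_k J1+k\sum_m\twap_{k+m}J\sin(m\cdot).
\end{align*}
This reduces the problem to enclosing the $N+1$ functions $J1$ and $J\sin(m\cdot)$, plus their Gram matrix of $H^2$-type inner products. The sum over $k$ then becomes a quadratic form in these quantities.

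Controlling the enclosure width near the maximum of $\beta$, where $c+\twap$ is small and $\beta$ varies rapidly, is where I expect the real difficulty to lie. It will likely require adaptive subdivision there.
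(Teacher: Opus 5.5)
Your first plan is the paper's proof. It uses the same identity for $\pa_x^2 h_k$, the same exact treatment of $h_{N+1}=\PiExis$ via Lemma~\ref{lemma:betamod_L2}, and the same three bounds $I_1=\norm{\beta}_{\XLinfinterval}\norm{\pa_x g_k}_\XLIpi$, $I_2=\norm{\pa_x\beta+i\beta^2}_\XLIpi\norm{g_k}_{\XLinfinterval}$, $I_3=\norm{\kappa_2}_{L^1([0,\pi])}^{1/2}\norm{g_k}_\XLIpi$. The $L^2$ norms of $g_k$ and $\pa_x g_k$ are in closed form.

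\textbf{The gap is your diagnosis of the obstacle.} It matters because the lemma asserts a specific constant. The paper reaches $1.02110\cdot 10^6$ using exactly the crude constants you distrust: $\beta_{\max}$, $\norm{\pa_x\beta+i\beta^2}_\XLIpi$ and $\norm{\kappa_2}_{L^1([0,\pi])}$. It bounds $\norm{\pa_x^2 h_k}_\XLIpi\le I_1+I_2+I_3$ for each $k$ and then sums the squares over $k$ in interval arithmetic. What would make your version overshoot is your own step $(I_1+I_2+I_3)^2\le 3(I_1^2+I_2^2+I_3^2)$. That step is sharp only if the three terms agree for every $k$. Here they are far from balanced (compare $\norm{\kappa_2}_{L^1([0,\pi])}\approx 7.7\cdot 10^4$ with $\beta_{\max}^2\approx 4\cdot 10^2$). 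So it can cost up to a factor $3$ over the unsplit sum the paper evaluates, and your primary route should not be expected to certify the stated bound.

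You lose more in bounding $\norm{g_k}_{\XLinfinterval}$ by the $\ell^1$ norm of the Fourier coefficients. Since $g_k=\frac{i}{2}k\twap_k+\Re(g_k)$ with $\Re(g_k)$ real, the paper instead uses $\norm{g_k}_{\XLinfinterval}^2\le(\frac{k}{2}\twap_k)^2+C_k^2$. Here $C_k$ are stored bounds for $\norm{\Re(g_k)}_{L^\infty([0,\pi])}$, certified by adaptive Taylor enclosures. Both fixes are cheap finite computations.

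\textbf{The fallback.} The validated-ODE/Gram-matrix fallback is unnecessary, and as sketched it would need repair. The approximations of Appendix~\ref{sec:ode} are only continuous and piecewise polynomial, with $L^2$ control of $\tau_k=h_k-h_k^{\rm ap}$. You therefore cannot differentiate $h_k^{\rm ap}$ twice and integrate. You would have to insert $h_k=h_k^{\rm ap}+\tau_k$ into the identity for $\pa_x^2 h_k$. The error term would then be bounded by $\norm{(i\pa_x\beta-\beta^2)\tau_k}_\XLIpi\le\norm{\kappa_2}_{L^1([0,\pi])}^{1/2}\norm{\xi_k}_\XLIpi$, using $\tau_k=-J\xi_k$. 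Finally, the linearity reduction saves nothing: $J1$ and $J\sin(m\,\cdot)$, $m=1,\ldots,N-1$, are again $N$ functions, and it adds an $N\times N$ Gram matrix of validated integrals.
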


\proofcompassi

\subsection{Conclusion}\label{sec:exis:laststep}
We are now in a position to prove Proposition~\ref{prop:inv_exis}.

By Lemma~\ref{lemma:exis:td}, the operator $\LOpExis:\XHodd{2}\to\XHeven{1}$ is invertible if and only if its holomorphic counterpart $\LOpExisHol:\XHholZMi{2}\to\XHholZMi{1}$ is invertible. In addition, the norms of the inverses coincide.

Let $G\in\XHholZMi{1}$ and set $g:=G|_\T\in\XH{1}$. 
In particular, $g$ is continuous. Hence, by Corollary~\ref{coro:exis_singval}, we know that there exists a unique holomorphic function $F:\D\to\C$ with $F(0)=0$ and $\pa_z F\in C(\overline{\D})$ that satisfies \eqref{eqn:exis_finite_hol_main}. Set $f:=F|_\T\in C^1(\T)$.

Making the change of variables $z = e^{ix}$ in \eqref{eqn:exis_finite_hol_main}, we find that $f$ and $g$ satisfy
\begin{align*}
    \big(c+\twap(x)\big) \pa_x f(x) =
    i f(x) + \sum_{k=1}^N \ha{f}_k g_k(x) + g(x),
\end{align*}
where $g_k$ is given in \eqref{def:gk_exis}.

By Duhamel's formula, we have 
\begin{align*}
    f(x)
    =
    f(0)\PiExis(x)
    +
    \sum_{k=1}^N \ha{f}_kJg_k(x)
    +
    Jg(x),
\end{align*}
where the coefficients $\ha{f}_k$, $f(0)$ and the matrix $\MExisTor$ from \eqref{def:Mexistor} satisfy the relation
\begin{align*}
    0=\MExisTor
    \begin{pmatrix}
        \ha{f}_1\\
        \vdots\\
        \ha{f}_N\\
        f(0)
    \end{pmatrix}
    +
    \begin{pmatrix}
        (\ha{Jg})_0\\
        \vdots\\
        (\ha{Jg})_N\\
    \end{pmatrix}.
\end{align*}

Hence, we obtain
\begin{align}\label{eqn:exis_final_step1}
    f(x)=-
    \begin{pmatrix}
        h_1(x) \\
        \vdots\\
        h_{N+1}(x)\\
    \end{pmatrix}^t
    (\MExisTor)^{-1}
    \begin{pmatrix}
        (\ha{Jg})_0\\
        \vdots\\
        (\ha{Jg})_N\\
    \end{pmatrix}
    +
    Jg(x),
\end{align}
where $h_k =Jg_k$ for $k=1,\ldots, N$ and $h_{N+1}=\PiExis$.

Let 
\begin{align*}
    \MExisTor = U\Sigma V^\ast
\end{align*} 
be a singular value decomposition, where $\Sigma={\rm diag}(\sigma_1,\ldots,\sigma_{N+1})$ is ordered increasingly.

Because $V$ and $U$ are unitary, for all $a,b\in\C^{N+1}$, we have
\begin{align*}
    \Big|
    \sum_{k,l,m=1}^{N+1}
        a_k
        V_{k,l}
        \frac{1}{\sigma_l} 
        \overline{U}_{m,l} 
        b_m
    \Big|^2
    &\leq
    \norm{a}_{\ell^2}^2
    \sum_{k=1}^{N+1}
        \Big|
        \sum_{l,m=1}^{N+1}
            V_{k,l}
            \frac{1}{\sigma_l} 
            \overline{U}_{m,l}  b_m
        \Big|^2 
    = 
    \norm{a}^2_{\ell^2}
    \sum_{l=1}^{N+1}
        \frac{1}{\sigma_l^2}
        \Big|
        \sum_{m=1}^{N+1}
            \overline{U}_{m,l}  
            b_m
        \Big|^2
    \\
    &\leq
    \frac{1}{\sigma_1^2}
    \norm{a}^2_{\ell^2}
    \sum_{l=1}^{N+1}
        \Big|
        \sum_{m=1}^{N+1}
            \overline{U}_{m,l}  
            b_m
        \Big|^2 
    = 
    \frac{1}{\sigma_1^2}
    \norm{a}^2_{\ell^2}
    \norm{b}^2_{\ell^2}.
\end{align*}
In particular, taking $b_m=(\ha{Jg})_{m-1},\ m=1,\ldots,N+1$ and $a_k=\pa_x^2h_k(x),\ k=1,\ldots,N+1$ for a fixed $x\in[0,2\pi]$, we obtain
\begin{align*}
    \Big|
    \sum_{k,l,m=1}^{N+1}
    \pa_x^2 h_k(x) V_{k,l}
    \frac{1}{\sigma_l} 
    \overline{U}_{m,l}  
    (\ha{Jg})_{m-1}
    \Big|^2
    &\leq
    \frac{1}{\sigma_1^2}
    \Big(
    \sum_{k=1}^{N+1} 
        |\pa_x^2 h_k(x)|^2
    \Big)
    \Big(
    \sum_{m=0}^{N} 
        |(\ha{Jg})_{m}|^2
    \Big)\\
    &\leq
    \frac{1}{\sigma_1^2}
    \Big(
    \sum_{k=1}^{N+1} 
        |\pa_x^2 h_k(x)|^2
    \Big)
    \norm{Jg}_\XL^2,
\end{align*}
where we used Parseval's identity.

Taking the $\XLIpi$ norm yields
\begin{align}
    \scalednorm{
    \begin{pmatrix}
        \pa_x^2 h_1\\
        \vdots\\
        \pa_x^2 h_{N+1}
    \end{pmatrix}^t
    V
    \Sigma^{-1}
    U^\ast
    \begin{pmatrix}
        (\ha{Jg})_0\\
        \vdots\\
        (\ha{Jg})_N
    \end{pmatrix}
    }_\XLIpi^2 
    &= 
    \intpi
    \Big|
    \sum_{k,l,m=1}^{N+1}
        \pa_x^2 h_k(x) 
        V_{k,l} 
        \frac{1}{\sigma_l} 
        \overline{U}_{m,l}
        \ha{Jg}_{m-1}
    \Big|^2
    \dx\nonumber
    \\
    &\leq
    \frac{1}{\sigma_1^2}
    \norm{Jg}_\XL^2
    \sum_{k=1}^{N+1} 
    \intpi
        |\pa_x^2 h_k(x)|^2
    \dx\nonumber
    \\
    &\leq
    \frac{1}{\sigma_1^2}
    \norm{Jg}_\XL^2
    \sum_{k=1}^{N+1} 
        \norm{\pa_x^2 h_k}_\XLIpi^2.
        \label{eqn:exis_final_step2}
\end{align}

Let $C_1\in\RR$ be defined by
\begin{align*}
    C_1^2:=
    \frac{1}{\sigma_1^2}
    \norm{J}_{\XL\to\XL}^2
    \sum_{k=1}^{N+1}\norm{\pa_x^2 h_k}_\XLIpi^2,
\end{align*}
Lemmas~\ref{lemma:exis_singval}, \ref{lemma:J_L2L2_exis}, and \ref{lemma:hk_funcs_exis} provide an explicit upper bound for $C_1$.

By Lemma~\ref{lemma:J_H1H2_exis}, there exists an explicit constant $C_2>0$ such that
\begin{align}\label{eqn:exis_final_step3}
    \norm{\pa_x^2 Jg}_\XLIpi
    \leq 
    C_2
    \norm{\pa_x g}_\XLIpi.
\end{align}

We differentiate \eqref{eqn:exis_final_step1} twice and apply the estimates \eqref{eqn:exis_final_step2} and \eqref{eqn:exis_final_step3}:
\begin{align*}
    \norm{\pa_x^2 f}_\XLIpi
    &\leq
    \scalednorm{
    \begin{pmatrix}
        \pa_x^2 h_1\\
        \vdots\\
        \pa_x^2 h_{N+1}
    \end{pmatrix}^t
    V
    \Sigma^{-1}
    U^\ast
    \begin{pmatrix}
        (\ha{Jg})_0\\
        \vdots\\
        (\ha{Jg})_N
    \end{pmatrix}
    }_\XLIpi
    +
    \norm{\pa_x^2 Jg}_\XLIpi
    \\
    &\leq 
    C_1 \norm{g}_\XL
    +
    C_2 \norm{\pa_x g}_\XLIpi.
\end{align*}

Since $G\in\XHholZMi{1}$, the trace $g=G|_\T$ has only positive Fourier modes. Hence,
\begin{align*}
    \norm{g}_\XL^2=
    \sum_{k\geq 1}
    |\ha{g}_k|^2 \leq 
    \sum_{k\geq 1}
    k^2 |\ha{g}_k|^2
    = 
    \frac{1}{2\pi}
    \norm{\pa_x g}_\XLIpi^2.
\end{align*}

Define $C_3 := \frac{C_1}{\sqrt{2\pi}} + C_2$, so we get that
\begin{align*}
    \norm{\pa_x^2 f}_\XLIpi
    &\leq 
    C_3 
    \norm{\pa_x g}_\XLIpi.
\end{align*}

Consequently, since $f\in C^1(\T)$ and $\pa_x^2 f\in \XLIpi$, it follows that $f\in\XH{2}$. Additionally, since $F$ is holomorphic in $\D$ and $F(0)=0$, the function $f$ has only positive Fourier modes. We then find that
\begin{align*}
    \norm{F}_\XHholZMi{2}^2=
    \frac{1}{2\pi}\norm{\pa_x^2 f}_\XLIpi^2
    \leq \frac{C_3^2}{2\pi}
    \norm{\pa_x g}_\XLIpi^2=
     C_3^2
    \norm{G}_\XHholZMi{1}^2.
\end{align*}

Therefore, by Lemma~\ref{lemma:exis:td} and identity \eqref{eqn:LOpExis2}, we have that
\begin{align*}
    \norm{\LOpExisTwapInv}
    _{\XHodd{0}\to\XHeven{1}}=
    \norm{(\LOpExis)^{-1}}
    _{\XHeven{1}\to\XHodd{2}}=
    \norm{(\LOpExisHol)^{-1}}
    _{\XHholZMi{1}\to\XHholZMi{2}} \leq C_3.
\end{align*}

Finally, we substitute all the bounds and find that
\begin{align*}
    \norm{\LOpExisTwapInv}
    _{\XHodd{0}\to\XHeven{1}} \leq C_3 <
    \getvar{Lexis_inv}.
\end{align*}

\section{Proof of Proposition \ref{prop:twStab}: Lemmas \ref{lemma:stab_ballintoitself}--\ref{lemma:real_eigen_control}}
\label{sec:prop:inv_stab}
In this section we shall prove Lemmas~\ref{lemma:stab_ballintoitself}, \ref{lemma:stab_lipschitz} and \ref{lemma:real_eigen_control} which yield Proposition \ref{prop:twStab}. Recall that Proposition \ref{prop:twStab} is the main ingredient which remains to show that the solution $(\frac{c}{3},\tw^3)$ is linearly unstable.

\subsection{The operator $\TopStab$}\label{sec:proof_cont_stab}
This section is devoted to the study of the operator $\TopStab$, given by
\begin{align}\label{def:ContOpStab_bis}
    \TopStab f :=
    (\LStabthe 
    - 
    \laap I)^{-1}
    \big[
        (\Eigdiff(f) I -\DiffOpExpNonExp)
        (\Eigveap+f)
        -
        \Eigres
    \big].
\end{align}

Before proceeding with the analysis of $\TopStab$, it is necessary to justify that the definition \eqref{def:ContOpStab_bis} is indeed meaningful. More precisely, the expression involves the inverse operator
\begin{align*}
    (\LStabthe - \laap I)^{-1},
\end{align*}
and \emph{a priori} it is not clear that the operator $\LStabthe - \laap I$ is invertible. 

In addition, in order to define $\Eigdiff(f)$, we will need to further understand the structure of $\LStabthe-\laap I$. Indeed, since $\laap$ is an approximation of the eigenvalue $\la$ of $\LStabtheTW$ (whose existence we aim to prove), and $\LStabthe$ is close to $\LStabtheTW$, the operator $\LStabthe-\laap I$ is expected to be close to being non-invertible. 

Nevertheless, by choosing $\Eigdiff(f)$ appropriately, we will remove the nearly singular direction, yielding uniform bounds for the inverse.

\subsection{The linear operator $\LStabthe$}
\label{sec:proof_inv_stab}

Our first goal in this section is to reduce the study of the operator $\LStabthe-\laap I$, acting on periodic function spaces, to the study of an operator acting on Hardy spaces. Here, $\LStabthe$ is defined in \eqref{def:LStabthe}.

To this end, we define the operator 
$\LOpStabHol{\la}:\XHholo\oplus\XHholo\to\XHholo\oplus\XHholo$, depending on two parameters $\te\in(0,1)$ and $\la\in\C$, by 
\begin{align*}
    \LOpStabHol{\la}(F^+,F^-):=
    \big(
    \LOpStabHol{\la}(F^+,F^-)^1
    ,
    \LOpStabHol{\la}(F^+,F^-)^2
    \big),
\end{align*}
where
\begin{subequations}\label{system:stab_hol}
        \begin{empheq}[left=\empheqlbrace]{align}
        \LOpStabHol{\la}(F^+,F^-)^1
        :=
            izP(z)\pa_z F^+(z)-
            (i+\la)F^+(z)+i\te^+ P(z)F^+(z) -\mathcal{G}^{\te^+}\!(F^+,F^-)(z),
        \label{system:stab_hol1}
        \\
        -\LOpStabHol{\la}(F^+,F^-)^2
        :=
            izP(z)\pa_z F^-(z)-
            (i-\la)F^-(z)+i\te^- P(z)F^-(z) -\mathcal{G}^{\te^-}\!(F^-,F^+)(z).
        \label{system:stab_hol2}
        \end{empheq}
\end{subequations}
Recall that $P$ is defined in \eqref{def:P_}.
Here we set $\te^+:=\te$ and $\te^-:=1-\te$. The meromorphic functions $\mathcal{G}^{\te^\pm} (A,B)$ are defined for $A,B\in\XHholo$ by
\begin{align}\label{def:Gcalthe}
    \mathcal{G}^{\te}(A,B):=
    \sum_{k=1}^N
    \big(
    A_{k-1}
    \mathcal{G}_{k,1-\te}^-
    +
    B_{k-1}
    \mathcal{G}_{k,\te}^+ 
    \big)\quad \text{for }\te\in(0,1),
\end{align}
where
\begin{align}\label{def:Gk_stab}
    \mathcal{G}_{k,\te}^-(z):=
    (k-\te)\frac{i}{2}
    \sum\limits_{m=0}^{N-k}    
    \twap_{m+k}
     z^{-(1+m)}
    \qquad
    \text{and}
    \qquad 
    \mathcal{G}_{k,\te}^+(z):=
    (k-\te)\frac{i}{2}
    \sum\limits_{m=0}^{N-k}    
    \twap_{m+k}
     z^m.
\end{align}

The operator $\LOpStabHol{\la}(F^+,F^-)$ is well defined. Indeed, for any $(F^+,F^-)\in\XHholo\oplus\XHholo$, the expressions in \eqref{system:stab_hol} define holomorphic functions, since all terms involving negative powers of $z$ cancel out.

\begin{rem}\label{rem:Gthe}
    With this notation, we have
    \begin{align*}
        \mathcal{G}^{\te^+}(F^+,F^-)(z):=
        \frac{i}{2}
        \sum_{j=1}^N\twap_j
        \sum_{m=1}^j
        \big[
        (m-\te^-) F^+_{m-1} z^{-(1+j-m)}
        +
        (m-\te^+) F^-_{m-1} z^{j-m}
        \big],
        \\
        \mathcal{G}^{\te^-}(F^-,F^+)(z):=
        \frac{i}{2}
        \sum_{j=1}^N\twap_j
        \sum_{m=1}^j
        \big[
        (m-\te^+) F^-_{m-1} z^{-(1+j-m)}
        +
        (m-\te^-) F^+_{m-1} z^{j-m}
        \big].
    \end{align*}
\end{rem}

\begin{defi}\label{def:FplusFminu}
    Let $f\in \XH{1}$. We define $F^+,F^-\in\XHhol{1}$ by
    \begin{align*}
        F^+(z) := \sum_{k\geq 0} \ha{f}_k z^k,
        \qquad
        F^-(z) := \sum_{k\geq 0} \ha{f}_{-k-1} z^k.
    \end{align*}

\end{defi}

\begin{rem}
    Unlike \eqref{def:F+F-}, here we do not assume $\ha{f}_0=0$. Thus, $F^+$ contains the Fourier modes $k\geq  0$, while $F^-$ contains the modes $k\leq  -1$.
\end{rem}

\begin{lemma}\label{lemma:equiv_stab}
    Let $\te\in(0,1)$ and $\la\in\C$.
    Let $f\in \XH{2}$ and $g\in\XH{1}$, and define $F^\pm\in\XHhol{2}$ and $G^\pm\in\XHhol{1}$ as in Definition~\ref{def:FplusFminu}. Assume that $f$ and $g$ satisfy
    \begin{align}\label{eqn:equiv_stab}
        \LStabthe f-\la f = g,
    \end{align}

    Then, $F^\pm$ and $G^\pm$ satisfy \eqref{system:stab_hol}.
    
\end{lemma}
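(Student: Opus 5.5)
The plan mirrors the proof of Lemma~\ref{lemma:exis:td}. We split \eqref{eqn:equiv_stab} into its nonnegative and negative Fourier modes and check that each half, read on the circle and extended into the disk, is exactly \eqref{system:stab_hol1} or \eqref{system:stab_hol2}.

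\textbf{Step 1: rewrite each term of $\LStabthe f$.} Write $c+\twap(x)=P(e^{ix})$ and split $f=f_{\geq0}+f_{<0}$. Here $f_{\geq 0}(x)=F^+(e^{ix})$ and $f_{<0}(x)=e^{-ix}F^-(e^{-ix})$. For the remaining terms:
\begin{itemize}
\item $Hf=-i f_{\geq 0}+i f_{<0}+i\ha{f}_0$, so the constant $-i\ha{f}_0$ in $\LStabthe$ cancels the $+i\ha f_0$ produced by $H$ at mode $0$.
\item On the circle, $\pa_x f_{\geq0}=iz\pa_zF^+$.
\item $\pa_x f_{<0}=-i\,w\pa_w(wF^-(w))\cdot w^{-1}\cdot(\ldots)$ with $w=e^{-ix}$, which will be made precise.
\end{itemize}
The derivative terms together give $P(z)(\pa_x+i\te)f$. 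That is, $izP\pa_zF^+ + i\te PF^+$ on the positive part, and the analogous expression in $w$ on the negative part. The shift from $e^{-ix}$ turns $\te$ into $\te-1=-\te^-$, which after the overall sign flip in \eqref{system:stab_hol2} produces the $+i\te^-PF^-$ term.

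\textbf{Step 2: apply the projections and collect the cross terms.} The projection $\Procomp$ onto modes $\geq0$ of $P(z)\cdot(\text{stuff})$ is not simply the product, because $P$ contains the negative powers $z^{-j}$. Writing $\Pi_{\geq0}[P\,u]=P\,u-\Pi_{<0}[P\,u]$, the correction is a finite sum involving only the first $N$ Taylor coefficients of $F^\pm$, coming from the $\twap_j z^{-j}$ part.

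Concretely, we compute $\Pi_{\geq0}$ of $P(z)(iz\pa_z+i\te)F^+$ and of $P\cdot(\text{negative part})$. The leakage has two sources:
\begin{itemize}
\item From $F^+$: the low modes $F^+_{m-1}$, $m\le j$, leak to negative frequencies with coefficient $\tfrac{i}{2}\twap_j(m-1+\te)$. Since $m-1+\te=m-\te^-$, this is precisely the $z^{-(1+j-m)}$ term of Remark~\ref{rem:Gthe}.
\item From $F^-$: the low modes $F^-_{m-1}$ leak into nonnegative frequencies with coefficient $\tfrac{i}{2}\twap_j(m-\te)$ times $z^{j-m}$, since the $(m-1)$-st mode of $F^-$ is Fourier mode $-m$ and $-m+\te=-(m-\te^+)$ up to the sign convention.
\end{itemize}
Hence the nonnegative-mode equation reads $izP\pa_zF^+-(i+\la)F^++i\te^+PF^+-\mathcal G^{\te^+}(F^+,F^-)=G^+$ on $\T$. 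Here the negative powers in the $F^+$ part of $\mathcal G$ cancel exactly the negative powers generated by $P F^+$, which is why the left side is holomorphic. The mode-$<0$ equation, after the substitution $w=e^{-ix}$ and dividing out the factor $w$, gives \eqref{system:stab_hol2} with the roles of $\te^\pm$ swapped and the sign of $\la$ reversed, since $\la f$ keeps its sign while the $w$-variable reverses orientation.

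\textbf{Step 3: pass from the circle to the disk.} Both sides lie in $\XHhol{0}$ (in $L^2$ with only nonnegative modes), so equality of traces on $\T$ gives equality of the holomorphic extensions. The regularity $f\in\XH2$, $g\in\XH1$ justifies the termwise Fourier manipulations.

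The main obstacle is the sign and index bookkeeping in Step 2: matching the shifts $k-\te$ versus $m-\te^\pm$, and the $z^{-(1+m)}$ offset created by indexing $F^-$ from mode $-1$. I would verify this by comparing the coefficient of a single monomial $z^{p}$ for $p$ in each range ($p<0$, $0\le p<N$, $p\ge N$), in the same way as in \eqref{eqn:unfold_aux3}.
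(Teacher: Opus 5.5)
Your argument is correct: the two leakage sums and the $w=e^{-ix}$ computation reproduce Remark~\ref{rem:Gthe} and \eqref{system:stab_hol2} with the right signs. It is, however, organized differently from the paper's proof.

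\textbf{The paper's route.} The paper does not compute the half-line projections of the products directly. It first takes the mean of \eqref{eqn:LStabthe} to get \eqref{eqn:equiv_stab:mean}; this is needed because $\tfrac12(I+iH)$ only captures half of the zero mode. It then applies $\tfrac12(I+iH)$ and evaluates the cross terms as the commutator $-\tfrac{i}{2}[H,c+\twap](\pa_x f+i\te f)$, using the explicit formula of Corollary~\ref{coro:BracketHiTw:f}. For the negative modes it does no new computation. The reflection $f^\sharp(x)=e^{-ix}f(-x)$ turns \eqref{eqn:LStabthe} into \eqref{eqn:LOpStabThetasharp}, an equation of the same form with $(\te,\la,g,\ha{f}_0)$ replaced by $(\te^-,-\la,-g^\sharp,\ha{f}_{-1})$, so the $f^+$ computation applies verbatim.

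\textbf{Your route.} You compute $\Pi_{\geq0}$ and $\Pi_{<0}$ of the products with the Laurent polynomial $P$ directly. This handles the zero mode automatically and needs neither the mean equation nor the commutator lemma. The cross terms in your negative-mode equation are minus the same two leakage sums,
\[
-\bigl(\Pi_{<0}[P(\pa_x+i\te)f_{\geq0}]-\Pi_{\geq0}[P(\pa_x+i\te)f_{<0}]\bigr),
\]
re-expressed in $w$. This is the identity $\mathcal{J}^{\te^+}(F^+,F^-)(x)=e^{-ix}\mathcal{J}^{\te^-}(F^-,F^+)(-x)$ recorded in the proof of Lemma~\ref{lemma:equiv_stab2}, so you also get the second equation essentially for free.

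\textbf{Comparison.} The paper's route reuses the commutator formula already needed for Lemma~\ref{lemma:exis:td} and halves the work by symmetry. Yours is more elementary and self-contained.

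\textbf{Two small slips.}
\begin{itemize}
\item The third bullet of Step~1 should read simply $\pa_x f_{<0}=-iw\pa_w\bigl(wF^-(w)\bigr)$. This gives $(\pa_x+i\te)f_{<0}=-w\bigl(iw\pa_wF^-+i\te^-F^-\bigr)$. Together with $P(e^{ix})=P(w)$, dividing the negative-mode equation by $-w$ then yields \eqref{system:stab_hol2} exactly.
\item In the paper, $\Procomp$ denotes $u_1^\ast(\laap)\circ{\bf b}\circ i_H$, not the projection onto nonnegative modes.
\end{itemize}
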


\begin{proof}
    Assume that $f$ and $g$ satisfy \eqref{eqn:equiv_stab}, that is
    \begin{align}\label{eqn:LStabthe}
        (c+\twap) (\pa_x f + i\te f) +
        H f - i\ha{f}_0
        - \la f
        = g.
    \end{align}

    Define
    \begin{align*}
        f^+(x):=
        \sum_{k\geq 0}\ha{f}_k e^{ikx}
        \qquad
        \text{and}
        \qquad
        f^-(x):=
            \sum_{k\geq 0}\ha{f}_{-k-1} e^{ikx}
            .
    \end{align*}

    Then, setting $f^\sharp(x):=e^{-ix}f(-x)$, we obtain
    \begin{align*}
        f^+(x)=\frac{1}{2}
            \big(f(x)+iHf(x)\big)
            +
            \frac{1}{2}\ha{f}_0, \qquad\text{and}
            \qquad
        f^-(x)=\frac{1}{2}
        \big(f^\sharp(x)+iHf^\sharp(x)\big)
            +
            \frac{1}{2}\ha{f}_{-1}
        .
    \end{align*}

    We first prove that $f^+$, $f^-$, $g^+$, and $g^-$ satisfy \eqref{system:stab_hol} restricted to $\T$, namely
    \begin{subequations}\label{system:stab_tor}
        \begin{empheq}[left=\empheqlbrace]{align}
        (c+\twap)
        (\pa_x f^+ + i \te^+ f^+)   &=
        (i+\la)f^+
        +
        g^+
        +\mathcal{J}^{\te^+}(f^+,f^-),\label{system:stab_tor1}
        \\
        (c+\twap)
        (\pa_x f^-+i\te^- f^-)   &=
        (i-\la)f^-
        -g^-
        +\mathcal{J}^{\te^-}(f^-,f^+),\label{system:stab_tor2}
        \end{empheq}
    \end{subequations}

    where 
    \begin{align}
        \mathcal{J}^{\te}(f^+,f^-)(x) :=
        \frac{i}{2}
        \sum_{j=1}^N\twap_j
            \sum_{m=1}^j
            (m+\te-1) \ha{f^+}_{m-1} e^{-i(1+j-m)x}+
            (m-\te) \ha{f^-}_{m-1} e^{i(j-m)x}.
            \label{def:Jcalthe}
    \end{align}

    Taking the mean of \eqref{eqn:LStabthe} yields
    \begin{align*}
        \frac{1}{2\pi}
        \intpi
        \big(c+\twap(x)\big)
        \big(\pa_x f(x) + i\te f(x)\big) dx 
        - i\ha{f}_0
        - \la \ha{f}_0
        = 
        \ha{g}_0.
    \end{align*}
    Hence,
    \begin{align}
        i\ha{f}_0 + \la \ha{f}_0 + \ha{g}_0
        &=
        \frac{1}{2\pi}
        \intpi
        \big(c+\twap(x)\big)
        \big(\pa_x f(x) + i\te f(x)\big) dx\nonumber \\ &=
        -\frac{1}{2\pi}
        \intpi
        \pa_x \twap(x)
        f(x)\dx
        +
        i\te
        \frac{1}{2\pi}
        \intpi
        \big(c+\twap(x)\big)
        f(x)\dx 
        \nonumber
        \\
        &=
        \frac{1}{2\pi}
        \int_{-\pi}^\pi
        \sum_{j=1}^N
        j\twap_j
        \sin(jx) f(x)\dx
        +
        i\te
        \frac{1}{2\pi}
        \intpi
        \Big(c+
        \sum_{j=1}^N 
        \twap_j \cos(jx)
        \Big)
        f(x)\dx 
        \nonumber
        \\
        &=\frac{i}{2}\sum_{j=1}^N
        j\twap_j (\ha{f}_j - \ha{f}_{-j})
        +
        i\te \Big(
        c \ha{f}_0+
        \frac{1}{2}
        \sum_{j=1}^N
        \twap_j
        (\ha{f}_{j} + \ha{f}_{-j})
        \Big)
        \nonumber
        \\
        &=
        i\te 
        c \ha{f}_0
        +
        \frac{i}{2}\sum_{j=1}^N
            \twap_j 
            \big((j+\te)\ha{f}_j 
            -
            (j-\te)\ha{f}_{-j}\big).
        \label{eqn:equiv_stab:mean}
    \end{align}
    
    In addition, applying $\frac{1}{2}(I+iH)$ to \eqref{eqn:LStabthe} gives
    \begin{align}\label{eqn:equiv_stab:aux1}
        \frac{1}{2}
        (I+iH)\big[
        (c+\twap)
        (\pa_x f + i\te f)
        \big]
        \nonumber
        &=\frac{1}{2}
        (I+iH)\big[
        g -Hf + i\ha{f}_0 + \la f
        \big]
        \nonumber
        \\
        &=
        \big(g^+ - \frac{1}{2}\ha{g}_0\big)
        +
        i(f^+ - \ha{f}_0)
        +\frac{i}{2}\ha{f}_0 +
        \la 
        \big(f^+-\frac{1}{2}\ha{f}_0\big)
        \nonumber\\
        &=
        g^+ + (i+\la) f^+
        - \frac{1}{2}
        \big(
            i\ha{f}_0 + \la \ha{f}_0 + \ha{g}_0
        \big),
    \end{align}
    where we have used that
    \begin{align*}
        -\frac{1}{2}(I+iH)[Hf](x)
        &=-\frac{1}{2}\sum_{k\in\Z}
        (1+\sgn(k)) (-i\sgn(k)) \ha{f}_k
        e^{ikx}\\
        &=\frac{i}{2}\sum_{k\in\Z}
        (\sgn(k)+\sgn(k)^2) \ha{f}_k e^{ikx}=i(f^+(x)-\ha{f}_0).
    \end{align*}

    Define
    \begin{align*}
        \Xi^+:=(c+\twap) 
        (\pa_x f^+ + i\te f^+) - (i+\la) f^+
        -g^+.
    \end{align*}
    
    Using the definition of $f^+$, we write
    \begin{align}
        \Xi^+ &=
        \frac{1}{2}
        (c+\twap)(I+iH)
        (\pa_x f + i\te f)
        +
        \frac{i\te}{2}(c+\twap)\ha{f}_0
        - (i+\la) f^+-g^+
        \nonumber\\
        &=
        \frac{1}{2}
        (I+iH)\big[
        (c+\twap)
        (\pa_x f + i\te f)
        \big]
        -\frac{i}{2}[H,c+\twap]
        (\pa_x f + i\te f)\nonumber\\
        &\qquad+
        \frac{i\te}{2}
        (c+\twap)
        \ha{f}_0
        - 
        (i+\la) f^+ - g^+.
        \label{eqn:equiv_stab:aux2}
    \end{align}
    
    Now, plugging \eqref{eqn:equiv_stab:aux1} into \eqref{eqn:equiv_stab:aux2}, we obtain
    \begin{align}\label{eqn:equiv_stab:aux3}
        \Xi^+
        &=
        - \frac{1}{2}
        (
            i\ha{f}_0 + \la \ha{f}_0 + \ha{g}_0
        )
        -
        \frac{i}{2}[H,c+\twap]
        (\pa_x f + i\te f)+
        \frac{i\te}{2}
        (c+\twap)
        \ha{f}_0
        \nonumber\\
        &=
        -\frac{i}{4}\sum_{j=1}^N
            \twap_j 
            \big((j+\te)\ha{f}_j 
            -
            (j-\te)\ha{f}_{-j}\big)
        -\frac{i}{2}[H,c+\twap]
        (\pa_x f + i\te f)+
        \frac{i\te}{2}
        \twap
        \ha{f}_0,
    \end{align}
    where we used \eqref{eqn:equiv_stab:mean} in the last step.

    The expression for the commutator is given in Corollary~\ref{coro:BracketHiTw:f}. Substituting it into \eqref{eqn:equiv_stab:aux3} yields
    \begin{align*}
        \Xi^+(x)&=
        -
        \frac{i}{4}\sum_{j=1}^N
            \twap_j 
            \big((j+\te)\ha{f}_j 
            -
            (j-\te)\ha{f}_{-j}\big)
        +
        \frac{i\te}{2}
        \twap(x)
        \ha{f}_0
        \\
        &\qquad
        +\frac{i\te}{4}
        \ha{f}_0
        \sum_{j=1}^N
            \twap_j
            \big(e^{-ijx}
            -
            e^{ijx}\big)
        +
        \frac{i}{4}
        \sum_{j=1}^N
        \twap_j
            \Big(
                (j+\te)
                \ha{f}_j
                +
                (j-\te)
                \ha{f}_{-j}
            \Big)
        \\
        &\qquad +\frac{i}{2}
        \sum_{j=1}^N
        \twap_j
        \sum_{m=1}^{j-1}
            \big(
                (m+\te)
                \ha{f}_m e^{-i(j-m)x}
                +
                (m-\te)
                \ha{f}_{-m} e^{i(j-m)x}
            \big)
        \\
        &=
        \frac{i}{2}
        \sum_{j=1}^N
        \twap_j
        (j-\te)
        \ha{f}_{-j}
        +
        \ha{f}_0
        \frac{i\te}{2}
        \sum_{j=1}^N
            \twap_j
            e^{-ijx}
        \\
        &\qquad +\frac{i}{2}
        \sum_{j=1}^N
        \twap_j
        \sum_{m=1}^{j-1}
            \big(
                (m+\te)
                \ha{f}_m e^{-i(j-m)x}
                +
                (m-\te)
                \ha{f}_{-m} e^{i(j-m)x}
            \big)
        \\
        &=\frac{i}{2}
        \sum_{j=1}^N
        \twap_j
        \sum_{m=1}^{j}
            \big(
                (m-1+\te)
                \ha{f}_{m-1} e^{-i(1+j-m)x}
                +
                (m-\te)
                \ha{f}_{-m} e^{i(j-m)x}
            \big)\\
        &=\mathcal{J}^{\te^+}(f^+,f^-).
    \end{align*}
    
    We now derive the equation satisfied by $f^-$. Changing variables $x\mapsto -x$ in \eqref{eqn:LStabthe} and multiplying by $e^{-ix}$, we obtain
    \begin{align}\label{eqn:LOpStabThetamult}
        e^{-ix} \big(c+\twap(x)\big)
        \big(\pa_x f(-x) + i\te f(-x)\big) +
        e^{-ix} Hf(-x) - i\ha{f}_0 e^{-ix}
        - \la f^\sharp(x)
        = g^\sharp(x).
    \end{align}
    where $f^\sharp(x):=e^{-ix}f(-x)$ and $g^\sharp(x):=e^{-ix}g(-x)$. Using that
    \begin{align}\label{eqn:stab:dxfsharp}
        e^{-ix} \big(\pa_x f(-x) + i\te f(-x)\big) &= 
        -\pa_x f^\sharp (x) + i (\te-1) f^\sharp(x),\\
        e^{-ix} Hf(-x) &= -Hf^\sharp(x)
        +i\ha{f}_{-1} + i\ha{f}_0 e^{-ix},
        \nonumber
    \end{align}
    and multiplying \eqref{eqn:LOpStabThetamult} by $-1$, we obtain
    \begin{align}\label{eqn:LOpStabThetasharp}
        \big(c+\twap(x)\big)
        \big(\pa_x f^\sharp + i\te^- f^\sharp\big) +
         Hf^\sharp - i\ha{f}_{-1} 
        + \la f^\sharp
        = -g^\sharp.
    \end{align}

    Since $\ha{f^\sharp}_0=\ha{f}_{-1}$, we may repeat the previous argument to conclude that $f^-$ satisfies \eqref{system:stab_tor2}.

    Finally, the functions $f^+$ and $f^-$, and their derivatives extend holomorphically to $\D$, and these extensions coincide with $F^+$ and $F^-$, respectively. The same holds for $g$, hence extending \eqref{system:stab_tor} from $\T$ to $\D$ yields \eqref{system:stab_hol}.
\end{proof}

Looking at the Fourier modes of $f$ in Definition~\ref{def:FplusFminu}, we see that the mapping $f\in\XH{1}\to (F^+,F^-)\in\XHholo^1\oplus\XHholo^1$ is a bijection. In fact, we can recover $f$ via
\begin{align}\label{eqn:recover_fstab}
    f(x) := F^+|_\T(x) + e^{-ix}F^-|_\T(-x)
    = F^+(e^{ix}) + e^{-ix}F^-(e^{-ix}).
\end{align}
Here, the Sobolev exponent is chosen as $1$ so that $f$ is continuous.

In the next lemma, we prove that the converse of Lemma~\ref{lemma:equiv_stab} also holds.

\begin{lemma}\label{lemma:equiv_stab2}
    Let $\te\in(0,1)$ and $\la\in\C$.
    Let $F^\pm\in \XHhol{2}$ and $G^\pm\in \XHhol{1}$, and define $f\in\XH{2}$ and $g\in \XH{1}$ via \eqref{eqn:recover_fstab}.
    
    If $(F^\pm,G^\pm)$ solves \eqref{system:stab_hol}, then $(f,g)$ satisfies \eqref{eqn:equiv_stab}.
\end{lemma}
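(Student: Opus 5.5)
We reverse the argument of Lemma~\ref{lemma:equiv_stab}. Given $(F^\pm,G^\pm)$ solving \eqref{system:stab_hol}, we restrict to the unit circle. Since $F^\pm\in\XHhol{2}$, the boundary traces $f^+:=F^+|_\T$ and $f^-:=F^-|_\T$ satisfy \eqref{system:stab_tor} pointwise almost everywhere. This is because \eqref{system:stab_hol} becomes \eqref{system:stab_tor} under $z=e^{ix}$, where $z\pa_z=-i\pa_x$, $P(e^{ix})=c+\twap(x)$, and $\mathcal{G}^{\te^\pm}$ restricts to $\mathcal{J}^{\te^\pm}$ (compare Remark~\ref{rem:Gthe} with \eqref{def:Jcalthe}). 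We then set $f=f^++e^{-ix}f^-(-x)$ and $g$ analogously, as in \eqref{eqn:recover_fstab}. By construction the Fourier modes $k\ge0$ of $f$ are those of $f^+$, and the modes $k\le-1$ are those of $f^-$ reflected and shifted.

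The key step is to recover \eqref{eqn:LStabthe} from the two projected equations. Define the residual
\[
R:=(c+\twap)(\pa_x f+i\te f)+Hf-i\ha{f}_0-\la f-g.
\]
It suffices to show $\ha{R}_k=0$ for all $k$. For the modes $k\ge0$, the computation \eqref{eqn:equiv_stab:aux1}--\eqref{eqn:equiv_stab:aux3} is an identity valid for any $f,g$. It expresses $\Xi^+-\mathcal{J}^{\te^+}(f^+,f^-)$ in terms of $\frac12(I+iH)R$ and the scalar
\[
m:=\ha{R}_0=\text{(the mean identity \eqref{eqn:equiv_stab:mean} read with } \ha{g}_0 \text{ as unknown)}.
\]
The commutator formula of Corollary~\ref{coro:BracketHiTw:f} is likewise purely algebraic. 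Hence \eqref{system:stab_tor1} gives
\[
\tfrac12(I+iH)R-\tfrac12\ha{R}_0=0 \quad\text{modulo constants},
\]
that is, $\ha{R}_k=0$ for every $k\ge1$. Similarly, the reflected computation \eqref{eqn:LOpStabThetamult}--\eqref{eqn:LOpStabThetasharp} applied to $f^\sharp$ turns \eqref{system:stab_tor2} into the vanishing of the reflected residual's positive modes, that is, $\ha{R}_k=0$ for every $k\le-2$.

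The main obstacle is the two remaining modes $k=0$ and $k=-1$. These are exactly where the constant terms were discarded: the constant $\frac12(i\ha f_0+\la\ha f_0+\ha g_0)$ in \eqref{eqn:equiv_stab:aux1}, and the term $i\ha f_{-1}$ in the reflected equation. To handle them, I would compare the zeroth Taylor coefficients, i.e.\ evaluate \eqref{system:stab_hol1} and \eqref{system:stab_hol2} at $z=0$, keeping track of the constant term of $\mathcal{G}^{\te^\pm}$. In the forward direction these constants were precisely the mean identity \eqref{eqn:equiv_stab:mean}. Reading the constant term of \eqref{system:stab_tor1} gives $\frac12\ha{R}_0$ plus a combination of $\ha R$ modes already shown to vanish, which should yield $\ha{R}_0=0$. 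The constant term of \eqref{system:stab_tor2} gives $\ha{R}_{-1}=0$ in the same way, via the sharp identity $\ha{f^\sharp}_0=\ha f_{-1}$.

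I expect this bookkeeping of constants to be the delicate part. The check must confirm that the factor $(m+\te-1)$ in the constant term of $\mathcal{J}^{\te}$ matches $(j-\te)\ha f_{-j}$ in \eqref{eqn:equiv_stab:mean} exactly, with no leftover term. If it does, then $R=0$, and the regularity $f\in\XH{2}$, $g\in\XH{1}$ follows from that of $F^\pm$ and $G^\pm$.
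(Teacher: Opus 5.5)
Your route is genuinely different from the paper's and can be made to work. As written, though, it leaves $\ha{R}_0$ and $\ha{R}_{-1}$ unproved, and the identity you display would make your plan for $\ha{R}_0$ fail.

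Without assuming \eqref{eqn:LStabthe} one has $(c+\twap)(\pa_x f+i\te f)=R+g-Hf+i\ha{f}_0+\la f$. So the right-hand side of \eqref{eqn:equiv_stab:aux1} acquires the extra term $\tfrac12(I+iH)R$. Taking means gives $i\ha{f}_0+\la\ha{f}_0+\ha{g}_0=M-\ha{R}_0$, where $M$ is the right-hand side of \eqref{eqn:equiv_stab:mean}, which is an identity in $f$. Substituting both into \eqref{eqn:equiv_stab:aux2}--\eqref{eqn:equiv_stab:aux3} yields the exact identity
\begin{align*}
    \Xi^+-\mathcal{J}^{\te^+}(f^+,f^-)
    =\tfrac12(I+iH)R+\tfrac12\ha{R}_0
    =\sum_{k\geq 0}\ha{R}_k e^{ikx}.
\end{align*}
This has a plus sign and holds exactly, not ``modulo constants''. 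With your minus sign, $\tfrac12(I+iH)R-\tfrac12\ha{R}_0$ has zero constant term for every $R$. Reading off the constant term of \eqref{system:stab_tor1} would then give $0=0$ and could never yield $\ha{R}_0=0$. So your sketch for $k=0$ is inconsistent with your own formula, and the check you defer is exactly the step that decides the matter. Also, the constant term of $\mathcal{J}^{\te^+}$ comes from the factor $(m-\te)$ at $m=j$, not from $(m+\te-1)$, since $e^{-i(1+j-m)x}$ is never constant.

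With the correct identity there is no separate obstacle: \eqref{system:stab_tor1} gives $\ha{R}_k=0$ for all $k\geq 0$ at once. For the other half, $\ti{R}(x):=-e^{-ix}R(-x)$ is exactly the residual of \eqref{eqn:LOpStabThetasharp}. That is the same problem with data $f^\sharp(x)=e^{-ix}f(-x)$, $-g^\sharp$, $\te^-$, $-\la$, whose nonnegative-mode parts are $f^-$ and $-g^-$. The same identity therefore turns \eqref{system:stab_tor2} into $\ha{\ti{R}}_k=-\ha{R}_{-k-1}=0$ for all $k\geq 0$. Equivalently $\ha{R}_k=0$ for all $k\leq -1$, including $k=-1$.

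Repaired this way, your argument shows that each equation of \eqref{system:stab_tor} is equivalent to the vanishing of one half of the spectrum of $R$. This packages Lemmas~\ref{lemma:equiv_stab} and~\ref{lemma:equiv_stab2} into a single statement. The paper's proof is much shorter and needs neither the commutator formula nor the mean identity. It applies $\sharp$ to \eqref{system:stab_tor2} and adds \eqref{system:stab_tor1}. The nonlocal terms cancel because $\mathcal{J}^{\te^+}(F^+|_\T,F^-|_\T)=\mathcal{J}^{\te^-}(F^-|_\T,F^+|_\T)^\sharp$. The leftover $i(F^+|_\T-{F^-|_\T}^\sharp)$ is then recognized as $-Hf+i\ha{f}_0$.
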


\begin{proof}
    Since $F^\pm$ and $G^\pm$ satisfy \eqref{system:stab_hol}, we also know that $F^\pm|_\T$ and $G^\pm|_\T$ satisfy \eqref{system:stab_tor}. Therefore, we shall deduce \eqref{eqn:LStabthe} from \eqref{system:stab_tor}.

    First, define
    \begin{align*}
        {F|_\T^-}^\sharp(x) := e^{-ix} F^-|_\T(-x)
        \quad 
        \text{and}
        \quad 
        {G|_\T^-}^\sharp(x) := e^{-ix} G^-|_\T(-x).
    \end{align*}
    It follows from the second equation in \eqref{system:stab_tor2} and \eqref{eqn:stab:dxfsharp} that
    \begin{align*}
        (c+\twap)
        ( \pa_x {F|_\T^-}^\sharp 
            +
        i\te^+ {F|_\T^-}^\sharp
        )   
        &=
        (\la-i){F|_\T^-}^\sharp
        +
        {G|_\T^-}^\sharp
        -
        \mathcal{J}^{\te^-}(F^-|_\T,F^+|_\T)^\sharp.
    \end{align*}

    We now add the first equation of \eqref{system:stab_tor1}, since $f=F|_\T^+ + {F|_\T^-}^\sharp$, we obtain
    \begin{align}
        (c+\twap)
        ( \pa_x f
            +
        i\te^+ f
        ) 
        -
        \la f - g
        =
        i (F|_\T^+ - {F|_\T^-}^\sharp)
        +
        \mathcal{J}^{\te^+}(F|_\T^+,F|_\T^-)
        -
        \mathcal{J}^{\te^-}(F^-|_\T,F^+|_\T)^\sharp
        \label{eqn:equiv_stab:auxsharp}
    \end{align}

    Note that
    \begin{align*}
        \mathcal{J}^{\te^+}(F|_\T^+,F|_\T^-)(x)
        &=
        e^{-ix}\mathcal{J}^{\te^-}(F^-|_\T,F^+|_\T)(-x)
        =
        \mathcal{J}^{\te^-}(F^-|_\T,F^+|_\T)^\sharp(x).
        \\
        Hf(x) &= \sum_{k\in\Z} -i\sgn(k)\ha{f}_k 
        =
        \sum_{k\geq 1} -i\ha{f}_k +
        \sum_{k\leq -1} i\ha{f}_k 
        = - i F|_\T^+ + i \ha{f}_0
        + i {F|_\T^-}^\sharp.
    \end{align*}

    Therefore, \eqref{eqn:equiv_stab:auxsharp} is equivalent to \eqref{eqn:equiv_stab}.
\end{proof}

As we did in Section~\ref{sec:OpExis}, we temporarily ignore the Taylor coefficients $F^+_k$ and $F^-_k$ appearing in \eqref{def:Gcalthe} and replace them with arbitrary constants.

\begin{lemma}\label{lemma:stab:holo_implies}
    Let $\te\in(0,1)$ and $\la\in\C$.
    Let $a_k^+,a_k^-\in\C$ for $k=0,\ldots,N-1$. Let $F^+,F^-,G^+,G^-\in\XHholo$.
    
    Assume that the following identities hold
    \begin{align*}
        izP(z)\pa_z F^+(z)&=
        (i+\la)F^+(z)-i\te^+ P(z) F^+(z)+G^+(z)+
        \sum_{k=1}^N
        \big(
            a^+_{k-1}
            \mathcal{G}_{k,\te^-}^-(z)
            +
            a^-_{k-1}
            \mathcal{G}_{k,\te^+}^+(z)
        \big)\\
        %%%%%%%%%%%
        izP(z)\pa_z F^-(z)&=
        (i-\la)F^-(z)-i\te^- P(z) F^-(z)-G^-(z)+
        \sum_{k=1}^N
        \big(
            a^-_{k-1}
            \mathcal{G}_{k,\te^+}^-(z)
            +
            a^+_{k-1}
            \mathcal{G}_{k,\te^-}^+(z)
        \big).
    \end{align*}
    
    Then, for every $k=0,\ldots,N-1$, we have
    \begin{align*}
        F_k^+:=\frac{1}{k!}
        \left.\frac{d^k}{dz^k}\right|_{z=0}
        F^+=a_k^+,\qquad
        F_k^-:=\frac{1}{k!}
        \left.\frac{d^k}{dz^k}\right|_{z=0}
        F^-=a_k^-.
    \end{align*}
    
\end{lemma}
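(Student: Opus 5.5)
We mimic the proof of Lemma~\ref{lemma:exis:holo_implies}: multiply each identity by $z^N$ and compare Taylor coefficients at $z=0$ of orders $0,\ldots,N-1$. Only the terms carrying negative powers of $z$ matter at these orders, and they sit on both sides of each equation.

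On the left, $z^{N+1}P(z)\pa_zF^\pm(z)$ has lowest-order part $\frac12\sum_{j=1}^N\twap_j z^{N+1-j}\sum_{l\geq 0} lF^\pm_l z^{l}$. On the right, $z^N\big[(i\pm\la)F^\pm - i\te^\pm P F^\pm \pm G^\pm\big]$ contributes at orders below $N$ only through $-i\te^\pm\frac12\sum_j\twap_j z^{N-j}F^\pm(z)$; the remaining terms are $\mathcal{O}(z^N)$. Among the $\mathcal{G}$ terms, only the $\mathcal{G}^-$ pieces $z^N a^\pm_{k-1}\mathcal{G}^-_{k,\cdot}$ contain powers $z^{N-1-m}$ below $N$, while the $\mathcal{G}^+$ pieces are $\mathcal{O}(z^N)$. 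We also need $-iz^N$ versus $iz^{N+1}$ bookkeeping, handled by multiplying by $-i$.

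For the $F^+$ equation we collect the coefficient of $z^{N-1-p}$, $p=0,\ldots,N-1$. The left side and the $-i\te^+PF^+$ term combine to
\[
\tfrac{i}{2}\sum_{j}\twap_j\,(l+\te^+)F^+_l\quad\text{with } l=j-1-p,
\]
so after reindexing they give $\tfrac{i}{2}\sum_{l=0}^{N-1-p}(l+\te^+)\twap_{l+p+1}F^+_l$. The $\mathcal{G}^-_{k,\te^-}$ term gives $\tfrac{i}{2}\sum_{k}(k-\te^-)\twap_{p+k}a^+_{k-1}$. Setting $l=k-1$ gives $k-\te^-=l+\te$, which matches exactly. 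The result is
\[
\sum_{l=0}^{N-1-p}(l+\te)\,\twap_{l+p+1}\,(F^+_l-a^+_l)=0,\qquad p=0,\ldots,N-1.
\]
The signs of the $i$ factors should be rechecked carefully, but the structure is forced because the operator was built to cancel negative powers when $a=F$. The same computation with $\te^-=1-\te$ gives the analogous system for $F^-_l-a^-_l$, with weights $(l+1-\te)$. The $F^+$ and $F^-$ systems decouple at these orders.

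Each system is triangular. Take $p=N-1$ first: it yields $\te\,\twap_N(F^+_0-a^+_0)=0$. Then decrease $p$: each step introduces one new unknown $F^+_{N-1-p}-a^+_{N-1-p}$, with coefficient $(N-1-p+\te)\twap_N$. Since $\te\in(0,1)$, the factors $l+\te$ and $l+1-\te$ never vanish for $l\geq0$, and Lemma~\ref{lemma:twapprox_nonzero_coeffs} gives $\twap_N\neq0$. So back substitution yields $F^\pm_k=a^\pm_k$ for all $k=0,\ldots,N-1$.

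The main obstacle is the index bookkeeping needed to confirm that the coefficients of the $\mathcal{G}^-$ terms match those produced by $z^{N+1}P\pa_zF^\pm - i\te^\pm z^NPF^\pm$ exactly. This matching is what makes the identity hold for the differences $F-a$. Using Remark~\ref{rem:Gthe} to expand $\mathcal{G}^{\te^\pm}$ explicitly is the way I would carry it out.
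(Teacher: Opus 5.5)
Your proposal is correct and follows the paper's own proof. You multiply by $z^N$ and match the coefficients of $z^0,\dots,z^{N-1}$, which gives the decoupled triangular systems $\sum_{l=0}^{N-1-p}(l+\te)\,\twap_{l+1+p}(F^+_l-a^+_l)=0$ and the analogue for $F^-_l-a^-_l$ with weights $l+1-\te$. You then solve them using $\twap_N\neq0$ and $\te\in(0,1)$, and the $i$-factors you flagged do all check out. One cosmetic slip: the lowest-order part of $z^{N+1}P\,\pa_zF^\pm$ is $\tfrac12\sum_j\twap_j z^{N-j}\sum_l lF^\pm_l z^{l}$, not $z^{N+1-j}$; your subsequent indexing $j=l+1+p$ already uses the correct exponent, so nothing downstream changes.
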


\begin{proof}
    We prove the statement for $F^+$, since the argument for $F^-$ is identical.

    Recalling the definition of $\mathcal{G}_{k,\te}^\pm$, the first equation is
    \begin{align*}
        izP(z)\pa_z F^+(z)&=
        (i+\la)F^+(z)-i\te^+ P(z) F^+(z)+G^+(z)
        \\
        &\qquad+\frac{i}{2}
        \sum_{j=1}^N\twap_j
        \sum_{m=1}^j
        (m-\te^-) z^{-(1+j-m)}a^+_{m-1}+
        (m-\te^+) z^{j-m}a^-_{m-1},\\
    \end{align*}
    
    Multiply by $z^N$:
    \begin{align}\label{eqn:stab:mutpzN}
        i z^{N+1}P(z) \pa_z F^+(z)
        &= (i+\la) z^N F^+(z)
        - i\te^+ z^N P(z)F^+(z)
        + z^N G^+(z) \\
        &\qquad + \frac{i}{2}
        \sum_{j=1}^N\twap_j
        \sum_{m=1}^j
        (m-\te^-) z^{N-(1+j-m)}a^+_{m-1}
        + \mathcal{O}(z^N).\nonumber
    \end{align}

    We now compare coefficients of $z^k$ for $k=0,\ldots,N-1$. Since $z^N G^+(z)=\mathcal O(z^N)$, and $z^N F^+(z)=\mathcal O(z^N)$, none of these terms contribute to the coefficient of $z^k$. From, \eqref{eqn:stab:mutpzN} implies that
    \begin{align*}
        \frac{i}{2}
        \sum_{j=1}^N 
        \sum_{l=0}^{j-1}
        \twap_j (l+\te^+) F_l^+ z^{N+l-j}  
        = \frac{i}{2}
        \sum_{j=1}^N\twap_j
        \sum_{m=1}^j
        (m-\te^-) z^{N-(1+j-m)}a^+_{m-1}
        + \mathcal{O}(z^N).
    \end{align*}

    Consequently, for each $k=0,\ldots,N-1$ we apply $\frac{1}{k!}\frac{d^{k}}{dz^{k}}$ to both sides yielding
    \begin{align*}
        \frac{i}{2}
        \sum_{j=1}^N 
        \sum_{l=0}^{j-1}
        \twap_j (l+\te^+)
        F_l^+ \delta_{k,N+l-j}
        &= \frac{i}{2}
        \sum_{j=1}^N\twap_j
        \sum_{l=0}^{j-1}
        (l+1-\te^-) a^+_{l} \delta_{k,N+l-j}\ \Rightarrow\\
        0&=\sum_{j=N-k}^N 
        \twap_j (k+j-N+\te^+)
        (F_{k+j-N}^+ - a^+_{k+j-N})\\
        &=\sum_{j=0}^{k} 
        \twap_{j+N-k} (j+\te^+)
        (F_{k}^+ - a^+_{k} ).
    \end{align*}
    
    Since $\twap_j\neq 0$ for all $j=1,\ldots,N$ (Lemma~\ref{lemma:twapprox_nonzero_coeffs}) and $\te^+\in(0,1)$, the above system is triangular and nonsingular, hence  $F_k^+=a_k^+$ for all $k=0,\ldots,N-1$.
\end{proof}

\subsubsection{Invertibility of $\LOpStabHol{\la}$}
\label{sec:stab:invert}
In order to study the operator $\LOpStabHol{\la}$, we first need a result similar to Lemma~\ref{lemma:Proots1}, but adapted to this new operator.

\begin{cond}\label{cond:Proots}
    We say that $\te\in(0,1)$ and $\la\in\C$ satisfy Condition~\ref{cond:Proots} if for every $\al\in Z$, we have
    \begin{align*}
        \lim_{z\to \al} i\frac{z-\al}{z}
        \Big(
        \frac{i\pm\la}{P(z)}-
        i\te^{\pm}\Big)\notin\Z.
    \end{align*}
    We recall that $Z$ denotes the set of the roots of $z^N P(z)$ lying inside the unit disk $\D$.
\end{cond}

We introduce some notation, analogous to Definition~\ref{def:exis_hol_notation}.

\begin{defi}\label{def:stab_hol_notation}
    Let $\te\in(0,1)$ and $\la\in\C$.
    Let 
    \begin{align*}
        Z_0:=\{0\}\cup Z,
    \end{align*}
    where $Z=\{z_j\}$ denotes the set of roots of $z^N P(z)$ lying inside the open unit disk, and let $\om_0$, $Q$, and the paths $\{\tau_j\}$ be as in Lemma~\ref{lemma:fuchs_criteria} with $\ti{Z}=Z_0$.  
    
    Given $G\in\XHholo\cup\{\mathcal{G}_{k,\te}^\pm:\ \theta\in(0,1),\ k=1,\ldots,N\}$, define
    \begin{align*}
        B_G(z):=-2i\frac{z^{N}}{\twap_N}G(z)
        \prod_{\al\in Z}(z-\al^{-1})^{-1}.
    \end{align*}
    
    We also define
    \begin{align*}
    A_{\la,\te}(z) &:= 2i\frac{z^{N}}{\twap_N}
            \big[i+\la - i \te P(z)\big]
            \prod_{\al\in Z}(z-\al^{-1})^{-1},
    \end{align*}

    For each $j=1,\ldots,N+1$, we define $J^+_{\tau_j}G$ as the unique continuous solution to \eqref{eqn:fuchs_crit_raqueta} along $\tau_j$ with initial condition $J^+_{\tau_j}G(0)=0$ and coefficients $A=A_{\la,\te^+}$ and $B=B_G$.

    Analogously, for each $j=1,\ldots,N+1$, we define $J^-_{\tau_j}G$ as the unique continuous solution of \eqref{eqn:fuchs_crit_raqueta} along $\tau_j$ with initial condition $J^-_{\tau_j}G(0)=0$ and coefficients $A=A_{-\la,\te^-}$ and $B=B_G$.

    Moreover, for each $j=1,\ldots,N+1$, we define $\Pi^+_{\tau_j}$ as the unique continuous solution of \eqref{eqn:fuchs_crit_raqueta} along $\tau_j$ with initial condition $\Pi^+_{\tau_j}(0)=1$
    and coefficients $A=A_{\la,\te^+}$ and $B=0$.

    Finally, for each $j=1,\ldots,N+1$, we define $\Pi^-_{\tau_j}$ as the unique continuous solution of
    \eqref{eqn:fuchs_crit_raqueta} along $\tau_j$ with initial condition $\Pi^-_{\tau_j}(0)=1$
    and coefficients $A=A_{-\la,\te^-}$ and $B=0$.

\end{defi}

The next step is to establish an invertibility criterion for $\LOpStabHol{\la}$. The following result follows the same strategy used to prove Proposition~\ref{prop:exis_hol}.

\begin{prop}\label{prop:stab_hol}
    Let $\te\in(0,1)$ and $\la\in\C$ and assume Condition~\ref{cond:Proots} holds, and set $b:=2\pi+2$.
    Using the notation from Definition~\ref{def:stab_hol_notation} and $\mathcal{G}^\pm_{k,\te}$ from~\eqref{def:Gk_stab}, define the $(2N+2)\times(2N+2)$ matrix $\MStabHol(\la)$ by
    \begin{align}\label{def:MStabHol}
        \big(\MStabHol(\la)\big)_{jk}=
            \begin{cases}
                J^+_{\tau_j} 
                \mathcal{G}^-_{k,\te^-}
                (b),
                & 1 \leq j \leq N+1,\quad
                 1\leq k\leq N\\
                %%%%%%%%%%%%%%%%%%%%%%%%%%%%%%%%%%%
                J^+_{\tau_j} 
                \mathcal{G}^+_{k-N,\te^+}
                (b),
                & 1 \leq j \leq N+1,\quad
                 N+1\leq k\leq 2N\\
                %%%%%%%%%%%%%%%%%%%%%%%%%%%%%%%%%%%
                \Pi^+_{\tau_j}(b)-1,
                & 1 \leq j \leq N+1,\quad
                 k=2N+1
                \\
                %%%%%%%%%%%%%%%%%%%%%%%%%%%%%%%%%%%
                J^-_{\tau_{j-N-1}} 
                \mathcal{G}^+_{k,\te^-}
                (b),
                & N+2 \leq j \leq 2N+2,\quad
                 1\leq k\leq N\\
                %%%%%%%%%%%%%%%%%%%%%%%%%%%%%%%%%%%
                J^-_{\tau_{j-N-1}} 
                \mathcal{G}^-_{k-N,\te^+}
                (b),
                & N+2 \leq j \leq 2N+2,\quad
                 N+1\leq k\leq 2N\\
                %%%%%%%%%%%%%%%%%%%%%%%%%%%%%%%%%%%
                \Pi^-_{\tau_{j-N-1}}(b)
                -
                1,
                & N+2 \leq j \leq 2N+2,\quad
                 k=2N+2\\
                 %%%%%%%%%%%%%%%%%%%%%%%%%%%%%%%%%%%
                 0,&\text{in other case}
            \end{cases}.
    \end{align}

    Then the matrix $\MStabHol(\la)$ is nonsingular if and only if, for every holomorphic pair $(G^+,G^-)\in\XHholo\oplus\XHholo$ of functions, continuous on $\overline{\D}$, there exists a unique pair $(F^+,F^-)\in\XHholo\oplus\XHholo$ of functions, with $\pa_z F^\pm$ continuous on $\overline{\D}$, such that
    \begin{align}\label{eqn:stab_finite_hol_main}
        \LOpStabHol{\la} (F^+,F^-) = (G^+,G^-).
    \end{align}

    Moreover, assume that the kernel of $\MStabHol$ has dimension $d$. Then there exist $d$ linearly independent pairs $\{(F_k^+,F_k^-)\}_{k=1}^d$ of holomorphic functions, continuous on $\overline{\D}$, such that \begin{align}\label{eqn:stab_finite_hol_main0}
        \LOpStabHol{\la} (F^+,F^-) = 0.
    \end{align}

\end{prop}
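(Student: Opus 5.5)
We follow the proof of Proposition~\ref{prop:exis_hol} nearly verbatim, with two decoupled scalar ODEs instead of one. Fix $(G^+,G^-)$ holomorphic in $\D$ and continuous on $\overline{\D}$. We replace the Taylor coefficients $F^\pm_{k-1}$ appearing in $\mathcal{G}^{\te^\pm}$ by free unknowns $a^\pm_{k-1}$, $k=1,\ldots,N$, and consider the affine system of Lemma~\ref{lemma:stab:holo_implies}. Each equation is scalar in its own unknown. The first is
\begin{align*}
izP\pa_zF^+ = \bigl(i+\la-i\te^+P\bigr)F^+ + G^+ + \textstyle\sum_k\bigl(a^+_{k-1}\mathcal{G}^-_{k,\te^-}+a^-_{k-1}\mathcal{G}^+_{k,\te^+}\bigr).
\end{align*}
Multiplying by $-iz^N$ and factoring $z^{N+1}P(z)=\tfrac{\twap_N}{2}\,Q(z)\prod_{\al\in Z}(z-\al^{-1})$ with $Q(z)=z\prod_{\al\in Z}(z-\al)$, we put it in the form $Q\pa_zF+AF=B$ of \eqref{eqn:fuchs_crit_disk}. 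The coefficients are $A=A_{\la,\te^+}$ and $B=B_{G^+}+\sum_k a^+_{k-1}B_{\mathcal{G}^-_{k,\te^-}}+\sum_k a^-_{k-1}B_{\mathcal{G}^+_{k,\te^+}}$, up to a sign convention to be checked against Definition~\ref{def:stab_hol_notation}. The singular set is $Z_0=\{0\}\cup Z$, which has $N+1$ points. The multiplication by $z^N$ makes the negative powers in $\mathcal{G}^-_{k,\te}$, which are at most $z^{-N}$, harmless: $B$ is holomorphic in $\D$ and continuous on $\overline{\D}$, and so is $A$. The same holds for the $F^-$ equation with $A_{-\la,\te^-}$.

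Condition~\ref{cond:Proots} is exactly the non-integrality hypothesis \eqref{eqn:cond_proots_general} at each $\al\in Z$. At $z_j=0$ the residue is $(i+\la-i\te^+ c')/\ldots$; we must verify that Condition~\ref{cond:Proots}, as phrased via the limit, also covers the point $0$. If it does not, we compute the residue at $0$ directly from $P$'s pole of order $N$; the exponent there should be $-(N\cdot)$-type and non-integral thanks to $\te^\pm\notin\Z$. I expect this bookkeeping at $z=0$ to be the main obstacle, together with matching signs and indices in \eqref{def:MStabHol}.

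With this in place, Lemma~\ref{lemma:fuchs_criteria} says that a holomorphic $F^+$ with $\pa_zF^+\in C(\overline{\D})$ and $F^+(\om_0)=c^+$ exists, and is then unique, iff the along-path solutions close up. By linearity these solutions are $\phi^+_j=c^+\Pi^+_{\tau_j}+\sum a^+_{k-1}J^+_{\tau_j}\mathcal{G}^-_{k,\te^-}+\sum a^-_{k-1}J^+_{\tau_j}\mathcal{G}^+_{k,\te^+}+J^+_{\tau_j}G^+$, and closing up means $\phi^+_j(b)=c^+$ for $j=1,\ldots,N+1$. The analogous statement holds for $F^-$ with $c^-$. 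This gives $2N+2$ affine equations in the $2N+2$ unknowns $(a^+,a^-,c^+,c^-)$, and the coefficient matrix is precisely $\MStabHol(\la)$. Unlike the existence case, no mean-zero row is needed, since $F^\pm(0)$ is not prescribed, and hence the count already matches. When the matrix is nonsingular we solve the system. Lemma~\ref{lemma:stab:holo_implies} then gives $F^\pm_k=a^\pm_k$, so $(F^+,F^-)$ solves \eqref{eqn:stab_finite_hol_main}. Uniqueness follows because any solution produces a solution $(F^\pm_{k-1},F^\pm(\om_0))$ of the system, and the uniqueness part of Lemma~\ref{lemma:fuchs_criteria} then identifies it. The converse holds because a nontrivial kernel vector yields, with $G^\pm=0$, a nontrivial solution.

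For the kernel statement, take a basis $v_1,\ldots,v_d$ of $\ker\MStabHol(\la)$. For each $v_i$ Lemma~\ref{lemma:fuchs_criteria} produces a pair $(F_i^+,F_i^-)$ solving \eqref{eqn:stab_finite_hol_main0} via Lemma~\ref{lemma:stab:holo_implies}. The linear map $v\mapsto(F^+,F^-)$ is injective: if the pair vanishes, then its Taylor coefficients and its values at $\om_0$ vanish, and these recover $v$. Hence the $d$ pairs are linearly independent.
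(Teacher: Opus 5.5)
Your proposal is correct and follows the paper's proof essentially step for step. Like the paper, you multiply by $z^N$, free the $2N$ Taylor coefficients, apply Lemma~\ref{lemma:fuchs_criteria} to each scalar equation with singular set $Z_0$ to get $N+1$ closing conditions apiece, read off $\MStabHol(\la)$, and close with Lemma~\ref{lemma:stab:holo_implies}. Your injectivity argument for the kernel statement, which recovers the kernel vector from the Taylor coefficients $F^\pm_0,\dots,F^\pm_{N-1}$ and the values $F^\pm(\om_0)$, is actually cleaner than the paper's.

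The point you flag at $z=0$ is not an obstacle, and it involves no $N$. Since $z^NP(z)\to\tfrac12\twap_N$ and $Q'(0)=\prod_{\al\in Z}(-\al)$, one finds $A_{\la,\te^+}(0)/Q'(0)=\te^+$ and $A_{-\la,\te^-}(0)/Q'(0)=\te^-$. Both lie in $(0,1)$, so \eqref{eqn:cond_proots_general} also holds at $0$; the paper leaves this check implicit.
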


\begin{example}\label{example:matstabhol}
    With $N=2$, the matrix $\MStabHol$ would be of order $6$, given by
    \begin{align}
        \MStabHol=
            \begin{pmatrix}
            J^+_{\tau_1} 
            \mathcal{G}^-_{1,\te^-} 
            (b) & 
            J^+_{\tau_1} 
            \mathcal{G}^-_{2,\te^-}
            (b) &
            %%%%%%%%%%%%%% 
            J^+_{\tau_1} 
            \mathcal{G}^+_{1,\te^+}
            (b) &
            J^+_{\tau_1} 
            \mathcal{G}^+_{2,\te^+}
            (b) &
            %%%%%%%%%%%%%% 
            \Pi^+_{\tau_1}(b)-1 & 0\\
            %%%%%%%%%%%%%%%%%%%%%%%%%%%%%%%%%%%%%%
            J^+_{\tau_2} 
            \mathcal{G}^-_{1,\te^-} 
            (b) & 
            J^+_{\tau_2} 
            \mathcal{G}^-_{2,\te^-}
            (b) &
            %%%%%%%%%%%%%% 
            J^+_{\tau_2} 
            \mathcal{G}^+_{1,\te^+}
            (b) &
            J^+_{\tau_2} 
            \mathcal{G}^+_{2,\te^+}
            (b) &
            %%%%%%%%%%%%%% 
            \Pi^+_{\tau_2}(b)-1 & 0\\
            %%%%%%%%%%%%%%%%%%%%%%%%%%%%%%%%%%%%%%
            J^+_{\tau_3} 
            \mathcal{G}^-_{1,\te^-} 
            (b) & 
            J^+_{\tau_3} 
            \mathcal{G}^-_{2,\te^-}
            (b) &
            %%%%%%%%%%%%%% 
            J^+_{\tau_3} 
            \mathcal{G}^+_{1,\te^+}
            (b) &
            J^+_{\tau_3} 
            \mathcal{G}^+_{2,\te^+}
            (b) &
            %%%%%%%%%%%%%% 
            \Pi^+_{\tau_3}(b)-1 & 0\\
            %%%%%%%%%%%%%%%%%%%%%%%%%%%%%%%%%%%%%%
            %%%%%%%%%%%%%%%%%%%%%%%%%%%%%%%%%%%%%%
            J^-_{\tau_{1}} 
            \mathcal{G}^+_{1,\te^-}
            (b) & 
            J^-_{\tau_{1}} 
            \mathcal{G}^+_{2,\te^-}
            (b) &
            %%%%%%%%%%%%%% 
            J^-_{\tau_{1}} 
            \mathcal{G}^-_{1,\te^+}
            (b) &
            J^-_{\tau_{1}} 
            \mathcal{G}^-_{2,\te^+}
            (b) &
            %%%%%%%%%%%%%% 
            0 & \Pi^-_{\tau_1}(b)-1\\
            %%%%%%%%%%%%%%%%%%%%%%%%%%%%%%%%%%%%%%
            J^-_{\tau_{2}} 
            \mathcal{G}^+_{1,\te^-}
            (b) & 
            J^-_{\tau_{2}} 
            \mathcal{G}^+_{2,\te^-}
            (b) &
            %%%%%%%%%%%%%% 
            J^-_{\tau_{2}} 
            \mathcal{G}^-_{1,\te^+}
            (b) &
            J^-_{\tau_{2}} 
            \mathcal{G}^-_{2,\te^+}
            (b) &
            %%%%%%%%%%%%%% 
            0 & \Pi^-_{\tau_2}(b)-1\\
            %%%%%%%%%%%%%%%%%%%%%%%%%%%%%%%%%%%%%%
            J^-_{\tau_{3}} 
            \mathcal{G}^+_{1,\te^-}
            (b) & 
            J^-_{\tau_{3}} 
            \mathcal{G}^+_{2,\te^-}
            (b) &
            %%%%%%%%%%%%%% 
            J^-_{\tau_{3}} 
            \mathcal{G}^-_{1,\te^+}
            (b) &
            J^-_{\tau_{3}} 
            \mathcal{G}^-_{2,\te^+}
            (b) &
            %%%%%%%%%%%%%% 
            0 & \Pi^-_{\tau_3}(b)-1
        \end{pmatrix}.
    \end{align}
\end{example}

\begin{proof}
    Let $G^+,G^-:\D\to\C$ be holomorphic functions, continuous on $\overline{\D}$. We study equation \eqref{eqn:stab_finite_hol_main}, and look for holomorphic functions $F^+,F^-:\D\to\C$ whose derivative $\pa_z F$ is continuous on $\overline{\D}$.

    Multiplying \eqref{eqn:stab_finite_hol_main} by $z^N$, and introducing auxiliary unknowns $a^\pm_1,\ldots,a^\pm_N\in\C$, we obtain
    \begin{subequations}\label{eqn:stab_finite_holplumin}
    \begin{empheq}[left=\empheqlbrace]{align}
    i z^{N+1}P(z)\pa_z F^+(z)
    &=
        z^N\big(i+\la - i\te^+ P(z)\big)F^+(z)
        + z^N G^+(z)
        \nonumber\\
        &\quad+
        z^N \sum_{k=1}^N
        \big(
            a^+_{k} \mathcal{G}_{k,\te^-}^-(z)
            +
            a^-_{k}\mathcal{G}_{k,\te^+}^+(z)
        \big),
    \label{eqn:stab_finite_hol_plu}
    \\
    i z^{N+1}P(z)\pa_z F^-(z)
        &=
        z^N\big(i-\la - i\te^- P(z)\big)F^-(z)
        - z^N G^-(z)
        \nonumber\\
        &\quad+
        z^N
        \sum_{k=1}^N
        \big(
            a^-_{k}\mathcal{G}_{k,\te^+}^-(z)
            +
            a^+_{k}\mathcal{G}_{k,\te^-}^+(z)
        \big).
    \label{eqn:stab_finite_hol_min}
    \end{empheq}
    \end{subequations}

    Note that the singular points of \eqref{eqn:stab_finite_hol_plu} and \eqref{eqn:stab_finite_hol_min} correspond to $Z_0$.

    The equation \eqref{eqn:stab_finite_hol_plu} reduces to a linear first order equation of the form \eqref{eqn:fuchs_crit_disk}, where $A=A_{\la,\te^+}$ and
    \begin{align*}
        B(z) :=
        B_{G^+}(z) + \sum_{k=1}^N 
        a^+_k
        B_{\mathcal{G}^-_{k,\te^-}}(z)+
        a^-_k
        B_{\mathcal{G}^+_{k,\te^+}}(z)
        .
    \end{align*}

    In addition, \eqref{eqn:stab_finite_hol_min} reduces to another linear first order equation of the form \eqref{eqn:fuchs_crit_disk}, where $A=A_{-\la,\te^-}$ and
    \begin{align*}
        B(z) :=
        -B_{G^-}(z) + \sum_{k=1}^N 
        a^+_k
        B_{\mathcal{G}^+_{k,\te^-}}(z)+
        a^-_k
        B_{\mathcal{G}^-_{k,\te^+}}(z)
        .
    \end{align*}

    Since Condition~\ref{cond:Proots} holds, it follows that \eqref{eqn:cond_proots_general} also holds. Hence, we may apply Lemma~\ref{lemma:fuchs_criteria} to both equations.

    For each fixed $j=1,\ldots,N+1$, solving \eqref{eqn:stab_finite_holplumin} along $\tau_j$ yields unique continuous solutions $\phi_j^+,\phi_j^-:[0,2\pi]\to\C$. These solutions can be expressed as 
    \begin{align*}
    \phi^+_j(x)
    &=
        F^+(\om_0)
        \Pi^+_{\tau_j}(x)
    +
        J^+_{\tau_j}
        G^+(x) 
    +
    \sum_{k=1}^N 
        a^+_k
        J^+_{\tau_j}\mathcal{G}^-_{k,\te^-}(x)+
        a^-_k
        J^+_{\tau_j}\mathcal{G}^+_{k,\te^+}(x),
    \\
    \phi^-_j(x)&=
        F^-(\om_0)
        \Pi^-_{\tau_j}(x)
    -
        J^-_{\tau_j}
        G^-(x)
    +
    \sum_{k=1}^N 
        a^+_k
        J^-_{\tau_j}\mathcal{G}^+_{k,\te^-}(x)+
        a^-_k
        J^-_{\tau_j}\mathcal{G}^-_{k,\te^+}(x).
    \end{align*}

    Lemma~\ref{lemma:fuchs_criteria} states that $F^+$ and $F^-$ are holomorphic functions on $\D$, continuous on $\overline{\D}$, and satisfying \eqref{eqn:stab_finite_holplumin}, if and only if each function $\phi^\pm_j$ satisfies the condition
    \begin{align}\label{eqn:stab_finite_hol_close}
        \phi^\pm_j(b)=\phi^\pm_j(0),
        \qquad j=1,\ldots,N.
    \end{align}

    Therefore, $F^+$ and $F^-$ are holomorphic if and only if
        \begin{align*}
        0
        &=
            F^+(\om_0)\big(
            \Pi^+_{\tau_j}(b)-1
            \big)
        +
            J^+_{\tau_j}
            G^+(b)
        +
        \sum_{k=1}^N 
            a^+_k
            J^+_{\tau_j}\mathcal{G}^-_{k,\te^-}(b)+
            a^-_k
            J^+_{\tau_j}\mathcal{G}^+_{k,\te^+}(b),
        \\
        0&=
            F^-(\om_0)\big(
            \Pi^-_{\tau_j}(b)-1
            \big)
        -
            J^-_{\tau_j}
            G^-(b)
        +
        \sum_{k=1}^N 
            a^+_k
            J^-_{\tau_j}\mathcal{G}^+_{k,\te^-}(b)+
            a^-_k
            J^-_{\tau_j}\mathcal{G}^-_{k,\te^+}(b).
    \end{align*}
    Writing these conditions gives $2N+2$ affine linear equations in the unknowns $a^\pm_1,\ldots,a^\pm,F^\pm(\om_0)$:
    \begin{align*}
        \MStabHol
        \begin{pmatrix}
            a^+_1\\\vdots\\a^+_N\\
            a^-_1\\\vdots\\a^-_N\\
            F^+(\om_0)\\F^-(\om_0)
        \end{pmatrix}+
        \begin{pmatrix}
            J_{\tau_1}^+ G^+(b)\\
            \vdots\\
            J_{\tau_{N+1}}^+ G^+(b)\\
            -J_{\tau_1}^- G^-(b)\\
            \vdots\\
            -J_{\tau_{N+1}}^- G^-(b)
        \end{pmatrix}
        =0.
    \end{align*}

    If $\MStabHol$ is nonsingular, then the system admits a unique solution. Consequently, there exists a unique pair of holomorphic functions $F^+,F^-:\D\to\C$ such that $\pa_z F^+,\pa_z F^-\in C(\overline{\D})$, and satisfy \eqref{eqn:stab_finite_holplumin}. By Lemma~\ref{lemma:stab:holo_implies}, the coefficients satisfy
    \begin{align*}
        a^\pm_k = F^\pm_{k-1} \qquad \text{for all } k,
    \end{align*}
    and therefore $(F^+,F^-)$ is the unique solution of~\eqref{eqn:stab_finite_hol_main}.
    
    Conversely, if for every admissible pair $(G^+,G^-)\in\XHholo\oplus\XHholo$ there exists a unique $(F^+,F^-)\in\XHholo\oplus\XHholo$ of \eqref{eqn:stab_finite_hol_main}, by setting $(G^+,G^-)=0$, we see that then the homogeneous system associated with $\MStabHol$ admits only the trivial solution, and hence $\MStabHol$ is nonsingular.

    Finally, assume that there exist $d$ independent terms in the kernel of $\MStabHol$: $\{v_k\}_{k=1}^d$. For each $k=1,\ldots,d$, there exists a pair of functions $(F_k^+,F_k^-)\in\XHholo\oplus\XHholo$, whose derivative extends continuously to $\D$, and  satisfying \eqref{eqn:stab_finite_hol_main0}. 
    
    Suppose that these pairs are not independent, so there exists $c\in\C^{2N+2}$, with $c\neq 0$ such that 
    \begin{align*}
        \sum_{k=1}^d c_k (F_k^+,F_k^-) = 0.
    \end{align*}
    
    Computing the difference between the evaluations at $b=2\pi+2$ and $0$, we see that 
    \begin{align*}
        \sum_{k=1}^d c_k v_k = 0,
    \end{align*}
    so we have a contradiction.
\end{proof}

Define
\begin{align}\label{def:Piplusminu}
    \Pi^+(x):=
    e^{(i+\la)\int_0^x\beta(y)\dy-i\te^+ x}
    \qquad
    \text{and}
    \qquad 
    \Pi^-(x):=
    e^{(i-\la)\int_0^x\beta(y)\dy-i\te^- x},
\end{align}
recall that $\beta$ is defined by \eqref{def:beta}. Moreover, define the formal operators
\begin{align}\label{def:Jplusminu}
    J^+ g(x):=\Pi^+(x)\int_0^x\frac{\beta(y)}{\Pi^+(y)}g(y)\dy
    \qquad
    \text{and}
    \qquad 
    J^- g(x):=\Pi^-(x)\int_0^x\frac{\beta(y)}{\Pi^-(y)}g(y)\dy.
\end{align}

\begin{rem}
    The functions $\Pi^+$ and $\Pi^-$ depend on $\la$ and $\te$, and the same holds for the operators $J^+$ and $J^-$.
\end{rem}

\begin{lemma}\label{lemma:stab_tor}
    Let $\Pi^\pm$ and $J^\pm$ be defined in
    \eqref{def:Piplusminu} and \eqref{def:Jplusminu}, respectively.

    For every $k=1,\ldots,N$, define
    \begin{align}\label{def:gkplusminu}
        g_{k,\te}^-(x):=
        (k-\te)\frac{i}{2}
        \sum\limits_{m=0}^{N-k}    
        \twap_{m+k}
         e^{-i(1+m)x}
        \qquad
        \text{and}
        \qquad 
        g_{k,\te}^+(x):=
        (k-\te)\frac{i}{2}
        \sum\limits_{m=0}^{N-k}    
        \twap_{m+k}
         e^{imx}.
    \end{align}
    
    For each $k=1,\ldots,N$, set
    \begin{align}
        \left.\begin{aligned}
            h^{-J^+}_{k}&:=
                J^+ g^-_{k,\te^-},&\qquad
            h^{+J^+}_{k}&:=
                J^+ g^+_{k,\te^+},\\
            h^{-J^-}_{k}&:=
                J^- g^-_{k,\te^+},&\qquad
            h^{+J^-}_{k}&:=
                J^- g^+_{k,\te^-}.
        \end{aligned}\right\}
        \label{def:hkplusminu}
    \end{align}

    Finally, define the square matrix $\MStabTor(\la)$ of order $2N+2$ by
    \begin{align}
        \big(\MStabTor(\la)\big)_{jk}=
            \begin{cases}
                (\ha{h^{-J^+}_{k}})_{j-2} - \delta_{j-1,k},
                & 1 \leq j \leq N+1,\quad
                 1\leq k\leq N\\
                %%%%%%%%%%%%%%%%%%%%%%%%%%%%%%%%%%%
                (\ha{h^{+J^+}_{k-N}})_{j-2} ,
                & 1 \leq j \leq N+1,\quad
                 N+1\leq k\leq 2N\\
                %%%%%%%%%%%%%%%%%%%%%%%%%%%%%%%%%%%
                (\ha{\Pi^+})_{j-2},
                & 1 \leq j \leq N+1,\quad
                 k=2N+1
                \\
                %%%%%%%%%%%%%%%%%%%%%%%%%%%%%%%%%%%
                (\ha{h^{+J^-}_{k}})_{j-N-3},
                & N+2 \leq j \leq 2N+2,\quad
                 1\leq k\leq N\\
                %%%%%%%%%%%%%%%%%%%%%%%%%%%%%%%%%%%
                (\ha{h^{-J^-}_{k-N}})_{j-N-3} - \delta_{j-2,k},
                & N+2 \leq j \leq 2N+2,\quad
                 N+1\leq k\leq 2N
                \\
                %%%%%%%%%%%%%%%%%%%%%%%%%%%%%%%%%%%
                (\ha{\Pi^-})_{j-N-3},
                & N+2 \leq j \leq 2N+2,\quad
                 k=2N+2
                 \\
                %%%%%%%%%%%%%%%%%%%%%%%%%%%%%%%%%%%
                0,&\text{in other case}
            \end{cases}.\label{def:MStabTor}
    \end{align}
    Then
    \begin{align*}
        \dim\big(\ker\big(\MStabHol(\la)\big)\big)
        \leq 
        \dim\big(\ker\big(\MStabTor(\la)\big)\big).
    \end{align*}
    
\end{lemma}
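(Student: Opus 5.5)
The plan is to imitate the proof of Lemma~\ref{lemma:exis_tor}, replacing the single kernel element by a $d$-dimensional family. Let $d:=\dim\ker\MStabHol(\la)$. By the last assertion of Proposition~\ref{prop:stab_hol}, there are $d$ linearly independent pairs $(F_k^+,F_k^-)$, holomorphic in $\D$ with $\pa_zF_k^\pm$ continuous on $\overline{\D}$, solving $\LOpStabHol{\la}(F^+,F^-)=0$. I will build a linear map $\Phi$ from the span $W$ of these pairs into $\ker\MStabTor(\la)$ and show that $\Phi$ is injective. Injectivity gives $d=\dim W\le\dim\ker\MStabTor(\la)$.

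For $(F^+,F^-)\in W$, set $f^\pm(x):=F^\pm(e^{ix})$. Substituting $z=e^{ix}$ in \eqref{system:stab_hol} with $G^\pm=0$, or equivalently using \eqref{system:stab_tor}, gives
\[
(c+\twap)\pa_xf^+=\big((i+\la)-i\te^+(c+\twap)\big)f^++\sum_{k=1}^N\big(F^+_{k-1}g^-_{k,\te^-}+F^-_{k-1}g^+_{k,\te^+}\big).
\]
The analogous equation holds for $f^-$, with $i-\la$ and $\te^-$ in place of $i+\la$ and $\te^+$, and with the $\mathcal{G}^{\te^-}(F^-,F^+)$ terms. Here $g^\pm_{k,\te}=\mathcal{G}^\pm_{k,\te}\circ e^{ix}$ by \eqref{def:gkplusminu}. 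Dividing by $c+\twap$, which is legitimate because $\beta$ is finite by Lemma~\ref{lemma:betamax}, the integrating factor is exactly $\Pi^\pm$ from \eqref{def:Piplusminu}. Duhamel's formula then gives
\[
f^+=f^+(0)\Pi^++\sum_{k=1}^N\big(F^+_{k-1}h^{-J^+}_k+F^-_{k-1}h^{+J^+}_k\big),
\]
and similarly
\[
f^-=f^-(0)\Pi^-+\sum_{k=1}^N\big(F^+_{k-1}h^{+J^-}_k+F^-_{k-1}h^{-J^-}_k\big).
\]
These identities hold on $[-\pi,\pi]$, or on $[0,2\pi]$; the Fourier coefficients have to be taken with the same convention used for $\MStabTor$.

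Next I take Fourier coefficients of these identities. On the circle, $f^\pm$ has $\ha{f^\pm}_m=F^\pm_m$ for $m\ge0$ and $\ha{f^\pm}_m=0$ for $m<0$. Projecting the $f^+$ identity onto the modes $m=-1,0,\ldots,N-1$ (which is what the index $j-2$, $j=1,\ldots,N+1$, encodes) gives $N+1$ equations. At $m=-1$ the left side is $0$. At $m\ge0$ the left side is $F^+_m$, and moving it to the right produces the $-\delta_{j-1,k}$ entries. The $f^-$ identity is handled the same way with the index $j-N-3$. The resulting $2N+2$ equations say exactly
\[
\MStabTor(\la)\big(F^+_0,\ldots,F^+_{N-1},F^-_0,\ldots,F^-_{N-1},f^+(0),f^-(0)\big)^t=0.
\]
So I define $\Phi(F^+,F^-)$ to be this vector. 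I need to check carefully that the placement of the $\delta$'s and the column ordering match \eqref{def:MStabTor}; I expect this index bookkeeping to be the main nuisance, in particular the mode $-1$ row and the shift between $a_k^\pm$ and $F^\pm_{k-1}$.

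Injectivity comes straight from the Duhamel representation. If $\Phi(F^+,F^-)=0$, then $f^\pm(0)=0$ and $F^\pm_{k-1}=0$ for all $k$, so the right-hand sides vanish. Hence $f^\pm\equiv0$ on the circle, and $F^\pm\equiv0$ by the identity theorem, or simply because all Taylor coefficients are boundary Fourier coefficients. Linearity of $\Phi$ is clear. Therefore $\Phi$ maps the $d$-dimensional space $W$ injectively into $\ker\MStabTor(\la)$, which proves the inequality.
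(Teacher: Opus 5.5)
Your proposal is correct and follows the paper's proof essentially step by step. You take the $d$ independent kernel pairs supplied by Proposition~\ref{prop:stab_hol}, restrict them to the circle, write them via Duhamel in terms of $\Pi^\pm$ and the $h$-functions, project onto the modes $-1,\ldots,N-1$ to get $\MStabTor(\la)\mathbf{c}(F^+,F^-)=0$, and use the Duhamel representation to show that $\mathbf{c}$ is injective on their span. Your index bookkeeping checks out (the $-\delta_{j-1,k}$ and $-\delta_{j-2,k}$ entries, the mode $-1$ rows, and the column order matching \eqref{def:cplumin}), and phrasing the last step as injectivity of a linear map is a slightly cleaner version of the paper's linear-independence sentence.
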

\begin{example}\label{example:matstabtor}
    With $N=2$ the matrix $\MStabTor$ would be of order $6$. In this case,
    \begin{align}
        \MStabTor=
            \begin{pmatrix}
            (\ha{h_1^{-J^+}})_{-1}&(\ha{h_2^{-J^+}})_{-1}&
            (\ha{h_1^{+J^+}})_{-1}&(\ha{h_2^{+J^+}})_{-1}&
            (\ha{\Pi^+})_{-1}&0\\
            %%%%%%%%%%%%%%%%%%%%%%%%%%%%%%%%%%%%%%
            (\ha{h_1^{-J^+}})_{0}-1&(\ha{h_2^{-J^+}})_{0}&
            (\ha{h_1^{+J^+}})_{0}&(\ha{h_2^{+J^+}})_{0}&
            (\ha{\Pi^+})_{0}&0\\
            %%%%%%%%%%%%%%%%%%%%%%%%%%%%%%%%%%%%%%
            (\ha{h_1^{-J^+}})_{1}&(\ha{h_2^{-J^+}})_{1}-1&
            (\ha{h_1^{+J^+}})_{1}&(\ha{h_2^{+J^+}})_{1}&
            (\ha{\Pi^+})_{1}&0\\
            %%%%%%%%%%%%%%%%%%%%%%%%%%%%%%%%%%%%%%
            (\ha{h_1^{+J^-}})_{-1}&(\ha{h_2^{+J^-}})_{-1}&
            (\ha{h_1^{-J^-}})_{-1}&(\ha{h_2^{-J^-}})_{-1}&
            0&(\ha{\Pi^-})_{-1}\\
            %%%%%%%%%%%%%%%%%%%%%%%%%%%%%%%%%%%%%%
            (\ha{h_1^{+J^-}})_{0}&(\ha{h_2^{+J^-}})_{0}&
            (\ha{h_1^{-J^-}})_{0}-1&(\ha{h_2^{-J^-}})_{0}&
            0&(\ha{\Pi^-})_{0}\\
            %%%%%%%%%%%%%%%%%%%%%%%%%%%%%%%%%%%%%%
            (\ha{h_1^{+J^-}})_{1}&(\ha{h_2^{+J^-}})_{1}&
            (\ha{h_1^{-J^-}})_{1}&(\ha{h_2^{-J^-}})_{1}-1&
            0&(\ha{\Pi^-})_{1}\\
        \end{pmatrix}.
    \end{align}
\end{example}
\begin{proof}
    The argument is essentially the same as in the proof of Lemma~\ref{lemma:exis_tor}. Let
    \begin{align*}
        d:=\dim\big(\ker(\MStabHol(\la))\big).
    \end{align*}
    Fix a basis of $\ker(\MStabHol(\la))$. By Proposition~\ref{prop:stab_hol}, there exist $d$ linearly independent pairs $(F^+,F^-)$ such that $\LOpStabHol{\la}(F^+,F^-)=0$.
    
    Fix such a pair $(F^+,F^-)$. Then \eqref{system:stab_tor} is satisfied by $F^+|_\T$, $F^-|_\T$ with $g^+=g^-=0$. Applying Duhamel's formula to both equations yields
    \begin{subequations}\label{eqn:recoFstabTor}
        \begin{align}
            F^+|_\T(x) &= \ha{F^+|_\T}(0) \Pi^+(x)
            +\sum_{k=1}^N (\ha{F^+|_\T})_{k-1} h_{k}^{-J^+}(x)
            +\sum_{k=1}^N (\ha{F^-|_\T})_{k-1} h_{k}^{+J^+}(x),
            \label{eqn:recoFstabTor:plus}
            \\
            F^-|_\T(x) &= \ha{F^-|_\T}(0) \Pi^-(x)
            +\sum_{k=1}^N (\ha{F^+|_\T})_{k-1} h_{k}^{+J^-}(x)
            +\sum_{k=1}^N (\ha{F^-|_\T})_{k-1} h_{k}^{-J^-}(x).
            \label{eqn:recoFstabTor:minu}
        \end{align}
    \end{subequations}
    Projecting onto $e^{ijx}$ for $j=-1,\ldots,N-1$ and collecting the unknown Fourier coefficients, we obtain a linear system of the form
    \begin{align*}
        \MStabTor {\bf c}(F^+,F^-)=0,
    \end{align*}
    where 
    \begin{align}\label{def:cplumin}
        \mathbf{c}(F^+,F^-)
        :=
        \begin{pmatrix}
            (\ha{F^+|_\T})_0&\!
            \cdots&\!
            (\ha{F^+|_\T})_{N-1}&\!
            (\ha{F^-|_\T})_0&\!
            \cdots&\!
            (\ha{F^-|_\T})_{N-1}&\!
            F^+|_\T(0)&\!
            F^-|_\T(0)
        \end{pmatrix}^t.
    \end{align}
    
    Since the $d$ pairs $(F^+,F^-)$ are linearly independent, \eqref{eqn:recoFstabTor} imply that the vectors $\mathbf{c}(F^+,F^-)$ are linearly independent as well. Hence $\dim\ker(\MStabTor)\geq  d$.
\end{proof}

\begin{corollary}\label{coro:stab_tor}
    Let $\te\in(0,1)$ and $\la\in\C$. Assume Condition~\ref{cond:Proots} holds and
    \begin{align*}
        \det\big(\MStabTor(\la)\big)\neq 0.
    \end{align*}

    Then, for every holomorphic pair of functions $(G^+,G^-)\in\XHholo\oplus\XHholo$, that are continuous on $\overline{\D}$, there exists a unique pair of holomorphic functions $(F^+,F^-)\in\XHholo\oplus\XHholo$ with $\pa_z F^\pm$ continuous on $\overline{\D}$, such that \eqref{eqn:stab_finite_hol_main} holds.
\end{corollary}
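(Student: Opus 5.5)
The plan mirrors the proof of Corollary~\ref{coro:exis_singval}, with Lemma~\ref{lemma:stab_tor} playing the role of Lemma~\ref{lemma:exis_tor}. The argument has three steps.

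\textbf{Step 1: the kernel of $\MStabHol(\la)$ is trivial.} By hypothesis $\det(\MStabTor(\la))\neq 0$, so $\ker(\MStabTor(\la))=\{0\}$. Lemma~\ref{lemma:stab_tor} gives
\begin{align*}
    \dim\ker\big(\MStabHol(\la)\big)\leq \dim\ker\big(\MStabTor(\la)\big)=0 .
\end{align*}
Since $\MStabHol(\la)$ is square of order $2N+2$, it is therefore nonsingular.

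Lemma~\ref{lemma:stab_tor} itself rests on Proposition~\ref{prop:stab_hol}, which requires Condition~\ref{cond:Proots}. That condition is part of the present hypotheses, so applying the lemma is legitimate. Only its final assertion is used: kernel vectors of $\MStabHol$ produce linearly independent holomorphic pairs in $\ker\LOpStabHol{\la}$.

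\textbf{Step 2: apply Proposition~\ref{prop:stab_hol}.} Condition~\ref{cond:Proots} holds and $\MStabHol(\la)$ is nonsingular, so the ``only if'' direction of that proposition applies. For every holomorphic pair $(G^+,G^-)\in\XHholo\oplus\XHholo$ continuous on $\overline{\D}$, there is then a unique pair $(F^+,F^-)$ with $\pa_z F^\pm\in C(\overline{\D})$ solving \eqref{eqn:stab_finite_hol_main}. This is exactly the statement to be proved.

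\textbf{Step 3: check uniqueness is not weaker than claimed.} The one point needing care is that uniqueness in the corollary refers to solutions in the class $\pa_z F^\pm\in C(\overline{\D})$. Proposition~\ref{prop:stab_hol} produces its solution through the closing conditions of Lemma~\ref{lemma:fuchs_criteria}, whose uniqueness is stated in that same class. So no gap arises, and the proof should amount to chaining the two cited results.
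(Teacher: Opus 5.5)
Your proposal is correct and follows the paper's proof exactly. The trivial kernel of $\MStabTor(\la)$ forces, via Lemma~\ref{lemma:stab_tor}, a trivial kernel for $\MStabHol(\la)$, and Proposition~\ref{prop:stab_hol} (available since Condition~\ref{cond:Proots} is assumed) then gives unique solvability; your Step~3 is a harmless sanity check that the paper leaves implicit.
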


\begin{proof}
    Since the dimension of $\ker\big(\MStabTor(\la)\big)$ is $0$, the same holds for $\ker\big(\MStabHol(\la)\big)$ and Proposition~\ref{prop:stab_hol} applies.
\end{proof}

\subsubsection{Restricted invertibility of $\LOpStabHol{\laap}$}

The main idea of this section is to construct a suitable \textit{pseudoinverse} that allows us to invert $\LOpStabHol{\laap}$ controlling the norm.

\begin{lemma}\label{lemma:detMstabholAn}
    The dependence of $\det(\MStabTor)$ on $\la$ is analytic.
\end{lemma}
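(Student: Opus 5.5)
The plan is to show that every entry of $\MStabTor(\la)$ is an entire function of $\la$ (with $\te$ fixed). Since the determinant is a polynomial in the entries, this gives the claim. Looking at \eqref{def:MStabTor}, the entries are of three kinds: constants (the $-\delta$ terms and the zeros), Fourier coefficients $(\ha{\Pi^\pm})_m$, and Fourier coefficients $(\ha{h_k^{\pm J^\pm}})_m$. So it suffices to show that each function $\la\mapsto \Pi^\pm(x;\la)$ and $\la\mapsto J^\pm g(x;\la)$ is entire for every fixed $x\in[-\pi,\pi]$, with local uniformity in $x$. Differentiation under the integral in $\frac{1}{2\pi}\intpi(\cdot)e^{-imx}\dx$ then transfers holomorphy to the Fourier coefficients.

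First, $B(x):=\int_0^x\beta(y)\dy$ is real, continuous and bounded on $[-\pi,\pi]$, by Lemma~\ref{lemma:betamax}. Hence by \eqref{def:Piplusminu},
\begin{align*}
    \Pi^\pm(x;\la)=e^{iB(x)-i\te^\pm x}\,e^{\pm\la B(x)}
\end{align*}
is entire in $\la$ for each $x$. On any compact set $|\la|\le R$ it satisfies $|\Pi^\pm|\le e^{R\norm{B}_{L^\infty}}$ and $|1/\Pi^\pm|\le e^{R\norm{B}_{L^\infty}}$, uniformly in $x$.

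Second, for $g=g^\pm_{k,\te}$ (trigonometric polynomials independent of $\la$), I would write
\begin{align*}
    J^\pm g(x;\la)=\int_0^x\beta(y)g(y)\,\frac{\Pi^\pm(x;\la)}{\Pi^\pm(y;\la)}\dy .
\end{align*}
The integrand is entire in $\la$ and jointly continuous in $(y,\la)$. It is also bounded by $\norm{\beta}_{L^\infty}\norm{g}_{L^\infty}e^{2R\norm{B}_{L^\infty}}$ on $|\la|\le R$. By the standard theorem on holomorphic parameter integrals (Morera plus Fubini, or dominated differentiation), $J^\pm g(x;\cdot)$ is entire and locally bounded uniformly in $x$. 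Applying the same argument once more to $(\ha{h})_m(\la)=\frac{1}{2\pi}\intpi h(x;\la)e^{-imx}\dx$ gives that every entry of $\MStabTor(\la)$ is entire. Consequently $\det(\MStabTor(\la))$, a finite sum of products of entries, is entire, in particular analytic in $\la$.

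The only point requiring care is the uniform domination that justifies exchanging holomorphy and integration. I expect this to be straightforward here, because $\beta$ is bounded (Lemma~\ref{lemma:betamax}) and the exponentials are explicit. Note that the argument never uses Condition~\ref{cond:Proots}. The torus matrix $\MStabTor$ is defined purely through $\beta$ and Duhamel integrals on $[-\pi,\pi]$, unlike $\MStabHol$, whose paths $\tau_j$ pass through the singular points. So no exceptional set of $\la$ arises.
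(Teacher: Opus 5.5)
Your proposal is correct and follows essentially the same route as the paper. The paper also shows that each entry of $\MStabTor(\la)$ is holomorphic by applying Morera/Fubini first to the Duhamel integral defining $J^\pm g$ (with $\Pi^\pm$ an explicit nonvanishing exponential) and then to the Fourier-coefficient integral; your explicit bounds, uniform on $|\la|\le R$, simply make precise the ``continuity and boundedness'' the paper invokes to interchange the integrals.
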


\begin{proof}
    We show that each entry of $\MStabTor$, defined in \eqref{def:MStabTor}, is analytic in $\la$. Let $g:\T\to\C$ be continuous, let $C_0\in\C$, and consider
    \begin{align*}
    f(x) := \Pi^+(x) C_0 + J^+ g (x),
    \end{align*}
    where $\Pi^+$ and $J^+$ are defined in \eqref{def:Piplusminu} and \eqref{def:Jplusminu}, respectively.
    
    The function $\Pi^+$ is analytic in $\la$ and never vanishes, since it is an exponential function. As for $J^+ g$, observe that it is the product of $\Pi^+$ with an antiderivative. Define
    \begin{align*}
        \Phi(x,\la):=\int_0^x
        \frac{\beta(y)}{\Pi^+(y)}g(y)\dy.
    \end{align*}
    Then, for every $x\in[a,b]$, the map $\Phi(x,\cdot)$ is holomorphic on $\C$. Indeed, for any triangular contour $\partial\Delta\subset \C$, continuity and boundedness on $[0,x]\times\partial\Delta$ allow us to interchange the order of integration, yielding
    \begin{align*}
        \int_{\partial\Delta}\Phi(x,\la) \,d\la
        =
        \int_0^x
        \int_{\partial\Delta}
        \frac{\beta(y)}{\Pi^+(y)}g(y) \,d\la \dy
        =0.
    \end{align*}
    Hence, by Morera's theorem, $\Phi(x,\cdot)$ is holomorphic.
    
    Consequently, the function $f$ depends analytically on $\la$. Applying Morera's theorem once more, we obtain
    \begin{align*}
        \int_{\partial\Delta}\ha{f}_k \,d\la
        =
        \frac{1}{2\pi}
        \int_{\partial\Delta}
            \intpi e^{-ikx}
            f(x) \dx \,d\la
        =
        \frac{1}{2\pi}
        \intpi
        e^{-ikx}
        \int_{\partial\Delta}
            f(x) \,d\la \dx
        =0.
    \end{align*}
    Therefore, $\ha{f}_k$ is holomorphic. 
\end{proof}

\begin{lemma}\label{lemma:stab_is_regu}
    Let $\te=\frac{1}{3}$. Then there exists $\la\in\C$ such that
    \begin{align*}
        \det\big(\MStabTor(\la)\big)\neq 0.
    \end{align*}
\end{lemma}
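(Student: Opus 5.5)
The plan is to exhibit one explicit value of $\la$ and certify $\det(\MStabTor(\la))\neq 0$ with interval arithmetic. The natural choice is a value far from the numerically observed spectrum, for instance the purely imaginary value $\la=\getvar{la_reg}$ recorded in the supplementary parameters. The point of this lemma is only to show that $\det(\MStabTor(\cdot))$ is not identically zero. Combined with Lemma~\ref{lemma:detMstabholAn}, it then follows that the zeros of $\det(\MStabTor(\la))$ are isolated. This is what will later allow us to treat $\laap$, or nearby values, through a regularization or pseudoinverse argument. Hence any single certified $\la$ suffices, and we are free to choose one where the matrix is well conditioned.

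First, with $\te=\tfrac13$ and the chosen $\la$, we compute rigorous enclosures of the functions $\Pi^\pm$ from \eqref{def:Piplusminu} and $h_k^{\pm J^\pm}$ from \eqref{def:hkplusminu}. These solve linear ODEs on $[-\pi,\pi]$ with coefficient $\beta=(c+\twap)^{-1}$, so we use piecewise polynomial (Taylor) enclosures with validated remainders, as in Appendix~\ref{sec:ode}. Second, we enclose the Fourier coefficients with indices $-1,\ldots,N-1$ that enter \eqref{def:MStabTor}. We compute them by exact integration of the piecewise polynomials against $e^{-ijx}$, and we add the propagated error bounds, which are controlled in $L^2$ through Parseval. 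This yields an interval matrix $\mathbf{M}\ni\MStabTor(\la)$ of order $2N+2$.

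Third, we certify nonsingularity of every matrix in $\mathbf{M}$. We do this by bounding the smallest singular value from below, in the same way as in Lemma~\ref{lemma:exis_singval}. We take a floating point approximate inverse or SVD $R$ of the midpoint matrix and show $\norm{I-R\mathbf{M}}<1$ in interval arithmetic. Equivalently, we obtain a lower bound $\sigma_1(\MStabTor(\la))^2\geq\getvar{svd_stab1}$ via a Gershgorin or Weyl perturbation argument on $\mathbf{M}^\ast\mathbf{M}$. A positive lower bound for $\sigma_1$ gives $|\det|^2=\prod\sigma_k^2>0$, which is the claim.

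The main obstacle is the size, $2N+2\approx 2000$, together with conditioning. The $-\delta$ shifts in \eqref{def:MStabTor} mean that the matrix is a perturbation of a permutation-like matrix. So for a well-chosen $\la$ we expect most singular values to cluster near $1$, and only a few small ones to require care. The $\la$ should therefore be picked, by nonrigorous scanning, to keep the smallest singular value comfortably away from zero relative to the accumulated enclosure width. If the crude bound fails, we would first tighten the ODE enclosures by refining the mesh, and only then switch to a block preconditioning argument.
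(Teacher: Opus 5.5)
Your plan is essentially the paper's proof: fix $\la=\getvar{la_reg}$, where $|\Pi^\pm|=1$, so the ODE errors are controlled by Lemma~\ref{lemma:J_L2L2_exis} and Lemma~\ref{lemma:ode_stab:regu}; build $\MStabTorApprox$ from the piecewise-polynomial approximations; bound the Gram-matrix perturbation column-wise through Parseval as in \eqref{eqn:mat_stab_resi_sum}; and conclude with Weyl plus Gershgorin exactly as in Lemma~\ref{lemma:exis_singval}. One slip: $\getvar{svd_stab1}$ is the paper's \emph{upper} bound for $\sigma_1^2$ at $\laap$ (Lemma~\ref{lemma:stab_singval}), not a lower bound certified at $\la=\getvar{la_reg}$, though any positive lower bound suffices here.
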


\proofcompassi

\begin{corollary}\label{coro:isozeros}
    The function $\det(\MStabTor):\C\to\C$ has only isolated zeros for $\te=\frac{1}{3}$.
\end{corollary}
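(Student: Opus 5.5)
This follows from the identity theorem for holomorphic functions, combining Lemma~\ref{lemma:detMstabholAn} with Lemma~\ref{lemma:stab_is_regu}. Fix $\te=\frac13$.

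First, I would check that $\la\mapsto\det(\MStabTor(\la))$ is an entire function. By Lemma~\ref{lemma:detMstabholAn}, every entry of $\MStabTor(\la)$ is holomorphic in $\la$ on all of $\C$. To see that there is no restriction on the domain, note that $\Pi^\pm$ in \eqref{def:Piplusminu} is an exponential of an affine function of $\la$. Its coefficient $\int_0^x\beta$ is finite because $\beta$ is continuous and bounded (Lemma~\ref{lemma:betamax}). Hence $1/\Pi^\pm$ is entire in $\la$, locally uniformly in $x\in[-\pi,\pi]$, and the Morera argument of Lemma~\ref{lemma:detMstabholAn} works on any triangle in $\C$. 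The functions $g^\pm_{k,\te}$ are trigonometric polynomials and do not depend on $\la$. The entries of $\MStabTor(\la)$ are Fourier coefficients of $\Pi^\pm$ and of $J^\pm g^\pm_{k,\te^\pm}$, so each entry is entire, possibly minus a constant $\delta$ term. The determinant is a polynomial in finitely many entries, namely $(2N+2)^2$ of them, so it is entire.

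Second, $\C$ is connected and $\det(\MStabTor)$ is holomorphic on it. By Lemma~\ref{lemma:stab_is_regu}, there is some $\la_0$ with $\det(\MStabTor(\la_0))\neq0$, so the determinant is not identically zero. The identity theorem then says that its zero set has no accumulation point in $\C$, which means every zero is isolated.

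The only point needing care is the first step, namely justifying holomorphy on all of $\C$ rather than on some subregion. Lemma~\ref{lemma:detMstabholAn} is stated without any domain restriction, and the integrands are jointly continuous in $(x,\la)$, so I expect no real obstacle.
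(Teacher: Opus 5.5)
Your proposal is correct and follows the same route as the paper. It gets holomorphy of $\det(\MStabTor)$ from Lemma~\ref{lemma:detMstabholAn} and a point of non-vanishing from Lemma~\ref{lemma:stab_is_regu}, then applies the identity theorem on the connected domain $\C$. Your extra check that the entries are entire, because $\Pi^\pm$ and $1/\Pi^\pm$ are exponentials affine in $\la$ with bounded coefficient $\int_0^x\beta$, just makes explicit what the paper's Morera argument already gives.
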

\begin{proof} 
    By Lemma~\ref{lemma:detMstabholAn}, the function $\det(\MStabTor)$ is holomorphic. If it had a non-isolated zero, then by the Identity Theorem for holomorphic functions it would vanish identically on $\C$. However, there exists $\la\in\C$ such that $\det\big(\MStabTor(\la)\big)\neq 0$, a contradiction.
\end{proof}

\begin{defi}\label{def:Psigma1}
    Let $\te\in(0,1)$ and $\la\in\C$. Let $\sigma_1$ be the smallest singular value of $\MStabTor(\la)$ and assume that $\sigma_1$ has multiplicity one. Let $u_1,v_1\in\C^{2N+2}$ be left and right singular vectors corresponding to $\sigma_1$, normalized so that $\norm{u_1}=\norm{v_1}=1$. Then
    \begin{align*}
        \MStabTor v_1=\sigma_1 u_1,
        \qquad
        \MStabTor^\ast u_1=\sigma_1 v_1.
    \end{align*}
    Let $\mathcal{U}_1:=\spa(u_1)$, $\mathcal{V}_1:=\spa(v_1)$ and 
    let $\PU:\C^{2N+2}\to\mathcal{U}_1$, 
    $\PV:\C^{2N+2}\to\mathcal{V}_1$ be the orthogonal projections. In particular, it holds that
    \begin{align*}
        \PU = u_1u_1^\ast,\qquad
        \PV = v_1v_1^\ast
    \end{align*}

    For any pair $(G^+,G^-)\in\XHhol{0}\oplus\XHhol{0}$, define the vector ${\bf b}(G^+,G^-)\in\C^{2N+2}$ by
    \begin{align}\label{def:bplumin}
        {\bf b}(G^+,G^-):=
        \begin{pmatrix}
            (\ha{J^+ G^+|_\T})_{-1},\! &\!
            \cdots\!&\!,
            (\ha{J^+ G^+|_\T})_{N-1},\!&\!
            -(\ha{J^- G^-|_\T})_{-1},\!&\!
            \cdots\!&\!,
            -(\ha{J^- G^-|_\T})_{N-1}
        \end{pmatrix}^t.
    \end{align}

    Let $(G_{u_1}^+,G_{u_1}^-)\in\XHholo\oplus\XHholo$ be such that
    \begin{align}\label{gauge}
        u_1^\ast {\bf b}(G_{u_1}^+,G_{u_1}^-)\neq0.
    \end{align}
    
    We define the operator
    \begin{align*}
        \Pcal_\la[G_{u_1}^+,G_{u_1}^-]:
        \XHholo\oplus\XHholo
        \longrightarrow
        \XHholo\oplus\XHholo
    \end{align*}
    by
    \begin{align*}
        \Pcal_\la[G_{u_1}^+,G_{u_1}^-](G^+,G^-)
        :=
        (G^+,G^-)
        -
        \frac{u_1^\ast {\bf b}(G^+,G^-)}{u_1^\ast {\bf b}(G_{u_1}^+,G_{u_1}^-)}
        (G_{u_1}^+,G_{u_1}^-).
    \end{align*}
\end{defi}

\begin{rem}
    We write $\Pcal_\la[G_{u_1}^+,G_{u_1}^-]$ to emphasize the dependence on $\la$. In fact, both $u_1$ and ${\bf b}$ also depend on $\la$.
\end{rem}

\begin{rem}
    Let $\la\in\C$ and let $\sigma_1(\la)$ denote the smallest singular value of $\MStabTor(\la)$, and assume that $\sigma_1(\la)$ has multiplicity $1$.
    The quantity $\sigma_1(\la)$ may be very small. In fact, we do not prove that it is nonzero for $\la=\laap$.
\end{rem}

\begin{lemma}\label{lemma:svd:stable}
    Let $\la_0\in\C$ and let $\overline{\sigma}_1,\underline{\sigma}_2>0$, with $\overline{\sigma}_1<\underline{\sigma}_2$. Let $V^{\la_0}$ be a neighborhood of $\la_0$ and let $M: V^{\la_0}\to \C^{r\times r}$ be a continuous map.

    Let 
    \begin{align*}
        \sigma_1(M(\la)) \leq  \dots \leq  \sigma_r(M(\la))
    \end{align*}
    denote the singular values of $M(\la)$, counted with multiplicity. Assume that
    \begin{align*}
        \sigma_1(M(\la)) \leq  \overline{\sigma}_1,
        \qquad
        \sigma_2(M(\la)) \geq  \underline{\sigma}_2
    \end{align*}
    for all $\la \in V^{\la_0}$.

    Then the maps
    \begin{align*}
        \la\mapsto \PUl{\la},\qquad \la\mapsto \PVl{\la}
    \end{align*}
    are continuous on $V^{\la_0}$.
    
    Moreover, the map defined by
    \begin{align}\label{def:Minvreg}
        M^\dagger_{\rm reg}(\la):=
        M(\la)^\ast
        \bigl(M(\la)M(\la)^\ast+\PUl{\la}\bigr)^{-1}
        \bigl(I-\PUl{\la}\bigr)
    \end{align}
    is well defined and continuous on $V^{\la_0}$ and satisfies, for all $\la\in V^{\la_0}$,
    \begin{align}\label{eqn:pseudoinvproj}
        \PV M^\dagger_{\rm reg} &= 0,\\
        \label{eqn:pseudoinvinve}
        MM^\dagger_{\rm reg}&=I-\PU.
    \end{align}

    Furthermore, the following estimate holds
    \begin{align}\label{eqn:pseudoinvesti}
        \norm{M^\dagger_{\rm reg}(\la) u}_{\ell^2(\C^r)}\leq 
        \frac{1}{\underline{\sigma}_2}\norm{u}_{\ell^2(\C^r)}\quad 
        \text{for all } \la\in V^{\la_0}.
    \end{align}
    
\end{lemma}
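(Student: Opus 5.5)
The proof rests on the singular value decomposition together with a spectral gap argument. Throughout, write $M=M(\la)$ and let $M=U\Sigma V^\ast$ be an SVD with $\Sigma=\mathrm{diag}(\sigma_1,\ldots,\sigma_r)$ in increasing order. Then $u_1=Ue_1$ and $v_1=Ve_1$.

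\emph{Continuity of the projections.} Since $\sigma_1\le\overline{\sigma}_1<\underline{\sigma}_2\le\sigma_2$, the eigenvalue $\sigma_1^2$ of the Hermitian matrix $MM^\ast$ is simple and separated from the rest of the spectrum by a uniform gap. I would therefore write $\PUl{\la}$ as a Riesz projection,
\begin{align*}
    \PUl{\la}=\frac{1}{2\pi i}\oint_{\Gamma}(w-M(\la)M(\la)^\ast)^{-1}\,dw,
\end{align*}
where $\Gamma$ is the circle $|w|=\rho$ with $\overline{\sigma}_1^2<\rho<\underline{\sigma}_2^2$. Both circles are fixed independently of $\la$. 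The resolvent is continuous in $\la$, uniformly on $\Gamma$, because the distance from $\Gamma$ to the spectrum of $MM^\ast$ is bounded below uniformly. Hence $\PUl{\la}$ is continuous. The same argument applied to $M^\ast M$ gives continuity of $\PVl{\la}$. This step also shows that $\PU=u_1u_1^\ast$ does not depend on the phase chosen for $u_1$.

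\emph{The identities.} Write $\PU=Ue_1e_1^\ast U^\ast$. Then
\begin{align*}
    MM^\ast+\PU=U\,\mathrm{diag}(\sigma_1^2+1,\sigma_2^2,\ldots,\sigma_r^2)\,U^\ast,
\end{align*}
and this matrix is invertible since $\sigma_j\ge\underline{\sigma}_2>0$ for $j\ge2$. Its inverse is a continuous function of $\la$, being the inverse of a continuous invertible matrix, so $M^\dagger_{\rm reg}$ is well defined and continuous. Moreover $I-\PU=U\,\mathrm{diag}(0,1,\ldots,1)\,U^\ast$. Multiplying out,
\begin{align*}
    M^\dagger_{\rm reg}=V\Sigma U^\ast U\,\mathrm{diag}\big((\sigma_1^2+1)^{-1},\sigma_2^{-2},\ldots\big)\,\mathrm{diag}(0,1,\ldots,1)\,U^\ast=V\,\mathrm{diag}(0,\sigma_2^{-1},\ldots,\sigma_r^{-1})\,U^\ast.
\end{align*}
From this closed form the two identities follow directly:
\begin{align*}
    \PV M^\dagger_{\rm reg}&=Ve_1e_1^\ast\,\mathrm{diag}(0,\ldots)\,U^\ast=0,\\
    MM^\dagger_{\rm reg}&=U\,\mathrm{diag}(0,1,\ldots,1)\,U^\ast=I-\PU.
\end{align*}

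\emph{The estimate.} Using the closed form again, $\norm{M^\dagger_{\rm reg}u}=\norm{\mathrm{diag}(0,\sigma_2^{-1},\ldots)\,U^\ast u}\le\sigma_2^{-1}\norm{u}\le\underline{\sigma}_2^{-1}\norm{u}$.

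The only step that needs care is the continuity of the projections. The SVD factors $U$ and $V$ need not depend continuously on $\la$, so continuity cannot be read off from them. The Riesz-projection formula with a fixed contour avoids this issue, and the uniform separation hypothesis is exactly what makes such a contour available.
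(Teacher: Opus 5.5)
Your proof is correct and follows the paper's argument. The identities and the bound come from conjugating by the SVD factors, exactly as in the paper; your closed form $V\,\mathrm{diag}(0,\sigma_2^{-1},\ldots,\sigma_r^{-1})U^\ast$ just makes that conjugation explicit, and invertibility of $MM^\ast+\PU$ is the same observation. The only difference is that the paper cites Kato's perturbation theorem for continuity of the rank-one spectral projectors, while you prove it directly with a Riesz projection over a fixed contour; this is the mechanism behind that theorem, and it is where the uniform gap $\overline{\sigma}_1<\underline{\sigma}_2$ is used.
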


\begin{proof}
    The singular values of $M$ may be computed as the nonnegative square roots of the eigenvalues of $M^\ast M$. By Theorem 5.5 from \cite[Chapter~2, \S5.1]{kato1980perturbation},  we know that the rank-one spectral projectors $\la\mapsto \PUl{\la}$ and $\la\mapsto \PVl{\la}$ are continuous on $V^{\la_0}$.

    In addition, note that the matrix given by
    \begin{align*}
        M M^\ast + \PU
    \end{align*}
    is also continuous on $V^{\la_0}$.

    For each fixed $\la$, take $M(\la)=U\Sigma V^\ast$. Then $\PU=u_1u_1^\ast$ and $\PV=v_1v_1^\ast$ for some unit vectors $u_1,v_1\in\C^r$.

    Therefore,
    \begin{align*}
        \det\big(MM^\ast + \PU\big) = 
        \det\big(\Sigma\Sigma^\ast +E_{11}\big) \geq
        \prod_{k=2}^r \sigma_k^2 \geq \underline{\sigma}_2^{2(r-1)} > 0,
    \end{align*}
    where $E_{11}$ denotes the square matrix of order $r$ consisting of all zeros except for the $(1,1)$ entry, where it has a $1$. Hence, $M M^\ast + \PU$ is invertible for all $\la\in D_{\delta}(z_0)$, and its inverse depends continuously on $\la$. 

    Consequently, the function $M^\dagger_{\rm reg}(\la)$ from \eqref{def:Minvreg} is well defined and continuous on $V^{\la_0}$.

    We show that \eqref{eqn:pseudoinvproj} and \eqref{eqn:pseudoinvinve} hold. The idea is to conjugate so that the middle factors are diagonal. Fix $\la\in V^{\la_0}$. Then we obtain
    \begin{align*}
        \PV M^\dagger_{\rm reg}&= 
        \PV
        M^\ast
        \big(MM^\ast+\PU\big)^{-1}
        \big(I-\PU\big)\\
        &= 
        V E_{11}
        \Sigma^\ast
        \big(\Sigma\Sigma^\ast+E_{11}\big)^{-1}
        \big(I-E_{11}\big) U^\ast=0.
        \\ \\
        M M^\dagger_{\rm reg}&=
        M M^\ast
        \bigl(MM^\ast+\PU\bigr)^{-1}
        \bigl(I-\PU\bigr)\\
        &=
        U \Sigma \Sigma^\ast        
        \bigl(\Sigma \Sigma^\ast+E_{11}\bigr)^{-1}
        \bigl(I-E_{11}\bigr) U^\ast\\
        &=
        U  
        \bigl(I-E_{11}\bigr) U^\ast=I-\PU.
    \end{align*}
    
    Finally, \eqref{eqn:pseudoinvesti} follows by applying the same conjugation argument for fixed $\la$
    \begin{align*}
        \norm{
        M^\dagger_{\rm reg} u
        }_{\ell^2(\C^r)}^2
        &=
        \norm{M^\ast
        \big(MM^\ast+\PU\big)^{-1}
        \big(I-\PU\big) u}^2\\
        &=
        \norm{V 
        \Sigma^\ast
        \big(\Sigma\Sigma^\ast+E_{11}\big)^{-1}
        \big(I-E_{11}\big) U^\ast
        u}^2\\
        &=
        \norm{
        \Sigma^\ast
        \big(\Sigma\Sigma^\ast+E_{11}\big)^{-1}
        \big(I-E_{11}\big)
        U^\ast u}^2\\
        &\leq 
        \sum_{k=2}^r
        \frac{1}{\sigma_k^2}
        |(U^\ast u)_k|^2\leq 
        \frac{1}{\sigma_2^2}
        \norm{U^\ast u}_{\ell^2(\C^r)}^2
        \leq \frac{1}{\underline{\sigma}_2^2}
        \norm{u}_{\ell^2(\C^r)}^2.
    \end{align*}
\end{proof}

\begin{lemma}
\label{lemma:stab_singval}
    Let $\te=\frac{1}{3}$ and 
    let $\sigma_1 \leq  \dots \leq  \sigma_{2N+2}$ denote the singular values of $\MStabTor(\laap)$, counted with multiplicity. Then it holds that
    \begin{align*}
        \sigma_1^2 < \overline{\sigma}_1,
        \qquad
        \sigma_2^2 > \underline{\sigma}_2,
    \end{align*}
    where
    \begin{align*}
        \overline{\sigma}_1 :=
        \getvar{svd_stab1},
        \qquad
        \underline{\sigma}_2 :=
        \getvar{svd_stab2}.
    \end{align*}
\end{lemma}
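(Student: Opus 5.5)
This is a computer-assisted statement. The plan is to enclose the $(2N+2)\times(2N+2)$ matrix $\MStabTor(\laap)$ rigorously, entry by entry, with interval arithmetic. Then $\sigma_1^2$ and $\sigma_2^2$ are bounded through a validated eigenvalue enclosure of the Hermitian matrix $\MStabTor^\ast\MStabTor$.

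\textbf{Step 1: enclosing the entries.} The entries are low Fourier coefficients, of index $-1,\ldots,N-1$, of $\Pi^\pm$ and of the functions $h_k^{\pm J^\pm}=J^\pm g^\pm_{k,\te^\mp}$ from \eqref{def:hkplusminu}, with $\la=\laap$ and $\te=\frac13$. Each of these is the solution of a scalar linear ODE on $[-\pi,\pi]$ with a trigonometric-polynomial right-hand side:
\begin{align*}
\pa_x h=\big((i\pm\la)\beta-i\te^\pm\big)h+\beta g,\qquad h(0)=0 \ (\text{or } 1).
\end{align*}
On a fine mesh of $[-\pi,\pi]$ I would approximate $\beta$ and $\int_0^x\beta$ by validated piecewise polynomials, using the machinery of Appendix~\ref{sec:ode} and Lemmas~\ref{lemma:ode_stab:regu}--\ref{lemma:ode_stab:sing}. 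The ODE would then be integrated with a rigorous Taylor-model integrator, which gives piecewise-polynomial enclosures of each $h$ together with an $L^\infty$ error bound. The Fourier coefficients of the polynomial parts can be computed exactly in interval arithmetic. The remainder contributes at most its $L^1$ norm divided by $2\pi$. This yields an interval matrix $\mathbf M$ that contains $\MStabTor(\laap)$ entrywise.

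The expected bottleneck is cost and wrapping. There are $4N+2\approx 4000$ columns, so the integration should exploit linearity. I would compute $\Pi^\pm$ and the primitive $\int_0^x\beta\,(\Pi^\pm)^{-1}e^{imx}\,dy$ once for each elementary mode $e^{imx}$, $-N\le m\le N$. Each $h_k$ is then a fixed linear combination of these primitives with the coefficients $(k-\te)\frac{i}{2}\twap_{m+k}$, which avoids solving $4N$ separate ODEs.

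\textbf{Step 2: validated singular values.} Let $M_c$ be the midpoint matrix and $E$ the radius matrix, with $\norm{E}_2\le\norm{E}_F=:\ep$. Weyl's inequality for singular values gives $|\sigma_j(\MStabTor)-\sigma_j(M_c)|\le\ep$. I would compute a floating-point SVD $M_c\approx U\Sigma V^\ast$ and verify it rigorously. The orthogonality defects $\norm{U^\ast U-I}$ and $\norm{V^\ast V-I}$ and the residual $\norm{U^\ast M_cV-\Sigma}$ are bounded in interval arithmetic. Standard perturbation results, namely Weyl's inequality combined with the near-unitarity of $U$ and $V$, then turn these bounds into rigorous enclosures for $\sigma_1(M_c)$ and $\sigma_2(M_c)$. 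Adding $\ep$ gives $\sigma_1^2<\overline\sigma_1=10^{-7}$ and $\sigma_2^2>\underline\sigma_2$. There is ample slack here: the numerical $\sigma_1$ should be essentially $0$ because $\laap$ is near an eigenvalue, while the numerical $\sigma_2^2$ should sit well above $1.31\cdot10^{-3}$.

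\textbf{Main obstacle.} The hard part is the upper bound $\sigma_1^2<10^{-7}$, that is $\sigma_1\lesssim 3\cdot10^{-4}$. It requires that the total entrywise enclosure error $\ep$ of Step 1 stay below about $10^{-4}$ in Frobenius norm over roughly $4\cdot10^6$ entries. I would get it directly from a test vector: take $v$ to be the computed right singular vector, normalized, and bound $\norm{\mathbf M v}_2$ rigorously with interval arithmetic. This avoids bounding the whole $\ep$. It suffices to have per-entry errors of order $10^{-9}$, which is the reason for the fine mesh and high-order Taylor models in Step 1. The lower bound on $\sigma_2$ is more forgiving, but it does need the Weyl-type argument above.
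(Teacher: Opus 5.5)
Your plan is sound, but it takes a genuinely different route from the paper.

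\textbf{The paper's route.} The paper never integrates the ODEs rigorously. It takes the stored continuous piecewise-polynomial approximations $h_k^{\rm ap}$ and evaluates the $L^2$ norms of their residuals exactly, as finite sums of precomputed moments. It then converts these into $\norm{h_k-h_k^{\rm ap}}_{\XLIpi}^2<\rho_k$ via Duhamel's formula $\tau_k=-J^\pm\xi_k$ and the operator bound of Lemma~\ref{lemma:J_L2L2_stab}. This is exactly Lemma~\ref{lemma:ode_stab:sing}, which you cite only for approximating $\beta$, but which already makes your Taylor-model integrator unnecessary. From there:
\begin{itemize}
\item Parseval bounds each column of $R=\MStabTor-\MStabTorApprox$ in $\ell^2$, which avoids the $\sqrt{N}$ loss of entrywise $L^1$ bounds.
\item The Gram perturbation $R^\ast\MStabTorApprox+\MStabTorApprox^\ast R+R^\ast R$ is controlled in Frobenius norm, and Weyl's theorem is applied to the Hermitian matrix $\MStabTor^\ast\MStabTor$.
\item A single Gershgorin computation on $V^\ast\MStabTorApprox^\ast\MStabTorApprox V$ finishes the argument. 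Here $V$ is the unitary matrix near the stored $V^{\rm ap}$, as in Lemma~\ref{lemma:exis_singval}. The isolated last disk gives the upper bound for $\sigma_1^2$, and the remaining disks bound all other eigenvalues from below.
\end{itemize}
This needs only one near-unitary factor and reuses the residual machinery from the rest of the paper.

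\textbf{Your route and what it buys.} You use Weyl for singular values of $\MStabTor$ itself, a validated SVD of the midpoint, and a test vector for $\sigma_1$. The cost is that you must validate both $U$ and $V$ and integrate across the sharp peak of $\beta$. The gain is that errors enter linearly in $\norm{R}_2$. In the Gram approach the perturbation is of size $\norm{r}\,\norm{\MStabTorApprox}_F$, where $r$ is the vector of column error bounds and $\norm{\MStabTorApprox}_F$ is of order $\sqrt{2N}$. Certifying $\sigma_1^2<10^{-7}$ there forces the total column error down to roughly $10^{-9}$. Your test vector only needs $\norm{R}_2$ below about $3\cdot 10^{-4}$.

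\textbf{A caution.} Your heuristic that $\sigma_2^2$ sits well above $\underline{\sigma}_2$ is not something to rely on. The six-digit value $1.31011\cdot10^{-3}$ looks like a nearly sharp computed bound. So in your framework it is probably the $\sigma_2$ side, not $\sigma_1$, that sets the required accuracy.
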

\proofcompassi

Recall the approximate eigenvector $\Eigveap$ of $\LStabthe$, see \eqref{residual_stability}. We choose $(G_{u_1}^+,G_{u_1}^-)$ in Definition~\ref{def:Psigma1} to be $(F^+,F^-)$, where $(F^+,F^-)$ are the functions introduced in Definition~\ref{def:FplusFminu} with $f=\Eigveap$.

The following lemma shows that this choice satisfies \eqref{gauge} for every $\la$ sufficiently close to $\laap$. Moreover, we prove that the operator $\Pcal_\la$ is continuous with respect to~$\la$.

\begin{lemma}\label{lemma:prod_fv_u}
    Let $\te=\frac{1}{3}$ and let $(\Fvrmap^+,\Fvrmap^-)$ be the pair from Definition~\ref{def:FplusFminu} corresponding to $f=\Eigveap$, defined in \eqref{residual_stability}.
    It holds that
    \begin{align*}
        \big|u_1(\laap)^\ast{\bf b}(\Fvrmap^+,\Fvrmap^-)\big| > \getvar{Jfvu1},
    \end{align*}
    $\bf b$ is computed at $\la=\laap$.
\end{lemma}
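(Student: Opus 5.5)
This is a computer-assisted estimate in the same spirit as Lemmas~\ref{lemma:exis_singval} and \ref{lemma:stab_singval}. The plan has three steps: enclose the left singular vector $u_1(\laap)$ rigorously, enclose the vector ${\bf b}(\Fvrmap^+,\Fvrmap^-)$ rigorously, and bound their inner product from below in interval arithmetic.

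\textbf{Enclosing $u_1(\laap)$.} I would reuse the enclosure of $\MStabTor(\laap)$ from the proof of Lemma~\ref{lemma:stab_singval}. Its entries are Fourier coefficients of $h_k^{\pm J^\pm}$ and $\Pi^\pm$, obtained from the validated piecewise polynomial approximations of Appendix~\ref{sec:ode}.
\begin{itemize}
\item First, compute in floating point an approximate left singular vector $\tilde u$ with $\norm{\tilde u}=1$.
\item Next, certify $\tilde u$ with a Davis--Kahan $\sin\Theta$ argument applied to the Hermitian matrix $\MStabTor\MStabTor^\ast$. The residual $r:=\MStabTor\MStabTor^\ast\tilde u-\tilde\sigma^2\tilde u$ is bounded in interval arithmetic. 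The spectral gap is at least $\underline{\sigma}_2-\overline{\sigma}_1$, by Lemma~\ref{lemma:stab_singval}. Together these give
\begin{align*}
    \min_{|\mu|=1}\norm{u_1-\mu\tilde u}\leq \varepsilon_u:=\frac{2\norm{r}}{\underline{\sigma}_2-\overline{\sigma}_1}.
\end{align*}
\item The phase ambiguity $\mu$ is harmless, because the statement only involves $|u_1^\ast{\bf b}|$.
\end{itemize}

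\textbf{Enclosing ${\bf b}$.} By Definition~\ref{def:FplusFminu}, $\Fvrmap^+|_\T$ and $\Fvrmap^-|_\T$ are explicit trigonometric polynomials built from the Fourier data of $\Eigveap$. The entries of ${\bf b}$ are the Fourier modes $-1,\dots,N-1$ of $J^\pm$ applied to these functions.
\begin{itemize}
\item $J^+\Fvrmap^+|_\T$ and $J^-\Fvrmap^-|_\T$ solve the same linear ODEs that define the $h_k$. I would therefore enclose them with the same validated ODE solver as in Lemma~\ref{lemma:ode_stab:Fv}, which is presumably what that lemma provides.
\item Integrate the enclosures against $e^{-ijx}$ to obtain an interval vector ${\bf b}$ with a rigorous error radius $\varepsilon_b$.
\item Bound $\norm{{\bf b}}$ from above, for instance via Lemma~\ref{lemma:J_L2L2_exis}-type $L^2$ estimates for $J^\pm$ combined with Lemma~\ref{lemma:norm_fv}, or directly from the enclosure.
\end{itemize}

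\textbf{Conclusion.} Writing $u_1=\mu(\tilde u+e)$ with $\norm{e}\le\varepsilon_u$, Cauchy--Schwarz gives
\begin{align*}
    |u_1^\ast{\bf b}|\geq |\tilde u^\ast{\bf b}|-\varepsilon_u\norm{{\bf b}},
\end{align*}
and the first term is evaluated in interval arithmetic to exceed \getvar{Jfvu1} plus the error terms.

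\textbf{Main obstacle.} I expect the hardest step to be obtaining a small $\varepsilon_u$. The gap $\underline{\sigma}_2$ is only of order $10^{-3}$, and the matrix has dimension $2N+2\approx2000$, so $\norm{r}$ must be tiny. I would first try a high-precision (multiprecision) SVD followed by a refinement step.
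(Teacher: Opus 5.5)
Your proposal is correct and follows essentially the same route as the paper. Both proofs rest on the same ingredients:
\begin{itemize}
\item an approximate left singular vector $u_1^{\rm ap}$, certified by a residual/gap bound for $\MStabTor\MStabTor^\ast$ using Lemma~\ref{lemma:stab_singval};
\item the vector ${\bf b}$, enclosed via the approximations of $J^\pm\Fvrmap^\pm|_\T$ from Lemma~\ref{lemma:ode_stab:Fv} together with Parseval;
\item the norm $\norm{{\bf b}}$, bounded through Lemmas~\ref{lemma:J_L2L2_stab} and~\ref{lemma:norm_fv};
\item the Cauchy--Schwarz lower bound $|u_1^\ast{\bf b}|\geq|(u_1^{\rm ap})^\ast{\bf b}|-\norm{{\bf b}}\min_{|\mu|=1}\norm{u_1-\mu u_1^{\rm ap}}$.
\end{itemize}

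The only differences are minor. The paper forms the residual with the exact $\sigma_1^2$ and then replaces it by $a^2=\norm{\MStabTor^\ast u_1^{\rm ap}}^2$ via Stewart--Sun, which is where its factor $2$ comes from. It also splits that residual into a part involving $\MStabTorApprox$ and a part controlled by the column norms of $R=\MStabTor-\MStabTorApprox$. You would need the same split, because Lemma~\ref{lemma:ode_stab:sing} provides $L^2$ error bounds, not entrywise interval enclosures of $\MStabTor$.
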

\proofcompassi

\begin{corollary}\label{coro:P_cont_la}
    Let $\te=\frac{1}{3}$, let $(\Fvrmap^+,\Fvrmap^-)$ as in Lemma~\ref{lemma:prod_fv_u} and let $\Pcal_\la[\Fvrmap^+,\Fvrmap^-]$ be the operator from Definition~\ref{def:Psigma1}.
    
    Then there is a choice for $u_{1}$ in terms of $\la$ that makes the operator $\Pcal_\la[\Fvrmap^+,\Fvrmap^-]:\XHholo \oplus\XHholo\to\XHholo \oplus\XHholo$ continuous in a neighborhood $V^{\laap}$ of $\laap$. In particular, it holds that
    \begin{align}\label{gauge_image}
        u_1(\la)^\ast{\bf b}
        \big(\Pcal_\la[\Fvrmap^+,\Fvrmap^-](G^+,G^-)\big)=0
    \end{align}
     for all $(G^+,G^-)\in \XHholo \oplus\XHholo$ and $\la\in V^\laap$. 

\end{corollary}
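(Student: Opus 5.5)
The plan is to combine the spectral-gap continuity from Lemma~\ref{lemma:svd:stable} with the quantitative lower bound of Lemma~\ref{lemma:prod_fv_u}.

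\textbf{Step 1: a neighborhood with a spectral gap.} By Lemma~\ref{lemma:stab_singval}, at $\la=\laap$ we have $\sigma_1^2<\overline{\sigma}_1$ and $\sigma_2^2>\underline{\sigma}_2$, with strict inequalities. The entries of $\MStabTor(\la)$ are analytic, hence continuous, in $\la$ by Lemma~\ref{lemma:detMstabholAn}, and singular values are $1$-Lipschitz in the operator norm (Weyl). So there is a neighborhood $V^{\laap}$ on which $\sigma_1(\la)^2<\overline{\sigma}_1<\underline{\sigma}_2<\sigma_2(\la)^2$. In particular $\sigma_1(\la)$ is simple on $V^{\laap}$.

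\textbf{Step 2: continuous singular vectors.} Lemma~\ref{lemma:svd:stable} then makes $\la\mapsto\PUl{\la}=u_1u_1^\ast$ continuous on $V^{\laap}$. The vector $u_1$ is only determined up to a phase, so I fix the gauge explicitly. Take $u_1(\laap)$ as in Lemma~\ref{lemma:prod_fv_u} and set
\[
u_1(\la):=\frac{\PUl{\la}u_1(\laap)}{\norm{\PUl{\la}u_1(\laap)}}.
\]
This is well defined and continuous after shrinking $V^{\laap}$, since $\norm{\PUl{\la}u_1(\laap)}\to1$ as $\la\to\laap$. It is a unit vector spanning $\mathcal{U}_1(\la)$, so it is an admissible left singular vector.

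\textbf{Step 3: continuity of $\mathbf{b}$.} Next I show that $\la\mapsto{\bf b}(G^+,G^-)$ is continuous, locally uniformly for $(G^+,G^-)$ in bounded sets of $\XHholo\oplus\XHholo$. The vector ${\bf b}$ is built from Fourier coefficients of $J^\pm G^\pm|_\T$. The functions $\Pi^\pm$ are exponentials that depend continuously on $\la$, uniformly in $x\in[-\pi,\pi]$ for $\la$ in a compact set, and they are bounded above and away from zero. Hence $J^\pm$ depends continuously on $\la$ in the operator norm $L^2\to L^2$, using an estimate in the style of Lemma~\ref{lemma:J_L2L2_exis} together with $|\Pi^\pm(x)/\Pi^\pm(y)|\le C$. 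Each Fourier coefficient is bounded by the $L^2$ norm, so ${\bf b}$ is a bounded linear map into $\C^{2N+2}$ that is continuous in $\la$.

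\textbf{Step 4: the denominator stays away from zero.} By Lemma~\ref{lemma:prod_fv_u}, $|u_1(\laap)^\ast{\bf b}(\Fvrmap^+,\Fvrmap^-)|>\getvar{Jfvu1}$. By Steps 2 and 3 this quantity is continuous in $\la$, so after shrinking $V^{\laap}$ it stays above, say, half that constant. Therefore $\Pcal_\la$ is well defined on $V^{\laap}$, and it is continuous in $\la$ because it is a composition of continuous pieces.

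\textbf{Step 5: the identity \eqref{gauge_image}.} This is linearity of ${\bf b}$:
\[
u_1^\ast{\bf b}(\Pcal_\la G)=u_1^\ast{\bf b}(G)-\frac{u_1^\ast{\bf b}(G)}{u_1^\ast{\bf b}(F^{\rm ap})}\,u_1^\ast{\bf b}(F^{\rm ap})=0.
\]

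The main obstacle is Step 3, namely making the continuity of ${\bf b}$ in $\la$ rigorous with the right uniformity. I expect it to be routine using the explicit exponential formulas for $\Pi^\pm$ and $J^\pm$.
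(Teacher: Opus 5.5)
Your proposal is correct and follows the paper's proof essentially step for step. It uses the same gauge fixing $u_1(\la)=\PUl{\la}u_1(\laap)/\norm{\PUl{\la}u_1(\laap)}$, justified by the continuity of $\PUl{\la}$ from Lemma~\ref{lemma:svd:stable}; the same combination of Lemma~\ref{lemma:prod_fv_u} with the continuity of ${\bf b}$ in $\la$ to keep the denominator away from zero; and linearity for \eqref{gauge_image}. Your Steps 1 and 3 are sound and supply details the paper only asserts: Weyl's inequality propagates the strict gap of Lemma~\ref{lemma:stab_singval} to a neighborhood so that Lemma~\ref{lemma:svd:stable} applies, and $J^\pm$ depends continuously on $\la$ in operator norm.
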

\begin{proof}
    We know from Lemma~\ref{lemma:svd:stable} that the projection map $\PU$ is continuous at $\laap$. Notice that the singular vector $u_1(\la)$, even if normalized, is only unique up to multiplication by a complex scalar of unit modulus. Let us kill this invariance. Fix a singular vector $u_1(\laap)$ and take a normalized $u_1(\la)$. We define $u^\sharp_1(\la)$ as
    \begin{align*}
        u^\sharp_1(\la) := \frac{\ti{u}_1(\la)}{\norm{\ti{u}_1(\la)}_{\ell^2(\C^{2N+2})}},
    \end{align*}
    where
    \begin{align*}
        \ti{u}_1(\la) := \PUl{\la} u_1(\laap).
    \end{align*}
    For the sake of notational simplicity we will denote $u_1(\la):=u^\sharp_1(\la)$.

    By construction, the only possible obstruction to continuity is that $\ti{u}_1$ vanishes. However, we have that
    \begin{align*}
        \norm{\ti{u}_1(\la)}_{\ell^2(\C^{2N+2})} &\geq 
        \norm{\ti{u}_1(\laap)}_{\ell^2(\C^{2N+2})} -\norm{\ti{u}_1(\la)-\ti{u}_1(\laap)}_{\ell^2(\C^{2N+2})}\\
        & \geq
        1-\norm{\PUl{\la} - \PUl{\laap}}.
    \end{align*}
    Since the projection is continuous in $\la$, we can find a suitable neighborhood $V^{\laap}$ of $\laap$ such that 
    \begin{align*}
        \norm{\PUl{\la} - \PUl{\laap}} < 1 \quad \text{for all } \la\in V^{\laap}.
    \end{align*}

    Consequently, $u_1$ is continuous on $V^\laap$. Note that the dependence of ${\bf b}(G^+,G^-)$ on $\la$ is also continuous so we can shrink the neighborhood to ensure that
    \begin{align*}
        \big|u_1(\la)^\ast{\bf b}(\Fvrmap^+,\Fvrmap^-)\big|\neq 0  \quad \text{for all } \la\in V^{\laap}.
    \end{align*}

    In conclusion, the operator $\Pcal_\la[\Fvrmap^+,\Fvrmap^-]$ is continuous on $V^{\laap}$.
    
    Finally, a straightforward computation shows \eqref{gauge_image}.
\end{proof}

\begin{lemma}\label{lemma:Proots2}
    Condition~\ref{cond:Proots} holds for $\te=\frac{1}{3}$ and $\la=\laap$.
\end{lemma}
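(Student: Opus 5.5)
The plan is to first compute the limit in Condition~\ref{cond:Proots} in closed form, and then certify it on the computer, exactly as in Lemma~\ref{lemma:Proots1}. Fix $\al\in Z$. Since $\al\neq 0$ is a simple root of $z^NP(z)$, it is a simple zero of $P$. Hence the term $i\te^\pm$ times $(z-\al)$ tends to zero, and the limit becomes
\begin{align*}
    \lim_{z\to\al} i\frac{z-\al}{z}\Big(\frac{i\pm\la}{P(z)}-i\te^\pm\Big)
    = \frac{i(i\pm\la)}{\al\,\pa_zP(\al)}
    = \frac{i(i\pm\la)}{\pa_z(z^NP)(\al)}\,\al^{N-1}.
\end{align*}
Using $z^NP(z)=\tfrac{\twap_N}{2}\prod_{\al'\in Z}(z-\al')\prod_{\al'\in Z}(z-\al'^{-1})$, which follows from the palindromic structure of $z^NP$ and Lemma~\ref{lemma:Proots1}, this equals $(i\pm\la)/2$ times the quantity $A(\al)\prod_{\al'\neq\al}(\al-\al')^{-1}$ appearing in \eqref{eqn:auxProots1}, up to a sign. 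We must therefore show that
\begin{align*}
    \mu_\pm(\al):=-\frac{(i\pm\laap)\,\al^{N-1}}{\al\,\pa_z P(\al)\,\al^{N-1}}\cdot(\text{explicit constant})\notin\Z
\end{align*}
for both signs and every $\al\in Z$. Equivalently, we need $\mu_\pm(\al)=\frac{i(i\pm\laap)}{\al P'(\al)}$ to stay away from $\Z$.

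Next we reuse the rigorous root enclosures of Lemma~\ref{lemma:enclose_poly_roots}, with radii $\getvar{radii_roots}$. These give each $\al\in Z$ in a tiny complex ball, and with interval arithmetic we obtain enclosures of $\al P'(\al)$. The quantity $\laap$ is an explicit rational complex number, so it enters exactly. For each of the $2N$ values we then compute an enclosure of $\mu_\pm(\al)$ and check that it misses $\Z$. The cheapest test is whether the imaginary part of the enclosure excludes $0$. When $\Im\mu_\pm$ is near $0$, we instead check that the real part lies strictly between two consecutive integers.

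A clean analytic shortcut handles the imaginary parts. Since $\Re\laap\approx0.142\neq 0$, the numerators $i\pm\laap$ have nonzero real part. Whether this gives $\Im\mu\neq0$ still depends on $\arg(\al P'(\al))$, so we do not rely on it and let the computer decide.

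The main obstacle I expect is precision. For $N=1000$, the roots of $z^NP(z)$ near the unit circle can be clustered and ill-conditioned. Moreover, $P'(\al)$ inherits large cancellations, so the enclosures of $\mu_\pm$ could become too wide. I would first try the existing high-precision enclosures from Lemma~\ref{lemma:enclose_poly_roots}, which are extremely tight, and evaluate $P'$ in multiprecision ball arithmetic. If some enclosure still meets an integer, I would recompute that root with Newton--Krawczyk at higher precision.
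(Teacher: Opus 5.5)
Your proposal is correct and follows the paper's proof. Your closed form $\frac{i(i\pm\laap)}{\al P'(\al)}$ equals the paper's $(\pm i\laap-1)\frac{\al^{N-1}}{\pa_z(z^NP)(\al)}$, and the paper evaluates it with ball arithmetic on the root enclosures of Lemma~\ref{lemma:enclose_poly_roots}, checking for all $\al\in Z$ and both signs that the enclosure contains no integer. One small slip, in a side remark you never use: the relevant quantity is $-i(i\pm\laap)$ times the quantity in \eqref{eqn:auxProots1}, not $\tfrac12(i\pm\laap)$ up to a sign.
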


\proofcompassi

\begin{lemma} \label{lemma:stab_tor_degen}
    Let $\te=\frac{1}{3}$ and let $(\Fvrmap^+,\Fvrmap^-)$ as in Lemma~\ref{lemma:prod_fv_u}, and $\Pcal_\la[\Fvrmap^+,\Fvrmap^-]$ be the operator defined in Definition~\ref{def:Psigma1}.
    
    For every pair $(G^+,G^-)\in\XHholo\oplus\XHholo$ of functions, with $G^\pm$ continuous on $\overline{\D}$, there exists a unique pair $(F^+,F^-)\in\XHholo\oplus\XHholo$ whose derivatives are continuous on $\overline{\D}$ such that 
    \begin{align}\label{eqn:stab_degen}
        \LOpStabHol{\laap}(F^+,F^-) = \Pcal_\laap[\Fvrmap^+,\Fvrmap^-](G^+,G^-),
    \end{align}
    and
    \begin{align}
        \MStabTor(\laap) 
        {\bf c}(F^+,F^-)=-
        {\bf b}\Pcal_\laap[\Fvrmap^+,\Fvrmap^-]
        (G^+,G^-),\qquad 
        \PVl{\laap} 
        \mathbf{c}(F^+,F^-)=0.
        \label{eqn:invertregular}
    \end{align}
    Here ${\bf c}$ and ${\bf b}$ are defined at \eqref{def:cplumin} and \eqref{def:bplumin} respectively.
\end{lemma}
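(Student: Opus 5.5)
We construct the solution directly from the finite-dimensional system of Proposition~\ref{prop:stab_hol}, replacing exact inversion of the (possibly singular) matrix by the regularized pseudoinverse of Lemma~\ref{lemma:svd:stable}. Fix $(G^+,G^-)$ continuous on $\overline{\D}$ and set $(\ti G^+,\ti G^-):=\Pcal_\laap[\Fvrmap^+,\Fvrmap^-](G^+,G^-)$. By Corollary~\ref{coro:P_cont_la} we have $u_1^\ast{\bf b}(\ti G^+,\ti G^-)=0$, so ${\bf b}(\ti G^+,\ti G^-)$ lies in the range of $I-\PU$. Lemma~\ref{lemma:stab_singval} gives $\sigma_1^2<\overline{\sigma}_1<\underline{\sigma}_2<\sigma_2^2$, so $\sigma_1$ is simple and Lemma~\ref{lemma:svd:stable} applies at $\la=\laap$. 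We define
\begin{align*}
    {\bf c}:=-M^\dagger_{\rm reg}(\laap)\,{\bf b}(\ti G^+,\ti G^-),
\end{align*}
with $M=\MStabTor(\laap)$. By \eqref{eqn:pseudoinvinve} and \eqref{eqn:pseudoinvproj} we get $M{\bf c}=-(I-\PU){\bf b}=-{\bf b}$ and $\PV{\bf c}=0$. These are exactly the conditions \eqref{eqn:invertregular}, once we show that ${\bf c}={\bf c}(F^+,F^-)$ for a genuine solution.

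Next, from ${\bf c}=(a^+_0,\dots,a^+_{N-1},a^-_0,\dots,a^-_{N-1},C^+,C^-)$ we build the candidate boundary functions using the Duhamel representation \eqref{eqn:recoFstabTor}, now with the inhomogeneous terms included:
\begin{align*}
    f^+:=C^+\Pi^++\sum_k a^+_{k-1}h^{-J^+}_k+\sum_k a^-_{k-1}h^{+J^+}_k+J^+\ti G^+|_\T,
\end{align*}
and analogously $f^-$ with $-J^-\ti G^-|_\T$. The rows of $M{\bf c}+{\bf b}=0$ say that the Fourier modes $-1,\dots,N-1$ of $f^\pm$ are consistent: the mode $-1$ vanishes and the modes $0,\dots,N-1$ equal $a^\pm$. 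This is the same bookkeeping as in Lemma~\ref{lemma:exis_tor}, run in reverse.

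The main obstacle is to show that $f^\pm$ are actually periodic traces of holomorphic functions. The system $M{\bf c}=-{\bf b}$ only controls finitely many Fourier modes, not periodicity or the vanishing of all negative modes. My plan is to use the holomorphic criterion. Lemma~\ref{lemma:Proots2} gives Condition~\ref{cond:Proots} at $\laap$, so Lemma~\ref{lemma:fuchs_criteria} and the closing matrix $\MStabHol(\laap)$ of Proposition~\ref{prop:stab_hol} are available. I would show that the affine map sending the data to the closing-condition vector, $\MStabHol{\bf a}+(J^\pm_{\tau_j}\ti G^\pm(b))_j$, vanishes. The relation between $\MStabHol$ and $\MStabTor$ comes from the injection of kernels in Lemma~\ref{lemma:stab_tor} and its proof. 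Under a one-dimensional degeneracy, the range of $\MStabTor$ restricted to the orthogonal complement of $v_1$ should correspond to closable data, and the gauge condition $u_1^\ast{\bf b}=0$ should be exactly the compatibility condition that removes the obstruction in the $u_1$ direction. I would first try a dimension count. The map ${\bf c}\mapsto$(closing residuals) factors through $\MStabTor$ up to an invertible change of variables, because both encode the same ODE solutions along different contours. If this factorization is made rigorous, the result follows. If it fails, I would fall back on a continuity argument in $\la$. Corollary~\ref{coro:isozeros} lets us take $\la_n\to\laap$ with $\det\MStabTor(\la_n)\neq0$, so Corollary~\ref{coro:stab_tor} produces genuine solutions $(F_n^+,F_n^-)$ of $\LOpStabHol{\la_n}(F^+,F^-)=\Pcal_{\la_n}(G^+,G^-)$ whose coefficient vectors satisfy $M(\la_n){\bf c}_n=-{\bf b}_n$. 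We can also arrange $\PVl{\la_n}{\bf c}_n=0$ by adding a multiple of the nearly-kernel direction, but care is needed here. The ${\bf c}_n$ are uniformly bounded by \eqref{eqn:pseudoinvesti}, since ${\bf c}_n=-M^\dagger_{\rm reg}(\la_n){\bf b}_n$ when ${\bf b}_n\perp u_1(\la_n)$. The continuity statements in Lemma~\ref{lemma:svd:stable} and Corollary~\ref{coro:P_cont_la} then let us pass to the limit, with uniform convergence of $f^\pm_n$ on $[-\pi,\pi]$, and this preserves holomorphy and the equation.

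Uniqueness follows from the same structure. Suppose two solutions satisfy \eqref{eqn:stab_degen} and \eqref{eqn:invertregular}. Their difference ${\bf d}$ solves the homogeneous problem, so $M{\bf d}=0$ by the proof of Lemma~\ref{lemma:stab_tor} and $\PV{\bf d}=0$. Hence ${\bf d}$ is orthogonal to $v_1$ and lies in $\ker M$. Since $\sigma_2>0$ and $\ker M\subset\mathcal{V}_1$, we get ${\bf d}=0$. The Duhamel formulas \eqref{eqn:recoFstabTor} then force the two pairs of functions to coincide.
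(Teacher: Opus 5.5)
Your continuity argument in $\la$ is the paper's proof. The paper restricts to a punctured neighbourhood of $\laap$ on which $\det\MStabTor(\la)\neq0$, Condition~\ref{cond:Proots} holds and $\Pcal_\la$ is continuous. The condition follows by openness from Lemma~\ref{lemma:Proots2}; note that you need it at each $\la_n$, not only at $\laap$. The paper then identifies the coefficient vector of the genuine solution with $-M^\dagger_{\rm reg}(\la){\bf b}_\la$, passes to the limit through the Duhamel formulas, and proves uniqueness exactly as you do.

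Present that argument as the proof rather than as a fallback. The proposed factorization of the closing residuals through $\MStabTor$ is not supported by anything available: Lemma~\ref{lemma:stab_tor} only gives $\dim\ker\MStabHol\le\dim\ker\MStabTor$, and $\MStabTor{\bf c}=-{\bf b}$ controls only the modes $-1,\dots,N-1$, not periodicity.

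Also drop the remark about adding a multiple of the nearly-kernel direction to ${\bf c}_n$; no adjustment is needed or even possible. Since $M(\la_n)$ is invertible, ${\bf c}_n$ is uniquely determined, and $u_1(\la_n)^\ast{\bf b}_n=0$ already forces $v_1(\la_n)^\ast{\bf c}_n=-\sigma_1(\la_n)^{-1}u_1(\la_n)^\ast{\bf b}_n=0$.
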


\begin{proof}
    By Corollary~\ref{coro:isozeros}, we know that there exists a neighborhood $V^\laap$ of $\laap$ such that $\det(\MStabTor)$ never vanishes on $V^\laap-\{\laap\}$. Moreover, from  Corollary~\ref{coro:P_cont_la} and Lemma~\ref{lemma:Proots2}, we can choose this neighborhood small enough so that $\Pcal_\la[\Fvrmap^+,\Fvrmap^-]$ is continuous and Condition~\ref{cond:Proots} is satisfied for $\la\in V^\laap$.

    Fix $\la\in V^\laap-\{\laap\}$ and apply Corollary~\ref{coro:stab_tor} to  $\Pcal_\la[\Fvrmap^+,\Fvrmap^-](G^+,G^-)$ to obtain $(F_\la^+,F_\la^-)\in\XHholo\oplus\XHholo$, with derivatives continuous on $\overline{\D}$, such that \eqref{eqn:stab_degen} holds.

    We have that
    \begin{subequations}\label{system:stab:FtiG}
        \begin{empheq}[left=\empheqlbrace]{align}
            F_{\la}^+|_\T(x)
            &= 
            F_{\la}^+|_\T(0)  \Pi^+_{\la}(x)
            +
            J^+_{\la}\ti{G}_{\la}^+|_\T(x)
            \nonumber
            \\&\qquad
            +
            \sum_{k=1}^N (\ha{F_{\la}^+|_\T})_{k-1}  h_{k}^{-J^+_{\la}}(x)
            +\sum_{k=1}^N (\ha{F_{\la}^-|_\T})_{k-1}  h_{k}^{+J^+_{\la}}(x),
        \label{system:stab:FtiG:plu}
        \\
            F_{\la}^-|_\T(x)
            &= 
            F_{\la}^-|_\T(0)  \Pi_{\la}^-(x)
            -
            J_{\la}^-\ti{G}_{\la}^-|_\T(x)
            \nonumber
            \\&\qquad
            +\sum_{k=1}^N (\ha{F_{\la}^+|_\T})_{k-1}  h_{k}^{+J^-_{\la}}(x)
            +\sum_{k=1}^N (\ha{F_{\la}^-|_\T})_{k-1}  h_{k}^{-J^-_{\la}}(x),
        \label{system:stab:FtiG:min}
        \end{empheq}
    \end{subequations}
    we are emphasizing the quantities that depend on $\la$ and $(\ti{G}^+_\la,\ti{G}^-_\la)$ is given by
    \begin{align*}
        (\ti{G}^+_\la,\ti{G}^-_\la):=
        \Pcal_\la[\Fvrmap^+,\Fvrmap^-](G^+,G^-).
    \end{align*}
    
    Projecting both equations onto $e^{ikx}$, $k=-1,\ldots,N-1$, yields
    \begin{align}\label{eqn:rest_inv_aux}
        \MStabTor(\la) 
        {\bf c}(F^+_{\la},F^-_{\la})+
        {\bf b}_\la
        (\ti{G}^+_\la,\ti{G}^-_\la)=0.
    \end{align}
    Moreover, by \eqref{gauge_image} we know that
    \begin{align*}
        \PUl{\la} {\bf b}_\la(\ti{G}^+_\la,\ti{G}^-_\la)
        =0.
    \end{align*}
    Since $\det\big(\MStabTor(\la)\big)\neq 0$, we know that ${\bf c}(F^+_{\la},F^-_{\la})$ is the only vector satisfying \eqref{eqn:rest_inv_aux}.
    
    By Lemma~\ref{lemma:svd:stable}, with $M=\MStabTor$, we know that 
    \begin{align*}
        {\bf c}(\la):=-M^\dagger_{\rm reg}(\la)  {\bf b}_\la(\ti{G}^+_\la,\ti{G}^-_\la)
    \end{align*}
    is continuous in $\la$. Moreover,
    \begin{align*}
        \PV {\bf c}(\la)=0 \qquad \text{and}\qquad 
        \MStabTor {\bf c}(\la) = - {\bf b}_\la(\ti{G}^+_\la,\ti{G}^-_\la).
    \end{align*}

    Consequently, we have that 
    \begin{align*}
        \mathbf{c}(F^+_{\la},F^-_{\la})={\bf c}(\la).
    \end{align*}

    Therefore, $(F_{\la}^+|_\T,F_{\la}^-|_\T)$ is continuous in $\la$ for all $\la \in V^{\laap}-\{\laap\}$, since \eqref{system:stab:FtiG} allows us to express these functions as a sum of continuous terms. By the dominated convergence theorem, for every $k<0$ we obtain:
    \begin{align*}
        \frac{1}{2\pi}
        \intpi
        F_{\laap}^\pm|_\T(x) e^{-ikx}\dx
        =
        \frac{1}{2\pi}
        \intpi
        \lim_{\la\to\laap}
        F_{\la}^\pm|_\T(x) e^{-ikx}\dx
        =
        \lim_{\la\to\laap}
        \frac{1}{2\pi}
        \intpi
        F_{\la}^\pm|_\T(x) e^{-ikx}\dx=0.
    \end{align*}
    Hence, the functions $F_{\la}^+|_\T$ and $F_{\la}^-|_\T$ admit a holomorphic extension to $\D$ even for $\la=\laap$.

    Finally, regarding uniqueness, assume that $(F_{\laap}^+,F_{\laap}^-)\in\XHholo\oplus\XHholo$ satisfies
    \begin{align*}
        \LOpStabHol{\laap}(F_{\laap}^+,F_{\laap}^-)=0,
        \qquad
        \big(\mathbf{c}(F_{\laap}^+,F_{\laap}^-),v_1(\laap)\big)=0.
    \end{align*}
    Projecting the corresponding identities as above yields
    \begin{align*}
        \MStabTor(\laap) \mathbf{c}(F_{\laap}^+,F_{\laap}^-)=0.
    \end{align*}
    By Lemma~\ref{lemma:stab_singval}, we know that
    \begin{align*}
        \sigma_2(\MStabTor(\laap))\geq \underline{\sigma}_2>0,
    \end{align*}
    which implies that $\dim\big(\ker(\MStabTor(\laap))\big)\leq  1$. Hence
    \begin{align*}
        {\bf c}(F_{\laap}^+,F_{\laap}^-)=\al v_1(\laap)
    \end{align*}
    for some $\al\in\C$. Taking the inner product with $v_1(\laap)$ and using the constraint 
    \begin{align*}
        \PVl{\laap}{\bf c}(F_{\laap}^+,F_{\laap}^-)=0
    \end{align*}
    yields $\al=0$,
    so ${\bf c}(F_{\laap}^+,F_{\laap}^-)=0$.
\end{proof}

\subsubsection{Estimates of the explicit functions and operators}
\label{sec:stab:bounds}

The next step is to find an explicit estimate for the restricted inverse. To this end, we establish several bounds in Lemmas~\ref{lemma:kappa3}--\ref{lemma:J_L2H1_stab}.

\begin{lemma}\label{lemma:kappa3}
    Let $\kappa_3^+$ and $\kappa_3^-$ be the functions given by
    \begin{align*}
        \kappa_3^\pm(x):=
        e^{\pm2\Re(\laap)\int_0^x \beta(y)\dy}
        \int_0^x
            \beta(y)^2 
            e^{\mp 2\Re(\laap)\int_0^y \beta(s)\,ds}        
    \dy.
    \end{align*}

    Recall \eqref{def:Piplusminu}, so that
    \begin{align*}
        |\Pi^\pm(x)|=
        e^{\pm\Re(\laap)\int_0^x \beta(y)\dy}.
    \end{align*}

    Then
    \begin{align*}
        \norm{\kappa^\pm_3}_{L^1([0,\pi])} <
        \getvar{kappa3}
        .
    \end{align*}
\end{lemma}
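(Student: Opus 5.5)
This bound is of the same kind as Lemmas~\ref{lemma:kappa1} and \ref{lemma:kappa2}, and I would prove it with the same computer-assisted strategy: obtain a rigorous enclosure of $\beta$ and its primitive on $[0,\pi]$, and then integrate the resulting explicit functions with interval arithmetic. First I would reduce the integrand to functions of a single variable. Write $\rho:=2\Re(\laap)>0$ and $B(x):=\int_0^x\beta(y)\dy$. Then
\begin{align*}
    \kappa_3^\pm(x)=\int_0^x \beta(y)^2 e^{\pm\rho(B(x)-B(y))}\dy .
\end{align*}
Because $\beta>0$ by Lemma~\ref{lemma:betamax}, $B$ is increasing, so $B(x)-B(y)\ge 0$ for $y\le x$. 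Two consequences follow. For the minus sign, the exponential is bounded by $1$, so $\kappa_3^-\le\kappa_1$ pointwise and Lemma~\ref{lemma:kappa1} already gives $\norm{\kappa_3^-}_{L^1([0,\pi])}<\getvar{kappa1}<\getvar{kappa3}$. For the plus sign, the weight $e^{\rho(B(x)-B(y))}$ is at most $e^{\rho B(\pi)}$, which could be large. A crude bound $\kappa_3^+\le e^{\rho B(x)}\kappa_1(x)$ might therefore lose too much, and I would keep the exact structure.

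For $\kappa_3^+$ I would use the ODE representation. The function $\kappa_3^+$ is the solution of
\begin{align*}
    \pa_x\kappa_3^+=\rho\beta\kappa_3^+ +\beta^2,\qquad \kappa_3^+(0)=0 .
\end{align*}
Following the approach of Appendix~\ref{sec:ode}, I would cover $[0,\pi]$ by small subintervals. On each subinterval I would enclose $\beta=1/(c+\twap)$ by a polynomial plus an interval remainder; this is possible because $\twap$ is an explicit trigonometric polynomial and $c+\twap$ is bounded below by $1/\getvar{beta_max}$. From that enclosure I would propagate rigorous enclosures of $B$, of the weight $e^{\rho B}$, and of the inner integral $\int_0^x\beta^2e^{-\rho B}$. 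Multiplying these and integrating the product over each subinterval then gives a certified upper bound for $\int_0^\pi\kappa_3^+$. Since $\laap$ is explicit, $\rho$ is an exact interval.

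An alternative that avoids nesting is Fubini:
\begin{align*}
    \norm{\kappa_3^+}_{L^1([0,\pi])}=\int_0^\pi \beta(y)^2e^{-\rho B(y)}\int_y^\pi e^{\rho B(x)}\dx\,\dy .
\end{align*}
This form only requires the cumulative integrals of $e^{\rho B}$ and one further quadrature. I would implement this version, because its monotone pieces make the error bookkeeping easier.

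I expect the main difficulty to be controlling enclosure widths near $x=0$. Figure~\ref{fig:twap} shows that $c+\twap$ comes close to zero at the crest, so $\beta$ reaches values near $\getvar{beta_max}$ and varies sharply there. To handle this I would place the subintervals adaptively, using a very fine mesh near the crest, and estimate derivatives of $\beta$ through Taylor models, so that wrapping effects stay below the margin separating the true value from $\getvar{kappa3}$.
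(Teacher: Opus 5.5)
Your proposal is correct, but for $\kappa_3^+$ it takes a much heavier route than the paper, prompted by a worry that does not materialize.

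The paper treats both signs with one uniform estimate. In your notation $\rho=2\Re(\laap)$ and $B(x)=\int_0^x\beta(y)\dy$, it uses $e^{\pm\rho(B(x)-B(y))}\le e^{\rho B(\pi)}=:I_\beta$ for $0\le y\le x\le\pi$. This gives $\kappa_3^\pm\le I_\beta\,\kappa_1$ pointwise, hence $\norm{\kappa_3^\pm}_{L^1([0,\pi])}\le I_\beta\norm{\kappa_1}_{L^1([0,\pi])}$. The only new rigorous computation is an enclosure of the single number $\int_0^\pi\beta(y)\dy$ by adaptive quadrature; Lemma~\ref{lemma:kappa1} does the rest.

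The constant in the statement is what this crude estimate produces. Indeed $\getvar{kappa3}/\getvar{kappa1}\approx 3.49$, which corresponds to $\int_0^\pi\beta\approx 4.39$. So the bound $\kappa_3^+\le e^{\rho B(x)}\kappa_1(x)$ that you set aside as possibly losing too much already suffices, since it is pointwise below the paper's bound.

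Your treatment of $\kappa_3^-$ via $\kappa_3^-\le\kappa_1$ is correct. It is sharper than the paper's bound for the minus sign and needs no computation.

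Your Fubini/ODE treatment of $\kappa_3^+$ is valid in principle and would give a smaller constant. That could tighten downstream constants such as the one in Lemma~\ref{lemma:J_L2L2_stab}. The cost is certifying the cumulative primitive $B$ and the nested integral $\int_y^\pi e^{\rho B(x)}\dx$. This doubly nested cumulative integral is considerably more delicate to enclose rigorously than the single quadrature the paper needs.
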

\proofcompassi

\begin{lemma}\label{lemma:kappa4}
    Let $\te=\frac{1}{3}.$ Given a function $\kappa:[0,\pi]\to\RR$, define the functions $\kappa_{4,\kappa}^+$ and $\kappa_{4,\kappa}^-$ as
    \begin{align*}
        \kappa_{4,\kappa}^\pm(x):=
        \big|
        (i\pm\laap)\beta(x)\mp i\te
        \big|^2 
        \kappa(x).
    \end{align*}

    Let $\kappa_3^+$ and $\kappa_3^-$ as in Lemma~\ref{lemma:kappa3}. Then the following estimates hold
    \begin{align*}
        \norm{
        \kappa_{4,\kappa_3^\pm}^+
        }_{L^1([0,\pi])}<
        \getvar{kappa4p}
        ,\qquad
        \norm{
        \kappa_{4,\kappa_3^\pm}^-
        }_{L^1([0,\pi])}<
        \getvar{kappa4m}
        .
    \end{align*}
\end{lemma}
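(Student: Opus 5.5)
The statement is a purely quantitative bound on four explicit scalar integrals, so I would prove it in the same computer-assisted way as Lemmas~\ref{lemma:kappa1}, \ref{lemma:kappa2} and \ref{lemma:kappa3}. The key structural observation is that every ingredient is an explicit function of $x\in[0,\pi]$. Indeed, $\beta=(c+\twap)^{-1}$ is built from the finite cosine sum $\twap$, $\laap$ and $\te=\tfrac13$ are fixed numbers, and $\kappa_3^\pm$ is given by nested integrals of $\beta$. Thus $\kappa_{4,\kappa_3^{\pm}}^{+}$ and $\kappa_{4,\kappa_3^{\pm}}^{-}$ are explicit nonnegative functions. Their $L^1([0,\pi])$ norms equal their integrals, and I would bound these from above with rigorous enclosures. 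First I expand the weight:
\begin{align*}
    \big|(i\pm\laap)\beta\mp i\te\big|^2
    = \big(\Re(\pm\laap)\beta\big)^2 + \big((1\pm\Im\laap)\beta\mp\te\big)^2,
\end{align*}
which is a polynomial in $\beta$ with real coefficients enclosed by intervals.

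The concrete steps are as follows. (i) Partition $[0,\pi]$ into many small subintervals. On each one, obtain a rigorous enclosure of $\beta$ by evaluating $c+\twap$ in interval arithmetic (or with a Taylor model), using Lemma~\ref{lemma:betamax} to guarantee that $c+\twap$ stays away from zero. (ii) Treat the auxiliary functions $B(x)=\int_0^x\beta$ and $\kappa_3^\pm$ as solutions of the ODE system $B'=\beta$, $(\kappa_3^\pm)'=\pm2\Re(\laap)\beta\,\kappa_3^\pm+\beta^2$, with $\kappa_3^\pm(0)=0$. Integrate this system with validated piecewise-polynomial enclosures and explicit error bounds, in the manner of Appendix~\ref{sec:ode} and the ODE lemmas listed in the introduction. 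This yields, on each subinterval, a rigorous upper bound for $0\le\kappa_3^\pm$. (iii) On each subinterval, bound the integral of $\kappa^{\pm}_{4,\kappa_3^{\pm}}$ by the length times the supremum of the product of the weight enclosure and the $\kappa_3^\pm$ enclosure. A higher-order rule with a remainder bound can be used if the sharper estimate is needed. (iv) Sum over the subintervals and check that the resulting bounds are below $524.203$ and $782.958$ respectively.

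I expect the main obstacle to be step (ii). The factor $e^{\pm2\Re(\laap)\int\beta}$ can grow sizeably where $\beta$ is large, because $c+\twap$ is small near the crest, so $\beta$ peaks sharply there (Figure~\ref{fig:twap}). Naive interval enclosures of the nested integrals would blow up through the wrapping effect. I would first try to integrate the linear ODE for $\kappa_3^\pm$ directly, using the variation-of-constants formula on each subinterval and propagating enclosures of the initial values. I would also refine the mesh adaptively near $x=0$, where $\beta$ is largest. The remaining steps are routine validated quadrature, and the proof would then read simply as a reference to the supplementary code, as for the other lemmas of this type.
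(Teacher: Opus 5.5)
Your proposal is correct, but it takes a heavier route than the paper. The paper never computes $\kappa_3^\pm$ itself. Write $\kappa_3^\pm(x)=\int_0^x\beta(y)^2e^{\pm2\Re(\laap)\int_y^x\beta}\,dy$. Since $\beta>0$ and $\Re(\laap)>0$, the exponential factor is at most $I_\beta:=e^{2\Re(\laap)\int_0^\pi\beta}$ for $0\le y\le x\le\pi$. Hence $0\le\kappa_3^\pm\le I_\beta\kappa_1$ pointwise; this is the same bound the paper uses to prove Lemma~\ref{lemma:kappa3}. Consequently
\[
\norm{\kappa^{\pm}_{4,\kappa_3^{+}}}_{L^1([0,\pi])},\ \norm{\kappa^{\pm}_{4,\kappa_3^{-}}}_{L^1([0,\pi])}\le I_\beta\int_0^\pi\int_0^x\big|(i\pm\laap)\beta(x)\mp i\te\big|^2\beta(y)^2\,dy\,dx .
\]
The only rigorous computations left are one adaptive quadrature for $\int_0^\pi\beta$ and one triangular double integral of explicit functions. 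The latter uses the same routine as Lemma~\ref{lemma:kappa2}.

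This sidesteps exactly the obstacle you flag in step (ii). There is no validated integration of $(\kappa_3^\pm)'=\pm2\Re(\laap)\beta\kappa_3^\pm+\beta^2$, no propagation of exponential factors, and no wrapping concern.

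What your route buys is sharpness. Validated ODE enclosures would give essentially the true values of $\kappa_3^\pm$. The paper's estimate overshoots by up to a factor $I_\beta$ for $\kappa_3^+$, and by at least $I_\beta$ for $\kappa_3^-$, since $\kappa_3^-\le\kappa_1$ already. Your bounds would therefore land comfortably below $524.203$ and $782.958$. That extra precision is not needed here, because the stated constants were produced from the cruder bound, and the cruder argument is much cheaper to certify.
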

\proofcompassi

\begin{lemma}\label{lemma:J_L2L2_stab}
    Let $g\in\XLIpi$, $\la=\laap$, and $\te=\frac{1}{3}$. Then
    \begin{align*}
        \norm{J^\pm g}_\XLIpi^2 <
        \getvar{kappa3}\,
        \norm{g}_\XLIpi^2.
    \end{align*}

    Recall that $J^\pm$ is defined in \eqref{def:Jplusminu}.
\end{lemma}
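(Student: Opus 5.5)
The estimate follows the proof of Lemma~\ref{lemma:J_L2L2_exis}, with $\PiExis$ replaced by $\Pi^\pm$; the one new feature is that $|\Pi^\pm|$ is no longer identically one. By density it suffices to take $g\in C(\T)$. From \eqref{def:Piplusminu}, since $\beta$ is real,
\begin{align*}
    |\Pi^\pm(x)| = e^{\pm\Re(\laap)\int_0^x\beta(y)\dy}.
\end{align*}
Therefore
\begin{align*}
    |J^\pm g(x)|\leq \int_0^x \frac{|\Pi^\pm(x)|}{|\Pi^\pm(y)|}\,\beta(y)\,|g(y)|\dy
\end{align*}
for $x\geq 0$, and the analogous bound with $\int_x^0$ holds for $x<0$. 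Setting $w^\pm(x,y):=e^{\pm\Re(\laap)\int_y^x\beta}$, the ratio of moduli is exactly $w^\pm(x,y)$.

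On $[0,\pi]$ I apply Cauchy--Schwarz with the weight split as $\beta(y)w^\pm(x,y)\cdot|g(y)|$:
\begin{align*}
    |J^\pm g(x)|^2\leq \Big(\int_0^x \beta(y)^2 w^\pm(x,y)^2\dy\Big)\Big(\int_0^x|g(y)|^2\dy\Big).
\end{align*}
The first factor equals
\begin{align*}
    e^{\pm2\Re(\laap)\int_0^x\beta}\int_0^x\beta(y)^2e^{\mp2\Re(\laap)\int_0^y\beta}\dy=\kappa_3^\pm(x),
\end{align*}
which is exactly the function of Lemma~\ref{lemma:kappa3}. Fubini then gives
\begin{align*}
    \int_0^\pi|J^\pm g|^2\dx\leq\int_0^\pi|g(y)|^2\int_y^\pi\kappa_3^\pm(x)\dx\dy\leq\norm{\kappa_3^\pm}_{L^1([0,\pi])}\norm{g}_{L^2([0,\pi])}^2,
\end{align*}
where $\kappa_3^\pm\geq0$ justifies the last inequality.

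The main obstacle is the half-interval $[-\pi,0]$, because the weight is not symmetric in $x$. Since $\beta$ is even, $\int_0^{-s}\beta=-\int_0^s\beta$. Substituting $x=-s$ and $y=-t$ turns the integral over $[x,0]$ into one over $[0,s]$, with the sign of the exponent flipped. The resulting weight is $\kappa_3^\mp(s)$, not $\kappa_3^\pm(s)$. This gives
\begin{align*}
    \int_{-\pi}^0|J^\pm g|^2\dx\leq \norm{\kappa_3^\mp}_{L^1([0,\pi])}\norm{g}_{L^2([-\pi,0])}^2.
\end{align*}
Lemma~\ref{lemma:kappa3} bounds both $\norm{\kappa_3^+}_{L^1([0,\pi])}$ and $\norm{\kappa_3^-}_{L^1([0,\pi])}$ by $\getvar{kappa3}$. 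Adding the two halves therefore gives $\norm{J^\pm g}_\XLIpi^2<\getvar{kappa3}\norm{g}_\XLIpi^2$; the inequality is strict because the bounds in Lemma~\ref{lemma:kappa3} are strict, and this handles $g\neq0$.

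Note that $\te$ plays no role: it enters $\Pi^\pm$ only through a unimodular factor $e^{-i\te^\pm x}$, which drops out when taking moduli.
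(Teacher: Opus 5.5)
Your proof is correct and follows the paper's argument essentially step by step. On $[0,\pi]$ you apply Cauchy--Schwarz with the weight $|\Pi^\pm(x)|/|\Pi^\pm(y)|$ to produce $\kappa_3^\pm$ and then use Fubini; on $[-\pi,0]$ the evenness of $\beta$ exchanges $\kappa_3^\pm$ for $\kappa_3^\mp$, and both halves are controlled by the common bound of Lemma~\ref{lemma:kappa3}. Your remark that the strict inequality needs $g\neq 0$ is a fair small correction to the statement as written.
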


\begin{proof}
    We first note that
    \begin{align*}
        |\Pi^\pm(x)|=
        \exp\!\left(
            \pm\Re(\laap) 
            \int_0^x\beta(y)\dy
        \right)
    \end{align*}

    It follows that, on $[0,\pi]$, 
    \begin{align}
        \norm{J^+ g}_{L^2([0,\pi])}^2
        &=
        \int_0^\pi
        |\Pi^+(x)|^2 
        \left|\int_0^x\frac{\beta(y)}{\Pi^+(y)} g(y)
        \dy\right|^2
        \dx
        \nonumber
        \\ 
        &\leq 
        \int_0^\pi
        |\Pi^+(x)|^2 
        \int_0^x\frac{\beta(y)^2}{|\Pi^+(y)|^2} 
        \dy
        \int_0^x|g(y)|^2
        \dy
        \dx
        =
        \int_0^\pi
        \kappa^+_3(x) 
        \norm{g}_{L^2([0,x])}^2
        \dx.
        \label{eqn:J_L2L2_stab:aux1}
    \end{align}
    where $\kappa_3^+$ is defined in Lemma~\ref{lemma:kappa3}.

    Hence, applying Fubini's theorem to \eqref{eqn:J_L2L2_stab:aux1}, we obtain
    \begin{align*}
        \norm{J^+ g}_{L^2([0,\pi])}^2
        &\leq
        \int_{0}^\pi 
        |g(y)|^2
        \int_y^{\pi} 
        \kappa_3^+(x) 
        \dx
        \dy
        \\
        &\leq
        \norm{\kappa_3^+}_{L^1([0,\pi])} 
        \norm{g}_{L^2([0,\pi])}^2.
    \end{align*}

    An analogous argument for $x\in[-\pi,0]$ yields
    \begin{align*}
        \norm{J^+ g}_{L^2([-\pi,0])}^2
        \leq
        \norm{\kappa_3^-}_{L^1([0,\pi])} 
        \norm{g}_{L^2([-\pi,0])}^2.
    \end{align*}

    Consequently,
    \begin{align*}
        \norm{J^+ g}_\XLIpi ^2
        &\leq 
        \norm{\kappa_3^+}_{L^1([0,\pi])} 
        \norm{g}_{L^2([0,\pi])}^2
        +
        \norm{\kappa_3^-}_{L^1([0,\pi])} 
        \norm{g}_{L^2([-\pi,0])}^2\\
        &\leq 
        \max\big\{\norm{\kappa_3^+}_{L^1([0,\pi])},
        \norm{\kappa_3^-}_{L^1([0,\pi])}\big\} 
        \norm{g}_\XLIpi^2.
    \end{align*}

    Finally, for $J^-$ the roles of $\norm{\kappa_3^+}_{L^1([0,\pi])}$ and $\norm{\kappa_3^-}_{L^1([0,\pi])}$ are exchanged. Hence, we obtain the same estimate.
\end{proof}

\begin{lemma}\label{lemma:J_L2H1_stab}
    Let $g\in\XLIpi$, $\la=\laap$, and $\te=\frac{1}{3}$. Then
    \begin{align*}
        \norm{\pa_x J^+g}_\XLIpi
        &<
        \getvar{J1p}\,
        \norm{g}_\XLIpi,\\
        \norm{\pa_x J^-g + i J^-g}_\XLIpi
        &<
        \getvar{J1m}\,
        \norm{g}_\XLIpi.
    \end{align*}
\end{lemma}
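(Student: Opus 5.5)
The plan is to mirror the proof of Lemma~\ref{lemma:J_L2L2_stab}, now differentiating once. Since $J^\pm g$ solves a first-order ODE, differentiation produces no derivative of $g$. By \eqref{def:Piplusminu}, $\pa_x\Pi^\pm=\big((i\pm\laap)\beta-i\te^\pm\big)\Pi^\pm$. Hence, from \eqref{def:Jplusminu},
\begin{align*}
    \pa_x J^\pm g=\beta g+\big((i\pm\laap)\beta-i\te^\pm\big)J^\pm g .
\end{align*}
For $J^-$ we have $\te^-=1-\te$. Adding $iJ^-g$ turns the coefficient into $(i-\laap)\beta+i-i(1-\te)=(i-\laap)\beta+i\te$. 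For $J^+$ the coefficient is $(i+\laap)\beta-i\te$. In both cases the multiplier is exactly the factor $(i\pm\laap)\beta\mp i\te$ that appears in $\kappa^\pm_{4,\kappa}$ of Lemma~\ref{lemma:kappa4}. This explains both the shift $+iJ^-g$ in the statement and the form of the weights $\kappa_4$.

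Next I apply the triangle inequality:
\begin{align*}
    \norm{\pa_xJ^+g}_\XLIpi\le\norm{\beta g}_\XLIpi+\norm{\big((i+\laap)\beta-i\te\big)J^+g}_\XLIpi ,
\end{align*}
and analogously for $J^-$. The first term is bounded by $\norm{\beta}_\XLinf\norm{g}_\XLIpi$, with $\norm{\beta}_\XLinf<\getvar{beta_max}$ by Lemma~\ref{lemma:betamax}.

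For the second term I split $[-\pi,\pi]$ into $[0,\pi]$ and $[-\pi,0]$, as in Lemma~\ref{lemma:J_L2L2_stab}.
\begin{itemize}
    \item On $[0,\pi]$, Cauchy--Schwarz gives
    \begin{align*}
        |J^+g(x)|^2\le\kappa^+_3(x)\norm{g}_{L^2([0,x])}^2 .
    \end{align*}
    Multiplying by $|(i+\laap)\beta-i\te|^2$ and using Fubini, this piece is bounded by $\norm{\kappa^+_{4,\kappa^+_3}}_{L^1([0,\pi])}\norm{g}^2_{L^2([0,\pi])}$.
    \item On $[-\pi,0]$, I use the evenness of $\beta$ and change variables $x\mapsto-x$. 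Then $|\Pi^+(-x)|=|\Pi^-(x)|$, which produces the weight $\kappa^-_3$. The multiplier keeps the same modulus, since the relevant function evaluated at $-x$ involves $\beta(x)$. This piece is therefore bounded by $\norm{\kappa^+_{4,\kappa^-_3}}_{L^1}\norm{g}^2_{L^2([-\pi,0])}$.
\end{itemize}
Taking the maximum of the two constants gives $\big(\max_\pm\norm{\kappa^+_{4,\kappa^\pm_3}}_{L^1}\big)^{1/2}\norm{g}_\XLIpi<\getvar{kappa4p}^{1/2}\norm{g}_\XLIpi$. The $J^-$ case is identical, with the roles of $\kappa_3^\pm$ swapped and $\kappa^-_4$ in place of $\kappa^+_4$, giving $\getvar{kappa4m}^{1/2}$. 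Summing, the constant is about $19.91+22.9\approx42.8$ for $J^+$ and $19.91+27.98\approx47.9$ for $J^-$. These match $\getvar{J1p}$ and $\getvar{J1m}$, confirming the decomposition.

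The step I expect to need the most care is the $[-\pi,0]$ piece. There I must check precisely that the reflection sends the exponential weight of $\Pi^\pm$ to that of $\Pi^\mp$, and that the modulus of the multiplier $|(i\pm\laap)\beta\mp i\te|$ is the same function of $|x|$ on both halves. Only then are the two $L^1$ bounds in Lemma~\ref{lemma:kappa4} (covering both $\kappa_3^+$ and $\kappa_3^-$) exactly what is needed. Finally, density of continuous functions in $\XLIpi$ extends the estimate from continuous $g$ to all $g\in\XLIpi$.
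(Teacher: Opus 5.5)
Your proposal is correct and follows the paper's proof essentially line by line. It uses the same identity $\pa_x J^\pm g=\beta g+\big((i\pm\laap)\beta-i\te^\pm\big)J^\pm g$, where the $+iJ^-g$ shift turns the multiplier into $(i-\laap)\beta+i\te$. It uses the same triangle-inequality split into $\norm{\beta g}$ plus the multiplier term. It also uses the same half-interval Cauchy--Schwarz/Fubini bounds by $\kappa^\pm_{4,\kappa_3^\pm}$ from Lemma~\ref{lemma:kappa4}, combined through a maximum. Your explicit reflection check on $[-\pi,0]$, where evenness of $\beta$ turns $\kappa_3^+$ into $\kappa_3^-$ and vice versa, and your numerical check $19.9056+\sqrt{524.203}$ and $19.9056+\sqrt{782.958}$ are exactly what the paper leaves implicit.
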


\begin{proof}
    First assume that $g\in C^0(\T)$, so that $\pa_x J^\pm g$ is well defined pointwise.
    Since continuous functions are dense in $\XLIpi$, the estimate extends to all $g\in\XLIpi$.
    
    Differentiating $J^\pm g$ yields
    \begin{align*}
        \pa_x J^\pm g &= 
        \big(\big(i\pm\laap)\beta-i\te^\pm\big)
        J^\pm g+
        \beta g.
    \end{align*}

    Define 
    \begin{align*}
        I_{1^+}&:=
        \norm{
        ((i+\laap)\beta-i\te^+) 
        J^+ g
        }_\XLIpi,\qquad
        I_2:=
        \norm{
        \beta  g
        }_\XLIpi,\\
        I_{1^-}&:=
        \norm{
        ((i-\laap)\beta+i\te^+) 
        J^- g
        }_\XLIpi.
    \end{align*}

    Thus, we estimate
    \begin{align*}
        \norm{\pa_x J^+g}_\XLIpi &\leq 
        I_{1^+} + I_2,\\
        \norm{\pa_x J^-g + i J^-g}_\XLIpi &\leq 
        I_{1^-} + I_2.  
     \end{align*}

    It is straightforward that
    \begin{align}\label{eqn:J_L2H1_stab:auxI2}
        I_2 \leq 
        \norm{\beta}_{\XLinfinterval} 
        \norm{g}_\XLIpi.
    \end{align}
    Lemma~\ref{lemma:betamax} provides an estimate for $\norm{\beta}_{\XLinfinterval}$.

    To estimate $I_{1^+}$, recall \eqref{eqn:J_L2L2_stab:aux1},
    \begin{align}
        \norm{
        ((i+\laap)\beta-i\te^+) 
        J^+ g
        }_{L^2([0,\pi])}^2&=
        \int_0^\pi 
        \big|
        ((i+\laap)\beta(x)-i\te^+) 
        J^+ g(x)
        \big|^2
        \dx
        \nonumber
        \\
        &\leq 
        \int_0^\pi
        \underbrace{
        \big|
        (i+\laap)\beta(x)-i\te^+
        \big|^2 
        \kappa_3^+(x)
        }_{
        \kappa_{4,\kappa_3^+}^+(x)
        } 
        \norm{g}_{L^2([0,x])}^2
        \dx
        \nonumber
        \\
        &\leq 
        \int_0^\pi
        |g(y)|^2
        \int_y^{\pi}
        \kappa_{4,\kappa_3^+}^+(x) 
        \dx\dy
        \nonumber
        \\
        &\leq 
        \norm{g}_{L^2([0,\pi])}^2 
        \norm{
        \kappa_{4,\kappa_3^+}^+
        }_{L^1([0,\pi])}.
        \label{eqn:J_L2H1_stab:auxI11}
    \end{align}
    By Lemma~\ref{lemma:kappa4}, we have an explicit bound for \eqref{eqn:J_L2H1_stab:auxI11}.

    Now, for $x\in[-\pi,0]$,
    \begin{align}
        \norm{
        ((i+\laap)\beta-i\te^+) 
        J^+ g
        }_{L^2([-\pi,0])}^2
        \leq 
        \norm{g}_{L^2([-\pi,0])}^2 
        \norm{\kappa_{4,\kappa_3^-}^+}_{L^1([0,\pi])}.
        \label{eqn:J_L2H1_stab:auxI12}
    \end{align}

    Therefore, we get from \eqref{eqn:J_L2H1_stab:auxI11} and \eqref{eqn:J_L2H1_stab:auxI12} that
    \begin{align*}
        I_{1^+}^2\leq 
        \max\big\{
        \norm{
        \kappa_{4,\kappa_3^+}^+
        }_{L^1([0,\pi])},
        \norm{
        \kappa_{4,\kappa_3^-}^+
        }_{L^1([0,\pi])}
        \big\} 
        \norm{g}_\XLIpi^2.
    \end{align*}

    Combining these estimates yields the first bound in the statement. The second one follows in the same way
    \begin{align*}
        I_{1^-}^2 \leq 
        \max\big\{\norm{
        \kappa_{4,\kappa_3^-}^-
        }_{L^1([0,\pi])}, 
        \norm{
        \kappa_{4,\kappa_3^+}^-
        }_{L^1([0,\pi])}
        \big\} 
        \norm{g}_\XLIpi^2.
    \end{align*}
\end{proof}

\begin{lemma}\label{lemma:hk:stab}
The following estimates hold for $\la=\laap$ and $\te=\frac{1}{3}$,
    \begin{align*}
        \norm{\Pi^+}_\XLIpi^2+
        \sum_{k=1}^N
        \norm{J^+ g^-_{k,\te^-}}
        _\XLIpi^2+
        \norm{J^+ g^+_{k,\te^+}}
        _\XLIpi^2
        &\leq 
        \getvar{hkL2p}
        ,\\
        \norm{\Pi^-}_\XLIpi^2+
        \sum_{k=1}^N
        \norm{J^- g^-_{k,\te^+}}
        _\XLIpi^2+
        \norm{J^- g^+_{k,\te^-}}
        _\XLIpi^2&\leq
        \getvar{hkL2m},
        \\
        \norm{\pa_x \Pi^+}_\XLIpi^2+
        \sum_{k=1}^N
        \norm{\pa_x J^+ g^-_{k,\te^-}}
        _\XLIpi^2+
        \norm{\pa_x J^+ g^+_{k,\te^+}}
        _\XLIpi^2
        &\leq
        \getvar{hkH1p},\\
        \norm{\ti{\pa_x} \Pi^-}_\XLIpi^2+
        \sum_{k=1}^N
        \norm{
            \ti{\pa_x} J^- g^-_{k,\te^+} 
        }
        _\XLIpi^2+
        \norm{
        \ti{\pa_x} J^- g^+_{k,\te^-}
        }
        _\XLIpi^2&\leq 
        \getvar{hkH1m},
    \end{align*}
    where $\ti{\pa_x}f := \pa_x f + i f$.
\end{lemma}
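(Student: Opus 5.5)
This bound is a computer-assisted estimate of explicit functions. I will prove it the same way as Lemma~\ref{lemma:hk_funcs_exis}: reduce each quantity to rigorously enclosed solutions of explicit linear ODEs on $[-\pi,\pi]$, and sum the enclosed norms with interval arithmetic.

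\textbf{Step 1: reduce every term to first-order linear ODEs.} The term $\Pi^\pm$ is an explicit exponential of $\int_0^x\beta$. Each $J^\pm g$ is the unique solution with $J^\pm g(0)=0$ of
\begin{align*}
\pa_x u=\big((i\pm\laap)\beta-i\te^\pm\big)u+\beta g ,
\end{align*}
where $g=g^\pm_{k,\te}$ is one of the trigonometric polynomials in \eqref{def:gkplusminu}. The derivative norms therefore need no extra numerical differentiation. Indeed $\pa_x J^\pm g$ equals $((i\pm\laap)\beta-i\te^\pm)J^\pm g+\beta g$, so it is explicit once $J^\pm g$ is enclosed. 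Similarly $\ti{\pa_x}$ just adds $iJ^-g$, and $\pa_x\Pi^\pm$ is an explicit multiple of $\Pi^\pm$.

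\textbf{Step 2: avoid solving $2N+1$ separate ODEs.} Linearity helps here. Write $g^+_{k,\te}=(k-\te)\tfrac i2\sum_{m}\twap_{m+k}e^{imx}$, and similarly for $g^-_{k,\te}$. Then every $J^\pm g^\pm_{k,\te}$ is a fixed linear combination of the basis solutions $J^\pm e^{imx}$, $m=-N,\dots,N-1$. In fact these can also be obtained from a single $x$-dependent quadrature. We have
\begin{align*}
J^\pm g(x)=\Pi^\pm(x)\int_0^x \beta(y)\Pi^\pm(y)^{-1}g(y)\,dy ,
\end{align*}
and the sums over $k$ of these combinations can be organized as matrix–vector products. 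I will evaluate them on a fine grid, using the piecewise polynomial enclosures of $\beta$ and of $\int_0^x\beta$ from Appendix~\ref{sec:ode}.

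\textbf{Step 3: enclose the ODE solutions and integrate.} On each subinterval I will use validated Taylor (piecewise polynomial) enclosures of the ODE solution with rigorous remainder bounds, in the style of Lemmas~\ref{lemma:ode_stab:regu} and~\ref{lemma:ode_stab:sing}. I will then integrate $|u|^2$ exactly on each piece and sum the results in interval arithmetic to obtain the four stated constants.

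\textbf{Main obstacle.} I expect the main difficulty to be error growth. The factor $|\Pi^\pm|=e^{\pm\Re(\laap)\int\beta}$ can grow, since $\beta$ reaches values near $\getvar{beta_max}$ near the crest. In addition, with $N=1000$ modes, the $\sum_k$ accumulates up to $2N$ enclosure errors. To handle this I will do two things. First, I will compute the sums at the level of the combined functions rather than term by term. Second, I will refine the mesh where $\beta$ is large, so that the total wrapping error stays well below the margins in the constants.
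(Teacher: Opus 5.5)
Your plan is a workable computer-assisted route, but it differs from the paper's and redoes work the paper already has. The paper integrates no new ODEs for this lemma. It reuses the piecewise-polynomial approximations $h_k^{\rm ap}$ of the functions in \eqref{def:hkstabsorted} and the residual enclosures certified in Lemma~\ref{lemma:ode_stab:sing}, which were needed anyway to build $\MStabTor(\laap)$. Since $\tau_k:=h_k-h_k^{\rm ap}=-J^\pm\xi_k$, the paper gets two bounds by constants times $\norm{\xi_k}_\XLIpi$:
\begin{itemize}
\item Lemma~\ref{lemma:J_L2L2_stab} controls $\norm{\tau_k}_\XLIpi$;
\item Lemma~\ref{lemma:J_L2H1_stab} controls $\norm{\pa_x\tau_k}_\XLIpi$ (respectively $\norm{\ti{\pa_x}\tau_k}_\XLIpi$).
\end{itemize}
The norms of $h_k^{\rm ap}$, $\pa_x h_k^{\rm ap}$ and $\ti{\pa_x}h_k^{\rm ap}$ are computed directly from the polynomial data. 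The four constants then come from $\sum_k\big(\norm{h_k^{\rm ap}}_\XLIpi+\norm{\tau_k}_\XLIpi\big)^2$ and its derivative analogues. You instead re-solve all $4N+2$ equations with pointwise validated enclosures and read the derivatives off the ODE. That gives $L^\infty$ control and avoids Lemma~\ref{lemma:J_L2H1_stab}. The cost is a second full validated integration of about $4N$ oscillatory equations with $\beta$ close to $20$, where an a posteriori $L^2$ bound on already stored approximations is enough.

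Several of your pointers are also inaccurate.
\begin{itemize}
\item Lemma~\ref{lemma:hk_funcs_exis} is not proved by integrating ODEs at all. It applies the operator bounds of Lemmas~\ref{lemma:betamax}, \ref{lemma:betamod_L2} and \ref{lemma:kappa2} directly to the explicit $g_k$.
\item Appendix~\ref{sec:ode} contains no piecewise-polynomial enclosures of $\beta$ or $\int_0^x\beta$.
\item Lemmas~\ref{lemma:ode_stab:regu} and \ref{lemma:ode_stab:sing} are not Taylor-remainder enclosures. They are $L^2$ a posteriori bounds obtained through $\tau_k=-J^\pm\xi_k$.
\end{itemize}
If you build on those lemmas, your Step~1 identity still closes the derivative estimates. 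From
\[
\pa_x\tau_k=\big((i\pm\laap)\beta-i\te^\pm\big)\tau_k-\beta\,\xi_k
\]
and the analogous identity for $\ti{\pa_x}\tau_k$, you get
\[
\norm{\pa_x\tau_k}_\XLIpi\le\norm{(i\pm\laap)\beta-i\te^\pm}_{L^\infty}\norm{\tau_k}_\XLIpi+\norm{\beta}_{L^\infty}\norm{\xi_k}_\XLIpi .
\]
This is cruder than Lemma~\ref{lemma:J_L2H1_stab} but valid. Finally, the error-growth concern is mild. Both $|\Pi^\pm|$ and $|\Pi^\pm|^{-1}$ are bounded by $I_\beta^{1/2}$ on $[-\pi,\pi]$, with $I_\beta$ as in \eqref{eqn:kappa3:brute}. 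That factor is already built into the constants of Lemmas~\ref{lemma:J_L2L2_stab} and \ref{lemma:J_L2H1_stab}.
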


\proofcompassi

We introduce a new operator, which will be useful to specify the space in which we are working. For every $s\in\RR$, let $i_H:\XH{s}\to\XHhol{s}\oplus\XHhol{s}$ be the operator given by
\begin{align}\label{def:i_H}
    i_H(g) := \Big(
    \sum_{k\geq 0} \ha{g}_k z^k, 
    \sum_{k\geq 0} \ha{g}_{-k-1} z^k
    \Big).
\end{align}

Define $\mathcal{W}_{\laap}\subset \XHhol{0}\oplus\XHhol{0}$ by
\begin{align}\label{def:W_s}
    \mathcal{W}_\laap&:=
    \Pcal_\laap[\Fvrmap^+,\Fvrmap^-](\XHhol{0}\oplus\XHhol{0}).\\
    W_\laap&:=
    i_H^{-1}\mathcal{W}_{\laap}.\label{def:W_s2}
\end{align}

\begin{prop}\label{prop:stab:lininv}
    Let $g\in W_\laap$. Then there exists a unique $f\in\XH{1}$, with
    \begin{align*}
        \Pcal_{\mathcal{V}_1(\laap)}
        {\bf c}(i_Hf) = 0,
    \end{align*}
    such that
    \begin{align*}
        \LStabthe f - \laap f = g.
    \end{align*}

    In addition, the following estimate holds
    \begin{align*}
        \norm{f}_\XH{1} \leq 
        \getvar{Lstab_inv}
        \,\norm{g}_\XH{0}.
    \end{align*}
\end{prop}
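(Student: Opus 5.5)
Given $g\in W_\laap$, I will set $(G^+,G^-):=i_H g$. By definition of $W_\laap$ there is $(H^+,H^-)$ with $(G^+,G^-)=\Pcal_\laap[\Fvrmap^+,\Fvrmap^-](H^+,H^-)$. Since $\Pcal_\laap$ is a projection, $\Pcal_\laap(G^+,G^-)=(G^+,G^-)$, and by \eqref{gauge_image} we have $u_1^\ast{\bf b}(G^+,G^-)=0$. For $g$ smooth (and hence $G^\pm$ continuous on $\overline{\D}$), Lemma~\ref{lemma:stab_tor_degen} produces a unique pair $(F^+,F^-)$ with $\LOpStabHol{\laap}(F^+,F^-)=(G^+,G^-)$, $\MStabTor{\bf c}=-{\bf b}(G^+,G^-)$ and $\PVl{\laap}{\bf c}=0$. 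I then define $f$ through \eqref{eqn:recover_fstab}. Lemma~\ref{lemma:equiv_stab2} gives $\LStabthe f-\laap f=g$. For uniqueness, I apply Lemma~\ref{lemma:equiv_stab} to a difference of two solutions; the resulting holomorphic pair solves the homogeneous problem with $\PVl{\laap}{\bf c}=0$, and the uniqueness part of Lemma~\ref{lemma:stab_tor_degen} then applies. The extension to general $g\in W_\laap\cap\XH{0}$ will follow by density once the a priori estimate is available. For this I plan to approximate $g$ by smooth elements of $W_\laap$, namely $\Pcal_\laap$ applied to smooth approximants, which is legitimate because $\Pcal_\laap$ is bounded on $\XHhol{0}\oplus\XHhol{0}$.

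For the estimate, I will use the Duhamel representation \eqref{system:stab:FtiG} at $\la=\laap$:
\begin{align*}
F^+|_\T = c_{2N+1}\Pi^+ + J^+G^+|_\T + \sum_{k} c_k h_k^{-J^+} + \sum_k c_{N+k} h_k^{+J^+},
\end{align*}
together with the analogous formula for $F^-$. Since $\PV{\bf c}=0$ and $\MStabTor{\bf c}=-{\bf b}$, Lemma~\ref{lemma:svd:stable} gives ${\bf c}=-M^\dagger_{\rm reg}{\bf b}$, and therefore $\norm{{\bf c}}\le \underline{\sigma}_2^{-1/2}\norm{{\bf b}}$ with $\underline{\sigma}_2$ from Lemma~\ref{lemma:stab_singval}. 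The vector ${\bf b}$ consists of Fourier coefficients of $J^\pm G^\pm|_\T$, so Parseval and Lemma~\ref{lemma:J_L2L2_stab} bound $\norm{{\bf b}}^2$ by $\frac{1}{2\pi}\kappa_3(\norm{G^+}^2_\XLIpi+\norm{G^-}^2_\XLIpi)$, which is comparable to $\norm{g}_\XL^2$.

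Next I will estimate $\norm{F^+}_\XLIpi$ and $\norm{\pa_x F^+}_\XLIpi$ by the Cauchy--Schwarz argument from Section~\ref{sec:exis:laststep}. The finite sum is bounded by $\norm{{\bf c}}$ times the square roots of the constants in Lemma~\ref{lemma:hk:stab}, and the term $J^+G^+$ is controlled by Lemmas~\ref{lemma:J_L2L2_stab} and \ref{lemma:J_L2H1_stab}. For $F^-$ I will work with $\ti{\pa_x}=\pa_x+i$ rather than $\pa_x$, because that is the combination appearing in the second estimate of Lemma~\ref{lemma:J_L2H1_stab} and in Lemma~\ref{lemma:hk:stab}. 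This is consistent with the identity $\pa_x\big(e^{-ix}F^-(e^{-ix})\big)=-e^{-ix}(\ti{\pa_x}F^-|_\T)(-x)$. Finally, the orthogonality of the Fourier supports of the two pieces in \eqref{eqn:recover_fstab} gives
\begin{align*}
\norm{f}_\XH{1}^2=\tfrac{1}{2\pi}\big(\norm{F^+}^2+\norm{\pa_xF^+}^2+\norm{F^-}^2+\norm{\ti{\pa_x}F^-}^2\big),
\end{align*}
with all norms taken in $\XLIpi$. Summing the contributions gives an explicit constant, which I would then check numerically to be below $\getvar{Lstab_inv}$.

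I expect the main obstacle to be the bookkeeping of constants and normalizations rather than any conceptual step: the $2\pi$ factors between $\XL$ and $\XLIpi$, the splitting of $\norm{g}$ between $G^+$ and $G^-$, and the $\ti{\pa_x}$ twist for $F^-$. A second delicate point is justifying that $f\in\XH{1}$ with periodic traces, and not only piecewise smooth on $[-\pi,\pi]$. This follows from the holomorphy provided by Lemma~\ref{lemma:stab_tor_degen}: $F^\pm$ are holomorphic with $\pa_zF^\pm$ continuous on $\overline{\D}$, so their boundary traces are periodic and $C^1$.
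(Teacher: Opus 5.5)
Your proposal follows the paper's proof essentially step for step. The shared steps are: the reduction to smooth $g$, Lemma~\ref{lemma:stab_tor_degen}, the Duhamel representation with ${\bf c}=-M^\dagger_{\rm reg}{\bf b}$ and the factor $\sigma_2^{-1}$ from Lemma~\ref{lemma:stab_singval}, Cauchy--Schwarz against the constants of Lemma~\ref{lemma:hk:stab}, Lemmas~\ref{lemma:J_L2L2_stab}--\ref{lemma:J_L2H1_stab} for $J^\pm G^\pm$, and the $\ti{\pa_x}$ splitting of $\norm{f}_{\XH{1}}^2$. You are in fact more careful than the paper about density inside $W_\laap$ and about uniqueness.

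The one step you leave unspecified is combining the cross terms $\norm{g}_\XL\norm{G^\pm|_\T}_\XL$, and the constant depends on how this is done. The paper uses Peter--Paul with tuned weights $\vartheta_\pm$ together with $\norm{G^+|_\T}_\XL^2+\norm{G^-|_\T}_\XL^2=\norm{g}_\XL^2$. By my estimate from the stated constants, $1014.41$ sits right at the optimum of this scheme. Cruder combinations would overshoot it: bounding each $\norm{G^\pm|_\T}_\XL\le\norm{g}_\XL$ separately gives roughly $1033$, and a Minkowski bound gives roughly $1017$.
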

\begin{proof}
    Since $\XH{1}$ is dense in $\XH{0}$, we will assume that $g\in\XH{1}$ and extend the result by density to all $\XH{0}$. Hence, we have $(G^+,G^-):=i_H(g)\in W_\laap \cap (\XHhol{1}\oplus\XHhol{1}).$
    Then, $G^\pm$ is continuous on $\overline{\D}$ and we can apply Lemma~\ref{lemma:stab_tor_degen}:
    
    There exists a unique pair $(F^+,F^-)\in\XHholo\oplus\XHholo$ of holomorphic functions, with derivatives continuous on $\overline{\D}$, such that
    \begin{align*}
        \LOpStabHol{\laap}(F^+,F^-) =
        \Pcal_\laap[\Fvrmap^+,\Fvrmap^-](G^+,G^-)=(G^+,G^-).
    \end{align*}

    Now we seek for an explicit estimate. Set $(F^+,F^-):=i_H(f)$ and observe that
    \begin{align*}
        \norm{f}_{\XH{1}}^2 &=
        \sum_{k\in\Z}
        (1+k^2)
        |\ha{f}_k|^2\\&=
        \sum_{k\geq 0}
        (1+k^2)
        |F^+_k|^2
        +
        \sum_{k\geq 0}
        (1+(k+1)^2)
        |F^-_{k}|^2.
    \end{align*}

    Note that
    \begin{align*}
        \pa_x\big(
        e^{ix} F^-|_\T(x)
        \big) =
        \sum_{k\geq 0}
        F_k^{-}
        \pa_x\big( e^{i(k+1)x} \big)\ 
        \Rightarrow\
        \sum_{k\geq 0}
        (k+1)^2
        |F^-_{k}|^2 &=  
        \frac{1}{2\pi}
        \intpi 
        \big|\pa_x\big(
        e^{ix} F^-(x)
        \big)\big|^2\dx\\
         &=  
        \frac{1}{2\pi}
        \intpi 
        \big|
        \pa_x F^-(x)+
        i F^-(x)
        \big|^2\dx.
    \end{align*}

    Thus, we obtain
    \begin{align}
        \norm{f}_{\XH{1}}^2 =
        \norm{F^+|_\T}_{\XL}^2+
        \norm{\pa_x F^+|_\T}_{\XL}^2+
        \norm{F^-|_\T}_{\XL}^2+
        \norm{\pa_x F^-|_\T + i F^-|_\T}_{\XL}^2
        \label{eqn:decomp_stab_lin}
    \end{align}

    We only explain how to work with $F^+$ using \eqref{system:stab:FtiG:plu}, since $F^-$ follows in the same way using \eqref{system:stab:FtiG:min}.
    Expression \eqref{system:stab:FtiG:plu} reads as follows:
    \begin{align*}
        F^+|_\T(x)
        &= 
        F^+|_\T(0)  
        \Pi^+(x)
        +
        J^+G^+|_\T(x)
        \nonumber
        \\&\qquad
        +
        \sum_{k=1}^N (\ha{F^+|_\T})_{k-1}  h_{k}^{-J^+}(x)
        +\sum_{k=1}^N (\ha{F^-|_\T})_{k-1}  h_{k}^{+J^+}(x).
    \end{align*}

    Define
    \begin{align*}
        {\bf h}^+(x):= \begin{pmatrix}
            h_1^{-J^+}(x)&
            \cdots&
            h_N^{-J^+}(x)&
            h_1^{+J^+}(x)&
            \cdots&
            h_N^{+J^+}(x)&
            \Pi^+(x)&
            0
        \end{pmatrix}
    \end{align*}
    Hence, we obtain
    \begin{align*}
        F^+|_\T(x)
        = 
        {\bf h}^+(x) 
        {\bf c}(F^+,F^-)
        +
        J^+G^+|_\T(x),
    \end{align*}
    where we know that
    \begin{align*}
        {\bf c}(F^+,F^-)=
        -M^\dagger_{\rm reg} 
        {\bf b}(G^+,G^-).
    \end{align*}
    Recall that ${\bf c}(F^+,F^-)$ is defined in \eqref{def:cplumin}, ${\bf b}$ is defined in \eqref{def:bplumin}, and $M^\dagger_{\rm reg}$ is the matrix given in Lemma~\ref{lemma:svd:stable} with $M=\MStabTor$.

    Using the same strategy from \eqref{eqn:exis_final_step2}, we have that
    \begin{align}
        \scalednorm{
        {\bf h}^+ 
        M^\dagger_{\rm reg}  
        {\bf b}(G^+,G^-)
        }_\XLIpi 
        \leq
        \frac{1}{\sigma_2(\laap)}
        C_{0^+}
        \norm{{\bf b}(G^+,G^-)}_{\ell^2(\C^{2N+2})}.
        \label{lemma:stab:lininv:aux1}
    \end{align}
    where 
    \begin{align*}
        C_{0^+}^2:=
        \norm{\Pi^+}_\XLIpi^2
        +
        \sum_{k=1}^N 
        \norm{h_k^{-J^+}}_\XLIpi^2
        + 
        \norm{h_k^{+J^+}}_\XLIpi^2.
    \end{align*}

    Additionally, note that
    \begin{align*}
        \norm{{\bf b}(G^+,G^-)}_{\ell^2(\C^{2N+2})}^2&\leq 
        \norm{J^+ G^+}_\XL^2 + 
        \norm{J^- G^-}_\XL^2\\
        &\leq 
        \max\{\norm{J^+}_{\XL\to\XL},\norm{J^-}_{\XL\to\XL}\}^2 
        \big(\underbrace{
            \norm{G^+}_\XL^2 + 
            \norm{G^-}_\XL^2}_{\norm{g}_\XL^2}
        \big).
    \end{align*}
    Moreover, by Lemma~\ref{lemma:J_L2L2_stab}, we have the same explicit bound for $\norm{J^+}_{\XL\to\XL}$ and $\norm{J^-}_{\XL\to\XL}$. Let $C_{J_0}$ be such that
    \begin{align}
        \norm{{\bf b}(G^+,G^-)}_{\ell^2(\C^{2N+2})}\leq 
        C_{J_0} \norm{g}_\XL
        .\label{eqn:stab:lininv:aux2}
    \end{align}
    
    Consequently, we have from \eqref{lemma:stab:lininv:aux1}--\eqref{eqn:stab:lininv:aux2} that
    \begin{align}
        \norm{F^+|_\T}_\XL
        &\leq 
        \frac{1}{\sqrt{2\pi}}\Big(
        \frac{C_{0^+}}{\sigma_2(\laap)}
        \norm{{\bf b}(G^+,G^-)}
        _{\ell^2(\C^{2N+2})}
        +
        \norm{J^+ G^+|_\T}_\XLIpi
        \Big)
        \nonumber
        \\
        &\leq \frac{1}{\sqrt{2\pi}} 
        \frac{C_{0^+}C_{J_0}}{\sigma_2(\laap)} 
        \norm{g}_\XL
        +
        C_{J_0}
        \norm{G^+|_\T}_\XL.
        \label{eqn:stab:lininv:aux3}
    \end{align}

    Let $C_{0^-},C_{1^+},C_{1^-},C_{J_1^+}$, and $C_{J_1^-}$ be defined by
    \begin{align*}
        C_{0^-}^2&:=
        \norm{\Pi^-}_\XLIpi^2
        +
        \sum_{k=1}^N 
        \norm{h_k^{+J^-}}_\XLIpi^2
        +
        \norm{h_k^{-J^-}}_\XLIpi^2
        ,\\
        C_{1^+}^2&:=
        \norm{\pa_x\Pi^+}_\XLIpi^2
        +
        \sum_{k=1}^N 
        \norm{\pa_x h_k^{-J^+}}_\XLIpi^2
        + 
        \norm{\pa_x h_k^{+J^+}}_\XLIpi^2
        ,\\
        C_{1^-}^2&:=
        \norm{\pa_x\Pi^-+i\Pi^-}_\XLIpi^2
        +
        \sum_{k=1}^N 
        \norm{\pa_x h_k^{+J^-} + ih_k^{+J^-}}_\XLIpi^2
        +
        \norm{\pa_x h_k^{-J^-} + ih_k^{-J^-}}_\XLIpi^2,\\
        C_{J_{1^+}}&:=
        \sup_{\norm{g}_\XLIpi=1}
        \big\{ 
            \norm{\pa_x J^+ g}_\XLIpi
        \big\},\qquad
        C_{J_{1^-}}:=
        \sup_{\norm{g}_\XLIpi=1}
        \big\{ 
            \norm{\pa_x J^- g + i J^- g}_\XLIpi
        \big\}.
    \end{align*}

    A similar computation to \eqref{eqn:stab:lininv:aux3} shows
    \begin{align*}
        \norm{F^-|_\T}_\XL
        &\leq 
        \frac{1}{\sqrt{2\pi}}
        \frac{C_{0^-}C_{J_0}}{\sigma_2(\laap)} 
        \norm{g}_\XL
        +
        C_{J_0} 
        \norm{G^-|_\T}_\XL,\\
        %%%%%%%%%%%%%%%%%%%%%%%%%%
        \norm{\pa_x F^+|_\T}_\XL & \leq
        \frac{1}{\sqrt{2\pi}}
        \frac{C_{1^+} C_{J_0}}{\sigma_2(\laap)}
        \norm{g}_\XL
        +
        C_{J_{1^+}}
        \norm{G^+|_\T}_\XLIpi,
        \\
        %%%%%%%%%%%%%%%%%%%%%%%%%%
        \norm{\pa_x F^-|_\T + i F^-|_\T}_\XL
        &\leq
        \frac{1}{\sqrt{2\pi}}
        \frac{C_{1^-} C_{J_0}}{\sigma_2(\laap)}
        \norm{g}_\XL
        +
        C_{J_{1^-}}
        \norm{G^-|_\T}_\XLIpi.
    \end{align*}

    Combining these estimates into \eqref{eqn:decomp_stab_lin} yields
    \begin{align*}
        \norm{f}_{\XH{1}}^2 &\leq 
        a\norm{g}_\XL^2 
        +
        b^+\norm{G^+|_\T}_\XL^2
        +
        b^-\norm{G^-|_\T}_\XL^2\\
        &\qquad+
        2c^+\norm{g}_\XL\norm{G^+|_\T}_\XL
        +
        2c^-\norm{g}_\XL\norm{G^-|_\T}_\XL.
    \end{align*}
    where $a,b^+,b^-,c^+$, and $c^-$ are given by
    \begin{align*}
        a&:=
        \frac{1}{2\pi}
        \frac{C_{J_0}^2}{\sigma_2(\laap)^2}\big(
        C_{0^+}^2 + 
        C_{0^-}^2 + 
        C_{1^+}^2 + 
        C_{1^-}^2 \big),\\
        b^\pm&:= C_{J_0}^2 + C_{J_{1^\pm}}^2,\qquad 
        c^{\pm}:= 
        \frac{1}{\sqrt{2\pi}}
        \frac{C_{J_0}}{\sigma_2(\laap)}
        \big(
        C_{0^\pm}C_{J_0} + C_{1^\pm}C_{J_{1^\pm}}
        \big).
    \end{align*}

    Finally, define the quantities $\vartheta_1,\vartheta_2$ by
    \begin{align*}
        \vartheta_+:= \getvar{vthep},\qquad 
        \vartheta_-:= \getvar{vthem}.
    \end{align*}
    
    Consequently, we have that\begin{align*}
        \norm{f}_{\XH{1}}^2 &\leq 
        (a+
        c^+\vartheta_++
        c^-\vartheta_-)
        \norm{g}_\XL^2 
        +
        (b^++\vartheta_+^{-1}c^+)\norm{G^+|_\T}_\XL^2
        +
        (b^-+\vartheta_-^{-1}c^-)\norm{G^-|_\T}_\XL^2\\
        &\leq \big(
        a+
        c^+\vartheta_++
        c^-\vartheta_-
        + 
        \max\{
        b^++\vartheta_+^{-1}c^+,
        b^-+\vartheta_-^{-1}c^-
        \}\big)\norm{g}_\XL^2.
    \end{align*}
    Here, every quantity has an explicit upper bound. We substitute them and check the inequality in the statement.
\end{proof}

\subsection{The choice for $\Eigdiff$}\label{sec:eta_estimates}

In this section we fix $\te=\frac{1}{3}$ and $\la=\laap$. We describe the construction of the function $\Eigdiff$ from \eqref{def:ContOpStab} and compute some estimates required to close our argument.

\begin{defi}\label{def:eta_func}
    Let $f\in\XH{1}$ and $i_H:\XH{1}\to\XHhol{1}\oplus\XHhol{1}$ as in \eqref{def:i_H}. Define the operator $\Procomp$ as the composition given by
    \begin{align}
        \Procomp :=  u_1^\ast(\laap)\circ {\bf b}\circ i_H.
        \label{def:Procomp}
    \end{align}
    
    Then we define $\Eigdiff(f)$ by
    \begin{align*}
        \Eigdiff(f) := \frac{
        \Procomp(
        \Dcal_\te(f+\Eigveap)+
        \Eigres
        )
        }{
        \Procomp(
        f+\Eigveap
        )
        }.
    \end{align*}

\end{defi}
\begin{rem}
    With this choice of $\Eigdiff(f)$, we have that
    \begin{align*}
        \Procomp
        \big[
        (\Eigdiff(f) I-\DiffOpExpNonExp)(f+\Eigveap)-\Eigres
        \big] = 0.
    \end{align*}

    Moreover,
    \begin{align*}
        (\Eigdiff(f) I-\DiffOpExpNonExp)(f+\Eigveap)-\Eigres
        \in  W_\laap.
    \end{align*}
    Recall the definition of $W_\laap$ in \eqref{def:W_s2}.
\end{rem}

\begin{lemma}\label{lemma:eta:bound}
    Recall Lemma~\ref{lemma:prod_fv_u} and
    let $x_{\rm max}>0$ such that
    \begin{align*}
        x_{\rm max} <
        \frac{\big|
            \Procomp(\Eigveap
            )
        \big|}{C_{J_0}},
    \end{align*}
    where $C_{J_0}\geq\max\big\{\norm{J^+}_{\XL\to\XL},\norm{J^-}_{\XL\to\XL}\big\}$. Let $B_{x_{\rm max}}$ be defined as
    \begin{align*}
        B_{x_{\rm max}}:=\{f\in\XH{1}:\ \norm{f}_\XH{1} < x_{\rm max}\}.
    \end{align*}

    Then, the operator $\Eigdiff|_{B_{x_{\rm max}}}$ is well defined.
    Moreover, for every $f\in B_{x_{\rm max}}$, it holds that
    \begin{align*}
        |\Eigdiff(f)|\leq 
        q_\Eigdiff(\norm{f}_\XH{1}),
    \end{align*}
    where $q_\Eigdiff:[0,x_{\rm max})\to\RR$ is given by
    \begin{align}\label{def:q_eta}
        q_\Eigdiff(x):=
        \frac{(x+
        \norm{\Eigveap}_\XH{1}) 
        \norm{\DiffOpExpNonExp}
        _{\XH{1}\to\XL}+
        \norm{\Eigres}_\XL}{
        x_{\rm max}-
        x}.
    \end{align}
    
\end{lemma}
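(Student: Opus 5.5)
The plan is to bound numerator and denominator of $\Eigdiff(f)$ separately, using only that $\Procomp$ is a bounded linear functional on $\XL$ with norm at most $C_{J_0}$. Since $\norm{u_1(\laap)}=1$, the Cauchy--Schwarz inequality in $\C^{2N+2}$ gives
\begin{align*}
    |\Procomp(h)| = |u_1^\ast(\laap)\,{\bf b}(i_H h)| \leq \norm{{\bf b}(i_H h)}_{\ell^2(\C^{2N+2})}.
\end{align*}
By the Parseval argument already used to obtain \eqref{eqn:stab:lininv:aux2}, with $(G^+,G^-)=i_H h$ and $\norm{G^+}_\XL^2+\norm{G^-}_\XL^2=\norm{h}_\XL^2$, this is at most $C_{J_0}\norm{h}_\XL$ for every $h\in\XL$. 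Linearity of $\Procomp$ is immediate, since ${\bf b}$, $i_H$ and $J^\pm$ are linear.

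For the denominator, I would write $\Procomp(f+\Eigveap)=\Procomp(\Eigveap)+\Procomp(f)$ and use the reverse triangle inequality together with $\norm{f}_\XL\leq\norm{f}_\XH{1}$:
\begin{align*}
    |\Procomp(f+\Eigveap)| \geq |\Procomp(\Eigveap)| - C_{J_0}\norm{f}_\XH{1}.
\end{align*}
By the hypothesis $C_{J_0}x_{\rm max}<|\Procomp(\Eigveap)|$, this is bounded below by $C_{J_0}(x_{\rm max}-\norm{f}_\XH{1})$, which is positive on $B_{x_{\rm max}}$. Hence $\Eigdiff$ is well defined there. Lemma~\ref{lemma:prod_fv_u} guarantees $\Procomp(\Eigveap)\neq0$, so a positive admissible $x_{\rm max}$ exists.

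For the numerator,
\begin{align*}
    |\Procomp(\Dcal_\te(f+\Eigveap)+\Eigres)| \leq C_{J_0}\big(\norm{\DiffOpExpNonExp}_{\XH{1}\to\XL}(\norm{f}_\XH{1}+\norm{\Eigveap}_\XH{1})+\norm{\Eigres}_\XL\big).
\end{align*}
Dividing, the factor $C_{J_0}$ cancels and gives exactly $q_\Eigdiff(\norm{f}_\XH{1})$.

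The only step needing care is the bound $|\Procomp(h)|\leq C_{J_0}\norm{h}_\XL$. One must check that each entry of ${\bf b}$ is a Fourier coefficient of $J^\pm G^\pm|_\T$, so that Bessel's inequality bounds the $\ell^2$ norm by $\norm{J^\pm G^\pm}_\XL$, and that the normalizations of $\XL$ and $\XLIpi$ match on both sides of the operator norm of $J^\pm$. Since the operator norm ratio is scale-invariant, the latter is harmless.
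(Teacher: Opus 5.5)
Your proposal is correct and follows essentially the same argument as the paper. The steps match: you bound $|\Procomp(h)|\leq C_{J_0}\norm{h}_\XL$ via Cauchy--Schwarz with $\norm{u_1}=1$ and Bessel/Parseval for the entries of ${\bf b}$, use the reverse triangle inequality for the denominator and the operator bound on $\DiffOpExpNonExp$ for the numerator, and let $C_{J_0}$ cancel. Your remarks on the $\XL$ versus $\XLIpi$ normalization, and on the existence of an admissible $x_{\rm max}$ via Lemma~\ref{lemma:prod_fv_u}, are accurate and only implicit in the paper.
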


\begin{rem}
    We can compute an upper bound for $x_{\rm max}$ using Lemma~\ref{lemma:J_L2L2_stab} and Lemma~\ref{lemma:prod_fv_u}. Moreover, in \eqref{def:q_eta} we also have explicit bounds for every term by Corollary~\ref{coro:DiffOpExpNonExp} and Lemma~\ref{lemma:norm_fv}.
\end{rem}

\begin{proof}
    We start by finding an upper bound for the numerator appearing in $\Eigdiff$. For each function $f\in\XH{1}$, denote by $(F^+,F^-)=i_H(f)$, it follows that
    \begin{align*}
        |\Procomp(f)|^2
       &
        \leq
        \norm{
        {\bf b}(F^+,F^-)
        }_{\ell^2(\C^{2N+2})}^2
        \\&
        \leq
        \norm{J^+}_{\XL\to\XL}^2
        \norm{F^+}_\XL^2
        +
        \norm{J^-}_{\XL\to\XL}^2
        \norm{F^-}_\XL^2
        \\&
        \leq
        \max\big\{
        \norm{J^+}_{\XL\to\XL},
        \norm{J^-}_{\XL\to\XL}\}^2 
        \norm{f}_\XL^2.
    \end{align*}
    Here, we know that $\norm{J^-}_{\XL\to\XL}$ is finite since Lemma~\ref{lemma:J_L2L2_stab} provides an explicit estimate.

    Then there exists $C_{J_0}>0$ such that
    \begin{align}\label{eqn:Pbihbound}
        |\Procomp(f)|
        \leq 
        C_{J_0} \norm{f}_\XL.
    \end{align}

    Hence, the numerator is estimated as follows
    \begin{align}
        \big|
            \Procomp(
            \Dcal_\te(f+\Eigveap)+
            \Eigres
            )
        \big|
        &\leq 
        C_{J_0} 
        \norm{
            \Dcal_\te(f+\Eigveap)
            +
            \Eigres
        }_\XL
        \nonumber
        \\
        &\leq 
        C_{J_0} 
        \big(
        (\norm{f}_\XH{1} 
        +
        \norm{\Eigveap}_\XH{1} ) 
        \norm{\DiffOpExpNonExp}_{\XH{1}\to\XL}+
        \norm{
            \Eigres
        }_\XL
        \big).
        \label{eqn:q_eta:nume}
    \end{align}

    Now, we get a lower bound for the denominator. Assume $\norm{f}_\XH{1}<x_{\rm max}$. Then
    \begin{align}
        \big|\Procomp(f+\Eigveap)\big|
        &\geq 
        \big|\Procomp(\Eigveap)\big|
        - 
        \big|\Procomp(f)\big|
        \nonumber
        \\
        &\geq 
        \big|\Procomp(\Eigveap)\big|
        - 
        C_{J_0} 
        \norm{f}_\XL
        \nonumber
        \\
        &\geq 
        \big|\Procomp(\Eigveap)\big|
        - 
        C_{J_0} 
        \norm{f}_\XH{1}
        \nonumber
        \\
        &>
        C_{J_0} \big(
        x_{\rm max}
        - 
        \norm{f}_\XH{1}
        \big).
        \label{eqn:q_eta:deno}
    \end{align}

    Finally, \eqref{def:q_eta} follows from \eqref{eqn:q_eta:nume} and \eqref{eqn:q_eta:deno}.
\end{proof}

We will also need an estimate for the derivative of $\Eigdiff$.
\begin{lemma}\label{lemma:eta:bound_dx}
    Let $x_{\rm max}$ as in Lemma~\ref{lemma:eta:bound}. Then, for every $f\in\XH{1}$ with $\norm{f}_\XH{1}<x_{\rm max}$, the linear operator $d_f\Eigdiff$ satisfies that
    \begin{align*}
        |d_f\Eigdiff(h)| \leq q_{d\Eigdiff}\big(\norm{f}_\XH{1}\big)
        \norm{h}_\XH{1},\quad\text{for all } h\in \XH{1},
    \end{align*}
    where the function $q_{d\Eigdiff}:[0,x_{\rm max}]\to\RR^+$ is given by
    \begin{align*}
        q_{d\Eigdiff}(x):=
        \frac{
            \big(
                2x
                +
                \norm{\Eigveap}_\XL
                +
                \norm{\Eigveap}_\XH{1}
            \big)
            \norm{\DiffOpExpNonExp}
            _{\XH{1}\to\XL}
            +
            \norm{\Eigres}_\XL
        }
        {(x_{\rm max}-x)^2}.
    \end{align*}
    
\end{lemma}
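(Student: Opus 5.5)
The plan is to differentiate the quotient defining $\Eigdiff$ directly. Write
\begin{align*}
N(f):=\Procomp\big(\DiffOpExpNonExp(f+\Eigveap)+\Eigres\big),\qquad
D(f):=\Procomp(f+\Eigveap).
\end{align*}
Both $N$ and $D$ are affine in $f$, since $\Procomp$ and $\DiffOpExpNonExp$ are linear. Hence
\begin{align*}
d_fN(h)=\Procomp(\DiffOpExpNonExp h),\qquad d_fD(h)=\Procomp(h).
\end{align*}
The quotient rule then gives
\begin{align*}
d_f\Eigdiff(h)=\frac{\Procomp(\DiffOpExpNonExp h)\,D(f)-N(f)\,\Procomp(h)}{D(f)^2}.
\end{align*}
Well-definedness on $\norm{f}_\XH{1}<x_{\rm max}$ is already guaranteed by Lemma~\ref{lemma:eta:bound}, since $D(f)\neq0$ there. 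The map is a quotient of affine continuous maps with nonvanishing denominator, so it is Fréchet differentiable.

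For the estimates I will reuse \eqref{eqn:Pbihbound}, namely $|\Procomp(g)|\le C_{J_0}\norm{g}_\XL$, together with \eqref{eqn:q_eta:nume} and \eqref{eqn:q_eta:deno}. In order:
\begin{itemize}
\item $|D(f)|^2> C_{J_0}^2(x_{\rm max}-x)^2$, where $x=\norm{f}_\XH{1}$.
\item $|\Procomp(\DiffOpExpNonExp h)|\le C_{J_0}\norm{\DiffOpExpNonExp}_{\XH{1}\to\XL}\norm{h}_\XH{1}$.
\item $|D(f)|\le C_{J_0}(\norm{f}_\XL+\norm{\Eigveap}_\XL)\le C_{J_0}(x+\norm{\Eigveap}_\XL)$.
\item $|N(f)|\le C_{J_0}\big((x+\norm{\Eigveap}_\XH{1})\norm{\DiffOpExpNonExp}_{\XH{1}\to\XL}+\norm{\Eigres}_\XL\big)$.
\item $|\Procomp(h)|\le C_{J_0}\norm{h}_\XL\le C_{J_0}\norm{h}_\XH{1}$.
\end{itemize}

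Adding the two numerator contributions gives
\begin{align*}
C_{J_0}^2\big[(2x+\norm{\Eigveap}_\XL+\norm{\Eigveap}_\XH{1})\norm{\DiffOpExpNonExp}_{\XH{1}\to\XL}+\norm{\Eigres}_\XL\big]\norm{h}_\XH{1}.
\end{align*}
The $C_{J_0}^2$ factors cancel against the denominator bound, which yields exactly $q_{d\Eigdiff}(x)\norm{h}_\XH{1}$.

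The argument is routine. The only point needing care is the bookkeeping of which norm ($\XL$ versus $\XH{1}$) applies to $\Eigveap$ in each factor, so that the two terms combine into the stated form. The statement allows $x\in[0,x_{\rm max}]$, but the bound is only meaningful for $x<x_{\rm max}$, which is the range where it is used.
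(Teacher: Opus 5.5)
Your proposal is correct and follows the paper's proof essentially verbatim. Both apply the quotient rule to the affine numerator and denominator, bound each numerator factor with $|\Procomp(g)|\le C_{J_0}\norm{g}_\XL$ from \eqref{eqn:Pbihbound}, and square the lower bound \eqref{eqn:q_eta:deno} in the denominator so the $C_{J_0}^2$ factors cancel. Your tracking of $\norm{\Eigveap}_\XL$ (from the denominator factor) versus $\norm{\Eigveap}_\XH{1}$ (from the numerator factor) is exactly what produces the stated form of $q_{d\Eigdiff}$.
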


\begin{proof}
    The computation is straightforward
    \begin{align*}
        d_f\Eigdiff(h)
        &=
        \frac{
        \Procomp (\Dcal_\te(h))
        }{
        \Procomp(f+\Eigveap)
        }
        -
        \frac{
        \Procomp(\Dcal_\te(f+\Eigveap)
        +
        \Eigres)
        }{
        \big[
        \Procomp(f+\Eigveap)
        \big]^2
        } 
        \Procomp (h)\\ 
        %%%%%%%%%%%%%%%%%%%%%%%%%%%
        &=
        \frac{
        \big[
        \Procomp (\Dcal_\te(h))
        \big]\big[
        \Procomp(f+\Eigveap)
        \big]
        -
        \big[
        \Procomp(\Dcal_\te(f+\Eigveap)
        +
        \Eigres)
        \big]
        \big[
        \Procomp(h)
        \big]
        }{\big[
        \Procomp(f+\Eigveap)
        \big]^2}.
    \end{align*}

    Hence, taking the absolute value, and using \eqref{eqn:Pbihbound} and \eqref{eqn:q_eta:deno} yields
    \begin{align*}
        |d_f\Eigdiff(h)|
        &\leq
        \frac{
        \norm{\Dcal_\te(h)}_\XL
        \norm{f+\Eigveap}_\XL
        +
        \norm{\Dcal_\te(f+\Eigveap)
        +
        \Eigres}_\XL
        \norm{h}_\XL
        }{
        \big[x_{\rm max}
        -
        \norm{f}_\XH{1}\big]^2
        }\\
        &\leq
        \frac{
        \big(2\norm{f}_{\XH{1}}
        +
        \norm{\Eigveap}_\XL
        +
        \norm{\Eigveap}_\XH{1}\big) 
        \norm{\Dcal_\te}_{\XH{1}\to\XL}
        +
        \norm{\Eigres}_\XL
        }{\big[x_{\rm max}
        -
        \norm{f}_\XH{1}\big]^2} 
        \norm{h}_{\XH{1}}.
    \end{align*}
\end{proof}

\subsection{Proofs of Lemmas \ref{lemma:stab_ballintoitself}--\ref{lemma:real_eigen_control}}\label{sec:stab_lemmas}

Fix $f\in\XH{1}$, with $\norm{f}_\XH{1}\leq\getvar{rad_stab}$. In particular, we have that
\begin{align*}
    \norm{f}_\XH{1}<x_{\rm max}.
\end{align*}

Moreover, the function $q_\Eigdiff$ defined in \eqref{def:q_eta} is increasing on $[0,x_{\rm max})$. Hence, we obtain
\begin{align*}
    |\Eigdiff(f)| \leq 
    q_\Eigdiff(\norm{f}_\XH{1}) \leq  
    q_\Eigdiff(\getvar{rad_stab}).
\end{align*}
This estimate allows us to prove Lemma~\ref{lemma:real_eigen_control} since we have explicit estimates for every quantity appearing in \eqref{def:q_eta}. Therefore, we check that
\begin{align*}
    \Re(\laap) - q_\Eigdiff(\getvar{rad_stab}) > \getvar{lare}.
\end{align*}

We now turn to Lemma~\ref{lemma:stab_ballintoitself}. Recall \eqref{def:ContOpStab_bis}, i.e.,
\begin{align}\label{def:ContOpStab_bis_bis}
    \TopStab f :=
    (\LStabthe - 
    \laap I)^{-1}\!
    \big[(\Eigdiff(f) I -\DiffOpExpNonExp)
    (\Eigveap+f) - \Eigres\big].
\end{align}

Taking norms in \eqref{def:ContOpStab_bis_bis} and using the estimate in Proposition~\ref{prop:stab:lininv}, we obtain
\begin{align}
    \norm{\TopStab(f)}_\XH{1}
    &\leq 
    \getvar{Lstab_inv} 
    \norm{
    (\Eigdiff(f) I -\DiffOpExpNonExp)
    (\Eigveap+f) - \Eigres
    }_\XL
    \nonumber
    \\
    &\leq
    \getvar{Lstab_inv} 
    \Big(
    q_\Eigdiff({\norm{f}_{\XH{1}}})
    \norm{\Eigveap+f}_\XL
    \nonumber
    \\&\qquad
    +
    \norm{\Dcal_\te}_{\XH{1}\to\XL}
    \norm{\Eigveap+f}_\XH{1}
    + 
    \norm{\Eigres}_\XL
    \Big).
    \label{eqn:stab_lemmas:aux1}
\end{align}
Here, $q_\Eigdiff$ is the increasing function defined in Lemma~\ref{lemma:eta:bound}, which we have applied in the last step. Hence, inserting the estimates in Corollary~\ref{coro:DiffOpExpNonExp} and Lemma~\ref{lemma:norm_fv} into \eqref{eqn:stab_lemmas:aux1} yields for all $f\in X$:
\begin{align*}
    0\leq \norm{\TopStab(f)}_\XH{1} \leq
    \getvar{rad_stab}
    \ \Rightarrow\
    \TopStab(X)\subset X.
\end{align*}

Finally, we prove Lemma~\ref{lemma:stab_lipschitz}. We begin by deriving an estimate for $d_f\TopStab$, with $f\in X$. Differentiating \eqref{def:ContOpStab_bis_bis}, we obtain
\begin{align}\label{eqn:stab_lemmas:auxdT}
    d_f\TopStab(h)=
    (\LStabthe-\laap I)^{-1}
        [(f+\Eigveap)d_f\Eigdiff(h)+
        \Eigdiff(f)h-
        \Dcal_\te h].
\end{align}

To apply Proposition~\ref{prop:stab:lininv}, it remains to check that the term inside the brackets belongs to $W_\laap$. Observe that the map
\begin{align*}
    (\Eigdiff(f) I -\DiffOpExpNonExp)(\Eigveap+f) - \Eigres:\XH{1}\to W_\laap,
\end{align*}
takes values in $W_\laap$. Therefore, its Fréchet derivative at $f$ also takes values in $W_\laap$. Consequently,
\begin{align}
    \norm{
    d_f\TopStab(h)
    }_\XH{1}&\leq 
    \getvar{Lstab_inv} 
    \bnorm{
    [(f+\Eigveap)d_f\Eigdiff(h)+
    \Eigdiff(f)h-
    \Dcal_\te h]}_\XL
    \nonumber
    \\
    &\leq 
    \getvar{Lstab_inv} 
    \Big(
    (\norm{f}_\XH{1}+\norm{\Eigveap}_\XL)
    q_{d\Eigdiff}\big(\norm{f}_\XH{1}\big)
    \\&\qquad+
    q_\Eigdiff\big(\norm{f}_\XH{1}\big)
    +
    \norm{\Dcal_\te}_{\XH{1}\to\XL}
    \Big) 
    \norm{h}_\XH{1}.
    \label{eqn:stab_lemmas:aux2}
\end{align}
Here, $q_{d\Eigdiff}$ is the increasing function from Lemma~\ref{lemma:eta:bound_dx}. We have applied this lemma and Lemma~\ref{lemma:eta:bound} again. Inserting the estimates in Corollary~\ref{coro:DiffOpExpNonExp} and Lemma~\ref{lemma:norm_fv} into \eqref{eqn:stab_lemmas:aux2} yields
\begin{align*}
    \norm{
        d_f\TopStab(h)
    }_\XH{1}
    &\leq 
    \getvar{lip_stab} 
    \norm{h}_\XH{1}.
\end{align*}

\appendix

\section{Classical inequalities}\label{sec:clasineq}
In this section, we establish some classical a priori estimates.
\begin{lemma}\label{lemma:apriori:LInfty_H1odd}
Let $f\in \XHevod{1}$. Then the function $f$ is continuous on $\T$ and
\begin{align*}
    \norm{f}_{\XLinf}^2\leq 4\zeta(2) \norm{f}_\XHevod{1}^2.
\end{align*}

\end{lemma}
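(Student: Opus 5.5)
The plan is to argue directly on the Fourier side. The two mechanisms are absolute summability of the Fourier series and Cauchy--Schwarz against the weights $k^{-2}$. Let $f\in\XHevod{1}$, so that $\ha{f}_0=0$. By parity, $|\ha{f}_{-k}|=|\ha{f}_k|$ for every $k\geq 1$, and the norm is
\begin{align*}
    \norm{f}_{\XHevod{1}}^2=\sum_{k\geq 1}k^2|\ha{f}_k|^2 .
\end{align*}

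First, I show that the Fourier series converges absolutely. Using Cauchy--Schwarz with weights $k$ and $k^{-1}$,
\begin{align*}
    \sum_{k\in\Z}|\ha{f}_k|
    =2\sum_{k\geq 1}|\ha{f}_k|
    \leq 2\Big(\sum_{k\geq 1}k^{-2}\Big)^{1/2}\Big(\sum_{k\geq 1}k^2|\ha{f}_k|^2\Big)^{1/2}
    =2\zeta(2)^{1/2}\norm{f}_{\XHevod{1}} .
\end{align*}
It follows that the partial sums $\sum_{|k|\leq K}\ha{f}_k e^{ikx}$ are continuous and converge uniformly on $\T$ (Weierstrass M-test). Their limit is therefore continuous.

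Second, I identify this limit with $f$. The partial sums also converge to $f$ in $L^2(\T)$, so the uniform limit agrees with $f$ almost everywhere. Consequently $f$ has a continuous representative. For this representative and every $x\in\T$,
\begin{align*}
    |f(x)|\leq \sum_{k\in\Z}|\ha{f}_k|\leq 2\zeta(2)^{1/2}\norm{f}_{\XHevod{1}}.
\end{align*}
Squaring gives the stated bound $\norm{f}_{\XLinf}^2\leq 4\zeta(2)\norm{f}_{\XHevod{1}}^2$.

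There is no real obstacle here. The one point needing care is the factor $2$: it comes from pairing the modes $\pm k$, and this pairing uses the symmetric-modulus property guaranteed by the parity assumption. That property must be used rather than a generic $H^1$ bound, which would give a different constant.
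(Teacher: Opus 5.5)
Your proof is correct and is essentially the paper's argument: Cauchy--Schwarz on the Fourier coefficients against the weights $k^{-2}$, with parity used to relate the two-sided sum to the one-sided norm. The paper pairs the modes $\pm k$ after applying Cauchy--Schwarz over $k\neq 0$ rather than before, which yields the same constant $4\zeta(2)$; it also cites the Sobolev/Morrey embedding $H^s(\T)\subset C^{s-1/2}$, $s>\tfrac12$, for continuity, whereas you prove continuity directly from uniform convergence of the Fourier series.
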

\begin{proof}
    Continuity of $f$ follows from Morrey's inequality $\mathcal{C}^{s-\frac{1}{2}}\supset\XH{s},\ s>\frac{1}{2}.$ 

    Applying the Cauchy--Schwarz inequality yields
    \begin{align*}
        \norm{f}_{\XLinf}^2
        \leq 
        \Big(\sum_{k\neq0}|\ha{f}_k|\Big)^2
        \leq 
        \Big(\sum_{k\neq0}
        \frac{1}{k^2}\Big)
        \Big(\sum_{k\in\Z}
        k^2|\ha{f}_k|^2\Big)=
        2\zeta(2) \norm{\pa_x f}_{\XL}^2=
        4\zeta(2) \norm{f}_\XHevod{1}^2.
    \end{align*}
\end{proof}

\begin{corollary}
\label{coro:apriori:BanachAlg:H1Even}
    Let $f,g\in\XHeven{1}.$ Then $fg\in\XHeven{1}$ and
    \begin{align*}
        \norm{fg}_{\XHeven{1}}^2\leq\ 8\zeta(2) 
        \norm{f}_{\XHeven{1}}^2 
        \norm{g}_{\XHeven{1}}^2.
    \end{align*}
\end{corollary}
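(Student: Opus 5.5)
The plan is to reduce the product estimate to the $L^\infty$ bound of Lemma~\ref{lemma:apriori:LInfty_H1odd} via the Leibniz rule. First, $fg$ lies in the right space. Since $f,g\in\XHeven{1}$ are even and continuous by Lemma~\ref{lemma:apriori:LInfty_H1odd}, the product $fg$ is even. Note that $fg$ need not have zero mean. So I will interpret the $\XHeven{1}$ norm of $fg$ through its seminorm $\frac{1}{4\pi}\intpi|\pa_x(fg)|^2\dx=\sum_{k\geq1}k^2|\ha{(fg)}_k|^2$, which is the quantity actually used in Lemma~\ref{Lemma_Bound_Exis_Q}, where it is applied after a derivative. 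With this convention, the claim is an inequality for $\norm{\pa_x(fg)}_\XL$.

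Next, by density I take $f,g$ to be trigonometric polynomials, so that $\pa_x(fg)=f\pa_x g+g\pa_x f$ holds pointwise. The triangle inequality then gives
\begin{align*}
\norm{\pa_x(fg)}_\XL\leq \norm{f}_\XLinf\norm{\pa_x g}_\XL+\norm{g}_\XLinf\norm{\pa_x f}_\XL .
\end{align*}
Two facts control the right-hand side. From Lemma~\ref{lemma:apriori:LInfty_H1odd}, $\norm{f}_\XLinf\leq 2\zeta(2)^{1/2}\norm{f}_\XHeven{1}$. From the definition of the norm, $\norm{\pa_x g}_\XL^2=2\norm{g}_\XHeven{1}^2$, because the modes $\pm k$ contribute equally. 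Substituting these gives
\begin{align*}
\norm{\pa_x(fg)}_\XL\leq 4\sqrt{2}\,\zeta(2)^{1/2}\norm{f}_\XHeven{1}\norm{g}_\XHeven{1}.
\end{align*}

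Finally, I convert back: $\norm{fg}_\XHeven{1}^2=\tfrac12\norm{\pa_x(fg)}_\XL^2\leq 16\zeta(2)\norm{f}^2\norm{g}^2$. This exceeds the claimed constant $8\zeta(2)$ by a factor of $2$, so the main obstacle is sharpening this bookkeeping. I would first use $(a+b)^2\leq 2a^2+2b^2$ rather than the crude square of the triangle inequality. If that does not close the gap, I would redo the estimate directly on Fourier coefficients: write $\ha{(fg)}_k$ as a convolution, apply Cauchy--Schwarz with the weight $k^2\leq 2(j^2+(k-j)^2)$, and sum. This weighting uses the $\zeta(2)$ sum exactly once per factor and should recover $8\zeta(2)$. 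The density step extends the estimate to all of $\XHeven{1}$.
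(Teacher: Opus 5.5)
Your reading of the statement is right: $fg$ is even but in general not mean-zero (for $f=g=\cos x$ one gets $\widehat{(fg)}_0=\tfrac12$). So the inequality has to be read for $\sum_{k\geq1}k^2|\widehat{(fg)}_k|^2$, which is all Lemma~\ref{Lemma_Bound_Exis_Q} uses. Your bookkeeping is also right, and it applies to the paper too. The paper's proof is exactly your route: $\norm{fg}_{\XHeven{1}}^2\leq 2\norm{g}_{\XLinf}^2\norm{f}_{\XHeven{1}}^2+2\norm{f}_{\XLinf}^2\norm{g}_{\XHeven{1}}^2$, followed by Lemma~\ref{lemma:apriori:LInfty_H1odd}. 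That chain only yields $16\zeta(2)$. Moreover, the constant $4\zeta(2)$ in Lemma~\ref{lemma:apriori:LInfty_H1odd} is attained by $f=\sum_{k\geq1}k^{-2}\cos(kx)$ at $x=0$. Taking $f=g$ equal to this function makes the paper's middle expression equal to $16\zeta(2)\norm{f}_{\XHeven{1}}^4$, so the paper's last inequality does not hold as written.

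Still, your proposal does not prove the stated constant, because neither of your repairs works. Passing from $(a+b)^2$ to $2a^2+2b^2$ can only weaken the bound, since $(a+b)^2\leq 2a^2+2b^2$. The convolution estimate with the weight $k^2\leq 2(j^2+(k-j)^2)$ gives $\sum_{j\neq0,k}\frac{k^2}{j^2(k-j)^2}\leq 2\sum_{j\neq0,k}\big(\frac1{j^2}+\frac1{(k-j)^2}\big)<8\zeta(2)$. Hence $\sum_{k\in\Z}k^2|\widehat{(fg)}_k|^2\leq 32\zeta(2)\norm{f}_{\XHeven{1}}^2\norm{g}_{\XHeven{1}}^2$, which is again $16\zeta(2)$ after halving. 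The weight overestimates $k^2$ by a factor close to $2$ in the high--low interactions ($|j|\ll|k|$ or $|k-j|\ll|k|$), and these dominate the sum.

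The missing idea is to keep the exact identity, valid because $\widehat f_0=\widehat g_0=0$,
\begin{align*}
    k\,\widehat{(fg)}_k=\sum_{j\neq0,k}\Big(\frac1j+\frac1{k-j}\Big)\big(j\widehat f_j\big)\big((k-j)\widehat g_{k-j}\big).
\end{align*}
Then use that the cross terms have negative sum, $\sum_{j\neq0,k}\frac{1}{j(k-j)}=-\frac{2}{k^2}$. This gives $\sum_{j\neq0,k}\big(\frac1j+\frac1{k-j}\big)^2=4\zeta(2)-\frac{6}{k^2}<4\zeta(2)$ for $k\neq0$. Cauchy--Schwarz in $j$ and summation over $k$ then yield
\begin{align*}
    \sum_{k\in\Z}k^2|\widehat{(fg)}_k|^2\leq 4\zeta(2)\Big(\sum_{j\in\Z}j^2|\widehat f_j|^2\Big)\Big(\sum_{m\in\Z}m^2|\widehat g_m|^2\Big)=16\zeta(2)\norm{f}_{\XHeven{1}}^2\norm{g}_{\XHeven{1}}^2.
\end{align*}
Since $fg$ is even, $\sum_{k\geq1}k^2|\widehat{(fg)}_k|^2$ is half of the left-hand side, which gives the claimed bound with $8\zeta(2)$.
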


\begin{proof}
    We use
    \begin{align*}
        \norm{fg}_{\XHeven{1}}^2
        &\leq 
        2\norm{\pa_x f g}_\XHodd{0}^2
        +
        2\norm{f \pa_x g}_\XHodd{0}^2\\
        &\leq
        2\norm{g}_{\XLinf}^2\norm{f}_{\XHeven{1}}^2+
        2\norm{f}_{\XLinf}^2\norm{g}_{\XHeven{1}}^2\\
        &\leq8\zeta(2) 
        \norm{f}_{\XHeven{1}}^2 
        \norm{g}_{\XHeven{1}}^2.
    \end{align*}
    
    The last step is a consequence of Lemma~\ref{lemma:apriori:LInfty_H1odd}.
\end{proof}

\begin{lemma}\label{lemma:apriori:Infty_H1:hol_mean0}
    Let $F\in \XHholZMi{1}$. Then $f:=F|_\T$ is continuous on $\T$ and
    \begin{align*}
        \norm{f}_{\XLinf}^2\leq 
        \frac{\pi}{12} 
        \norm{\pa_x f}_\XLIpi^2.
    \end{align*}
\end{lemma}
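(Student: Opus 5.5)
The plan is to write $f$ as a Fourier series with only positive modes and use Cauchy--Schwarz, exactly as in Lemma~\ref{lemma:apriori:LInfty_H1odd}. Since $F\in\XHholZMi{1}$, the trace $f=F|_\T$ has $\ha{f}_k=F_k$ for $k\geq 1$ and $\ha{f}_k=0$ for $k\leq 0$. Thus
\begin{align*}
    f(x)=\sum_{k\geq 1}F_k e^{ikx}.
\end{align*}
Continuity follows as before: the series converges absolutely and uniformly, because $\sum_{k\ge1}|F_k|\le(\sum_{k\ge1} k^{-2})^{1/2}(\sum_{k\ge1} k^2|F_k|^2)^{1/2}<\infty$. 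Alternatively, one can invoke Morrey's embedding $\XH{1}\subset C(\T)$.

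The pointwise bound then comes from
\begin{align*}
    |f(x)|^2\leq\Big(\sum_{k\geq1}|F_k|\Big)^2\leq \Big(\sum_{k\geq 1}\frac{1}{k^2}\Big)\Big(\sum_{k\geq1}k^2|F_k|^2\Big)=\zeta(2)\,\frac{1}{2\pi}\norm{\pa_x f}_\XLIpi^2 .
\end{align*}
The last equality is Parseval's identity, $\frac{1}{2\pi}\intpi|\pa_x f|^2\dx=\sum_{k}k^2|\ha f_k|^2$. Since $\zeta(2)/(2\pi)=\pi^2/(12\pi)=\pi/12$, this gives exactly the constant in the statement. Compared with Lemma~\ref{lemma:apriori:LInfty_H1odd}, the constant is halved because only one side of the Fourier spectrum is present, and it is expressed with respect to the $\XLIpi$ normalization rather than the $\XHevod{1}$ one.

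There is no real obstacle. The only points needing care are the bookkeeping of normalizations: $\norm{\cdot}_\XLIpi$ versus $\norm{\cdot}_\XL$ differ by a factor $2\pi$. Also, $\pa_x f$ must be understood as the weak derivative on $\T$. This is legitimate since $f\in\XH{1}$ is periodic, so the classical derivative on $(-\pi,\pi)$ coincides with it almost everywhere.
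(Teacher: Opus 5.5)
Your proof is correct and is essentially the paper's argument: expand $f$ in its positive Fourier modes, apply Cauchy--Schwarz against $\sum_{k\geq1}k^{-2}=\zeta(2)$, and use Parseval together with $\zeta(2)/(2\pi)=\pi/12$. Your handling of the $L^2(\T)$ versus $L^2([-\pi,\pi])$ normalization and of the continuity claim matches what the paper does.
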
 % pi^2/3 \norm{\pa_x f}_\XL^2 \leq  
\begin{proof}
    We proceed as in the proof of Lemma~\ref{lemma:apriori:LInfty_H1odd}. Since
    $f=F|_\T$ has only positive Fourier modes, we have
    \begin{align*}
        \norm{f}_{\XLinf}^2
        \leq 
        \Big(\sum_{k > 0}|\ha{f}_k|\Big)^2
        \leq 
        \Big(\sum_{k > 0}
        \frac{1}{k^2}\Big)
        \Big(\sum_{k>}
        k^2|\ha{f}_k|^2\Big)=
        \zeta(2) \norm{\pa_x f}_{\XL}^2=
        \frac{\pi}{12} \norm{\pa_x f}_\XLIpi^2.
    \end{align*}
    Here we used the Basel identity $\zeta(2)=\frac{\pi^2}{6}$ and the relation
    \begin{align*}
        \norm{f}_\XL^2=\frac{1}{2\pi}\norm{ f}_\XLIpi^2\qquad \text{for all }f\in \XL.
    \end{align*}

\end{proof}

\section{Commutator of Hilbert transform and cosine}
\label{sec:BracketHiCo}

In this section, we do some of the computations required to prove Lemmas~\ref{lemma:exis:td} and \ref{lemma:equiv_stab}.
\begin{lemma}\label{lemma:BracketHiCo}
    Let $f\in\XH{s}$, with $s>\frac{1}{2}.$ We have that
    \begin{align*}
        [H,\cos(j\cdot)] f(x)=
        \frac{i}{2}
        \ha{f}_0
        \big(e^{-ijx}
        -
        e^{ijx}\big)
        +
        \frac{i}{2}\big(
            \ha{f}_j
            -
            \ha{f}_{-j}
        \big)
        +
        i\sum_{m=1}^{j-1}
            \big(
                \ha{f}_me^{-i(j-m)x}
                -
                \ha{f}_{-m}e^{i(j-m)x}
            \big).
    \end{align*}
\end{lemma}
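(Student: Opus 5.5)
The plan is to compute the commutator mode by mode. Write $\cos(jx)=\tfrac12(e^{ijx}+e^{-ijx})$ and expand $f=\sum_{k\in\Z}\ha{f}_k e^{ikx}$; the series converges absolutely for $s>\tfrac12$, so all rearrangements are justified. Recall that $H e^{ikx}=-i\,\sgn(k)e^{ikx}$ with $\sgn(0)=0$. For a single mode this gives
\begin{align*}
    [H,e^{\pm ij\cdot}]e^{ikx}
    =-i\big(\sgn(k\pm j)-\sgn(k)\big)e^{i(k\pm j)x}.
\end{align*}
Summing over $k$ with weight $\tfrac12\ha{f}_k$, we get
\begin{align*}
    [H,\cos(j\cdot)]f(x)
    =-\frac{i}{2}\sum_{k\in\Z}\ha{f}_k\Big[\big(\sgn(k+j)-\sgn(k)\big)e^{i(k+j)x}+\big(\sgn(k-j)-\sgn(k)\big)e^{i(k-j)x}\Big].
\end{align*}

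Next I will identify which $k$ give a nonzero sign difference. For the $+j$ term, $\sgn(k+j)-\sgn(k)\neq0$ only when $-j\le k\le 0$. The cases are as follows.
\begin{itemize}
    \item $k=0$ gives $1$, contributing $-\tfrac{i}{2}\ha{f}_0e^{ijx}$.
    \item $k=-j$ gives $0-(-1)=1$, contributing $-\tfrac{i}{2}\ha{f}_{-j}$.
    \item $-j<k<0$ gives $2$. Setting $k=-m$ with $1\le m\le j-1$, this contributes $-i\ha{f}_{-m}e^{i(j-m)x}$.
\end{itemize}
Symmetrically, for the $-j$ term only $0\le k\le j$ contribute, with the same pattern and opposite signs:
\begin{itemize}
    \item $k=0$ gives $-1$, contributing $+\tfrac{i}{2}\ha{f}_0e^{-ijx}$.
    \item $k=j$ gives $-1$, contributing $+\tfrac{i}{2}\ha{f}_j$.
    \item $k=m$ with $1\le m\le j-1$ gives $-2$, contributing $+i\ha{f}_me^{-i(j-m)x}$.
\end{itemize}

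Collecting these terms reproduces exactly the three groups in the statement: the $\ha{f}_0$ term, the constant term $\tfrac{i}{2}(\ha{f}_j-\ha{f}_{-j})$, and the finite sum over $m$.

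The only delicate points are the sign convention at $k=0$ and the boundary cases $k=\pm j$. I will double-check them against a simple test such as $j=1$, $f=e^{ix}$, where $[H,\cos]e^{ix}=-i\big(\sgn(2)-1\big)\tfrac12e^{2ix}-i\big(0-1\big)\tfrac12=\tfrac{i}{2}$. This agrees with the formula's $\tfrac{i}{2}\ha{f}_1$.
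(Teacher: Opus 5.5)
Your proof is correct and follows essentially the same route as the paper: expand $\cos(j\cdot)=\tfrac12(e^{ij\cdot}+e^{-ij\cdot})$ and compute the commutator mode by mode as $-i(\sgn(k\pm j)-\sgn(k))e^{i(k\pm j)x}$. Then note that only $-j\le k\le 0$ (resp.\ $0\le k\le j$) contribute, and collect the endpoint terms $k=0$, $k=\pm j$ and the interior terms; your sign bookkeeping for each case is accurate.
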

\begin{proof}
    We expand $f$ in Fourier series:
    \begin{align*}
        f:=\sum_{m\in\Z}\ha{f}_m e^{imx}.
    \end{align*}
    
    A direct computation yields
    \begin{align*}
        &[H,\cos(j\cdot)]f(x)
        =
        \frac{1}{2}[H,e^{ij\cdot}+e^{-ij\cdot}]f(x)
        =
        \frac{1}{2}\sum_{m\in\Z}\ha{f}_m[H,e^{ij\cdot}+e^{-ij\cdot}]e^{imx}\\
        &=
        -\frac{i}{2}\sum_{m\in\Z}
        \ha{f}_m\Big[(\sgn(j+m)-\sgn(m))e^{i(j+m)x}
        +
        (\sgn(m-j)-\sgn(m))e^{-i(j-m)x}\Big]
        \\
        &=
        -
        \frac{i}{2}\sum_{m=-j}^0
        \ha{f}_m(\sgn(j+m)-\sgn(m))e^{i(j+m)x}
        +
        \frac{i}{2}\sum_{m=0}^j
        \ha{f}_m(\sgn(j-m)+\sgn(m))e^{-i(j-m)x}
        \\
        &=
        -\frac{i}{2}
        \sum_{m=0}^j
        \ha{f}_{-m}(\sgn(j-m)+\sgn(m))e^{i(j-m)x}
        +
        \frac{i}{2}
        \sum_{m=0}^j
        \ha{f}_m(\sgn(j-m)+\sgn(m))e^{-i(j-m)x}
        \\
        &=
        \frac{i}{2}
        \sum_{m=0}^j
        (\sgn(j-m)+\sgn(m))
        (\ha{f}_me^{-i(j-m)x}
        -
        \ha{f}_{-m}e^{i(j-m)x})
        \\
        &=
        \frac{i}{2}
        (e^{-ijx} - e^{ijx})
        \ha{f}_0
        +
        \frac{i}{2}
        (\ha{f}_j - \ha{f}_{-j})
        +
        i\sum_{m=1}^{j-1}
        (\ha{f}_me^{-i(j-m)x}
        -
        \ha{f}_{-m}e^{i(j-m)x}).
    \end{align*}
\end{proof}

\begin{corollary}\label{coro:BracketHiTw:df}
    Let $f$ be a function in $\XH{s},\ s>\frac{3}{2}$. We have that
    \begin{align*}
        [H,\twap]\pa_x f(x)=
        \sum_{j=1}^N
        \twap_j
        \Big(
            -\frac{j}{2}\big(
                \ha{f}_j+\ha{f}_{-j}
            \big)
            -
            \sum_{m=1}^{j-1} m \big(
                    \ha{f}_m e^{-i(j-m)x}
                    +
                    \ha{f}_{-m} e^{i(j-m)x}
                \big)
        \Big).
    \end{align*}
\end{corollary}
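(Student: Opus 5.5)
The plan is to reduce the statement to Lemma~\ref{lemma:BracketHiCo} by linearity of the commutator in its multiplier, applied to the function $\pa_x f$ in place of $f$.

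First I check the hypotheses. Since $f\in\XH{s}$ with $s>\frac32$, we have $\pa_x f\in\XH{s-1}$ with $s-1>\frac12$, so Lemma~\ref{lemma:BracketHiCo} applies to $\pa_x f$. Because $\twap(x)=\sum_{j=1}^N\twap_j\cos(jx)$ is a finite sum and $[H,\cdot\,]$ is linear in the multiplier, we get
\begin{align*}
    [H,\twap]\pa_x f=\sum_{j=1}^N\twap_j\,[H,\cos(j\cdot)]\pa_x f .
\end{align*}
It therefore suffices to compute each $[H,\cos(j\cdot)]\pa_x f$.

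Next I substitute the Fourier coefficients of $\pa_x f$ into the formula of Lemma~\ref{lemma:BracketHiCo}. These are $\ha{(\pa_x f)}_m=im\ha{f}_m$, and in particular $\ha{(\pa_x f)}_0=0$.
\begin{itemize}
\item The term $\frac{i}{2}\ha{(\pa_x f)}_0(e^{-ijx}-e^{ijx})$ vanishes.
\item The constant term becomes
\begin{align*}
\frac{i}{2}\big(ij\ha{f}_j+ij\ha{f}_{-j}\big)=-\frac{j}{2}\big(\ha{f}_j+\ha{f}_{-j}\big).
\end{align*}
\item The sum becomes
\begin{align*}
i\sum_{m=1}^{j-1}\big(im\ha{f}_m e^{-i(j-m)x}+im\ha{f}_{-m}e^{i(j-m)x}\big)
=-\sum_{m=1}^{j-1}m\big(\ha{f}_m e^{-i(j-m)x}+\ha{f}_{-m}e^{i(j-m)x}\big).
\end{align*}
\end{itemize}
Multiplying by $\twap_j$ and summing over $j$ gives exactly the claimed expression.

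I expect no real obstacle. The only point needing care is the sign bookkeeping in the $\ha{f}_{-m}$ terms: the factor $-m$ coming from differentiation combines with the minus sign already present in Lemma~\ref{lemma:BracketHiCo} to produce a plus sign.
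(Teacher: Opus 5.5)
Your proposal is correct and matches the paper's proof essentially step for step. The paper also reduces by linearity to $[H,\cos(j\cdot)]\pa_x f$ and notes that $\pa_x f$ has zero mean. It then substitutes $\ha{(\pa_x f)}_m = im\ha{f}_m$ into Lemma~\ref{lemma:BracketHiCo}, exactly as you do, and your sign computations and the check $s-1>\frac12$ are right.
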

\begin{proof}
    $\pa_x f$ is zero-mean. Then, substituting $\pa_x f$ into Lemma~\ref{lemma:BracketHiCo} we obtain
    \begin{align*}
        [H,\cos(j\cdot)]\pa_x f(x)
        &=
        -\frac{j}{2}(\ha{f}_j+\ha{f}_{-j})
        -\sum_{m=1}^{j-1}
        m(\ha{f}_me^{-i(j-m)x}+\ha{f}_{-m}e^{i(j-m)x}).
    \end{align*}
    Therefore, we get that
    \begin{align*}
        [H,\twap]\pa_x f(x)
        &=
        \sum_{j=1}^N
            \twap_j
            [H,\cos(j\cdot)]
            \pa_x f(x)\\
        &=
        \sum_{j=1}^N
            \twap_j
            \Big(
                -\frac{j}{2}\big(
                \ha{f}_j
                +
                \ha{f}_{-j}
                \big)
                -\sum_{m=1}^{j-1} m \big(
                    \ha{f}_me^{-i(j-m)x}
                    +
                    \ha{f}_{-m}e^{i(j-m)x}
                \big)
            \Big).
    \end{align*}
\end{proof}

\begin{corollary}\label{coro:BracketHiTw:f}
    Let $f$ be a function in $\XH{s},\ s>\frac{3}{2}$. We have that
    \begin{align*}
        [H,\twap](\pa_x f+i\te f)(x)&=
        -\frac{\te}{2}
        \ha{f}_0
        \sum_{j=1}^N
        \twap_j
        \big(e^{-ijx}
        -
        e^{ijx}\big)
        -
        \frac{1}{2}
        \sum_{j=1}^N
        \twap_j
        \Big(
            (j+\te)
            \ha{f}_j
            +
            (j-\te)
            \ha{f}_{-j}
        \Big)
        \\
        &\qquad -\sum_{j=1}^N
        \twap_j
        \sum_{m=1}^{j-1}
            \big(
                (m+\te)
                \ha{f}_m e^{-i(j-m)x}
                +
                (m-\te)
                \ha{f}_{-m} e^{i(j-m)x}
            \big)
    \end{align*}
\end{corollary}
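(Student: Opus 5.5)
The plan is to apply Lemma~\ref{lemma:BracketHiCo} term by term, with $\pa_x f+i\te f$ in place of $f$, and then sum against the coefficients $\twap_j$. Set $\phi:=\pa_x f+i\te f$. Its Fourier coefficients are $\ha{\phi}_m=i(m+\te)\ha{f}_m$, so that
\begin{align*}
    \ha{\phi}_0=i\te\ha{f}_0,\qquad
    \ha{\phi}_{-m}=i(\te-m)\ha{f}_{-m}=-i(m-\te)\ha{f}_{-m}.
\end{align*}
Since $f\in\XH{s}$ with $s>\frac{3}{2}$, we have $\phi\in\XH{s-1}$ with $s-1>\frac12$, so the hypothesis of Lemma~\ref{lemma:BracketHiCo} is satisfied. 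The constant $c$ commutes with $H$, so only the $\twap_j\cos(jx)$ terms contribute, and $[H,\twap]\phi=\sum_{j=1}^N\twap_j[H,\cos(j\cdot)]\phi$.

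Next, substitute these coefficients into the three groups of terms in Lemma~\ref{lemma:BracketHiCo}.
\begin{itemize}
    \item The mean term gives $\frac{i}{2}\cdot i\te\ha{f}_0(e^{-ijx}-e^{ijx})=-\frac{\te}{2}\ha{f}_0(e^{-ijx}-e^{ijx})$.
    \item The constant term gives $\frac{i}{2}\big(i(j+\te)\ha{f}_j+i(j-\te)\ha{f}_{-j}\big)=-\frac12\big((j+\te)\ha{f}_j+(j-\te)\ha{f}_{-j}\big)$.
    \item The sum over $1\le m\le j-1$ gives $i\big(i(m+\te)\ha{f}_m e^{-i(j-m)x}+i(m-\te)\ha{f}_{-m}e^{i(j-m)x}\big)=-\big((m+\te)\ha{f}_m e^{-i(j-m)x}+(m-\te)\ha{f}_{-m}e^{i(j-m)x}\big)$.
\end{itemize}
Multiplying each group by $\twap_j$ and summing over $j=1,\ldots,N$ gives exactly the claimed formula.

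As a consistency check, setting $\te=0$ should recover Corollary~\ref{coro:BracketHiTw:df}. With $\te=0$ the mean term vanishes and the remaining terms match that corollary.

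There is no real obstacle here. The only point that needs care is the sign bookkeeping for the negative modes, namely that $\ha{\phi}_{-m}$ carries $-(m-\te)$, which turns the minus sign in the lemma into a plus.
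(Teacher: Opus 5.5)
Your proposal is correct and is essentially the paper's argument: a direct substitution into Lemma~\ref{lemma:BracketHiCo}, with your sign bookkeeping for the modes $\ha{f}_{-m}$ checking out. The only cosmetic difference is that you insert the combined coefficients $i(m+\te)\ha{f}_m$ of $\pa_x f+i\te f$ in one step, whereas the paper adds Corollary~\ref{coro:BracketHiTw:df} (the $\pa_x f$ part) to $i\te$ times Lemma~\ref{lemma:BracketHiCo} applied to $f$.
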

\begin{proof}
    A direct substitution yields the desired formula:
    \begin{align*}
        &[H,\twap](\pa_x f+i\te f)(x)=
        \sum_{j=1}^N
        \twap_j
        \Big(
            -\frac{j}{2}\big(
                \ha{f}_j+\ha{f}_{-j}
            \big)
            -
            \sum_{m=1}^{j-1} m \big(
                    \ha{f}_m e^{-i(j-m)x}
                    +
                    \ha{f}_{-m} e^{i(j-m)x}
                \big)
        \Big)+\\
        &
        \qquad i\te \sum_{j=1}^N 
        \twap_j
        \Big(
        \frac{i}{2}
        \ha{f}_0
        \big(e^{-ijx}
        -
        e^{ijx}\big)
        +
        \frac{i}{2}\big(
        \ha{f}_j
        -
        \ha{f}_{-j}\big)
        +
        i\sum_{m=1}^{j-1}
            \big(\ha{f}_me^{-i(j-m)x}
            -
            \ha{f}_{-m}e^{i(j-m)x}\big)
        \Big)
        \\
        &=
        -\frac{\te}{2}
        \ha{f}_0
        \sum_{j=1}^N
        \twap_j
        \big(e^{-ijx}
        -
        e^{ijx}\big)
        -
        \frac{1}{2}
        \sum_{j=1}^N
        \twap_j
        \big(
            (j+\te)
            \ha{f}_j
            +
            (j-\te)
            \ha{f}_{-j}
        \big)
        \\
        &\quad -\sum_{j=1}^N
        \twap_j
        \sum_{m=1}^{j-1}
            \big(
                (m+\te)
                \ha{f}_m e^{-i(j-m)x}
                +
                (m-\te)
                \ha{f}_{-m} e^{i(j-m)x}
            \big).
    \end{align*}
\end{proof}

\section{Palindromic polynomial}\label{sec:polpaly}
In this section, we explain how to work rigorously with the roots of the polynomial $z^N P(z)$, where $P$ is defined at \eqref{def:P_}. Furthermore, we also prove Lemma~\ref{lemma:Proots1} and Lemma~\ref{lemma:Proots2}.

\begin{rem}\label{rem:poly_trav}
    The polynomial $z^N P(z)$ is related to $c$ and $\twap$ by the following identity on $\T$
    \begin{align*}
        P(e^{ix}) =
        c+\twap(x).
    \end{align*}
\end{rem}
A first observation about $z^N P(z)$ is that it is palindromic, since
\begin{align*}
    z^N P(z) =
    c z^N + \frac{1}{2}\sum_{j=1}^N \twap_j \big(z^{N+j}+z^{N-j}\big).
\end{align*}
Such polynomials have an interesting property, as the next lemma states.
\begin{lemma}\label{lemma:pal_root_reduc}
    Let $d\in\N$ and $Q(z)=\sum_{k=0}^d Q_k z^k$ be a polynomial with complex coefficients. Assume $Q$ is palindromic, i.e.
    \begin{align*}
        Q_k = Q_{d-k}\ \forall k=0,\ldots,d.
    \end{align*}

    Then, for every root $\al\in\C$ of $Q$, the inverse $\al^{-1}$ is also a root of $Q$.
\end{lemma}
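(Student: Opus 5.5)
The statement concerns a palindromic polynomial, so the natural tool is the reciprocal-polynomial identity $z^d Q(1/z) = Q(z)$ for $z\neq 0$. I will establish this identity first and then read off the symmetry of the roots.

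The identity comes from a direct computation using $Q_k = Q_{d-k}$:
\begin{align*}
    z^d Q(z^{-1}) = \sum_{k=0}^d Q_k z^{d-k} = \sum_{j=0}^d Q_{d-j} z^{j} = \sum_{j=0}^d Q_j z^j = Q(z).
\end{align*}
Here I reindex with $j = d-k$ and then apply the palindromic condition.

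Before applying the identity, I must make sure $\al^{-1}$ exists. I will interpret the statement for nonzero $\al$, which requires disposing of the root $0$. If $Q\not\equiv 0$ and $Q(0)=0$, then $Q_0=0$, and palindromicity forces $Q_d=0$ as well. So either $0$ is not a root, or the leading and constant coefficients both vanish and a factor of $z$ can be removed symmetrically.

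In our application, $z^N P(z)$ has constant coefficient $\tfrac12\twap_N\neq 0$ by Lemma~\ref{lemma:twapprox_nonzero_coeffs}, so $0$ is not a root there. For the general lemma, I will simply note that the statement concerns $\al\neq 0$, which is implicit in writing $\al^{-1}$.

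Now let $\al\neq 0$ be a root. Evaluating the identity at $z=\al^{-1}$ gives
\begin{align*}
    Q(\al^{-1}) = \al^{-d} Q(\al) = 0,
\end{align*}
so $\al^{-1}$ is also a root.

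Nothing here is a serious obstacle. The only point requiring care is excluding $\al=0$, which the argument above handles.
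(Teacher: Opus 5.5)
Your proof is correct and follows the paper's argument: the paper also derives $z^dQ(z^{-1})=Q(z)$ by reindexing and evaluates it at the root (at $z=\al$ and then divides by $\al^d$, which is equivalent to your evaluation at $z=\al^{-1}$). The only difference is how $\al=0$ is handled: the paper reads $d$ as the exact degree, so $Q_0=Q_d\neq 0$ and $0$ is never a root, whereas you restrict to nonzero roots and check $Q_0=\tfrac12\twap_N\neq 0$ in the application; both are fine.
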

\begin{proof}
    Since the degree of $Q$ is $d$, we know that $0\neq Q_d = Q_0$. Consequently, $0$ is never a root of $Q$.
    
    We now compute $z^d Q(z^{-1})$ for $z\neq 0$:
    \begin{align*}
        z^d Q(z^{-1})=z^d \sum_{k=0}^d Q_k z^{-k}
        = \sum_{k=0}^d Q_k z^{d-k}
        = \sum_{k=0}^d Q_{d-k} z^{k}
        = \sum_{k=0}^d Q_{k} z^{k}=Q(z).
    \end{align*}

    Thus, if $\al$ is a root of $Q$, we have 
    $\al^d Q(\al^{-1})=0$, so $\al^{-1}$ is again a root of $Q$.
\end{proof}

In the supplementary material we attach a file with approximations of $N$ roots of $z^N P(z)$ inside the unit disk. The next lemma shows how accurate these approximations are.

\begin{lemma}\label{lemma:enclose_poly_roots}
    For every approximated root $\ti{\al}$ of $z^N P(z)$ lying inside the unit complex disk, and whose explicit expression is given in the supplementary material, there exists a unique root $\al\in D_r(\ti{\al})$, where $\overline{D}_r(\ti{\al})$ denotes the closed disk with center $\ti{\al}$ and radius $r=\getvar{radii_roots}$.

    Furthermore, given any other approximated root $\ti{\al}'$, the disks $D_r(\ti{\al})$ and $D_r(\ti{\al}')$ are disjoint.

    In addition, we also know that no disk $\overline{D}_r(\ti{\al})$ intersects the unit circle.
\end{lemma}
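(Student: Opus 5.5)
The statement is a computer-assisted root-enclosure result: certify that each of the $N$ approximate roots $\ti{\al}$ from the supplementary file sits at the center of a disk $D_r(\ti{\al})$, $r=\getvar{radii_roots}$, containing exactly one root of $z^N P(z)$, and that these disks are pairwise disjoint and avoid the unit circle. I will certify each disk with a Rouché-type argument: compare $z^NP(z)$ on $\partial D_r(\ti{\al})$ with its linearization at the center, and carry out every evaluation in high-precision interval (ball) arithmetic. Since $r$ is tiny, a precision of several hundred bits is needed, well beyond double precision.

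For each $\ti{\al}$, write $p(z):=z^NP(z)$, which has degree $2N$ and explicit (interval) coefficients $c$ and $\twap_j/2$. I would compute enclosures of $p(\ti{\al})$ and $p'(\ti{\al})$. I would also bound $\sup_{|z-\ti{\al}|\le r}|p''(z)|$ by a crude Cauchy/coefficient bound, using $|z|\le |\ti{\al}|+r<1$. Then for $|z-\ti{\al}|=r$,
\begin{align*}
    \big|p(z)-p'(\ti{\al})(z-\ti{\al})\big|\le |p(\ti{\al})|+\tfrac{r^2}{2}\sup|p''| < r\,|p'(\ti{\al})|,
\end{align*}
and this strict inequality is the check performed by the computer. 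By Rouché's theorem $p$ then has exactly as many zeros in $D_r(\ti{\al})$ as the linear function $p'(\ti{\al})(z-\ti{\al})$, namely one. An equivalent route is the interval Newton/Krawczyk operator on the complex disk, whose contraction into the interior gives existence and uniqueness at once. I would choose whichever the code implements more easily.

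Disjointness is then checked directly, by verifying $|\ti{\al}-\ti{\al}'|>2r$ for all pairs. This is an $O(N^2)$ check that can be reduced by sorting the roots. The claim that the disks avoid the circle is the check $|\ti{\al}|+r<1$. Because $p$ is palindromic (Lemma~\ref{lemma:pal_root_reduc}), its $2N$ roots come in pairs $\al,\al^{-1}$. Having $N$ disjoint certified roots strictly inside $\D$ therefore accounts for all roots inside $\D$, since the inverses give $N$ roots outside and there is no room for any on $\T$. This is consistent with $c+\twap>0$.

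I expect the main obstacle to be the first check near roots that lie very close to $\T$. For $N=1000$ many roots have modulus near $1$, where $|p'(\ti{\al})|$ may be small relative to $|p''|$, and where the ball arithmetic must control cancellation in evaluating a degree-$2000$ polynomial. I would first try Horner evaluation in ball arithmetic with around $500$ bits, together with the sharper local bound on $p''$. Only if that fails would I fall back to a Krawczyk step.
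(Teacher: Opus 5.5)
Your proposal is correct and is in substance the paper's proof. The paper verifies in ball arithmetic that the Newton-type map $T_{\ti\al}(\delta)=-\bigl(Q(\ti\al)+R(\ti\al,\delta)\bigr)/\pa_zQ(\ti\al)$ maps $\overline{D}_r(0)$ into itself and satisfies $|\pa_\delta T_{\ti\al}|<1$ there, with the Taylor remainder bounded through the coefficient estimates $|\pa_z^kQ(z)|\le\sum_j\frac{j!}{(j-k)!}|Q_j|$ on $|z|\le1$; it then checks disjointness and $\overline{D}_r(\ti\al)\subset\D$ directly, exactly as you propose.

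The only real difference is that you close with Rouch\'e rather than Banach's fixed-point theorem. Your single boundary inequality is, up to how the remainder is bounded, the paper's self-map condition. Rouch\'e therefore makes the separate contraction check unnecessary, and it gives simplicity of the enclosed root (used in Lemma~\ref{lemma:Proots1}) directly from the multiplicity count. In the paper, simplicity is only implicit in $|\pa_\delta T_{\ti\al}|<1$, which forces $\pa_zQ\neq0$ on the disk.
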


\begin{proof}
    Although this proof is computer-assisted, we include it here rather than in Appendix~\ref{sec:compassisted}. We denote by $Q(z):=z^N P(z)$ and
    \begin{align*}
        Q(z) =: \sum_{j=0}^{2N} Q_j z^j.
    \end{align*}
    Let $\ti{\al}\in\C$ be one of the numerical
    approximations provided in the supplementary material. The goal is to prove that there exists a unique root $\al$ of $Q$ in the closed disk $\overline{D}_r(\ti{\al})$, with $r=\getvar{radii_roots}$.
    
    This verification is performed by the routine \verb|enclose_roots()| in \verb|lemmas.py|. The method proceeds by looking for a correction $\delta\in\C$ such that
    \begin{align*}
        Q(\ti{\al}+\delta)=0.
    \end{align*}
    
    Expanding $Q$ in Taylor series at $\ti{\al}$ yields
    \begin{align*}
        0
        = Q(\ti{\al})
        + \delta \pa_z Q(\ti{\al})
        + R(\ti{\al},\delta),
    \end{align*}
    where the remainder is
    \begin{align*}
        R(z,\delta)
        := \sum_{k=2}^{2N} \frac{\delta^k}{k!} \pa_z^k Q(z).
    \end{align*}
    
    The code defines the operator $T_{\ti{\al}}:\overline{D}_r(0)\to\C$, given by
    \begin{align*}
        T_{\ti{\al}}(\delta)
        := -\frac{1}{\pa_z Q(\ti{\al})}
        \bigl(Q(\ti{\al}) + R(\ti{\al},\delta)\bigr),
    \end{align*}
    so that fixed points of $T_{\ti{\al}}$, when added to $\ti{\al}$, correspond exactly to roots of
    $Q$ near $\ti{\al}$.
    
    To verify that $T_{\ti{\al}}$ is a contraction, the code first computes coefficient-based upper bounds for the derivatives $\pa_z^k Q$ on the unit disk. These bounds are implemented using the explicit formula
    \begin{align*}
        |\pa_z^k Q(z)|
        \leq
        \sum_{j=0}^{2N} \frac{j!}{(j-k)!} |Q_j|,
        \qquad |z|\leq 1,
    \end{align*}
    which depends only on the coefficients of $Q$.
    
    Using these bounds, the routine constructs explicit enclosures for both the remainder $R(\ti{\al},\delta)$ and its $\delta$--derivative, valid for all $|\delta|\leq r$.
    The two contraction conditions
    \begin{align*}
        T_{\ti{\al}}\big(\overline{D}_r(0)\big) \subset \overline{D}_r(0),
        \qquad
        |\pa_z T_{\ti{\al}}\big(\overline{D}_r(0)\big)| < 1
    \end{align*}
    are then verified numerically using ball arithmetic, ensuring full rigor.
    
    Once these conditions are certified, the contraction mapping theorem guarantees the existence of a unique fixed point $\delta\in \overline{D}_r(0)$, and hence a unique root $\al=\ti{\al}+\delta$ of $Q$ inside $\overline{D}_r(\ti{\al})$.
    
    Finally, the same routine checks that the disks corresponding to distinct approximations $\ti{\al}$ are pairwise disjoint and strictly contained in the open unit disk.
\end{proof}

\begin{corollary}\label{coro:Nrootsdisk}
    The polynomial $z^N P(z)$ has exactly $N$ roots inside the unit complex open disk $\D$. Recall this set is denoted by $Z$.
\end{corollary}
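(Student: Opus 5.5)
The plan is to combine Lemma~\ref{lemma:pal_root_reduc} with Lemma~\ref{lemma:enclose_poly_roots} and a simple root count. Set $Q(z):=z^NP(z)$. This is a polynomial of degree exactly $2N$ with leading coefficient $Q_{2N}=\tfrac12\twap_N\neq0$ by Lemma~\ref{lemma:twapprox_nonzero_coeffs}. It is palindromic, and $Q(0)=\tfrac12\twap_N\neq0$, so $0\notin Z$.

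First I show that $Q$ has no roots on the unit circle. By Remark~\ref{rem:poly_trav}, $Q(e^{ix})=e^{iNx}\,(c+\twap(x))$. Lemma~\ref{lemma:betamax} gives $c+\twap(x)>1/\getvar{beta_max}>0$ for every real $x$, so $Q$ does not vanish on $\partial\D$.

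Next I count roots inside and outside the disk. By Lemma~\ref{lemma:pal_root_reduc}, the map $\al\mapsto\al^{-1}$ sends roots of $Q$ to roots of $Q$. Multiplicities are also preserved, because the identity $z^{2N}Q(z^{-1})=Q(z)$ lets one differentiate at $\al$. This inversion is a bijection between the roots in $\D$ and the roots in $\{|z|>1\}$, and there are none on the circle. Counting all $2N$ roots with multiplicity, exactly $N$ of them lie in $\D$.

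Finally I check that these $N$ roots are distinct. Lemma~\ref{lemma:enclose_poly_roots} gives $N$ pairwise disjoint closed disks, each contained in $\D$. Each disk contains exactly one root, and that root is simple because the contraction argument certifies $\pa_z Q(\ti\al)\neq0$ together with uniqueness in the disk. These are $N$ distinct roots in $\D$. Since the total count with multiplicity is $N$, they exhaust $Z$, and $Z$ has exactly $N$ simple roots.

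The only delicate point is the multiplicity bookkeeping under inversion. It is handled by the palindromic identity as described, or more crudely by noting that the $N$ certified roots already fill the count of $N$.
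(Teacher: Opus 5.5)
Your proof is correct, but it takes a longer route than the paper's. The paper neither excludes roots on $\partial\D$ separately nor tracks multiplicities under inversion. Lemma~\ref{lemma:enclose_poly_roots} gives $N$ distinct roots in $\D$. Lemma~\ref{lemma:pal_root_reduc} turns them into $N$ distinct roots of modulus greater than one. Since $2N$ distinct roots of a degree-$2N$ polynomial are all of its roots, this single count shows at once that exactly $N$ roots lie in $\D$, that every root is simple, and that $z^NP(z)$ has no zeros on the unit circle. The paper later uses that last fact to justify that $\beta$ in \eqref{def:beta} is well defined.

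Your closing ``more crude'' remark is exactly this argument, and it makes your first two steps unnecessary. Those steps are nevertheless valid. The multiplicity bookkeeping via $z^{2N}Q(z^{-1})=Q(z)$ is fine. Invoking Lemma~\ref{lemma:betamax} is not circular, provided you read it as the paper does at the end of Section~\ref{sec:tw_exis}: as a direct interval-arithmetic certification that $c+\twap>1/\getvar{beta_max}$, not as presupposing the non-vanishing on $\partial\D$ that the paper derives from this corollary. What your route buys is a count of the roots in $\D$ that does not use the certified root enclosures, at the price of an extra computer-assisted input.

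One small inaccuracy. Knowing $\pa_zQ(\ti\al)\neq0$ at the center, together with uniqueness of the zero in the disk, does not by itself make that zero simple. What does is the contraction bound on the whole disk. Since $T_{\ti\al}(\delta)=\delta-Q(\ti\al+\delta)/\pa_zQ(\ti\al)$, you get $\pa_\delta T_{\ti\al}(\delta)=1-\pa_zQ(\ti\al+\delta)/\pa_zQ(\ti\al)$, so $|\pa_\delta T_{\ti\al}|<1$ forces $\pa_zQ\neq0$ on $\overline{D}_r(\ti\al)$. Your final counting step already gives simplicity anyway, so this claim is redundant.
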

\begin{proof}
    We know that there are at least $N$ simple roots inside the unit complex disk. Then, applying Lemma~\ref{lemma:pal_root_reduc} we find other $N$ different roots with modulus greater than one. Since the degree of $z^NP(z)$ is $2N$, we have found every root and there are exactly $N$ contained in $\D$.
\end{proof}

\section{Treatment of the solutions of the differential equations}\label{sec:ode}
Several parts of the proof require working with the solutions of linear first-order ODEs on $[-\pi,\pi]$. In particular, we refer to the `$h$ functions` from \eqref{def:hk}, \eqref{def:Piplusminu}, and \eqref{def:hkplusminu}. Even though these solutions are explicit, they involve heavy computational operations, which make it difficult to work directly with them.

To overcome this problem, we provide high-accuracy approximations, which are more manageable, of the solutions in the supplementary data. 
The error between the exact and the approximate solution will be measured by evaluating explicit residuals and applying Duhamel-type representations.

\subsection{Existence argument}\label{sec:ode_exis}
Recall the definition of $h_k$ with $k=1,\ldots,N+1$ in \eqref{def:hk}. These functions arise in the construction of the approximate inverse operator used in the existence argument, and their accurate numerical evaluation is a key ingredient in our proof.

In this section, we explain how to construct continuous approximations, $h_k^{\rm ap}$, of $h_k$ and provide rigorous bounds for the errors
\begin{align*}
    \tau_k := h_k - h_k^{\rm ap}.
\end{align*}

More precisely, we prove the following lemma.
\begin{lemma}\label{lemma:ode_exis}
    For each $k=1,\ldots,N+1$, the approximation error satisfies
    \begin{align*}
        \norm{\tau_k}_\XLIpi^2
        < 
        \rho_k.
    \end{align*}
    Here, the quantities $\rho_k$ are explicit and can be found in the file \verb|exis_resis.txt| from the supplementary material.
\end{lemma}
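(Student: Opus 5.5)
The approach is to write $h_k$ as the exact solution of a linear first-order ODE and to measure $\tau_k$ through the residual of $h_k^{\rm ap}$, using the Duhamel representation with the operator $J$ from \eqref{def:Jexis}. By construction, for $1\le k\le N$ the function $h_k=Jg_k$ is the unique solution on $[-\pi,\pi]$ of
\begin{align*}
    \pa_x h_k = i\beta h_k + \beta g_k, \qquad h_k(0)=0,
\end{align*}
and $h_{N+1}=\PiExis$ solves $\pa_x h_{N+1}=i\beta h_{N+1}$ with $h_{N+1}(0)=1$. I would take $h_k^{\rm ap}$ continuous and piecewise polynomial on a fine partition of $[-\pi,\pi]$. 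Its defect is
\begin{align*}
    r_k:=\pa_x h_k^{\rm ap}-i\beta h_k^{\rm ap}-\beta g_k ,
\end{align*}
with $\beta g_k$ omitted when $k=N+1$. Since the pieces are polynomials, $r_k$ is explicit on each subinterval; $h_k^{\rm ap}$ is continuous, so no delta masses appear at the nodes. Then $\tau_k$ solves $\pa_x\tau_k=i\beta\tau_k-r_k$ with initial value $\tau_k(0)=h_k(0)-h_k^{\rm ap}(0)$, which is exactly known. Duhamel gives
\begin{align*}
    \tau_k(x)=\tau_k(0)\PiExis(x)-\PiExis(x)\int_0^x\frac{r_k(y)}{\PiExis(y)}\dy .
\end{align*}

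Next I would bound this using $|\PiExis|=1$. This gives $|\tau_k(x)|\le|\tau_k(0)|+\big|\int_0^x|r_k|\big|$. Squaring and integrating over $[-\pi,\pi]$ then yields, for instance,
\begin{align*}
    \norm{\tau_k}_\XLIpi^2\le 2\cdot 2\pi|\tau_k(0)|^2+2\int_{-\pi}^{\pi}|x|\,\norm{r_k}_{L^2([0,x])}^2\dx \le 4\pi|\tau_k(0)|^2+2\pi^2\norm{r_k}_\XLIpi^2 .
\end{align*}
One can sharpen this with a Fubini argument as in Lemma~\ref{lemma:J_L2L2_exis}, using the weight $\int_y^\pi\!\dx$. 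In practice I would choose $h_k^{\rm ap}(0)=h_k(0)$, so the first term vanishes. The value $\rho_k$ is then this right-hand side, evaluated with interval arithmetic.

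The main obstacle is bounding $\norm{r_k}_\XLIpi$ rigorously, because $\beta=1/(c+\twap)$ is not a polynomial and is nearly singular (it can be as large as about $20$). On each subinterval I would replace $\beta$ by a polynomial enclosure $\beta^{\rm ap}$ with a certified sup-error $\epsilon$. This enclosure can be obtained from the residual of $(c+\twap)\beta^{\rm ap}-1$ together with the lower bound on $c+\twap$ from Lemma~\ref{lemma:betamax}. The defect then splits as
\begin{align*}
    r_k = \big[\pa_x h_k^{\rm ap}-i\beta^{\rm ap}h_k^{\rm ap}-\beta^{\rm ap}g_k\big] - (\beta-\beta^{\rm ap})(ih_k^{\rm ap}+g_k).
\end{align*}
The first term is an explicit polynomial, whose $L^2$ norm can be computed exactly in ball arithmetic. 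The second term is at most $\epsilon\,\norm{ih_k^{\rm ap}+g_k}_{L^2}$. The real difficulty is computational: the partition must be fine enough, with many nodes near the minimum of $c+\twap$, and this must be done for all $N+1=1001$ functions. Evaluating $g_k$, a sum of up to $N$ sines, on every piece is costly. I would handle this by expanding $g_k$ as polynomials on each subinterval with certified Taylor remainders, reusing these expansions across $k$.
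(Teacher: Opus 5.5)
Your argument is correct, but it differs from the paper at the one point that matters computationally: where $\beta$ is placed. The paper never divides by $c+\twap$.

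\begin{itemize}
\item \emph{The paper's route.} It defines the residual in multiplied form, $\xi_k=(c+\twap)\pa_x h_k^{\rm ap}-ih_k^{\rm ap}-g_k$ as in \eqref{def:xik_exis}, so your defect is $r_k=\beta\xi_k$. It notes that $\tau_k(0)=0$ and $\tau_k=-J\xi_k$, and then invokes Lemma~\ref{lemma:J_L2L2_exis}. This gives $\norm{\tau_k}_\XLIpi^2\le\norm{\kappa_1}_{L^1([0,\pi])}\norm{\xi_k}_\XLIpi^2$ with $\norm{\kappa_1}_{L^1([0,\pi])}<4.03105$.
\item \emph{What this buys.} All dependence on $\beta$ is packed into one constant, certified once and shared by all $N+1$ functions. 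On each subinterval $J$, $\xi_k$ is a local polynomial times trigonometric polynomials with known coefficients. Hence $\norm{\xi_k}_{L^2(J)}^2$ is an explicit quadratic expression in the local coefficients, assembled from the closed-form integrals $C_J^{l,j}$ and $S_J^{l,j}$ of Definition~\ref{def:CSEfuncs}. No approximation of $\beta$ or of $g_k$ is ever needed.
\item \emph{What your route costs.} By putting $\beta$ inside the defect, you create exactly the obstacle you identify as the main difficulty. You need a certified piecewise-polynomial enclosure $\beta^{\rm ap}$ near the minimum of $c+\twap$, plus Taylor expansions of $g_k$ with remainders. Your Fubini sharpening, done with $\beta$ kept inside the Duhamel integral rather than inside $r_k$, reduces essentially to the paper's estimate.
\item \emph{Constants.} Your route is independent of the trigonometric structure of the coefficient, which is irrelevant here. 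It adds an extra layer of error and computation, and its crude constant is worse: $2\pi^2$ acts on $\norm{\beta\xi_k}_\XLIpi^2$, with $\beta$ as large as about $20$. So it is not clear that it would certify the specific stored values $\rho_k$. The paper checks those against $\norm{\kappa_1}_{L^1([0,\pi])}\norm{\xi_k}_\XLIpi^2$.
\end{itemize}

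A small slip: your bracket $\pa_x h_k^{\rm ap}-i\beta^{\rm ap}h_k^{\rm ap}-\beta^{\rm ap}g_k$ is not a polynomial, since $g_k$ is a trigonometric polynomial. Its $L^2(J)$ norm is still computable in closed form.
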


In order to construct the approximations $h_k^{\rm ap}$, we partition the domain
$[-\pi,\pi]$ into subintervals of varying length, and denote this partition by
$\mathcal{J}$. Let $J=[x_0-r,x_0+r]\in\mathcal{J}$ be one of these intervals. For every $k=1,\ldots,N+1$, we provide in the supplementary material \verb|exis_data.txt| the coefficients $h_{N+1,l}^{{\rm ap},J},$ for $l=1,\ldots,5$. We define with them the function $h_{k}^{{\rm ap},J}$, given by
\begin{align*}
h_{k}^{{\rm ap},J}(x):=\delta_{k,N+1} + \sum_{l=1}^{4} h_{k,l}^{{\rm ap},J} (x-x_0)^l.
\end{align*}

Observe that, by definition, each $h_k$ solves 
\begin{align}\label{eqn:exis_global}
    \left.\begin{aligned}
        \big(c+\twap(x)\big) \partial_x h_{k}(x)  &= i h_{k}(x) +g_k(x) ,\\
        h_{k}(0) &= \delta_{k,N+1},
    \end{aligned}\right\}
\end{align}
where $g_k$ is given by \eqref{def:gk_exis} for $k=1,\ldots,N$ and $g_{N+1}=0$.
The function $h_{N+1}$ corresponds to a homogeneous equation, while the remaining solutions include an affine term. 

The data defining the functions $h_k^{\rm ap,J}$ have been constructed to approximate the functions $h_{k}^{J}:J\to\C$, solving
\begin{align}\label{eqn:exis_local}
    \left.\begin{aligned}
        \big(c+\twap(x)\big) \partial_x h_{k}^{J}(x) &= i h_{k}^{J}(x) + g_k(x),\\
        h_{k}^{J}(x_0) &= \delta_{k,N+1},
    \end{aligned}\right\}
\end{align}
The coefficients $h_{k,l}^{{\rm ap},J}$ have been chosen so that they minimize the function
\begin{align*}
    (
    h_{k,1}^{{\rm ap},J},
    h_{k,2}^{{\rm ap},J},
    h_{k,3}^{{\rm ap},J},
    h_{k,4}^{{\rm ap},J}
    )
    \mapsto 
    \norm{
        (c+\twap)  \pa_x h_{k}^{{\rm ap},J} 
        -
        i h_{k}^{{\rm ap},J} 
        -
        g_k
    }_{L^2(J)}^2.
\end{align*}

With these pieces, we are able to construct continuous approximations of $h_k$, namely $h_k^{\rm ap}$, for every $k=1,\ldots,N+1$. 

The construction of $h_k$ from $h_k^{\rm ap}$ is done using the following idea.
Let $J_1,J_2\in\mathcal{J}$ be two consecutive intervals with a common endpoint $x_\ast := J_1\cap J_2$. On each interval $J$, we have:
\begin{itemize}
    \item A local approximation $h^{{\rm ap},J}_{N+1}$ on $J$ of the function $h^{J}_{N+1}$ solving \eqref{eqn:exis_local};
    \item For each $k=1,\dots,N$, a local approximation $h^{{\rm ap},J}_k$ of the function $h^{J}_{k}$ solving \eqref{eqn:exis_local}.
\end{itemize}

Since the equation is linear, any other smooth solution $\ti{h}_k^J$ on $J$ to \eqref{eqn:exis_local} is written as 
\begin{align*}
    \ti{h}_k^J = C  h^{J}_{N+1} + h^{J}_k,
\end{align*}
for a suitable constant $C\in\mathbb{C}$. This allows us to choose $C$ on each subinterval so that $h_k^{\rm ap}$ is continuous and satisfies the same initial condition as $h_k$, that is $h_k(0)=\delta_{k,N+1}$. 

Fix $k=1,\ldots,N+1$ and assume that we have already constructed on $J_1$ the approximation $h_k^{\rm ap}|_{J_1}$, of the function $h_k$ solving \eqref{eqn:exis_global}. To extend it to $J_2$ by continuity, we choose $C$ so that the value at the interface $x_\ast$ coincides:
\begin{align*}
    h_k^{\rm ap}|_{J_1}(x_\ast)=
    C  h^{{\rm ap},J_2}_{N+1}(x_\ast) + h^{{\rm ap},J_2}_k(x_\ast).
\end{align*}

Equivalently,
\begin{align*}
    C
    = \frac{h_k^{\rm ap}|_{J_1}(x_\ast) - h^{{\rm ap},J_2}_k(x_\ast)}{h^{{\rm ap},J_2}_{N+1}(x_\ast)}.
\end{align*}

Hence, we can continuously extend $h_k^{\rm ap}|_{J_1}$ to $h_k^{\rm ap}|_{J_1\cup J_2}$
\begin{align}
    h_k^{\rm ap}|_{J_2}(x)
    := C  h^{{\rm ap},J_2}_{N+1}(x) + h^{{\rm ap},J_2}_k(x).
    \label{def:hjexisap}
\end{align}

We apply this procedure iteratively from $x=0$, imposing
\begin{align}
    h_k^{\rm ap}(0)=\delta_{k,N+1}
    \label{def:hjapexis:iniconds}
\end{align}
thereby obtaining a continuous approximation on the whole interval $[-\pi,\pi]$.

The routine \verb|_ode_values()| from \verb|load_data.py| splits the domain into positive and negative parts using \verb|_split_blocks_by_domain()|, and constructs the solutions on each domain using \verb|_cons_sol_0_x|.

\paragraph{Working with the approximations.}
Fix one of the intervals $J=[x_0-r,x_0+r]\in\mathcal{J}$ and $k=1,\ldots,N+1$. On $J$, the approximating function $h^{\rm ap}:=h_k^{\rm ap}$ is a polynomial of degree $N_r=5$ of the form
\begin{align}\label{eqn:hk_ap_local}
    h^{\rm ap}(x)
    =
    \sum_{l=0}^{N_r} \al^J_l (x-x_0)^l,
\end{align}
where the coefficients $\al^J_l$ have been determined from the stored data \verb|exis_data.txt| and the iterative process from \eqref{def:hjexisap} and \eqref{def:hjapexis:iniconds}.

\begin{defi}\label{def:CSEfuncs}
    Let $J=[x_0-r,x_0+r]$. Fix $l\geq 0$ and $j\in\Z$, define:
    \begin{align*}
        C_J^{l,j} :=
        \int_J (x-x_0)^l  \cos(jx)\dx,\qquad
        S_J^{l,j} :=
        \int_J (x-x_0)^l  \sin(jx)\dx,\qquad
        \overline{E}_J^{l,j} :=
        \int_J (x-x_0)^l  e^{-ijx}\dx.
    \end{align*}
\end{defi}

With the notation introduced in Definition~\ref{def:CSEfuncs}, $j$-th Fourier coefficient of $h^{\rm ap}(x)$ is computed as follows:
\begin{align*}
    (\ha{h^{\rm ap}})_j 
    =
    \frac{1}{2\pi}
    \sum_{J\in\mathcal{J}}
    \sum_{l=0}^{N_r}
    \al_l^J
     \overline{E}_J^{l,j}.
\end{align*}

\begin{note}
    The integrals from Definition~\ref{def:CSEfuncs} are thought to be precomputed. For example, the routine \verb|build_CS_for_jj()| from \verb|auxiliar_funcs.py| does this job. In particular constructs $C_J^{l,j}$ and $S_J^{l,j}$ with $l$ from $0$ up to a specified order, and $j$ fixed. In addition, the function \verb|precompute_expo_vals()|, which builds, for $l$ from $0$ up to a specified order, and $j=$ \verb|n_min|,...,\verb|n_min| $+$ \verb|total|,  the quantities
    \begin{align*}
        \overline{E}_J^{l,j} = 
        C_J^{l,j} - i S_J^{l,j}.
    \end{align*}
\end{note}

\begin{proofsth}{Lemma~\ref{lemma:ode_exis}}
    
    Although this proof contains some computer-assisted parts, we present it here rather than in Appendix~\ref{sec:compassisted}.
    
    The goal of this section is to provide a rigorous $L^2$ bound for the approximation error
    \begin{align*}
        \tau_{k} := 
        h_k - h_k^{\rm ap}, \qquad k=1,\ldots,N+1. 
    \end{align*}
    
    More precisely, we prove that for every $k=1,\ldots,N+1$,
    \begin{align*}
        \norm{\tau_{k}}_\XLIpi^2 < 
        \rho_k,
    \end{align*}
    where the corresponding quantities $\rho_k$ are listed in the file \verb|exis_resis.txt|.
    
    Notice that the function $\tau_{k}$ satisfies a new ODE on each $J$,
    \begin{align}
        \big(c+\twap(x)\big) 
        \pa_x\tau_{k}(x) &= 
        \big(c+\twap(x)\big) 
        \pa_x h_k (x)
        - 
        \big(c+\twap(x)\big) 
        \pa_x h_k^{\rm ap}(x)
        \nonumber\\
        &= 
        i h_k (x) + g_k(x) -
        \big(c+\twap(x)\big) 
        \pa_x h_k^{\rm ap}(x)
        \nonumber\\
        &= 
        i (\tau_{k} + h_k^{\rm ap}(x))
        + 
        g_k (x)
        -
        \big(c+\twap(x)\big) 
        \pa_x h_k^{\rm ap}(x)
        \nonumber\\
        &= 
        i \tau_{k}(x) - \xi_k(x),
        \label{eqn:ode:tauexisj}
    \end{align}
    where the function $\xi_k$ is explicit and defined by 
    \begin{align}\label{def:xik_exis}
        \xi_k :=
        (c+\twap) \pa_x h_k^{\rm ap} - i h_k^{\rm ap} - g_k.
    \end{align}
    
    Since $h_k$ is smooth and $h_{k}^{\rm ap}$ is piecewise smooth, the derivative of $\tau_{k}$ will not be continuous on every point. However, we can obtain $\tau_k$ from $\xi_k(x)$ using Duhamel's formula,
    \begin{align}
        \tau_{k}(x) & = 
        \underbrace{\tau_{k}(0)}_0 - 
        \PiExis(x) \int_0^x \frac{\beta(y)}{\PiExis(y)}  
        \xi_k(y)\dy
        \nonumber
        \\
        & =
        - J \xi_k(x),
        \label{eqn:tauexis_Jxi}
    \end{align}
    where $J$ is defined in \eqref{def:Jexis}.
    \begin{rem}
        Continuity of $\tau_{k}$ is needed to apply Duhamel's formula. In our construction, $h_k^{\rm ap}$ is continuous on $[-\pi,\pi]$, and therefore $\tau_{k}=h_k-h_k^{\rm ap}$ is continuous as well.
    \end{rem}
    
    Therefore, we apply Lemma~\ref{lemma:J_L2L2_exis} to obtain a $\XLIpi$ bound of $\tau_{k}$ in terms of $\xi_k$. 
    
    The whole routine is carried out by the function \verb|ode_exis_L2()| from \verb|lemmas.py|.
\end{proofsth}

\paragraph{Decomposition of $\norm{\xi_k}_\XLIpi^2$.}
This section is devoted to explain the formulas we use to estimate $\norm{\xi_k}_\XLIpi$ in the proof of Lemma~\ref{lemma:ode_exis}.

We decompose the $\XLIpi$-norm into a sum of local $L^2$ contributions over the subintervals $J\in \mathcal{J}$. Accordingly, for each fixed interval $J$ we expand $\norm{\xi_k}_{L^2(J)}$ as follows:
\begin{align*}
    \norm{\xi_k}_{L^2(J)}^2
    =
    \sum_{l=1}^5 \Iexis_l(h_k^{\rm ap};J) + 
    \norm{g_k}_{L^2(J)}^2
\end{align*}
where
\begin{subequations}\label{def:Iexis}
    \begin{empheq}{align}
        \Iexis_1(h;J)
        &:=
        \norm{h}_{L^2(J)}^2,
        \\
        \Iexis_2(h;J)
        &:=
            \norm{
                (c+\twap)
                \pa_x h
            }
            _{L^2(J)}^2,
        \\
        \Iexis_3(h;J)
        &:=
        2\Re
        \big(
            (c+\twap)\pa_x h, 
            -i h
        \big)_{L^2(J)}
        ,\\
        \Iexis_{4,k}(h;J)
        &:=
        2\Re
        \big(
            i h, g_k
        \big)
        _{L^2(J)}
        \\
        \Iexis_{5,k}(h;J)
        &:=
        2\Re
        \big(
            (c+\twap)\pa_x h
            , 
            - g_k
        \big)
        _{L^2(J)}.
    \end{empheq}
\end{subequations}

Recall that $g_k$ is given by
\begin{align*}
    g_k(x):=\frac{i}{2} 
    k \twap_k+
    k\sum_{m=1}^{N-k}
    \twap_{k+m} 
    \sin(m x).
\end{align*}
Notice that we can compute $\norm{g_k}_\XLIpi$ using Parseval's identity,
\begin{align}
    \frac{1}{2\pi}\norm{g_k}_\XLIpi^2 
    = 
    \sum_{j\in\Z} |(\ha{g_k})_j|^2
    = 
    \frac{k^2}{4} 
    (\twap_k)^2 + 
    \frac{k^2}{2}
    \sum_{m=1}^{N-k}
    (\twap_{k+m})^2,\label{eqn:I6exis}
\end{align}
We compute $\norm{g_k}_\XLIpi^2$ for all $k=1,\ldots,N+1$, where $g_{N+1}=0$, in \verb|gk_exis_norm_sq()|.

In order to obtain formulas for \eqref{def:Iexis}, we will write $h$ on $J$ as 
\begin{align}
    h(x):=\sum_{l=0}^{N_r}
    \al_l 
    (x-x_0)^l.
    \label{def:localhalpha}
\end{align}

The quantity $\Iexis_{1}(h;J)$ is computed as follows:
\begin{align}  
    \Iexis_1(h;J)&=
    \norm{h}_{L^2(J)}^2
    =
    \sum_{\substack{l_1=0\\l_2=0}}^{N_r}
        \al_{l_1} 
        \overline{\al_{l_2}} 
        \IIexis_1(l_1,l_2;J),
        \nonumber\\
    \IIexis_1(l_1,l_2;J)&:=
        C_{J}^{l_1+l_2,0},\label{def:II1exis}
\end{align}
where $C_{J}^{l,j}$ is defined in Definition~\ref{def:CSEfuncs} and 
\begin{align*}
    C_{J}^{l_1+l_2,0} = \int_J (x-x_0)^{l_1+l_2} \dx = 
    \frac{r^{l_1+l_2+1}}{l_1+l_2+1} (1+(-1)^{l_1+l_2}).
\end{align*}

$\Iexis_2(h;J)$ and $\Iexis_3(h;J)$ are computed in a similar way.
Define $\twap_0:=c$ and
\begin{align}\label{def:DSexis}
    D[v,v](d) &:= \sum_{j_1-j_2=d}
    v_{j_1} 
    \overline{v_{j_2}},
    \qquad 
    \qquad 
    S[v,v](s) := \sum_{j_1+j_2=s} 
    v_{j_1} 
    \overline{v_{j_2}},\\
    \IIexis_{2}(l_1,l_2;J)
    &:=
    \frac{l_1 l_2}{2}
    \sum_{\substack{j_1=0\\j_2=0}}^N 
    \twap_{j_1}
    \twap_{j_2} 
    \big(
        C_J^{l_1+l_2-2,j_1-j_2}
        +
        C_J^{l_1+l_2-2,j_1+j_2}
    \big)\nonumber
    \\
    &=
    \frac{l_1 l_2}{2}\Big(
    \sum_{d=-N}^N 
        D[\twap,\twap](d) 
        C_J^{l_1+l_2-2,d}
    +
    \sum_{s=0}^{2N} 
        S[\twap,\twap](s) 
        C_J^{l_1+l_2-2,s}
        \Big)
        \label{def:II2exis}.
\end{align}
\begin{rem}
    We only have defined $C_{J}^{l,j}$ for $l\geq 0$. In \eqref{def:II2exis}, $l_1+l_2-2 < 0$ implies $l_1l_2=0$, hence $\IIexis_{2}(l_1,l_2;J)=0$. For simplicity, we adopt the convention of allowing $C_{J}^{l,j}$ with negative $l$  whenever it is multiplied by a zero coefficient throughout the rest of the section.
\end{rem}

Then, for $\Iexis_2(h;J)$ we obtain
\begin{align*}
    \Iexis_2(h;J)
    &=
    \norm{
        (c+\twap) 
        \pa_x h
    }_{L^2(J)}^2
    \nonumber
    \\
    &=
    \sum_{\substack{l_1=0\\l_2=0}}^{N_r}
    \sum_{\substack{j_1=0\\j_2=0}}^N 
    \int_J
        l_1 l_2 
        \twap_{j_1}
        \twap_{j_2} 
        \al_{l_1}
        \overline{\al_{l_2}} 
        \cos(j_1x)\cos(j_2x) 
        (x-x_0)^{l_1+l_2-2}
    \dx
    \nonumber
    \\
    &=
    \sum_{\substack{l_1=0\\l_2=0}}^{N_r}
    \al_{l_1} 
    \overline{\al_{l_2}} 
    \frac{l_1 l_2}{2}
    \sum_{\substack{j_1=0\\j_2=0}}^N 
        \twap_{j_1}
        \twap_{j_2} 
        \big(
            C_J^{l_1+l_2-2,j_1-j_2}
            +
            C_J^{l_1+l_2-2,j_1+j_2}
        \big)
    \nonumber
    \\
    &=
    \sum_{\substack{l_1=0\\l_2=0}}^{N_r}
    \al_{l_1} 
    \overline{\al_{l_2}} 
    \IIexis_{2}(l_1,l_2;J).
\end{align*}

For $\Iexis_3(h;J)$ we have that
\begin{align*}
    \Iexis_3(h;J)&=
    2\Re
    \big(
        (c+\twap)\pa_x h, 
        -i h
    \big)_{L^2(J)}
    =
    2\Re
    \int_J 
        i \big(c+\twap(x)\big) 
        \pa_x h(x) 
        \overline{h(x)}
    \dx
    \\&=
    2\Re
    \sum_{\substack{l_1=0\\l_2=0}}^{N_r}
    \sum_{j=0}^N 
        il_1
        \twap_{j} 
        \al_{l_1} 
        \overline{\al_{l_2}} 
        C_J^{l_1+l_2-1,j}
    =
    \sum_{\substack{l_1=0\\l_2=0}}^{N_r}
        \Re\big(\al_{l_1} 
        \overline{\al_{l_2}} 
        \IIexis_{3}(l_1,l_2;J)\big).
\end{align*}
where $\IIexis_{3}(l_1,l_2;J)$ is defined as
\begin{align}
    \IIexis_{3}(l_1,l_2;J) :=
    2il_1\sum_{j=0}^N 
        \twap_{j} 
        C_J^{l_1+l_2-1,j},\label{def:II3exis}
\end{align}

Finally, we proceed to compute $\Iexis_{4,k}(h;J)$ and $\Iexis_{5,k}(h;J)$. Here, we will also need $S_{J}^{l,j}$ from Definition~\ref{def:CSEfuncs}. Then,
\begin{align*}
    \Iexis_{4,k}(h;J)&=
    2\Re
    \big(
        i h, g_k
    \big)_{L^2(J)}
    =
    2\Re
    \int_J
        \big(
        ih(x)\overline{g_k(x)}
        \big)\dx
    \\
    &=
    2k
    \sum_{l=0}^{N_r}
    \Re
    \int_J
    i\al_l 
    \Big(
        -\frac{i}{2}
        \twap_k
        +
        \sum_{m=1}^{N-k} 
            \twap_{m+k}
            \sin(mx)
    \Big) 
    (x-x_0)^l
    \dx
    \\
    &=
    \sum_{l=0}^{N_r}
    \Re\big(
    \al_l
    \IIexis_{4}(l,k;J)
    \big),
\end{align*}
where
\begin{align}
    \IIexis_{4}(l,k;J)
    :=2k\Big(
            \frac{1}{2}
            \twap_k 
            C_J^{l,0}
            +
            i\sum_{m=1}^{N-k} 
                \twap_{m+k} 
                S_J^{l,m}
        \Big).
    \label{def:II4exis}
\end{align}

Finally, we have for $\Iexis_{5,k}(h;J)$ the computations below
\begin{align*}
    \Iexis_{5,k}(h;J)&=
    -2\Re
    \int_J
        (c+\twap(x)) 
        \pa_x h(x) 
        \overline{g_k(x)}
    \dx
    \\
    &=
    \sum_{l=0}^{N_r}
    \Re(
    \al_l
    \IIexis_{51,k}(l;J)
    ),
\end{align*}
where
\begin{subequations}\label{def:II5exis}
    \begin{empheq}{align}
    \IIexis_{51,k}(l;J)&:=
        \sum_{j=0}^N
        lk 
        \twap_{j} 
        \Big(i
            C_J^{l-1,j} 
            \twap_k
            -
            \sum_{m=1}^{N-k}
                \twap_{m+k}
                \big(
                    S_J^{l-1,m+j}
                    +
                    S_J^{l-1,m-j}
                \big)
        \Big)
        \nonumber
        \\
        &=
        ilk \twap_k\sum_{j=0}^N
        \twap_{j} 
            C_J^{l-1,j}
        -
        lk
        \sum_{m=1}^{N-k}
            \twap_{m+k}
            \IIIexis_{5,m}(l;J),
        \\
    \IIIexis_{5,m}(l;J)
        &:=
        \sum_{j=0}^N
            \twap_{j} 
            \big(
                S_J^{l-1,m+j}
                +
                S_J^{l-1,m-j}
            \big).
    \end{empheq}
\end{subequations}

\begin{note}
    In order to compute \eqref{def:Iexis} for every $k=1,\ldots,N+1$, we precompute some data. In particular, there is a different routine for each \eqref{def:II1exis}--\eqref{def:II5exis} and most of them are precomputed in \verb|pre_resis_exis()| for each different interval $J$. We then use these precomputed data to compute the sum of the terms of \eqref{def:Iexis} for a fixed interval $J$, this part is carried out by the routine \verb|I_sols_resi_exis_from_precomp()|.
\end{note}

\subsection{Stability argument}\label{sec:ode_stab}
This section is analogous to Appendix~\ref{sec:ode_exis}, but now working with  \eqref{def:Piplusminu} and \eqref{def:hkplusminu} instead of \eqref{def:hk}. Indeed, in \eqref{def:Piplusminu} and \eqref{def:hkplusminu} we construct $4N+2$ functions depending on the parameters $\la$ and $\te$. We introduce the following notation, which fixes an ordering of these functions:
\begin{align}
    h_k=\begin{cases}
        h_{k}^{+J^{+}},&1\leq k\leq N\\
        h_{k-N}^{-J^{+}},&N+1\leq k\leq 2N\\
        h_{k-2N}^{+J^{-}},&2N+1\leq k\leq 3N\\
        h_{k-3N}^{-J^{-}},&3N+1\leq k\leq 4N\\
        \Pi^+,&k=4N+1,\\
        \Pi^-,&k=4N+2.
    \end{cases}
    \label{def:hkstabsorted}
\end{align}

Since we have already described how we construct the continuous approximations in Appendix~\ref{sec:ode_exis}, our goal in this section is to prove an estimate for the difference between $h_k$ and the functions approximating it: $h_k^{\rm ap}$. 
The data used to construct the $h_k^{\rm ap}$ with $\la=\laap$ is found in \verb|sing_p_data.txt| and \verb|sing_m_data.txt|, while the data used to construct the approximations with $\la=\getvar{la_reg}$ is stored in \verb|regu_p_data.txt| and \verb|regu_m_data.txt|.

These functions are solutions of the following ODEs
\begin{align*}
    \big(c+\twap(x)\big)
    \pa_x h_k^{\pm J^+}(x) &=
    (i+\la) h_k^{\pm J^+}(x) -
    i\te^+
    \big(c+\twap(x)\big)
     h_k^{\pm J^+}(x) +
    g_{k,\te^\pm}^\pm,\\
    \big(c+\twap(x)\big)
    \pa_x h_k^{\pm J^-}(x) &=
    (i-\la) h_k^{\pm J^-}(x) -
    i\te^-
    \big(c+\twap(x)\big)
     h_k^{\pm J^+}(x) +
    g_{k,\te^\mp}^\pm,
\end{align*}
where $g_{k,\te}^+$ and $g_{k,\te}^-$ are given in \eqref{def:gkplusminu} for $k=1,\ldots,N$ and $g_{N+1,\te}^+=g_{N+1,\te}^-=0$.

The homogeneous equation for $h_k$ with $k=1,\ldots,2N$ is the equation that $h_{4N+1}=\Pi^+$ satisfies, and the homogeneous equation for $h_k$ with $k=2N+1,\ldots,4N$, is the equation satisfied by $h_{4N+2}=\Pi^-$. Hence, we use the routine \verb|_ode_values()| to construct the continuous approximations by joining the pieces as we did in \eqref{def:hjexisap} and \eqref{def:hjapexis:iniconds}. We then arrange them as in \eqref{def:hkstabsorted} using \verb|_stab_fixed()|.

The two lemmas we show are the following:

\begin{lemma}\label{lemma:ode_stab:regu}
    Let $\la=\getvar{la_reg}$ and $\te=\frac{1}{3}$, $h_k$ from \eqref{def:hkstabsorted}, and $h_k^{\rm ap}$ constructed from \verb|regu_p_data.txt| and \verb|regu_m_data.txt|. 
    Then it holds that
    \begin{align*}
        \norm{h_k-h_k^{\rm ap}}_\XLIpi^2
        < 
        \rho_k,
    \end{align*}
    where the quantities $\rho_k$ are explicit and can be found in the file \verb|regu_resis.txt| from the supplementary material.
\end{lemma}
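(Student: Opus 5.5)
The plan is to mirror the proof of Lemma~\ref{lemma:ode_exis}, with the operators $J^\pm$ from \eqref{def:Jplusminu} (now evaluated at $\la=\getvar{la_reg}$, $\te=\frac13$) in place of $J$.

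\textbf{Step 1: the error equation.} For each $k$, set $\tau_k:=h_k-h_k^{\rm ap}$. By construction in \eqref{def:hjexisap}--\eqref{def:hjapexis:iniconds}, $h_k^{\rm ap}$ is continuous on $[-\pi,\pi]$ and has the same initial value at $0$ as $h_k$, so $\tau_k(0)=0$. On each subinterval $J\in\mathcal{J}$ the error satisfies
\begin{align*}
(c+\twap)\pa_x\tau_k=\big((i\pm\la)-i\te^\pm(c+\twap)\big)\tau_k-\xi_k,
\end{align*}
with the sign chosen according to whether $h_k$ belongs to the $J^+$ or $J^-$ family in \eqref{def:hkstabsorted}. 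The residual is explicit:
\begin{align*}
\xi_k:=(c+\twap)\pa_x h_k^{\rm ap}-\big((i\pm\la)-i\te^\pm(c+\twap)\big)h_k^{\rm ap}-g_k,
\end{align*}
where $g_k$ is the appropriate $g^\pm_{k,\te^\pm}$, or $0$ for $\Pi^\pm$. Since $\tau_k$ is continuous, Duhamel's formula gives $\tau_k=-J^\pm\xi_k$.

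\textbf{Step 2: the operator bound.} I then need an $L^2\to L^2$ bound for $J^\pm$ at $\la=\getvar{la_reg}$. This follows from the proof of Lemma~\ref{lemma:J_L2L2_stab} verbatim, with $\Re(\laap)$ replaced by $\Re(\la)$. Here $\Re(\la)=0$, so $|\Pi^\pm|=1$ and the relevant weight reduces to $\kappa_1$ from Lemma~\ref{lemma:kappa1}. Hence
\begin{align*}
\norm{\tau_k}_\XLIpi^2\le \getvar{kappa1}\,\norm{\xi_k}_\XLIpi^2.
\end{align*}
This makes the regular case easier than the singular one: no new $\kappa_3$-type constant is needed.

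\textbf{Step 3: enclosing the residuals.} It remains to bound $\norm{\xi_k}_\XLIpi^2$ rigorously. I would expand $\norm{\xi_k}_{L^2(J)}^2$ interval by interval, as in \eqref{def:Iexis}. The terms are now the $L^2$ norms of $(c+\twap)\pa_x h$, of the multiplier term $\big((i\pm\la)-i\te^\pm(c+\twap)\big)h$ and of $g_k$, together with the cross terms. Each is expressed through the local polynomial coefficients $\al_l^J$ and the precomputed integrals $C_J^{l,j},S_J^{l,j},\overline{E}_J^{l,j}$ of Definition~\ref{def:CSEfuncs}. The new features relative to the existence case are:
\begin{itemize}
\item the extra multiplication by $(c+\twap)$ in the $i\te^\pm$ term, handled with the same $D[\twap,\twap]$ and $S[\twap,\twap]$ convolutions as in \eqref{def:II2exis};
\item the exponential form of $g^\pm_{k,\te}$ in \eqref{def:gkplusminu}, which couples to $\overline{E}_J^{l,j}$ rather than to sines;
\item $\norm{g^\pm_{k,\te}}^2$, which is again given exactly by Parseval.
\end{itemize}
All sums are evaluated in interval/ball arithmetic, summed over $\mathcal{J}$, multiplied by the constant from Step~2 and compared with the stored $\rho_k$.

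The main obstacle is Step~3. There are $4N+2$ functions on many subintervals, each with $N$-term trigonometric convolutions, so the computation is only feasible if the $J$-dependent integrals are precomputed once and reused for all $k$. Cancellation between $(c+\twap)\pa_x h^{\rm ap}$ and the other terms must also be captured without excessive interval overestimation. To get tight enclosures I would first try computing $\norm{\xi_k}^2$ as a Hermitian quadratic form in the $\al^J_l$, rather than by the triangle inequality.
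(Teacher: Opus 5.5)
Your proposal is correct and is essentially the paper's argument. You write the error via Duhamel as $\tau_k=-J^\pm\xi_k$, note that $\Re(\la)=0$ gives $|\Pi^\pm|=1$ so the $\kappa_1$-bound controls $J^\pm$, and enclose the residual norms interval by interval as quadratic forms in the local coefficients; this is the paper's decomposition \eqref{def:Istab}, with Parseval for the $g$ terms. The only cosmetic difference is that you rederive the $J^\pm$ bound from the proof of Lemma~\ref{lemma:J_L2L2_stab} (where $\kappa_3^\pm$ reduces to $\kappa_1$), whereas the paper cites Lemma~\ref{lemma:J_L2L2_exis} directly.
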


\begin{lemma}\label{lemma:ode_stab:sing}
    Let $\la=\laap$ and $\te=\frac{1}{3}$, $h_k$ from \eqref{def:hkstabsorted}, and $h_k^{\rm ap}$ constructed from \verb|sing_p_data.txt| and \verb|sing_m_data.txt|. 
    Then it holds that
    \begin{align*}
        \norm{h_k-h_k^{\rm ap}}_\XLIpi^2
        <
        \rho_k,
    \end{align*}
    where the quantities $\rho_k$ are explicit and can be found in the file \verb|sing_resis.txt| from the supplementary material.
\end{lemma}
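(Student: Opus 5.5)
The last statement is Lemma~\ref{lemma:ode_stab:sing}: an $L^2$ error bound for the piecewise-polynomial approximations $h_k^{\rm ap}$ of the $4N+2$ functions in \eqref{def:hkstabsorted}, at $\la=\laap$ and $\te=\frac13$. I would copy the proof of Lemma~\ref{lemma:ode_exis}, replacing the scalar propagator $J$ by $J^\pm$ and the $\kappa_1$ bound by the $\kappa_3^\pm$ bound.

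\emph{Step 1: error equation and Duhamel representation.} Fix $k$ and set $\tau_k:=h_k-h_k^{\rm ap}$. The functions with $1\le k\le 2N$ and $k=4N+1$ solve an equation of the form
\begin{align*}
(c+\twap)\pa_x h=\big((i+\la)-i\te^+(c+\twap)\big)h+g,
\end{align*}
with $g$ equal to the relevant $g^\pm_{k,\te^\pm}$, or zero for $\Pi^+$. Define the explicit residual
\begin{align*}
\xi_k:=(c+\twap)\pa_x h_k^{\rm ap}-\big((i+\la)-i\te^+(c+\twap)\big)h_k^{\rm ap}-g.
\end{align*}
Then on each interval $J\in\mathcal{J}$,
\begin{align*}
(c+\twap)\pa_x\tau_k=\big((i+\la)-i\te^+(c+\twap)\big)\tau_k-\xi_k.
\end{align*}
The gluing procedure \eqref{def:hjexisap}--\eqref{def:hjapexis:iniconds} makes $h_k^{\rm ap}$ continuous with $h_k^{\rm ap}(0)=h_k(0)$. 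Hence $\tau_k$ is continuous with $\tau_k(0)=0$, and Duhamel's formula gives $\tau_k=-J^+\xi_k$, where $J^+$ is as in \eqref{def:Jplusminu}. The case $k>2N$ is the same with $J^-$ and $\Pi^-$.

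\emph{Step 2: reduce to the residual.} Lemma~\ref{lemma:J_L2L2_stab} gives
\begin{align*}
\norm{\tau_k}_\XLIpi^2<\getvar{kappa3}\,\norm{\xi_k}_\XLIpi^2,
\end{align*}
so it suffices to certify $\getvar{kappa3}\norm{\xi_k}_\XLIpi^2<\rho_k$. On each $J$ the approximation $h_k^{\rm ap}$ is a polynomial $\sum_l\al_l(x-x_0)^l$, and $c+\twap$ and $g$ are trigonometric polynomials. I would therefore expand $\norm{\xi_k}^2_{L^2(J)}$ exactly as in \eqref{def:Iexis}:
\begin{itemize}
\item the term $\norm{(c+\twap)\pa_x h}^2$, using $D[\twap,\twap]$ and $S[\twap,\twap]$ with $C_J^{l,j}$;
\item $\norm{\mu h}^2$ for the now complex coefficient $\mu=(i+\la)-i\te^+(c+\twap)$, which also requires $C_J$-products of $\twap$;
\item the cross terms, through $C_J^{l,j}$, $S_J^{l,j}$ and $\overline{E}_J^{l,j}$;
\item $\norm{g}^2$, computed by Parseval since $g^\pm_{k,\te}$ are explicit exponential sums.
\end{itemize}
All of these quantities are evaluated in interval arithmetic and summed over $J$.

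\emph{Main obstacle.} I expect the difficulty to be numerical rather than conceptual. $\xi_k$ is tiny, so $\norm{\xi_k}^2$ arises from cancellation between large terms, and the expansion must be evaluated with enough precision to keep the enclosures sharp. One option is to compute $\xi_k$'s polynomial coefficients directly by multiplying out on each $J$ before squaring, using Taylor models of $\twap$ on $J$ with rigorous remainders. Two further points need checking: $|\Pi^\pm|$ grows like $e^{\pm\Re\laap\int\beta}$, which is already built into $\kappa_3^\pm$; and every $\rho_k$ must hold simultaneously for all $4N+2$ functions.
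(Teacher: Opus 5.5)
Your proposal is correct and is essentially the paper's argument. The paper also obtains $\tau_k=-J^{\pm}\xi_k$ by Duhamel's formula, bounds $\norm{\tau_k}^2_{\XLIpi}$ by the constant $\getvar{kappa3}$ of Lemma~\ref{lemma:J_L2L2_stab} times $\norm{\xi_k}^2_{\XLIpi}$, and computes $\norm{\xi_k}^2_{L^2(J)}$ through the same expansion ($\Istab_1,\ldots,\Istab_5$ built from $C_J$, $S_J$, $\overline{E}_J$, plus Parseval for the forcing term), evaluated directly in ball arithmetic rather than through the Taylor-model variant you mention as an option.
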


\begin{proofsth}{Lemma~\ref{lemma:ode_stab:regu} and Lemma~\ref{lemma:ode_stab:sing}}
    The argument is the same as in the proof of Lemma~\ref{lemma:ode_exis}. We estimate the $\XLIpi$-norm of the residuals and use $L^2$ estimates for $J^\pm$.
    
    Observe that $|\Pi(x)| = 1$ for $x \in \RR$ and $\la = \getvar{la_reg}$. Hence, we use Lemma~\ref{lemma:J_L2L2_exis} to estimate $J^\pm$, instead of Lemma~\ref{lemma:J_L2L2_stab}, which is used for $\la = \laap$.

    The computations are carried out by \verb|ode_stab_resis_sing()| and \verb|ode_stab_resis_regu()|. The former calls the latter with the required parameters.
\end{proofsth}

We introduce additional notation that will be useful for computing the residual terms.
Recall the definition \eqref{def:hkplusminu}. Namely,
\begin{align*}
    h_{k}^{-J^{+}} &:= J^{+}  g^{-}_{k,\te^{-}}, 
    &\qquad
    h_{k}^{+J^{+}} &:= J^{+}  g^{+}_{k,\te^{+}},\\
    h_{k}^{-J^{-}} &:= J^{-}  g^{-}_{k,\te^{+}}, 
    &\qquad
    h_{k}^{+J^{-}} &:= J^{-}  g^{+}_{k,\te^{-}},
\end{align*}
where, for each $k=1,\dots,N$ and $\te\in\mathbb{R}$,
\begin{align*}
    \te^+&:=\te,\qquad \te^-:=1-\te,\\
    g^{-}_{k,\te}(x)
    &:=(k-\te)\frac{i}{2}
    \sum_{m=0}^{N-k}\twap_{m+k}  e^{-i(1+m)x},
    \\
    g^{+}_{k,\te}(x)
    &:=(k-\te)\frac{i}{2}
    \sum_{m=0}^{N-k}\twap_{m+k}  e^{imx}.
\end{align*}

For $s\in\{\pm1\}$ set
\begin{align*}
    h_k^{(s_J)(s_g)}&:=
    \begin{cases}
        h^{+J^{+}},&s_J=1, s_g=1,\\
        h^{-J^{+}},&s_J=1, s_g=-1,\\
        h^{+J^{-}},&s_J=-1, s_g=1,\\
        h^{-J^{-}},&s_J=-1, s_g=-1,
    \end{cases}&
    J^{(s)}&:=
    \begin{cases}J^+,& s=+1,\\ 
    J^-,& s=-1.\end{cases}
    \\
    g^{(s)}_{k,\te}&:=
    \begin{cases}g^+_{k,\te},& s=+1,\\ g^-_{k,\te},& s=-1.\end{cases}
    &
    \te^{(s)} &:=
    \begin{cases}\te^+,& s=+1,\\ \te^-,& s=-1,\end{cases}
\end{align*}

Define $\xi_k^{s_J,s_g}$ by
\begin{align}
    \xi_k^{s_J,s_g}(x)
    &:=
    (c+\twap(x))\partial_x h_k^{(s_J)(s_g)}(x)\nonumber\\
    &\qquad-
    \Big((i+s_J\laap)-i \te^{(s_J)}\big(c+\twap(x)\big)\Big)h_k^{(s_J)(s_g)}(x)
    -
    g^{(s_g)}_{k,\te^{(s_J s_g)}}(x).\label{def:resi_stab}
\end{align}

The study of $\Pi^+$ and $\Pi^-$ corresponds to the particular case of \eqref{def:resi_stab} where the affine term vanishes, i.e. with $g^{(s_g)}_{k,\theta^{(s_J s_g)}} = 0$.

\paragraph{Decomposition of $\norm{\xi_k^{s_J,s_g}}_\XLIpi^2$.}

To get a lighter notation, we set $\twap_0 := c$ and
\begin{align}
    W^{(s_J)}_{j}
    :=
    \begin{cases}
        i+s_J \laap - i \te^{(s_J)}  c, & j=0,\\
       -i \te^{(s_J)} \twap_{j}, & j=1,\ldots,N.
    \end{cases}\qquad
    W^{(s_J)}(x):=\sum_{j=0}^N
    W^{(s_J)}_{j} \cos(jx).
    \label{def:ode:Wpm}
\end{align}

If we write \eqref{def:resi_stab} with this notation, we obtain
\begin{align}\label{eqn:resi_stab}
    [\xi_k^{s_J,s_g}h](x)
    =
    \big(c+\twap(x)\big) \pa_x h(x)
    -
    W^{(s_J)} h(x)
    -
    g^{(s_g)}_{k,\te^{(s_J s_g)}}(x).
\end{align}

Hence, taking the $L^2(J)$-norm yields
\begin{align*}
    \norm{\xi_k^{s_J,s_g}}_{L^2(J)}^2
    &=
    \Istab_{1,s_J}(h_k^{(s_g)J^{(s_J)}};J)+
    \Istab_{2}(h_k^{(s_g)J^{(s_J)}};J)+
    \Istab_{3,s_J}(h_k^{(s_g)J^{(s_J)}};J)\nonumber\\&\qquad + 
    \sum_{l=4}^5 \Istab_{l,s_J,s_g,k}(h_k^{(s_g)J^{(s_J)}};J)+
    \norm{g_{k,\te^{(s_Js_g)}}^{(s_g)}}_{L^2(J)}^2,
\end{align*}
where
\begin{subequations}\label{def:Istab}
    \begin{empheq}{align}
        \Istab_{1,s_J}(h;J)&:=
        \norm{
            W^{(s_J)}h
        }_{L^2(J)}^2,
        \label{def:I1stab}
        \\
        \Istab_2(h;J)&:=
        \norm{
            (c+\twap) \pa_x h
        }_{L^2(J)}^2=\Iexis_2(h;J),\\
        \Istab_{3,s_J}(h;J)&:=
        2 \Re \big(
            -W^{(s_J)}h
            , 
            (c+\twap) \pa_x h
        \big)_{L^2(J)}
        \label{def:I3stab}
        ,\\
        \Istab_{4,s_J,s_g,k}(h;J)
        &:=
        2\Re \big(
            W^{(s_J)} h
            , 
            g_{k,\te^{(s_Js_g)}}^{(s_g)}
        \big)_{L^2(J)},
        \\
        \Istab_{5,s_J,s_g,k}(h;J)&:=
        2\Re\big(
            (c+\twap) \pa_x h
            , 
            -g_{k,\te^{(s_Js_g)}}^{(s_g)}
        \big)_{L^2(J)}.
    \end{empheq}
\end{subequations}

As we did in \eqref{eqn:I6exis}, we can compute directly $\norm{g_{k,\te^{(s_Js_g)}}^{(s_g)}}_{\XLIpi}^2$ using Parseval's identity. To this end, note that
\begin{align*}
    g_{k,\te^{(s_Js_g)}}^{(s_g)} 
    = 
        (k-\te^{(s_Js_g)})\frac{i}{2}
        \sum_{m=0}^{N-k}\twap_{m+k}  e^{i\frac{(2m+1)s_g-1}{2}x}.
\end{align*}

Therefore, for every $k=1,\ldots,N$ and $s_J,s_g\in\{-1,+1\}$, we obtain
\begin{align}
    \frac{1}{2\pi}\norm{g_{k,\te^{(s_Js_g)}}^{(s_g)}}_{\XLIpi}^2
    &=(k-\te^{(s_Js_g)})^2
    \frac{1}{4}
    \sum_{m=0}^{N-k}|\twap_{m+k}|^2.
    \label{eqn:I6stab}
\end{align}

The other terms are computed using the local expression \eqref{def:localhalpha} of $h$ on $J$. Observe that $\Istab_2(h;J)=\Iexis_2(h;J)$ so we have that
\begin{align*}
    \Istab_{2}(h;J)=
    \sum_{\substack{l_1=0\\l_2=0}}^{N_r}
    \al_{l_1} 
    \overline{\al_{l_2}} 
    \IIexis_{2}(l_1,l_2;J).
\end{align*}
 
For $\Istab_{1}(h;J)$, we obtain
\begin{align*}
    \Istab_{1,s_J}(h;J)=
    \sum_{\substack{l_1=0\\l_2=0}}^{N_r}
        \al_{l_1}
        \overline{\al_{l_2}} 
        \IIstab_{1,s_J}(l_1,l_2;J),
\end{align*}
where
\begin{align}
    \IIstab_{1,s_J}(l_1,l_2;J)&=
    \int_J
    \sum_{\substack{j_1=0\\j_2=0}}^N
    W^{(s_J)}_{j_1}
    \overline{W^{(s_J)}_{j_2}} 
    \cos(j_1 x)
    \cos(j_2 x) (x-x_0)^{l_1+l_2}
    \dx
    \nonumber
    \\
    &=
    \frac{1}{2}
    \sum_{\substack{j_1=0\\j_2=0}}^N
    W^{(s_J)}_{j_1}
    \overline{W^{(s_J)}_{j_2}} 
    \big(
        C_{J}^{l_1+l_2,j_1-j_2}
        +
        C_{J}^{l_1+l_2,j_1+j_2}
    \big)
    \nonumber
    \\
    &=
    \frac{1}{2}\Big(
    \sum_{d=-N}^N 
        D[W^{(s_J)},W^{(s_J)}](d) 
        C_J^{l_1+l_2,d}
    +
    \sum_{s=0}^{2N} 
        S[W^{(s_J)},W^{(s_J)}](s) 
        C_J^{l_1+l_2,s}
        \Big)
    \label{def:II1stab}
\end{align}
Recall that $D[W^{(s_J)},W^{(s_J)}]$ and $S[W^{(s_J)},W^{(s_J)}]$ are defined using \eqref{def:DSexis}.

For $\Istab_{3,s_J}(h;J)$, we find that
\begin{align*}
    \Istab_{3,s_J}(h;J)
    &=2 \Re \big(
        -W^{(s_J)}h
        , 
        (c+\twap) \pa_x h
    \big)_{L^2(J)}
    =
    \sum_{\substack{l_1=0\\l_2=0}}^{N_r}
        \Re\big(\al_{l_1} 
        \overline{\al_{l_2}}
        \IIstab_{3,s_J}(l_1,l_2;J)\big),
\end{align*}
where
\begin{align}
    \IIstab_{3,s_J}(l_1,l_2;J)
    &:=
    -2l_2\int_J
    \sum_{\substack{j_1=0\\j_2=0}}^N
    W^{(s_J)}_{j_1} 
    \twap_{j_2} 
    \cos(j_1 x) 
    \cos(j_2 x) 
    (x-x_0)^{l_1+l_2-1}
    \dx
    \nonumber
    \\
    &=
    -l_2
    \sum_{\substack{j_1=0\\j_2=0}}^N
    W^{(s_J)}_{j_1} 
    \twap_{j_2} 
    \big(
        C_{J}^{l_1+l_2-1,j_1-j_2}
        +
        C_{J}^{l_1+l_2-1,j_1+j_2}
    \big)
    \nonumber
    \\
    &=
    -l_2
    \Big(
    \sum_{d=-N}^N 
        D[W^{(s_J)},\twap](d) 
        C_J^{l_1+l_2-1,d}
    +
    \sum_{s=0}^{2N} 
        S[W^{(s_J)},\twap](s) 
        C_J^{l_1+l_2-1,s}.
        \Big)
        \label{def:II3stab}
\end{align}

For $\Istab_{4,s_J,s_g,k}$, it yields
\begin{align*}
    \Istab_{4,s_J,s_g,k}(h;J)
    :=
    2\Re\big(
        W^{(s_J)} h
        , 
        g_{k,\te^{(s_Js_g)}}^{(s_g)}
    \big)_{L^2(J)}
    =
    \sum_{l=0}^{N_r}
    \Re\big(
    \al_l 
    \IIstab_{4,s_J,s_g,k}(l;J)
    \big)
\end{align*}
where
\begin{subequations}\label{def:II4stab}
    \begin{empheq}{align}
    \IIstab_{4,s_J,s_g,k}(l;J)
        &:=
        -i(k-\te^{(s_Js_g)})\!\int_J 
        \sum_{j=0}^N
        \sum_{m=0}^{N-k}
            W^{(s_J)}_j
            \twap_{m+k}
            \cos(jx) 
            e^{-i\frac{(2m+1)s_g-1}{2}x} 
        (x-x_0)^{l}\dx\nonumber
        \\
        &=-i\frac{k-\te^{(s_Js_g)}}{2}
        \sum_{m=0}^{N-k}
            \twap_{m+k} \IIIstab_{4,m,s_J,s_g}(l;J)
        \\
    \IIIstab_{4,m,s_J,s_g}(l;J)&:=\sum_{j=0}^N
            W^{(s_J)}_j
        \big(
            \overline{E}_J^{l,\frac{(2m+1)s_g-1}{2}-j}
            +
            \overline{E}_J^{l,\frac{(2m+1)s_g-1}{2}+j}
        \big)
    \end{empheq}
\end{subequations}

Finally, we compute $\Istab_{5,s_J,s_g,k}$, getting 
\begin{align*}
    \Istab_{5,s_J,s_g,k}(h;J):=
    2\Re\big(
        (c+\twap) \pa_x h
        , 
        -g_{k,\te^{(s_Js_g)}}^{(s_g)}
    \big)_{L^2(J)}
    \nonumber
    =
    \sum_{l=0}^{N_r}
    \Re\big(
    \al_l 
    \IIstab_{5,s_J,s_g,k}(l;J)
    \big),
\end{align*}
where
\begin{subequations}\label{def:II5stab}
    \begin{empheq}{align}
    \IIstab_{5,s_J,s_g,k}(l;J)&:=
    il(k-\te^{(s_Js_g)})
    \int_J 
    \sum_{j=0}^N
        \twap_j 
        \sum_{m=0}^{N-k}\twap_{m+k} 
        \cos(jx)
        e^{-i\frac{(2m+1)s_g-1}{2}x}
    (x-x_0)^{l-1}
    \dx
    \nonumber
    \\
    &=
    il\frac{k-\te^{(s_Js_g)}}{2}
    \sum_{m=0}^{N-k}
    \twap_{m+k}
    \IIIstab_{5,m,s_g}(l;J).
    \\
\IIIstab_{5,m,s_g}(l;J)&:=
    \sum_{j=0}^N
        \twap_j
        \big(
            \overline{E}_J^{l-1,\frac{(2m+1)s_g-1}{2}-j}
            +
            \overline{E}_J^{l-1,\frac{(2m+1)s_g-1}{2}+j}
        \big).
    \end{empheq}
\end{subequations}

\begin{note}
    In order to compute \eqref{def:Istab} for every $k=1,\ldots,N+1$, we precompute some data. In particular, there is a different routine for each \eqref{def:II1stab}--\eqref{def:II5stab} and most of them are precomputed in \verb|pre_resis_stab()| for each different interval $J$. We then use these precomputed data to compute the sum of the terms of \eqref{def:Istab} for a fixed interval $J$, this part is carried out by the routine \verb|I_sols_resi_stab_from_precomp()|.
\end{note}

\subsubsection{The functions $J^+\Fvrmap^+|_\T$ and $J^-\Fvrmap^-|_\T$}
\label{sec:ode:Jfv}

The functions $\Fvrmap^+$ and $\Fvrmap^-$ are given in Definition~\ref{def:FplusFminu}, with $f=\Eigveap$ (see \eqref{residual_stability}). From the stored data in \verb|Jfv_p_data.txt| and \verb|Jfv_m_data.txt|, we construct continuous approximations of $J^+\Fvrmap^+|_\T$ and $J^-\Fvrmap^-|_\T$, respectively.
These approximations, denoted by $h_+^{\rm ap}$ and $h_-^{\rm ap}$, are obtained following the construction introduced in \eqref{def:hjexisap} and \eqref{def:hjapexis:iniconds}.

The goal of this section is proving the following lemma.
\begin{lemma}\label{lemma:ode_stab:Fv}
    Let $\la=\laap$ and $\te=\frac{1}{3}$. Let $\Fvrmap^+$ and $\Fvrmap^-$ given in Definition~\ref{def:FplusFminu} with $f=\Eigveap$, where $\Eigveap$ is provided in the supplementary material.
    
    Then
    \begin{align*}
        \norm{J^+\Fvrmap^+|_\T-h^{\rm ap}_+}_\XLIpi^2
        < 
        \rho_+
        \qquad 
        \norm{J^-\Fvrmap^-|_\T-h^{\rm ap}_-}_\XLIpi^2
        < 
        \rho_-
        .
    \end{align*}
    The quantities $\rho_+$ and $\rho_-$ are explicit and are given in the file \verb|Jfv_resis.txt| in the supplementary material.
\end{lemma}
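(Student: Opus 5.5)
The plan follows the Duhamel strategy already used in the proof of Lemma~\ref{lemma:ode_exis} and of Lemmas~\ref{lemma:ode_stab:regu}--\ref{lemma:ode_stab:sing}. Set $\tau_\pm := J^\pm\Fvrmap^\pm|_\T - h^{\rm ap}_\pm$. By \eqref{def:Jplusminu}, $J^\pm\Fvrmap^\pm|_\T$ is the solution with zero initial value at $x=0$ of
\begin{align*}
    (c+\twap)\pa_x u = \big((i\pm\laap) - i\te^\pm(c+\twap)\big)u + \Fvrmap^\pm|_\T .
\end{align*}
The approximation $h^{\rm ap}_\pm$ is built by gluing the local polynomials from \verb|Jfv_p_data.txt| and \verb|Jfv_m_data.txt| through \eqref{def:hjexisap}, using the already validated homogeneous pieces $\Pi^\pm$. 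The gluing is anchored at $x=0$ with initial value $0$, so $h^{\rm ap}_\pm$ is continuous and $\tau_\pm(0)=0$.

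On each interval $J\in\mathcal{J}$ I would subtract the equations exactly as in \eqref{eqn:ode:tauexisj}. This gives
\begin{align*}
    (c+\twap)\pa_x\tau_\pm = \big((i\pm\laap)-i\te^\pm(c+\twap)\big)\tau_\pm - \xi_\pm,
\end{align*}
with the explicit residual
\begin{align*}
    \xi_\pm := (c+\twap)\pa_x h^{\rm ap}_\pm - W^{(\pm)}h^{\rm ap}_\pm - \Fvrmap^\pm|_\T .
\end{align*}
Here $W^{(\pm)}$ is as in \eqref{def:ode:Wpm}. Since $\tau_\pm$ is continuous with $\tau_\pm(0)=0$, Duhamel's formula gives $\tau_\pm = -J^\pm\xi_\pm$. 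Lemma~\ref{lemma:J_L2L2_stab} then yields
\begin{align*}
    \norm{\tau_\pm}_\XLIpi^2 < \getvar{kappa3}\,\norm{\xi_\pm}_\XLIpi^2 .
\end{align*}
It therefore suffices to bound $\norm{\xi_\pm}_\XLIpi^2$ rigorously and set $\rho_\pm$ accordingly.

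To bound $\norm{\xi_\pm}_{L^2(J)}^2$, I would expand it interval by interval exactly as in \eqref{def:Istab}. The terms $\Istab_{1,\pm}$, $\Istab_2$ and $\Istab_{3,\pm}$ are unchanged, and I reuse \eqref{def:II1stab}, \eqref{def:II2exis} and \eqref{def:II3stab}. The affine term $g^{(s_g)}_{k,\cdot}$ is replaced by $\Fvrmap^\pm|_\T$, a trigonometric polynomial whose coefficients are read off from the Fourier data of $\Eigveap$ via Definition~\ref{def:FplusFminu}. The term $\norm{\Fvrmap^\pm|_\T}_\XLIpi^2$ is computed by Parseval as in \eqref{eqn:I6stab}, summed over $\T$ rather than over each $J$. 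The cross terms $\Istab_4$ and $\Istab_5$ become finite sums of the precomputed moments $\overline{E}_J^{l,j}$ from Definition~\ref{def:CSEfuncs}, with $j$ shifted by the frequencies of $W^{(\pm)}$ and $\twap$, analogously to \eqref{def:II4stab} and \eqref{def:II5stab}. Everything is evaluated in ball arithmetic and summed over $\mathcal{J}$.

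The main obstacle is cancellation. $\norm{\xi_\pm}^2$ is obtained as a sum of large terms that nearly cancel, since the residual is tiny compared with the size of the individual terms. Rigor would therefore require high-precision interval evaluation of the cross terms and sharp enclosures of the moments $\overline{E}_J^{l,j}$ at high frequencies. If the enclosure width becomes too large, I would first raise the working precision and refine the partition $\mathcal{J}$. Failing that, I would bound $\xi_\pm$ pointwise on each subinterval via a Taylor remainder instead of expanding the square.
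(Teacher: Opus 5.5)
Your proposal is correct and takes essentially the paper's route. The paper likewise writes the error as $-J^\pm\xi^\pm$ via Duhamel and bounds it with the constant of Lemma~\ref{lemma:J_L2L2_stab}. It then encloses $\norm{\xi^\pm}_\XLIpi^2$ interval by interval through the same decomposition: $\Istab_{1,\pm1}$, $\Istab_2$, $\Istab_{3,\pm1}$, the modified cross terms \eqref{def:II4Jfv}--\eqref{def:II5Jfv}, and a Parseval term for $\Eigveap^\pm$, all in 200-bit ball arithmetic, so the pointwise Taylor fallback you mention is never invoked.
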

\begin{proof}
    The proof is the same as for Lemma~\ref{lemma:ode_stab:sing}, but the residual is computed with a different formula.
\end{proof}

From $\Eigveap$, given by
\begin{align*}
    \Eigveap(x) = \sum_{m=-N}^{N-1}(\ha{\Eigveap})_m e^{imx},
\end{align*}
define the functions
\begin{align*}
    {\Eigveap}^{+}(x):=
    \sum_{m=0}^{N-1}
        (\ha{\Eigveap})_m 
        e^{imx},
    \qquad 
    {\Eigveap}^{-}(x):=
    \sum_{m=0}^{N-1}
        (\ha{\Eigveap})_{-m-1}
        e^{imx}.
\end{align*}

Hence, the residuals $\xi^{+}$ and $\xi^{-}$ are given by
\begin{subequations}\label{def:resi:Jfv}
    \begin{align}
        \xi^{\pm}(x)
        &:=
        \big(c+\twap(x)\big)\pa_x h^{\rm ap}_\pm(x)
        -
        W^{(\pm1)} h^{\rm ap}_\pm(x)
        -
        \sum_{m=0}^{N-1}
        (\ha{{\Eigveap}^\pm})_m
        e^{imx},
    \end{align}
\end{subequations}
where $W^{(1)}$ and $W^{(-1)}$ are given in \eqref{def:ode:Wpm}.

Hence, taking the $L^2(J)$-norm yields
\begin{align*}
    \norm{\xi^\pm}_{L^2(J)}^2
    &=
    \Istab_{1,\pm 1}(h^{\rm ap}_\pm;J)+
    \Istab_{2}(h^{\rm ap}_\pm;J)+
    \Istab_{3,\pm 1}(h^{\rm ap}_\pm;J) + 
    \sum_{l=4}^5 \Istab_{l,\pm 1,\Eigveap}(h^{\rm ap}_\pm;J)+
    \norm{{\Eigveap}^\pm}_{L^2(J)}^2,
\end{align*}
The terms $\Istab_{1,\pm 1}, \Istab_{2}$, and  $\Istab_{3,\pm 1}$ have been also studied, see \eqref{def:I1stab}--\eqref{def:I3stab}. In addition, as we did in \eqref{eqn:I6exis} and \eqref{eqn:I6stab}, we can compute directly the sum over $J$ of $\norm{{\Eigveap}^\pm}_{L^2(J)}^2$ using Parseval's identity. In this case, we obtain
\begin{align}
    \frac{1}{2\pi}\norm{{\Eigveap}^+}_{\XLIpi}^2
    =
        \sum_{m=0}^{N-1} 
        |{\Eigveap_{m}}|^2,
    \qquad
    \frac{1}{2\pi}\norm{{\Eigveap}^-}_{\XLIpi}^2
    =
        \sum_{m=-N}^{-1} 
        |{\Eigveap_{m}}|^2.
    \label{eqn:I6Jfv}
\end{align}

Finally, we provide the computations of $\Istab_{l,+1,\Eigveap}$ and $\Istab_{l,-1,\Eigveap}$. Using the notation from \eqref{def:localhalpha}, we find that
\begin{align*}
    \Istab_{4,s_J,\Eigveap}(h;J)&:=
    2\Re\big(W^{(s_J)} h, 
    {\Eigveap}^{(s_J)}\big)
    =
    \sum_{l=0}^{N_r}
    \Re\big(
    \al_l 
    \IIstab_{4,s_J,\Eigveap}(l;J)
    \big),
    \\
    \Istab_{5,s_J,\Eigveap}(h;J)&:=
    2\Re\big((c+\twap) \pa_x h, 
    -{\Eigveap}^{(s_J)}\big)
    =
    \sum_{l=0}^{N_r}
    \Re\big(
    \al_l 
    \IIstab_{5,s_J,\Eigveap}(l;J)
    \big),
\end{align*}
where
\begin{align}
    \IIstab_{4,s_J,\Eigveap}(l;J)
    &:=
    \int_J
    \sum_{j=0}^{N}
    \sum_{m=0}^{N-1}
        W_j^{(s_J)}
        \overline{{\Eigveap_{m}}^{(s_J)}}
        \cos(jx) 
        e^{-imx} 
        (x-x_0)^l
    \dx
    \nonumber
    \\
    &=
    \sum_{j=0}^{N}
    \sum_{m=0}^{N-1}
        W_j^{(s_J)}
        \overline{{\Eigveap_{m}}^{(s_J)}}
        \big(
            \overline{E}_J^{l,m-j}
            +
            \overline{E}_J^{l,m+j}
        \big),
        \label{def:II4Jfv}
    \\
    %%%%%%%%%%%%%%%%%%%%%%
    \IIstab_{5,s_J,\Eigveap}(l;J)
    &:=
    -\int_J
    \sum_{j=0}^{N}
    \sum_{m=0}^{N-1}
        l\twap_j
        \overline{{\Eigveap_{m}}^{(s_J)}}
        \cos(jx) 
        e^{-imx} 
        (x-x_0)^{l-1}
    \dx
    \nonumber 
    \\
    &=
    -
    \sum_{j=0}^{N}
    \sum_{m=0}^{N-1}
        l\twap_j
        \overline{{\Eigveap_{m}}^{(s_J)}}
        \big(
            \overline{E}_J^{l-1,m-j}
            +
            \overline{E}_J^{l-1,m+j}
        \big).
        \label{def:II5Jfv}
\end{align}

\section{Computer-assisted proofs}\label{sec:compassisted}
This appendix presents the computer-assisted results used in the paper. We describe the computational procedures, algorithms, and verification strategies used in the proofs, so that the computer-assisted component is transparent and reproducible.

All computer-assisted steps are carried out using rigorous numerical methods, based on interval arithmetic, ensuring that every computed interval encloses the exact real value. 
The programs used for these computations were written in Python using the SageMath mathematical software system, which provides the interval arithmetic routines used throughout the verification. In particular, numerical approximations are never used heuristically: each algorithm is designed to produce mathematically certified bounds, which are then combined with analytic arguments to establish the stated results.

The appendix is organized as follows. We first describe the organization of the data files used by the programs, indicating the role of each file and the information it contains. Next, for every lemma that requires computer-assisted verification, we describe the corresponding verification procedure and explain how the relevant bounds are obtained.

The material in this appendix is not required for understanding the conceptual arguments of the paper, but it is essential for verifying the correctness of the computer-assisted claims. Readers interested in reproducibility or independent verification are encouraged to consult this section.

All source code and most data files used in the computations are available at \getvar{github_link}. Larger data files are provided separately on Zenodo at \getvar{zenodo_link}. To reproduce the results, the Zenodo files should be placed in the directory \verb|supplementary_data/|.

\subsection{Supplementary data}\label{sec:compassisted:suppdata}
All auxiliary numerical data used in the computer-assisted proofs are
stored in plain text files stored in the directory
\verb|supplementary_data/|. These files are loaded at runtime by
the routines defined in \verb|load_data.py|.

All numerical quantities are interpreted using interval arithmetic.
Real numbers are converted to \verb|RealBallField| instances 
(\verb|RBF|) and complex numbers to \verb|ComplexBallField| instances (\verb|CBF|), both with $200$ bits of precision. Consequently, every value used in the computation represents an enclosure of the corresponding exact quantity. Numerical values may appear either in standard decimal notation or in base-10 scientific notation (for instance \verb|1.23e-6| meaning $1.23\cdot10^{-6}$). In all cases they are parsed as elements of the real ball field (\verb|RBF|).

\paragraph{Tabulated real data.}
Some files consist of rows of real numbers separated by whitespace. These are read by the routine \verb|_load_tab_data()|, which converts each entry to an element of \verb|RBF|. In the files listed below each line contains a single real number.

\begin{itemize}
    \item \verb|tw_data.txt| contains the speed $c$ of the traveling wave and the cosine Fourier coefficients. The first line stores $c$, while the $(j+1)$-th line stores $\twap_j$ for $j=1,\ldots,N$. Recall that $N=\getvar{Nexplicit}$.

    \item \verb|exis_resis.txt| contains the $N+1$ quantities $\rho_k$ appearing in Lemma~\ref{lemma:ode_exis}.

    \item \verb|regu_resis.txt| contains the $4N+2$ quantities $\rho_k$ appearing in Lemma~\ref{lemma:ode_stab:regu}.

    \item \verb|sing_resis.txt| contains the $4N+2$ quantities $\rho_k$ appearing in Lemma~\ref{lemma:ode_stab:sing}.

    \item \verb|Jfv_resis.txt| contains the two quantities $\rho_+$ and $\rho_-$, in this order, appearing in Lemma~\ref{lemma:ode_stab:Fv}.

    \item \verb|gk_Linf_bounds.txt| contains $L^\infty$ bounds for the real part of the $g_k$ functions given in \eqref{def:gk_exis}. In particular, we use the data from this file in the proof of Lemma~\ref{lemma:hk_funcs_exis}.
\end{itemize}

\paragraph{Tabulated complex data.}
Several files contain complex numbers written in the form \verb|re,im|, representing the real and imaginary parts of a complex number. These files are read by the routine \verb|_load_tab_data_complex()|, which parses each token and constructs the corresponding element of \verb|CBF|.

\begin{itemize}
    \item \verb|eigen_data.txt| contains $2$ lines, each consisting of a complex number. The first line contains $\laap$, while the second one contains $\getvar{la_reg}$.

    \item \verb|polynomial_zeros.txt| contains $N$ lines, each containing a complex number. Each number is approximating a root of $z^N P(z)$ inside the unit complex disk. Recall that $P$ is defined in \eqref{def:P_}.

    \item \verb|sing_u1.txt| contains $2N+2$ lines, each one with a complex number. These numbers represent the coefficients of an explicit approximation of the left singular vector associated with the smallest singular value of $\MStabTor(\laap)$. In particular the loader routine \verb|eigen_u1()| also normalizes the approximation so that it has norm $1$.

    \item \verb|exis_V.txt| contains an approximation of the $V$ matrix in the singular value decomposition of $\MExisTor$.

    \item \verb|regu_V.txt| contains an approximation of the $V$ matrix in the singular value decomposition of $\MStabTor(\getvar{la_reg})$.

    \item \verb|sing_V.txt| contains an approximation of the $V$ matrix in the singular value decomposition of $\MStabTor(\laap)$. We also remark that we construct $\Eigveap$ from the last column of $V$.
    
\end{itemize}

\paragraph{Construction of $\Eigveap$.}
Let $V\in\C^{(2N+2)\times(2N+2)}$ be the complex matrix constructed from \verb|sing_V.txt|. We define the Fourier coefficients of $\Eigveap$ as
\begin{align*}
    \ha{\Eigveap}_k =\begin{cases}
        V_{N-k,2N+2}
        &-N\leq k\leq -1,\\
        V_{k+1,2N+2}&0\leq k\leq N-1.
    \end{cases}
\end{align*}

Then $\Eigveap$ is defined by
\begin{align*}
    \Eigveap = \sum_{k=-N}^{N-1}(\ha{\Eigveap})_k e^{ikx}.
\end{align*}

\paragraph{ODE approximation data.}
The polynomial approximations used in the rigorous integration of the ODE are stored in files organized in blocks. Each block corresponds to a dyadic subinterval of the $[-\pi,\pi]$ and has the form
\begin{verbatim}
    J j k
    VALUES
    ...
    END
\end{verbatim}
where \verb|j| and \verb|k| determine the interval $J_{j,k}$ defined by 
\begin{align*}
    J_{j,k} := \frac{2\pi}{2^j}[k-1, k] - \pi.
\end{align*}

The keyword \verb|VALUES| is followed by a sequence of lines, each line containing $N_r=5$ complex coefficients. The first data line corresponds to the coefficients of the homogeneous solution, while the rest of the lines correspond to the affine ones. The block is terminated by the keyword \verb|END|.

In this section we only explain how the local polynomial data on an interval $J$ is stored. The procedure used to reconstruct the approximations from the stored data was described in Appendix~\ref{sec:ode}.

\begin{itemize}
    \item \verb|exis_data.txt| contains the necessary data to construct the approximations $h_k^{\rm ap}$, $k=1,\dots,N+1$ described in Section~\ref{sec:ode_exis}. 

    Let $\mathcal{J}$ denote the collection of all subintervals used in the construction, namely the set of intervals $J_{j,k}$. Let $J\in \mathcal{J}$ be one of these intervals, and let $x_0\in\RR$ and $r>0$ be such that
    \begin{align}\label{def:Jintervals}
        J = [x_0-r, x_0+r].
    \end{align}

    On $J$, we store after \verb|VALUES| and before \verb|END| $N+1$ lines, each one corresponding to one of the functions $h_k^{{\rm ap},J}$. More precisely, the first line  stores the non-constant coefficients of the homogeneous approximation $h_{N+1}^{\mathrm{ap},J}$, while the subsequent $N$ lines store the non-constant coefficients of the affine approximations $h_k^{\mathrm{ap},J}$ for $k=1,\ldots,N$.
    In each line we can find $5$ pairs of coefficients separated by spaces. These pairs represent the coefficients $h_{k,l}^{{\rm ap},J}$, $l=1,\ldots,5$ and have the format \verb|re,im|.

    \item \verb|sing_p_data.txt| and \verb|sing_m_data.txt| contain the data required to construct the approximations $h_k^{\rm ap}$, $k=1,\dots,4N+2$, described in Section~\ref{sec:ode_stab}, with $\lambda=\lambda^{\rm ap}$ and $\theta=\frac{1}{3}$.
    
    The structure is the same as in \verb|exis_data.txt|, so we only explain how the ordering of the lines corresponds to \eqref{def:hkstabsorted}.

    The first line of \verb|sing_p_data.txt| corresponds to $h_{4N+1}$, while the remaining lines in this file correspond to the functions $h_k$, $k=1,\ldots,2N$. 
    
    Similarly, the first line of \verb|sing_m_data.txt| corresponds to $h_{4N+2}$, while the remaining lines correspond to the functions $h_k$, $k=2N+1,\ldots,4N$.

    \item \verb|regu_p_data.txt| and \verb|regu_m_data.txt|:
    As for \verb|sing_p_data.txt| and \verb|sing_m_data.txt|, we use the same framework, with $\lambda = \getvar{la_reg}$ instead of $\lambda^{\rm ap}$.

    \item \verb|Jfv_p_data.txt| and \verb|Jfv_m_data.txt|:
    These files contain approximations of $J^+ F_v^{\rm ap,+}\big|_\T$ and $J^- F_v^{\rm ap,-}\big|_\T$, respectively. The framework is the same as described above, but in this case each block consists of a single line.

\end{itemize}

\paragraph{Bounds.}
Finally, the file \verb|bounds.txt| contains the remaining numerical
bounds used throughout the paper. These are read by the routine
\verb|load_bounds()|, which converts the numerical values into \verb|RBF| elements
and stores them in a dictionary indexed by their labels.

The file is organized as sequence of entries, one per line. Each line
contains several fields separated by the symbol \verb!|!. Lines that
are empty or start with the character \verb|#| are ignored. From each
entry the program extracts three pieces of information: a label
identifying the bound, the numerical value of the bound, and a tag
indicating the side of the inequality. Thus a typical entry has the form
\begin{verbatim}
    label|...|value|...|side|...|description
\end{verbatim}
where \verb|label| is the identifier used in the code, \verb|value| is
the numerical bound written in decimal notation, and \verb|side|
indicates whether the bound corresponds to the lower or upper side of
the corresponding inequality. In addition, \verb|description| is a brief description of the bound to improve readability.

\subsection{Objects, methods and routines}
We briefly describe the main classes and routines used in the implementation.
\paragraph{Classes.}
The following classes are defined in \verb|classes.py|.
\begin{itemize}
    \item \verb|FourierRealSeries|.
    This class represents real Fourier series and implements basic operations with them, such as evaluation, differentiation, and algebraic manipulations of the coefficients.
    
    \item \verb|Functions_1D|. 
    A class representing one-dimensional real-valued functions by storing callable black-box evaluators for the function and a finite number of its derivatives. It also supports basic algebraic manipulations, with the corresponding derivatives computed automatically. In particular, products are handled through Leibniz rule, and inverses through a Faà di Bruno formula. We also include a constructor that creates a \verb|Functions_1D| instance from a \verb|FourierRealSeries|.
    
\end{itemize}

\paragraph{Methods.}
The following classes are defined in \verb|methods.py|.
\begin{itemize}
    \item \verb|integral_1D| constructs a callable instance for one-dimensional integration, which encloses integrals of \verb|Functions_1D| objects over an interval $I$. 
    
    We only include the method \verb|gauss_lege_2dots()|, which implements a Gauss--Legendre quadrature. In particular, it uses the enclosure
    \begin{align}
        \int_I f(x)\dx
        \in 
        r \big(
            f(x_0+r\om)
            +
            f(x_0-r\om)
        \big)
        +\frac{r^5}{135}
        \pa_x^4 f(I),
        \label{nummethod:gaussleg}
    \end{align}
    where $I:=[x_0-r,x_0+r]$ and $\om:=\frac{\sqrt{3}}{3}$.

    \item \verb|image_1D| constructs a callable object that encloses the images of functions represented as \verb|Functions_1D| objects. We only include the method \verb|taylor_method()|, which, for a fixed order $k$, implements a Taylor enclosure. In particular, if $k=2$, we use
    \begin{align}\label{compassi:secord}
        f(I) \subset f(x_0) + [-r,r]\pa_x f(x_0) + \frac{1}{2}[0,r^2] \pa_x^2 f(I).
    \end{align}
    where $I:=[x_0-r,x_0+r]$.
    
\end{itemize}

\paragraph{Routines.}
We describe a selection of the routines from \verb|auxiliar_funcs.py|. We do not cover every function, but focus on those that are more elaborated.

\begin{itemize}

\item \verb|max_prev_esti()|. 
    Let $[a,b]\subset\RR$ be a compact interval, $B\in\RR$, and let $f:[a,b]\to\RR$. 
    This routine verifies whether 
    \begin{align}
        f(x) \leq B,\quad \text{for all } x\in[a,b].\label{eqn:r:max_prev_esti}
    \end{align}
    
    The function $f$ is represented as a \verb|Functions_1D| instance, the domain as a list containing the endpoints of the interval, and $B$ as a \verb|RealBallField| instance.

    In order to prove \eqref{eqn:r:max_prev_esti}, assume we have a routine, which we call \verb|method()|, such that given a compact interval $I$ and a function $f:I\to\RR$, it returns an interval $J$ satisfying
    \begin{align*}
        J \supset f(I).
    \end{align*}
    Smaller intervals $I$ are expected to produce tighter enclosures $J$.
    
    The algorithm starts with the list \verb|intervals| containing only $[a,b]$. While \verb|intervals| is nonempty, an interval $I$ is removed from the list and $f(I)$ is enclosed using \verb|method()|. If $f(I) > B$, the routine stops and returns \verb|False|. If $J\ni B$, the interval $I$ is bisected and both subintervals are appended to the list. Otherwise the interval is accepted.

    If the list becomes empty, the inequality holds on the whole domain $[a,b]$ and the routine returns \verb|True|. 
    Two additional stopping criteria control the total number of iterations and the number of intervals stored in the list.

\item \verb|integ_adaptive_1D()|. 
    Let $[a,b]\subset\RR$ be a compact interval and $f:[a,b]\to\RR$. This routine produces an enclosure of the integral
    \begin{align*}
        \int_{[a,b]} f(x)\dx .
    \end{align*}
    
    The function $f$ is represented as a \verb|Functions_1D| instance and the domain as a list containing the endpoints of the interval.
    
    Assume we have a routine \verb|method()| which, given a compact interval $I$ and a function $f:I\to\RR$, returns an interval $J$ such that
    \begin{align*}
        J \supset \int_I f(x)\dx .
    \end{align*}
    As before, smaller intervals are expected to yield tighter enclosures.
    
    The adaptive subdivision strategy is the same as in \verb|max_prev_esti()|. Starting from the list \verb|intervals| containing $[a,b]$, intervals are removed, evaluated using \verb|method()|, and either accepted or bisected according to prescribed tolerances.\footnote{Rigor does not come from the tolerances. They are only used as a criterion to accept or reject an interval.}
    
    Accepted intervals contribute their enclosure to the total integral. When the list becomes empty, the whole domain has been processed and the routine returns the final enclosure. As before, two stopping conditions limit the total number of iterations and the number of stored intervals.

\item \verb|triangular_integral()|.
    Let $f,g:[0,\pi]\to\RR$, and assume that $f(x)\geq 0$ for all $x\in[0,\pi]$. This routine produces an enclosure of the triangular integral
    \begin{align*}
        \int_0^\pi \int_0^x f(x)g(y)\dy\dx.
    \end{align*}
    
    The input \verb|integrand| represents the function $f$, while \verb|mult_term| represents the function $g$. Both are given as \verb|Functions_1D| instances and are assumed to provide derivatives up to order four.
    
    The routine is based on the identity
    \begin{align*}
        \int_0^\pi \int_0^x f(x)g(y)\dy\dx
        =
        \int_0^\pi g(x)\iota(x)\dx,
    \end{align*}
    where
    \begin{align*}
        \iota(x)=\int_x^\pi f(t)\,dt.
    \end{align*}
    In particular,
    \begin{align*}
        \iota(\pi)=0,
        \qquad
        \pa_x\iota(x)=-f(x).
    \end{align*}
    
    The algorithm starts from the interval $[0,\pi]$ and maintains a stack of subintervals, processed from right to left. For each interval $I:=[x_0-r,x_0+r]$, the values of $\iota$ at the relevant quadrature nodes are enclosed by integrating $f$ over suitable subintervals. We use the Gauss--Legendre enclosure formula \eqref{nummethod:gaussleg} to enclose
    \begin{align*}
        \int_{I} \iota(y) g(y) \dy.
    \end{align*}
    Hence, we need to evaluate $\iota$ at $x_0-r\om$ and $x_0+r\om$ with $\om=\frac{\sqrt{3}}{3}$ and enclose $\pa_x^4 (\iota g) (I)$. To this end, we need to know how to enclose the function $\iota$ up to four derivatives since
    \begin{align*}
        \pa_x^4 (\iota g)  = 
        \pa_x^4 \iota g
        +
        4\pa_x^3 \iota \pa_x g
        +
        6\pa_x^2 \iota \pa_x^2 g
        +
        4\pa_x \iota \pa_x^3 g
        +
        \iota  \pa_x^4 g.
    \end{align*}
    Since $\iota$ is an antiderivative of $-f$, its derivatives can be enclosed directly from those of $f$. Because $f$ is nonnegative, $\iota$ is decreasing, and therefore
    \begin{align*}
    \iota(I)=[\iota(x_0+r),\iota(x_0-r)].
    \end{align*}
    
    Each evaluation of $\iota$ is obtained by a cumulative integral, computed using the enclosure \eqref{nummethod:gaussleg}.
    
    If the resulting local error enclosure satisfies a prescribed tolerance, the interval is accepted and its contribution is added to the total integral. Otherwise, the interval is bisected and both subintervals are returned to the stack, with the right one processed first so that the reconstruction of $\iota$ remains consistent.
    
    When all intervals have been accepted, the routine returns the final enclosure of the triangular integral. As before, the total number of iterations is bounded by a safety criterion.

\item \verb|gersh_smallest_eig()| implements Gershgorin's theorem to produce a lower bound for the real parts of the eigenvalues of a given matrix.

\item \verb|gersh_gap_one_two()| implements Gershgorin's theorem to compute an upper bound for the real part of the eigenvalue associated with the Gershgorin disk corresponding to the last row. It also returns a lower bound for the real parts of the remaining eigenvalues.

\end{itemize}

\subsection{Details of the proofs}

The routine \verb|substituting_estimates()| can be found in the file \verb|lemmas.py|. Although it does not prove any lemma, it collects all substitutions used throughout the paper. The function returns \verb|True| if no errors are detected.

\begin{proofsth}{Lemma~\ref{lemma:residue_exis_L2}}
    The coefficients of $\twap$ are provided in the supplementary material.
    The function $\twxi$ is defined as the residual obtained by substituting $\twap$ into the Burgers--Hilbert equation, namely
    \begin{align*}
        \twxi(x)
        = c\pa_x\twap(x)
        + H\twap(x)
        + \twap(x) \pa_x\twap(x).
    \end{align*}

    We represent $\twap$ as a \verb|FourierRealSeries| object, which stores Fourier coefficients and implements algebraic operations directly at the coefficient level, including differentiation \verb|dx()|, multiplication \verb|__mul__()|, and the Hilbert transform \verb|hx()|.
    
    An instance representing $\twxi$ is constructed by the routine \verb|equ_tw_eval_symbolic()| from the speed $c$ and the Fourier coefficients of $\twap$. Its $\XHodd{0}$-norm is then obtained from these Fourier coefficients with \verb|norm_homogeneous()|. Since this routine returns the full $H^0(\T)$ norm, the result is finally divided by $\sqrt{2}$.
    
    Because all operations are performed on interval-valued coefficients and the norm is evaluated algebraically, the resulting enclosure of $\norm{\twxi}_{\XHodd{0}}$ is fully rigorous. All these steps are carried out by the routine \verb|norm_xi_twap_sq()| in
    \verb|lemmas.py|.
    
    The verified enclosure is strictly smaller than the constant stated in the lemma, which completes the proof.
\end{proofsth}

\begin{proofsth}{Lemmas~\ref{lemma:norm_fv} and \ref{lemma:eignot0}}
    The proof of this lemmas is done in \verb|norm_xi_fvap_sq()| from \verb|lemmas.py|.
    
    The computation of the residual is carried out coefficient by coefficient, following the same strategy used in the proof of Lemma~\ref{lemma:residue_exis_L2}. In that lemma we used the class \verb|FourierRealSeries| in order to represent the action of the operator on the Fourier coefficients.
    
    In the present case we proceed in the same spirit, but without introducing an auxiliary class. Instead, the computation is implemented directly by manipulating the Fourier coefficients of the approximate eigenvector.
    
    Let $f^{\mathrm{ap}}$ denote the approximate eigenvector with Fourier coefficients stored in the vector \verb|fvap|. The $\XL$ norm is computed directly as
    \begin{align*}
        \norm{\Eigveap}_{\XL}^2
        =
        \sum_{k=-N}^{N-1} |(\ha{\Eigveap})_k|^2 ,
    \end{align*}
    which in the code corresponds to the function \verb|norm_vector_sq()|, from \verb|auxiliar_funcs.py|, applied to \verb|fvap|.
    
    Similarly, the $\XH{1}$ norm is evaluated using the Fourier representation
    \begin{align*}
        \norm{\Eigveap}_\XH{1}^2
        =
        \sum_{k=-N}^{N-1} (1+k^2) |(\ha{\Eigveap})_k|^2 .
    \end{align*}
    This is implemented by multiplying each Fourier coefficient by the weight $1+k^2$ and summing the resulting contributions.
    
    Finally, the residual is computed from its definition, that is
    \begin{align*}
        \Eigres
        =
        \LStabthe \Eigveap
        -
        \laap \Eigveap .
    \end{align*}
    The operator $\LStabthe$ is applied directly at the level of Fourier coefficients: the derivative, the Hilbert transform, and the convolution with the Fourier coefficients of the wave profile are computed mode by mode. The resulting coefficients are split into those that remain inside the truncated Fourier window and those that fall outside it. The $\XL$ norm of the residual is then obtained as the sum of the squared norms of these two contributions.
\end{proofsth}

\begin{proofsth}{Lemma~\ref{lemma:twapprox_nonzero_coeffs}}
    The Fourier coefficients of $\twap$ are read from the file
    \verb|tw_data.txt| and represented using interval arithmetic. After reading the file, we also verify that none of these intervals contains $0$. The routine checking this is \verb|coeffs_are_not_zero()| from \verb|lemmas.py|.
\end{proofsth}

\begin{proofsth}{Lemma~\ref{lemma:Proots1}}
    Let $\al\in Z$. Since $\al$ is a simple root, we may write
    \begin{align*}
        A(\al)\prod_{\substack{\al'\in Z\\ \al'\neq \al}}
            (\al-\al')^{-1}&=
            -2\frac{z^{N-1}}{\twap_N}
            \prod_{\al\in Z}(z-\al^{-1})^{-1}
            \prod_{\substack{\al'\in Z\\ \al'\neq \al}}
            (\al-\al')^{-1}\\&=
        \lim_{z\to\al}
        -(z-\al)\frac{z^{N-1}}{z^N P(z)}=
        -\frac{\al^{N-1}}{\pa_z (z^N P(z))(\al)}.
    \end{align*}
    
    This quantity is evaluated using the interval enclosure of $\al$ provided by
    Lemma~\ref{lemma:enclose_poly_roots} together with interval arithmetic for the evaluation of $\pa_z(z^N P(z))$. The corresponding verification is implemented in the routine \verb|proots_exis()| in \verb|lemmas.py|, which checks that the resulting complex interval cannot contain any integer.
    
    The computation certifies that the above expression is never an integer for any
    $\al\in Z$.
\end{proofsth}

\begin{proofsth}{Lemma~\ref{lemma:exis_singval}}
    Recall from \eqref{def:Mexistor} that $\MExisTor$ is defined in terms of finitely many Fourier coefficients of the functions $h_k$ introduced in \eqref{def:hk}. For each $k=1,\ldots,N+1$, we have constructed an explicit approximation $h_k^{\rm ap}$. The corresponding numerical data are stored in the file \verb|exis_data.txt| and described in Appendix~\ref{sec:ode_exis}, where we explain how we compute the Fourier modes of $h_k^{\rm ap}$ and provide explicit $\XLIpi$ bounds for $\tau_{k}$, defined by
    \begin{align*}
        \tau_{k}
        :=
        h_k - h_k^{\rm ap},
        \qquad k=1,\ldots,N+1.
    \end{align*}
    Lemma~\ref{lemma:ode_exis} provides rigorous bounds for $\norm{\tau_{k}}_\XLIpi^2$ in terms of the stored quantities $\rho_k$, listed in \verb|exis_resis.txt|.

    Using the $h_k^{\rm ap}$ functions, we construct the matrix
    \begin{align*}
        \MExisTorApprox \in \C^{(N+1)\times(N+1)},
    \end{align*}
    with entries
    \begin{align}
        (\MExisTorApprox)_{jk}=
            (\ha{h_k^{\rm ap}})_{j-1} -\delta_{k,j-1},
            \qquad 1\leq j, k\leq N+1.
    \end{align}
    By construction,
    \begin{align*}
        \MExisTor = \MExisTorApprox + R,
    \end{align*}
    where the error matrix $R$ is given by
    \begin{align*}
        R_{jk}=(\ha{\tau_{k}})_{j-1},
        \qquad 1\leq j, k\leq N+1.
    \end{align*}

    A direct expansion shows that
    \begin{align}\label{eqn:gram_bound}
        \MExisTor^\ast \MExisTor
        -
        \MExisTorApprox^\ast \MExisTorApprox
        &=R^\ast
        \MExisTorApprox+
        \MExisTorApprox^\ast
        R+
        R^\ast
        R
    \end{align}

    For each $k=1,\ldots,N+1$, we set
    \begin{align*}
        r_k := \Big(\frac{\rho_k}{2\pi}\Big)^{1/2},\qquad
        c_k := 
        \Big(\sum_{j=1}^{N+1}
        |(\MExisTorApprox)_{jk}|^2
        \Big)^{1/2}.
    \end{align*}
    By Parseval’s identity and Lemma~\ref{lemma:ode_exis},
    \begin{align*}
        \sum_{j=1}^{N+1}
        |R_{jk}|^2 \leq 
        \frac{1}{2\pi}
        \norm{\tau_{k}}_\XLIpi^2\leq 
        \frac{1}{2\pi}\rho_k= r_k^2.
    \end{align*}

    Applying Cauchy--Schwarz, we obtain for every $j,k$
    \begin{align*}
        |(R^\ast \MExisTorApprox)_{jk}|
        &\leq r_j  c_k,
        \\
        |(\MExisTorApprox^\ast R)_{jk}|
        &\leq 
         c_j r_k,
         \\
        |(R^\ast R)_{jk}|
        &\leq 
        r_j r_k.
    \end{align*}

    Hence, combining these estimates into \eqref{eqn:gram_bound} yields
    \begin{align}\label{eqn:entrywise_gram_bound}
        \big|(\MExisTor^\ast \MExisTor
        -
        \MExisTorApprox^\ast \MExisTorApprox)_{jk}\big|
        \leq 
        r_j  c_k + c_j  r_k + r_j  r_k.
    \end{align}

    We define the matrix $B\in\RR^{(N+1)\times(N+1)}$ by
    \begin{align*}
        B_{jk} := r_j c_k + r_k c_j + r_j r_k,
    \end{align*}
    and its Frobenius norm
    \begin{align*}
        \norm{B}_F:=
        \Big(\sum_{j,k=1}^{N+1} B_{jk}^2\Big)^{1/2}.
    \end{align*}

    Since $\norm{E}_2\leq \norm{E}_F$ for any matrix $E$, we conclude from \eqref{eqn:entrywise_gram_bound} that
    \begin{align}
        \norm{\MExisTor^\ast \MExisTor
        -
        \MExisTorApprox^\ast \MExisTorApprox}_2
        \leq 
        \norm{B}_F.
    \end{align}

    By Weyl’s theorem on eigenvalue perturbations for Hermitian matrices, see for example \cite[Corollary~III.2.6]{bhatia2013matrix}, we deduce that
    \begin{align*}
        \la_{\min}\big(\MExisTor^\ast \MExisTor\big)
        &\geq
        \la_{\min}\big(\MExisTorApprox^\ast \MExisTorApprox\big) - \|\MExisTor^\ast \MExisTor-\MExisTorApprox^\ast \MExisTorApprox\|_2\\
        &\geq
        \la_{\min}\big(\MExisTorApprox^\ast \MExisTorApprox\big) - \norm{B}_F.
    \end{align*}
    Hence, a lower bound on $\la_{\min}(\MExisTorApprox^\ast \MExisTorApprox)$ and $B_F$ immediately yields a rigorous lower bound for \begin{align*}\sigma_1^2 = \la_{\min}(\MExisTor^\ast\MExisTor).\end{align*}

    To obtain a bound for $\la_{\min}(\MExisTorApprox^\ast \MExisTorApprox)$, we apply Gershgorin’s theorem \cite{Gershgorin31} to a nearly diagonalized representation of $\MExisTorApprox^\ast \MExisTorApprox$. To this end, we use the matrix
    \begin{align*}
        V_{\rm exis}^{\rm ap}\in\C^{(N+1)\times(N+1)},
    \end{align*}
    provided in the supplementary material, and define
    \begin{align*}
        \ti{S} :=
        {V_{\rm exis}^{\rm ap}}^\ast
        \MExisTorApprox^\ast
        \MExisTorApprox
        V_{\rm exis}^{\rm ap}.
    \end{align*}
    The matrix $\ti{S}$ is nearly diagonal and Hermitian. However, its spectrum is not the same as $\MExisTorApprox^\ast \MExisTorApprox$ because $V_{\rm exis}^{\rm ap}$ is not unitary. However, it is close to a unitary one $V$. This is proved in Lemma~2.4 from \cite{gomez2021any} for orthogonal matrices, the argument for the unitary case follows the same argument with very small changes. Moreover, immediately after the lemma, the authors provide a method to rigorously construct an enclosure of
    \begin{align*}
        S:=V^\ast\MExisTorApprox^\ast \MExisTorApprox V.
    \end{align*}
    from the computed enclosure of $\ti{S}$. Applying Gershgorin's theorem to $S$ provides a lower bound for its smallest eigenvalue. 
    
    The computer-assisted part of the argument is done in \verb|svd_exis()| from \verb|lemmas.py| and proceeds as follows. The matrix $\MExisTorApprox$ is constructed directly from the data stored in \verb|exis_data.txt|, this is done in \verb|construct_exis_mat()|. Using the certified $\XLIpi$-bounds $\rho_k$, stored in \verb|exis_resis.txt|, the routine \verb|mat_exis_add_radii()| constructs \verb|to_rest|, an upper bound for $\norm{B}_F$, 

    To obtain a lower bound on $\la_{\min}(\MExisTorApprox^\ast \MExisTorApprox)$ we first compute $\ti{S}$. To this end, we use $V_{\rm exis}^{\rm ap}$, stored in \verb|exis_V.txt|. Then the function \verb|S_from_ti_S_for_Gersh()| constructs $S$, and finally
    the routine \verb|gersh_smallest_eig()| computes \verb|min_val|, a lower bound  for the eigenvalues of $S$, using Gershgorin’s theorem.
    
    Finally, we compute the difference between \verb|min_val| and \verb|to_rest|, yielding 
    \begin{align*}
        \sigma_1^2 > \getvar{svd1_exis}.
    \end{align*}
\end{proofsth}

\begin{proofsth}{Lemma \ref{lemma:betamax}}
    The function $\beta$ is represented as the \verb|Functions_1D| object with derivatives up to order $2$, constructed from the cosine Fourier representation of $\twap$ and the parameter $c$, by \verb|beta_func_constructor()|. 
    
    The maximum bound $B:=19.9056$ of $\beta$ is then verified by the adaptive routine \verb|max_prev_esti|, we also show that $\beta$ is positive applying \verb|max_prev_esti()| to $-\beta$ with $B=0$. The method we choose is based on a second order Taylor, see \eqref{compassi:secord}.
    
    The whole computation is carried out in \verb|beta_max()| from \verb|lemmas.py|.
\end{proofsth}

\begin{proofsth}{Lemma~\ref{lemma:kappa1}}
    Observe that
    \begin{align*}
        \norm{\kappa_1}_{L^1([0,\pi])}=
        \int_0^\pi 
            \int_0^x \beta(y)^2
        \dy\dx
        =
        \int_0^\pi \int_y^\pi 
        \beta(y)^2
        \dx\dy
        =
        \int_0^\pi 
        \beta(y)^2 (\pi - y)
        \dy.
    \end{align*}

    The function \verb|aux_kappa1_func_constructor()| builds $(\pi-x) \beta(x)^2$ as a \verb|Functions_1D| with derivatives up to order $4$. In order to construct $\beta(x)^2$, the routine \verb|aux_kappa1_func_constructor()| uses \verb|beta_sq_func_constructor()|, where we use a Fourier representation of $(c+\twap)^2$.

    We then compute an enclosure of the integral
    \begin{align*}
        \int_0^\pi (\pi-x) \beta(x)^2 \dx
    \end{align*}
    using \verb|integ_adaptive_1D()| and $\verb|gauss_lege_2dots()|$.
    
    All these steps are implemented in the routine \verb|kappa1()| from \verb|lemmas.py|. Finally, the resulting enclosure is compared with the bound in the statement. 
\end{proofsth}

\begin{proofsth}{Lemma~\ref{lemma:betamod_L2}}
    Since $\beta$ is $\RR$-valued, we have that
    \begin{align*}
        \bigl|\partial_x \beta(x) + i \beta(x)^2\bigr|^2
        = \bigl(\partial_x\beta(x)\bigr)^2 + \beta(x)^4,
    \end{align*}
    and therefore,
    \begin{align*}
        \norm{\partial_x \beta + i\beta^2}_{\XLIpi}^2
        = \int_{-\pi}^{\pi} \bigl(\partial_x\beta(x)\bigr)^2 + \beta(x)^4\dx.
    \end{align*}
    
    The computer-assisted verification proceeds as follows. We construct a \verb|Functions_1D| object, with derivatives up to order $4$, for the nonnegative integrand
    \begin{align*}
        x \longmapsto \bigl(\partial_x\beta(x)\bigr)^2 + \beta(x)^4
    \end{align*}
    using \verb|betamod_func_sq_constructor()|. Internally, this constructor builds the equivalent expression
    \begin{align*}
        \bigl(\partial_x\beta(x)\bigr)^2 + \beta(x)^4 = \frac{1 + (\pa_x\twap(x))^2}{(c+\twap(x))^4},
    \end{align*}
    and creates \verb|FourierRealSeries| instances for the numerator and the denominator.
    
    We then compute an enclosure of the integral
    \begin{align*}
        \int_0^\pi \bigl(\partial_x\beta(x)\bigr)^2 + \beta(x)^4 \dx
    \end{align*}
    using \verb|integ_adaptive_1D()| and $\verb|gauss_lege_2dots()|$.
    
    Finally, since the integrand is even, the integral over $[-\pi,\pi]$ equals twice the integral over $[0,\pi]$, and hence
    \begin{align*}
        \norm{\partial_x \beta + i\beta^2}_{\XL}^2
        = 2 \int_0^\pi \bigl(\partial_x\beta(x)\bigr)^2 + \beta(x)^4 \dx.
    \end{align*}

    All these steps are implemented in the routine \verb|betamod_L2_sq()| from \verb|lemmas.py|. Finally, the resulting enclosure is compared with the bound in the statement. 
\end{proofsth}

\begin{proofsth}{Lemma~\ref{lemma:kappa2}}
    Our computer-assisted proof evaluates the integral
    \begin{align*}
        \norm{\kappa_2}_{L^1([0,\pi])}&=
        \int_0^\pi \int_0^x
            \big[(\pa_x\beta(x))^2+\beta(x)^4\big],
    \end{align*}
    using \verb|triangular_integral()|.
    
    We then compare the resulting enclosure of the integral with the bound in the statement. The whole routine is called \verb|kappa2()| and is found in \verb|lemmas.py|.
\end{proofsth}

\begin{proofsth}{Lemma~\ref{lemma:hk_funcs_exis}}
    For every $k=1,\ldots,N+1$, the function $h_k$ solves
    \begin{align*}
        \pa_x h_k(x)=  \beta(x) \big(ih_k(x) + g_k(x)\big),
    \end{align*}
    where $g_k$ is given by \eqref{def:gk_exis} for $k=1,\ldots,N$ and $g_{N+1}=0$. Hence,
    \begin{align*}
        \pa_x^2 h_k=
        \beta
        \pa_x g_k+
        (\pa_x\beta+i\beta^2)
        g_k+
        (i\pa_x\beta-\beta^2)
        h_k.
    \end{align*}

    First, we have for $h_{N+1}=\PiExis$ that
    \begin{align*}
        \norm{\pa_x^2 \PiExis}_\XLIpi=
        \norm{
        (i\pa_x\beta-\beta^2) 
        \PiExis}_\XLIpi
        =
        \norm{
        i\pa_x\beta-\beta^2}_\XLIpi,
    \end{align*}
    Lemma~\ref{lemma:betamod_L2} provides an explicit upper bound for $\norm{
        i\pa_x\beta-\beta^2}_\XLIpi^2$.

    We now estimate the remaining terms. We have for every $k=1,\ldots,N$ that
    \begin{align*}
        \norm{\pa_x^2 h_k}_\XLIpi
        \leq I_1 + I_2 + I_3,
    \end{align*}
    where
    \begin{align*}
        I_1&:=
        \norm{\beta}_\XLinfinterval
        \norm{\pa_x g_k}_\XLIpi,\\
        I_2&:=
        \norm{\pa_x\beta+i\beta^2}_\XLIpi
        \norm{g_k}_\XLinfinterval,\\
        I_3&:=
        \norm{\kappa_2}_{L^1([0,\pi])}^{1/2}
        \norm{g_k}_\XLIpi.
    \end{align*}

    The quantities $\norm{\beta}_\XLinfinterval$, $\norm{\pa_x\beta+i\beta^2}_\XLIpi$ and $\norm{\kappa_2}_{L^1([0,\pi])}$ are bounded using Lemmas~\ref{lemma:betamax}, \ref{lemma:betamod_L2}, and \ref{lemma:kappa2}, respectively.

    Next, we compute explicit expressions for the norms of $g_k$. A direct computation yields
    \begin{align*}
        \norm{\pa_x g_k}_\XLIpi^2
        &=
        k^2\pi
        \sum_{m=1}^{N-k}
        m^2 (\twap_{k+m})^2,\\
        \norm{g_k}_\XLIpi^2
        &=
        k^2\pi
        \Big(
        \frac{1}{2}(\twap_k)^2
        +
        \sum_{m=1}^{N-k}
        (\twap_{k+m})^2
        \Big).
    \end{align*}

    To bound $\norm{g_k}_\XLinfinterval$, we use the estimates stored in \verb|gk_Linf_bounds.txt|. More precisely, we verify using \verb|max_prev_esti()| that the values $C_k$, $k=1,\ldots,N$, satisfy
    \begin{align*}
        \norm{\Re(g_k)}_{L^\infty([0,\pi])} < 
        C_k.
    \end{align*}

    Hence, we have
    \begin{align*}
        \norm{g_k}_\XLinfinterval^2
        &\leq 
        \norm{\Im(g_k)}_\XLinfinterval^2
        +
        \norm{\Re(g_k)}_\XLinfinterval^2
        \leq 
        \Big(\frac{k}{2}
        \twap_k\Big)^2
        +
        C_k^2.
    \end{align*}

    Combining the above estimates yields the claimed bound. The full verification is carried out in the routine \verb|hk_norm_exis_H2sq()| in \verb|lemmas.py|.
    
\end{proofsth}

\begin{proofsth}{Lemma~\ref{lemma:stab_is_regu}}
    We fix $\la=\getvar{la_reg}$ and show that
    \begin{align*}
        \det\big(\MStabTor(\la)^\ast\MStabTor(\la)\big) > 0.
    \end{align*}
    We proceed as in the proof of Lemma~\ref{lemma:exis_singval}. The entries of $\MStabTor(\la)$ are the Fourier coefficients of certain functions, namely $h_k$ from \eqref{def:hkstabsorted}. With this notation, the matrix $\MStabTor(\la)$ from \eqref{def:MStabTor} can be written as
    \begin{align}
        {\MStabTor(\la)}_{jk}=
        \begin{cases}
            (\ha{h_{k+N}})_{j-2}-\delta_{j-1,k} ,&
            1\leq j\leq N+1,\ 1\leq k\leq N,\\
            (\ha{h_{k}})_{j-2} ,&
            1\leq j\leq N+1,\ N+1\leq k\leq 2N,\\
            (\ha{h_{4N+1}})_{j-2} ,&
            1\leq j\leq N+1,\ k=2N+1,
            \\
            (\ha{h_{k+2N}})_{j-N-3} ,&
            N+1\leq j\leq 2N+2,\ 1\leq k\leq N,\\
            (\ha{h_{k+3N}})_{j-N-3}-\delta_{j-2,k} ,&
            N+1\leq j\leq 2N+2,\ N+1\leq k\leq 2N,\\
            (\ha{h_{4N+2}})_{j-N-3} ,&
            N+1\leq j\leq 2N+2,\ k=2N+2,
        \end{cases}
        \label{def:Mstab:sort}
    \end{align}

    Hence, using the notation from Lemma~\ref{lemma:ode_stab:regu}, we have 
    \begin{align*}
        \norm{h_k}_{\XLIpi} \leq \norm{h_k^{\rm ap}}_{\XLIpi} + \rho_k^{1/2}.
    \end{align*}
    
    Consequently, following the same strategy as in the proof of Lemma~\ref{lemma:exis_singval}, we obtain a rigorous lower bound on the smallest singular value of $\MStabTor(\la)$, which is strictly positive. The only routines that differ are \verb|construct_stab_mat()| and \verb|mat_stab_add_radii()|. 

    The routine \verb|construct_stab_mat()| constructs $\MStabTorApprox$ by substituting $h_k$ by $h_k^{\rm ap}$ in \eqref{def:Mstab:sort}. We define
    \begin{align*}
        R:=\MStabTor - \MStabTorApprox,
    \end{align*}
    so the routine \verb|mat_stab_add_radii()| uses the fact that
    \begin{align}\label{eqn:mat_stab_resi_sum}
        2\pi\sum_{j=1}^{2N+2} |R_{jk}|^2 \leq 
        \begin{cases}
            \rho_{k+N} + \rho_{k+2N}
            ,& 1\leq k\leq N,\\
            \rho_{k} + \rho_{k+3N}
            ,& N+1\leq k\leq 2N,\\
            \rho_{4N+1}
            ,& k=2N+1,\\
            \rho_{4N+2}
            ,& k=2N+2,
        \end{cases}
    \end{align}
    to bound the following quantity
    \begin{align*}
        \norm{\MStabTorApprox^\ast R + R\MStabTorApprox^\ast + R^\ast R}_F.
    \end{align*}
    
    The full routine is implemented in \verb|svd_regu()| from \verb|lemmas.py|.
\end{proofsth}

\begin{proofsth}{Lemma~\ref{lemma:stab_singval}}
    As in the proof of Lemma~\ref{lemma:stab_is_regu}, we adopt the notation from \eqref{def:hkstabsorted}. However, here we work with $\la=\laap$ instead of $\la = \getvar{la_reg}$, and we use Lemma~\ref{lemma:ode_stab:sing} rather than Lemma~\ref{lemma:ode_stab:regu}. These are not the only differences from the argument used in the proof of Lemma~\ref{lemma:stab_is_regu}. In the present proof, we seek rigorous bounds for the two smallest singular values of a matrix. Consequently, we run the routine \verb|gersh_gap_one_two()| instead of \verb|gersh_smallest_eig()|. The full routine is \verb|svd_sing()| from \verb|lemmas.py|.
\end{proofsth}

\begin{proofsth}{Lemma~\ref{lemma:prod_fv_u}}
    We load an explicit approximation of $u_1$ stored in \verb|sing_u1.txt|, after a renormalization, we denote it by $u_1^{\rm ap}$, i.e. $\norm{u_1^{\rm ap}}_{\ell^2(\C^{2N+2})}=1$. 
    Let $\om\in\C$ with $|\om|=1$ and define $\delta:=u_1-\om^{-1}  u_1^{\rm ap}$, so we get 
    \begin{align*}
        \big|u_1(\la)^\ast{\bf b}(\Fvrmap^+,\Fvrmap^-)\big|
        &=
        \big|\big(
        {\bf b}(
            \Fvrmap^+,\Fvrmap^-
        ), 
        u_1(\la)
        \big)_{\ell^2(\C^{2N+2})}\big|
        \nonumber
        \\
        &\geq
        \big|\big({\bf b}(
            \Fvrmap^+,\Fvrmap^-
        ), 
        \om^{-1} u_1^{\rm ap}\big)_{\ell^2(\C^{2N+2})}\big|
        -\big|
        \big({\bf b}(
            \Fvrmap^+,\Fvrmap^-
        ), 
        \delta\big)_{\ell^2(\C^{2N+2})}\big|
        \nonumber
        \\
        &\geq
        \big|\big({\bf b}(
            \Fvrmap^+,\Fvrmap^-
        ), 
        u_1^{\rm ap}\big)_{\ell^2(\C^{2N+2})}\big|
        -
        \norm{{\bf b}(
            \Fvrmap^+,\Fvrmap^-
        )}_{\ell^2(\C^{2N+2})}
        \norm{\delta}_{\ell^2(\C^{2N+2})},
    \end{align*}
    where we have used Cauchy--Schwarz inequality in the last step. We can use Lemma~\ref{lemma:norm_fv} and Lemma~\ref{lemma:J_L2L2_stab} to obtain an upper estimate on $\norm{{\bf b}(\Fvrmap^+,\Fvrmap^-)}_{\ell^2(\C^{2N+2})}$. Using these lemmas, we obtain
    \begin{align*}
            \norm{{\bf b}(
            \Fvrmap^+,\Fvrmap^-
        )}_{\ell^2(\C^{2N+2})}^2
        &\leq 
        \norm{J^+ \Fvrmap^+}_{\XL}^2
        +
        \norm{J^- \Fvrmap^-}_{\XL}^2\\
        &\leq 
        \max\big\{
        \norm{J^+}_{\XL\to\XL},
        \norm{J^-}_{\XL\to\XL}
        \big\}^2 
        \norm{\Eigveap}_\XL^2.
    \end{align*}

    We now bound $\norm{\delta}_{\ell^2(\C^{2N+2})}$. Set 
    \begin{align}\label{def:om}
        \om := \frac{(u_1^{\rm ap}, u_1)_{\ell^{2}(\C^{2N+2})}}{|(u_1^{\rm ap}, u_1)_{\ell^{2}(\C^{2N+2})}|}.
    \end{align}
    We claim that the denominator $|(u_1^{\rm ap}, u_1)_{\ell^{2}(\C^{2N+2})}|$ does not vanish, we will show this after \eqref{eqn:prod_fv_u:aux4}.
  
    For this choice of $\om$, we find that
    \begin{align}
        \norm{\delta}_{\ell^2(\C^{2N+2})}^2 &= \norm{u_1-\om^{-1} u_1^{\rm ap}}_{\ell^{2}(\C^{2N+2})}^2
        \nonumber
        \\&= \norm{u_1}_{\ell^{2}(\C^{2N+2})}^2+
        \norm{u_1^{\rm ap}}_{\ell^{2}(\C^{2N+2})}^2
        -2\Re(\om^{-1} u_1^{\rm ap}, u_1)_{\ell^{2}(\C^{2N+2})}
        \nonumber
        \\
        &= 2
        -2|(u_1^{\rm ap}, u_1)_{\ell^{2}(\C^{2N+2})}|
        \nonumber
        \\
        &\leq 2\big(1
        -|(u_1^{\rm ap}, u_1)_{\ell^{2}(\C^{2N+2})}|^2\big),
        \label{eqn:prod_fv_u:aux3}
    \end{align}
    where we have used that $|(u_1^{\rm ap}, u_1)_{\ell^{2}(\C^{2N+2})}|\leq1$ in the last step.

    Define $\xi_{u_1^{\rm ap}}\in\C^{2N+2}$ by
    \begin{align}
        \xi_{u_1^{\rm ap}} &:= 
        \MStabTor \MStabTor^\ast  u_1^{\rm ap} - \sigma_1^2  u_1^{\rm ap}
        \nonumber
        \\
        &=
        \sum_{k=1}^{2N+2} (\MStabTor \MStabTor^\ast - \sigma_1^2I) (u_1^{\rm ap},u_k)_{\ell^{2}(\C^{2N+2})} u_k
        \nonumber
        \\
        &=
        \sum_{k=1}^{2N+2} (\sigma_k^2 - \sigma_1^2) (u_1^{\rm ap},u_k)_{\ell^{2}(\C^{2N+2})} u_k
        =\sum_{k=2}^{2N+2} (\sigma_k^2 - \sigma_1^2) (u_1^{\rm ap},u_k)_{\ell^{2}(\C^{2N+2})} u_k.\label{eqn:prod_fv_u:auxaux}
    \end{align}
    
    Taking the norm in \eqref{eqn:prod_fv_u:auxaux} yields
    \begin{align}
        \norm{\xi_{u_1^{\rm ap}}}_{\ell^{2}(\C^{2N+2})}^2
        \geq (\sigma_2^2-\sigma_1^2)^2\sum_{k=2}^{2N+2} |(u_1^{\rm ap},u_k)_{\ell^{2}(\C^{2N+2})}|^2 =
         (\sigma_2^2-\sigma_1^2)^2 \big(1-|(u_1^{\rm ap},u_1)_{\ell^{2}(\C^{2N+2})}|^2\big).
        \label{eqn:prod_fv_u:aux4}
    \end{align}
    The denominator $|(u_1^{\rm ap}, u_1)_{\ell^{2}(\C^{2N+2})}|$ in \eqref{def:om} does not vanish. If it did we would have that
    \begin{align*}
        \norm{\xi_{u_1^{\rm ap}}}_{\ell^{2}(\C^{2N+2})}
        \geq
         \sigma_2^2-\sigma_1^2,
    \end{align*}
    so we check that this does not happen.

    From \eqref{eqn:prod_fv_u:aux3} and \eqref{eqn:prod_fv_u:aux4} we get the upper estimate
    \begin{align*}
        \norm{\delta}_{\ell^2(\C^{2N+2})}
        \leq \sqrt{2}\frac{\norm{\xi_{u_1^{\rm ap}}}_{\ell^2(\C^{2N+2})}}{\sigma_2^2-\sigma_1^2}.
    \end{align*}
    Here, Lemma~\ref{lemma:stab_singval} provides a lower bound for the denominator $\sigma_2^2 - \sigma_1^2$.

    In order to bound $\xi_{u_1^{\rm ap}}$, we will use Corollary 4.15 of Chapter 4 in \cite{stewart1990matrix} with $A=\MStabTor\MStabTor^\ast$ and $X=u_1^{\rm ap}$. Denoting by $a:=\norm{\MStabTor^\ast u_1^{\rm ap}}_{\ell^{2}(\C^{2N+2})}\in\RR$,\footnote{The quantity $a$ corresponds to $M$ in \cite{stewart1990matrix}. Since we only have one column we also have that $\mu_1=M$.} we have that
    \begin{align*}
        |a^2 - \sigma_1^2| &\leq 
        \norm{\MStabTor\MStabTor^\ast u_1^{\rm ap} - 
        a^2 u_1^{\rm ap}}_{\ell^2(\C^{2N+2})}\\
        &\leq 
        \norm{\MStabTorApprox\MStabTorApprox^\ast u_1^{\rm ap} - 
        a^2 u_1^{\rm ap}}_{\ell^2(\C^{2N+2})}
        +
        \norm{
        (\MStabTorApprox R^\ast 
        +
        R\MStabTorApprox^\ast 
        +
        RR^\ast )
        u_1^{\rm ap}
        }_{\ell^2(\C^{2N+2})},
    \end{align*}
    where we have written the matrix $\MStabTor$ as
    \begin{align*}
        \MStabTor = \MStabTorApprox + R,
    \end{align*}
    (this same decomposition, with $\la=\getvar{la_reg}$ instead of $\laap$ is found in the proof of Lemma~\ref{lemma:stab_is_regu}.)
    
    Hence, we find that
    \begin{align*}
        \norm{\xi_{u_1^{\rm ap}}}_{\ell^2(\C^{2N+2})} &= 
        \norm{
            \MStabTor \MStabTor^\ast  u_1^{\rm ap} 
            -
            \sigma_1^2  u_1^{\rm ap}
        }_{\ell^2(\C^{2N+2})}\\
        &\leq 
        \norm{
            \MStabTor \MStabTor^\ast  u_1^{\rm ap} 
            -
            a^2  u_1^{\rm ap}
        }_{\ell^2(\C^{2N+2})}
        +
        |
            a^2 
            - 
            \sigma_1^2 
        |\\
        &\leq 
        2\norm{\MStabTorApprox\MStabTorApprox^\ast u_1^{\rm ap} - 
        a^2 u_1^{\rm ap}}_{\ell^2(\C^{2N+2})}
        +
        2\norm{
        (\MStabTorApprox R^\ast 
        +
        R\MStabTorApprox^\ast 
        +
        RR^\ast )
        u_1^{\rm ap}
        }_{\ell^2(\C^{2N+2})}
    \end{align*}

    We bound $\rho:=\norm{
    (\MStabTorApprox R^\ast 
    +
    R\MStabTorApprox^\ast 
    +
    RR^\ast )
    u_1^{\rm ap}
    }_{\ell^2(\C^{2N+2})}$ as follows. Let $r_k$ and $m_k$ denote the columns of $R$ and $\MStabTorApprox$ respectively. 
    Hence,
    \begin{align*}
        \rho
        &\leq 
        \norm{\MStabTorApprox R^\ast 
        }_2
        +
        \norm{
        R\MStabTorApprox^\ast 
        u_1^{\rm ap}
        }_{\ell^2(\C^{2N+2})}
        +
        \norm{
        RR^\ast
        }_2
        \\
        &\leq 
        \sum_{l=1}^{2N+2}
        \norm{
        m_l r_l^\ast
        }_2
        +
        \norm{
        r_l m_l^\ast
        u_1^{\rm ap}
        }_{\ell^2(\C^{2N+2})}
        +
        \norm{
        r_l r_l^\ast
        }_2
    \end{align*}
    Since $ab^\ast$ is a rank-one matrix for any vectors $a,b\in\C^{2N+2}$, we have that $\norm{ab^\ast}_2 = \norm{ab^\ast}_F \leq \norm{a}_F\norm{b}_F$, so we get that
    \begin{align*}
        \rho
        &\leq 
        \sum_{l=1}^{2N+2}\Big(
        \norm{
        m_l
        }_{\ell^2(\C^{2N+2})}
        +
        |
        m_l^\ast
        u_1^{\rm ap}
        |
        +
        \norm{
        r_l
        }_{\ell^2(\C^{2N+2})}
        \Big)
        \norm{
        r_l
        }_{\ell^2(\C^{2N+2})}.
    \end{align*}
        
    The computation of
    \begin{align*}
    \big|\big({\bf b}(
            \Fvrmap^+,\Fvrmap^-
        ), 
        u_1^{\rm ap}\big)_{\ell^2(\C^{2N+2})}\big|,
    \end{align*}
    is done using the approximations of $J^+\Fvrmap^+$ and $J^-\Fvrmap^-$ described in Appendix~\ref{sec:ode:Jfv}, namely $h_+^{\rm ap}$ and $h_-^{\rm ap}$. Recall \eqref{def:bplumin}, so we have that
    \begin{align*}
        {\bf b}(
            \Fvrmap^+,\Fvrmap^-
        )&=
        \begin{pmatrix}
            (\ha{J^+\Fvrmap^+})_{-1},\! &\!
            \cdots\!&\!,
            (\ha{J^+\Fvrmap^+})_{N-1},\!&\!
            -(\ha{J^-\Fvrmap^-})_{-1},\!&\!
            \cdots\!&\!,
            -(\ha{J^-\Fvrmap^-})_{N-1}
        \end{pmatrix}^t.
    \end{align*}

    Consequently, applying Lemma~\ref{lemma:ode_stab:Fv} and Parseval's identity, we obtain
    \begin{align*}
        \scalednorm{{\bf b}(
            \Fvrmap^+,\Fvrmap^-
        )- 
        \begin{pmatrix}
            (\ha{h_+^{\rm ap}})_{-1} ,\! &\!
            \cdots\!&\!,
            (\ha{h_+^{\rm ap}})_{N-1},\!&\!
            -(\ha{h_-^{\rm ap}})_{-1} ,\! &\!
            \cdots\!&\!,
            -(\ha{h_-^{\rm ap}})_{N-1}
        \end{pmatrix}^t}_{\ell^2(\C^{2N+2})}^2\leq 
        \frac{\rho_+ + \rho_-}{2\pi}.
    \end{align*}
    
    Then, by the Cauchy--Schwarz inequality, we have that
    \begin{align*}
        \big|\big({\bf b}(
            \Fvrmap^+,\Fvrmap^-
        ), 
        u_1^{\rm ap}\big)_{\ell^2(\C^{2N+2})}\big|
        \geq 
        \Big|
            \sum_{j=-1}^{N-1}
                (\ha{h_+^{\rm ap}})_j (\overline{u_1^{\rm ap}})_{j+2}
            -
            \sum_{j=-1}^{N-1}
                (\ha{h_-^{\rm ap}})_j (\overline{u_1^{\rm ap}})_{j+N+3}
        \Big|
        -
        \frac{(\rho_+ + \rho_-)^{1/2}}{\sqrt{2\pi}}.
    \end{align*}

    The full routine is done in \verb|prod_fv_u1()| from \verb|lemmas.py|.
\end{proofsth}

\begin{proofsth}{Lemma~\ref{lemma:Proots2}}
For every $\la\in\C$, we have that
\begin{align*}
    \lim_{z\to \al} i\frac{z-\al}{z}
    \Big(
    (i\pm\la)
    \frac{z^{N}}{z^N P(z)}-
    i\te^{\pm}\Big)
    &=
    \lim_{z\to \al} i(z-\al)
    (i\pm\la)
    \frac{z^{N-1}}{z^N P(z)}\\
    &=
    (\pm i \la-1)
    \frac{\al^{N-1}}{\pa_z (z^N P(z))(\al)}.
\end{align*}

Using the interval enclosures of $\al$ provided by Lemma~\ref{lemma:enclose_poly_roots}, this expression is evaluated with rigorously. The routine \verb|proots_stab()| in \verb|lemmas.py| verifies that, for every $\al\in Z$ and for both choices of sign, the resulting enclosure cannot contain an integer.
\end{proofsth}

\begin{proofsth}{Lemma~\ref{lemma:kappa3}}
    We know by Lemma~\ref{lemma:betamax} that $\beta$ is a positive and bounded function. Hence,
    \begin{align*}
        0<\int_0^\pi \beta(y)\dy <\infty.
    \end{align*}

    We then know that
    \begin{align}\label{eqn:kappa3:brute} 
        I_\beta:=e^{2\Re(\laap)\int_0^\pi \beta(y)\dy}>1.
    \end{align}
    The quantity $\int_0^\pi \beta(y)\dy$ is computed by the routine \verb|integ_adaptive_1D()|.

    Using \eqref{eqn:kappa3:brute} and $\kappa_1$ in Lemma~\ref{lemma:kappa1}, we obtain for every $x\in[0,\pi]$,
    \begin{align*}
    |\kappa_3^\pm(x)|
    &=
        e^{\pm2\Re(\laap)\int_0^x \beta(y)\dy}
        \int_0^x
            \beta(y)^2 
            e^{\mp 2\Re(\laap)\int_0^y \beta(s)\,ds}        
        \dy \\
    &\leq 
        e^{2\Re(\laap)\int_0^\pi \beta(y)\dy}
        \int_0^x
            \beta(y)^2      
        \dy
        =
        I_\beta 
        \kappa_1(x).
    \end{align*}

    Finally, we apply Lemma~\ref{lemma:kappa1}. The whole computation is done in \verb|kappa3()| from \verb|lemmas.py|.
\end{proofsth}

\begin{proofsth}{Lemma~\ref{lemma:kappa4}}
    We construct the \verb|Functions_1D| instances \verb|integ_p| and \verb|integ_m| with the routine \verb|betamod_laap_sq_func_constructor()|. Here, \verb|integ_p| represents $I^+(x):=(i+\laap)\beta(x)- i\te$, while \verb|integ_m| represents $I^-(x):=(i-\laap)\beta(x)+ i\te$. 
    
    We also construct an instance representing $\beta^2$ with \verb|beta_sq_func_constructor()|.

    Recall $I_\beta$ from \eqref{eqn:kappa3:brute} and notice that
    \begin{align*}
        \norm{\kappa_{4,\kappa_3^\pm}^+}_{L^1([0,\pi])} 
        &= 
        \int_0^\pi 
        \big|I^+(x)
        \big|^2 
        \kappa_3^\pm(x)
        \dx\\
        &\leq 
        I_\beta
        \int_0^\pi 
        \big|I^+(x)
        \big|^2 
        \kappa_1(x)
        \dx=
        I_\beta
        \int_0^\pi 
        \int_0^x
        \big|I^+(x)
        \big|^2 
        \beta(y)^2
        \dy
        \dx.
    \end{align*}
    The computation for $\kappa_{4,\kappa_3^\pm}^-$ follows in the same way. 
    
    Finally, the integrals
    \begin{align*}
        \int_0^\pi 
        \int_y^\pi
        \big|I^+(x)
        \big|^2
        \beta(y)^2
        \dx
        \dy
        \qquad 
        \text{and}
        \qquad
        \int_0^\pi 
        \int_y^\pi
        \big|I^-(x)
        \big|^2
        \beta(y)^2
        \dx
        \dy
    \end{align*}
    are evaluated with \verb|triangular_integral()|. The whole computation is realized in \verb|kappa4()| from \verb|lemmas.py|.
\end{proofsth}

\begin{proofsth}{Lemma~\ref{lemma:hk:stab}}
    We have to compute four different sums. We explain only how the third and fourth inequalities work, since the other two are analogous. Recall \eqref{def:hkstabsorted}, that is $h_{4N+1}=\Pi^+$, and $h_{4N+2}=\Pi^-$ and
    \begin{align*}
        h_k=J^+ g^+_{k,\te^+},\ h_{k+N}=J^+ g^-_{k,\te^-},\ h_{k+2N}=J^- g^+_{k,\te^-}\ \text{and } h_{k+3N}=J^- g^-_{k,\te^+}\ \text{for all } k=1,\ldots,N.
    \end{align*}
    Consequently, if we define $\mathfrak{S}:=\{4N+1\}\cup\{k\in\Z:\ 1\leq k\leq 2N\}$, the left member of the third inequality of the statement is
    \begin{align*}
        \sum_{k\in\mathfrak{S}}
        \norm{\pa_x h_k}_\XLIpi^2\leq 
        \sum_{k\in\mathfrak{S}}
        \big(
            \norm{\pa_x h^{\rm ap}_k}_\XLIpi
            +
            \norm{\pa_x (h_k-h_k^{\rm ap})}_\XLIpi
        \big)^2.
    \end{align*}
    Now, we evaluate $\norm{\pa_x h^{\rm ap}_k}_\XLIpi$ with the local routine \verb|_hk_norms_p()|, and use the
    Lemmas~\ref{lemma:J_L2L2_stab} and \ref{lemma:J_L2H1_stab}, and~\ref{lemma:ode_stab:sing} to get the upper bound $\rho_k^1$ for $\norm{\pa_x \tau_k}_\XLIpi$. Observe that in the proof of Lemma~\ref{lemma:ode_stab:sing} we enclose every residual quantity $\norm{\xi_k^{(s_J)(s_g)}}_\XLIpi^2$, multiply it by the constant $C_0$ in Lemma~\ref{lemma:J_L2L2_stab}, and compare it with the quantity $\rho_k$ in \verb|sing_resis.txt|:
    \begin{align*}
        \norm{h_k-h_k^{\rm ap}}_\XLIpi^2< C_0 \norm{\xi_k^{(s_J)(s_g)}}_\XLIpi^2 < \rho_k.
    \end{align*}

    Now, let $C_1^+$ be the constant from the first inequality in Lemma~\ref{lemma:J_L2H1_stab}, we obtain
    \begin{align*}
        \norm{\pa_x(h_k-h_k^{\rm ap})}_\XLIpi^2<
        C_1^+ \norm{\xi_k^{(s_J)(s_g)}}_\XLIpi^2 <
        \frac{C_1^+}{C_0} \rho_k.
    \end{align*}

    The fourth inequality is analogous, if we define $\mathfrak{D}:=\{4N+2\}\cup\{k\in\Z:\ 2N+1\leq k\leq 4N\}$, we have to estimate now 
    \begin{align*}
        \sum_{k\in\mathfrak{D}}
        \norm{\ti{\pa_x} h_k}_\XLIpi^2\leq 
        \sum_{k\in\mathfrak{D}}
        \big(
            \norm{\ti{\pa_x} h^{\rm ap}_k}_\XLIpi
            +
            \norm{\ti{\pa_x} \tau_k}_\XLIpi
        \big)^2.
    \end{align*}
    The term $\norm{\ti{\pa_x} h^{\rm ap}_k}_\XLIpi$ is evaluated with the local routine \verb|_hk_norms_m()|, while $\norm{\ti{\pa_x} \tau_k}_\XLIpi$ is bounded using again Lemmas~\ref{lemma:J_L2L2_stab}, \ref{lemma:J_L2H1_stab}, and~\ref{lemma:ode_stab:sing}. Notice that the constant we take now in Lemma~\ref{lemma:J_L2H1_stab} is the one in the second inequality. Denoting it by $C_1^-$ we obtain
    \begin{align*}
        \norm{\ti{\pa_x} (h_k-h_k^{\rm ap})}_\XLIpi^2< C_1^- \norm{\xi_k^{(s_J)(s_g)}}_\XLIpi^2 < \frac{C_1^-}{C_0}\rho_k.
    \end{align*}
\end{proofsth}

\subsection*{Acknowledgments}

AC and MMGPC acknowledge financial support from the Severo Ochoa Programme for Centers of Excellence (Grants \nolinkurl{CEX2019-000904-S} and \nolinkurl{CEX-2023-001347-S}), funded by 
\nolinkurl{MCIN/AEI/10.13039/501100011033}

AC and MMGPC were also supported by the grants \nolinkurl{PID2020-114703GB-I00}, \nolinkurl{PID2024-158418NB-I00}, and \nolinkurl{RED2024-153842-T}, all funded by \nolinkurl{MICIU/AEI/10.13039/501100011033}.

JGS has been partially supported by the MICINN (Spain) research grant number \nolinkurl{PID2021–125021NA–I00}, by NSF under Grants \nolinkurl{DMS-2245017}, \nolinkurl{DMS-2247537} and \nolinkurl{DMS-2434314}, by the AGAUR project \nolinkurl{2021-SGR-0087} (Catalunya) and by a Simons Fellowship. 

\printbibliography

@article{hur2014modulational,
  title        = {Modulational instability in the {Whitham} equation for water waves},
  author       = {Hur, Vera Mikyoung and Johnson, Mathew A.},
  journal      = {Studies in Applied Mathematics},
  volume       = {134},
  number       = {1},
  pages        = {120--143},
  year         = {2015},
  doi          = {10.1111/sapm.12061}
}

@article{hur2015modulational,
  title        = {Modulational instability in the {Whitham} equation with surface tension and vorticity},
  author       = {Hur, Vera Mikyoung and Johnson, Mathew A.},
  journal      = {Nonlinear Analysis: Theory, Methods \& Applications},
  volume       = {129},
  pages        = {104--118},
  year         = {2015},
  doi          = {10.1016/j.na.2015.08.019}
}

@article{hur2016modulational,
  title        = {Modulational instability in nonlinear nonlocal equations of regularized long wave type},
  author       = {Hur, Vera Mikyoung and Pandey, Ashish Kumar},
  journal      = {Physica D: Nonlinear Phenomena},
  volume       = {325},
  pages        = {98--112},
  year         = {2016},
  doi          = {10.1016/j.physd.2016.03.005}
}

@article{angulo2017stability,
  title        = {Stability properties of periodic traveling waves for the intermediate long wave equation},
  author       = {Pava, Jaime Angulo and Cardoso, Eleomar Jr. and Natali, F{\'a}bio},
  journal      = {Revista Matem\'atica Iberoamericana},
  volume       = {33},
  number       = {2},
  pages        = {417--448},
  year         = {2017},
  doi          = {10.4171/RMI/943}
}

@article{johnson2013stability,
  title        = {Stability of small periodic waves in fractional {KdV}-type equations},
  author       = {Johnson, Mathew A.},
  journal      = {SIAM Journal on Mathematical Analysis},
  volume       = {45},
  number       = {5},
  pages        = {3168--3193},
  year         = {2013},
  doi          = {10.1137/120894397}
}

@article{MasperoRadakovic2025,
  author  = {Maspero, Alberto and Radakovic, Antonio Milosh},
  title   = {Full description of {B}enjamin--{F}eir instability for generalized {K}orteweg--de {V}ries equations},
  journal = {SIAM J. Math. Anal.},
  volume  = {57},
  year    = {2025},
  number  = {3},
  pages   = {3030--3070},
  doi     = {10.1137/24M1652076},
}

@misc{RaduStevenson2025,
      title={Desingularization of nondegenerate rotating vortex patches},
      author={Radu, R\u{a}zvan-Octavian and Stevenson, Noah},
      year={2025},
      eprint={2511.18592},
      archivePrefix={arXiv},
      primaryClass={math.AP},
      url={https://arxiv.org/abs/2511.18592}
}

@article{GarciaHmidiMateu2024,
  author  = {Garc\'{\i}a, Claudia and Hmidi, Taoufik and Mateu, Joan},
  title   = {Time periodic solutions close to localized radial monotone profiles for the {2D} {E}uler equations},
  journal = {Ann. PDE},
  volume  = {10},
  year    = {2024},
  number  = {1},
  pages   = {Paper No. 1, 75},
  doi     = {10.1007/s40818-023-00166-5},
}

@article{garcia2025dynamicsvortexcapsolutions,
      title={Dynamics of vortex cap solutions on the rotating unit sphere},
      author={Garc\'{\i}a, Claudia and Hassainia, Zineb and Roulley, Emeric},
      journal={Journal of Differential Equations},
      volume={417},
      pages={1--63},
      year={2025},
      doi={10.1016/j.jde.2024.11.012}
}

@misc{garcia2026timeperiodicleapfroggingvortexrings,
      title={Time-periodic leapfrogging vortex rings in the 3D Euler equations},
      author={Garc\'{\i}a, Claudia and Hassainia, Zineb and Hmidi, Taoufik},
      year={2026},
      eprint={2603.21644},
      archivePrefix={arXiv},
      primaryClass={math.AP},
      url={https://arxiv.org/abs/2603.21644}
}

@misc{HassainiaHmidiRoulley2024desingularization,
      title={Desingularization of time-periodic vortex motion in bounded domains via {KAM} tools},
      author={Hassainia, Zineb and Hmidi, Taoufik and Roulley, Emeric},
      year={2024},
      eprint={2408.16671},
      archivePrefix={arXiv},
      primaryClass={math.AP},
      url={https://arxiv.org/abs/2408.16671}
}

@article{CrouseillesFaou2013,
    author  = {Crouseilles, Nicolas and Faou, Erwan},
    title   = {Quasi-periodic solutions of the {2D} {E}uler equations},
    journal = {Asymptot. Anal.},
    volume  = {81},
    year    = {2013},
    number  = {1},
    pages   = {31--34},
    doi     = {10.3233/ASY-2012-1117},
}

@article{EncisoPeraltaSalasTorres2023,
    author  = {Enciso, Alberto and Peralta-Salas, Daniel and Torres de Lizaur, Francisco},
    title   = {Quasi-periodic solutions to the incompressible {E}uler equations in dimensions two and higher},
    journal = {J. Differential Equations},
    volume  = {354},
    year    = {2023},
    pages   = {170--182},
    doi     = {10.1016/j.jde.2023.01.013},
}

@article{BaldiMontalto2021,
    author  = {Baldi, Pietro and Montalto, Riccardo},
    title   = {Quasi-periodic incompressible {E}uler flows in {3D}},
    journal = {Adv. Math.},
    volume  = {384},
    year    = {2021},
    pages   = {Paper No. 107730},
    doi     = {10.1016/j.aim.2021.107730},
}

@article{BertiHassainiaMasmoudi2023,
    author  = {Berti, Massimiliano and Hassainia, Zineb and Masmoudi, Nader},
    title   = {Time quasi-periodic vortex patches of {E}uler equation in the plane},
    journal = {Invent. Math.},
    volume  = {233},
    year    = {2023},
    number  = {3},
    pages   = {1279--1391},
    doi     = {10.1007/s00222-023-01195-4},
}

@article{HassainiaRoulley2025,
    author  = {Hassainia, Zineb and Roulley, Emeric},
    title   = {Boundary effects on the emergence of quasi-periodic solutions for {E}uler equations},
    journal = {Nonlinearity},
    volume  = {38},
    year    = {2025},
    number  = {1},
    pages   = {015016},
    doi     = {10.1088/1361-6544/ad9ba7},
}

@article{HassainiaHmidiRoulley2024,
    author  = {Hassainia, Zineb and Hmidi, Taoufik and Roulley, Emeric},
    title   = {Invariant {KAM} tori around annular vortex patches for {2D} {E}uler equations},
    journal = {Comm. Math. Phys.},
    volume  = {405},
    number  = {11},
    year    = {2024},
    pages   = {Paper No. 270},
    doi     = {10.1007/s00220-024-05141-0},
}

@article{HassainiaHmidiMasmoudi2025,
    author  = {Hassainia, Zineb and Hmidi, Taoufik and Masmoudi, Nader},
    title   = {Rigorous derivation of the leapfrogging motion for planar {E}uler equations},
    journal = {Invent. Math.},
    volume  = {242},
    year    = {2025},
    number  = {3},
    pages   = {725--825},
    doi     = {10.1007/s00222-025-01368-3},
}

@article{CastroGomezSerrano2025NonConvex,
    author  = {Gerard Castro-L\'opez and Javier G\'omez-Serrano},
    title   = {Existence of analytic non-convex V-states of the 2D Euler equation},
    journal = {Communications in Mathematical Physics},
    volume  = {406},
    number  = {9},
    pages   = {Paper No. 217},
    year    = {2025},
    doi     = {10.1007/s00220-025-05382-7},
}

@article{GuoHallstromSpirn2004,
    author  = {Guo, Yan and Hallstrom, Chris and Spirn, Daniel},
    title   = {Dynamics near an unstable {K}irchhoff ellipse},
    journal = {Comm. Math. Phys.},
    volume  = {245},
    year    = {2004},
    number  = {2},
    pages   = {297--354},
    doi     = {10.1007/s00220-003-1017-z},
}

@misc{WangXuZhou2022,
      title={Degenerate bifurcations of two-fold doubly-connected uniformly rotating vortex patches},
      author={Yuchen Wang and Xin Xu and Maolin Zhou},
      year={2022},
      eprint={2212.01869},
      archivePrefix={arXiv},
      primaryClass={math.AP},
      url={https://arxiv.org/abs/2212.01869}
}

@article{HmidiMateu2016Kirchhoff,
    author  = {Taoufik Hmidi and Joan Mateu},
    title   = {Bifurcation of rotating patches from Kirchhoff vortices},
    journal = {Discrete and Continuous Dynamical Systems},
    volume  = {36},
    number  = {10},
    year    = {2016},
    pages   = {5401--5422},
    doi = {10.3934/dcds.2016038},
}

@book{Kirchhoff1874,
    author    = {Kirchhoff, Gustav},
    title     = {Vorlesungen {\"u}ber mathematische {P}hysik},
    publisher = {Teubner},
    address   = {Leipzig},
    year      = {1874},
}

@misc{ElgindiJo2025,
      title={Cusp formation in vortex patches},
      author={Elgindi, Tarek M. and Jo, Min Jun},
      year={2025},
      eprint={2504.02705},
      archivePrefix={arXiv},
      primaryClass={math.AP},
      url={https://arxiv.org/abs/2504.02705}
}

@book{Fraenkel2000,
    author    = {Fraenkel, L. E.},
    title     = {An introduction to maximum principles and symmetry in elliptic problems},
    series    = {Cambridge Tracts in Mathematics},
    volume    = {128},
    publisher = {Cambridge University Press},
    address   = {Cambridge},
    year      = {2000},
    pages     = {x+340},
    doi       = {10.1017/CBO9780511569203},
}

@article{GomezSerranoParkShiYao2021,
    author  = {G\'{o}mez-Serrano, Javier and Park, Jaemin and Shi, Jia and Yao, Yao},
    title   = {Symmetry in stationary and uniformly-rotating solutions of active scalar equations},
    journal = {Duke Math. J.},
    volume  = {170},
    year    = {2021},
    number  = {13},
    pages   = {2957--3038},
    doi     = {10.1215/00127094-2021-0002},
}

@article{Hmidi2015trivial,
    author  = {Hmidi, Taoufik},
    title   = {On the trivial solutions for the rotating patch model},
    journal = {J. Evol. Equ.},
    volume  = {15},
    year    = {2015},
    number  = {4},
    pages   = {801--816},
    doi     = {10.1007/s00028-015-0281-7},
}

@article{HassainiasMasmoudiWheeler2020,
  author  = {Hassainia, Zineb and Masmoudi, Nader and Wheeler, Miles H.},
  title   = {Global bifurcation of rotating vortex patches},
  journal = {Comm. Pure Appl. Math.},
  volume  = {73},
  year    = {2020},
  number  = {9},
  pages   = {1933--1980},
  doi     = {10.1002/cpa.21855},
}

@article{delaHozHassainiaHmidiMateu2016,
  author  = {de la Hoz, Francisco and Hassainia, Zineb and Hmidi, Taoufik and Mateu, Joan},
  title   = {An analytical and numerical study of steady patches in the disc},
  journal = {Anal. PDE},
  volume  = {9},
  year    = {2016},
  number  = {7},
  pages   = {1609--1670},
  doi     = {10.2140/apde.2016.9.1609},
}

@article{delaHozHmidiMateuVerdera2016,
  author  = {de la Hoz, Francisco and Hmidi, Taoufik and Mateu, Joan and Verdera, Joan},
  title   = {Doubly connected {$V$}-states for the planar {E}uler equations},
  journal = {SIAM J. Math. Anal.},
  volume  = {48},
  year    = {2016},
  number  = {3},
  pages   = {1892--1928},
  doi     = {10.1137/140992801},
}

@article{HmidiMateu2016degenerate,
  author  = {Hmidi, Taoufik and Mateu, Joan},
  title   = {Degenerate bifurcation of the rotating patches},
  journal = {Adv. Math.},
  volume  = {302},
  year    = {2016},
  pages   = {799--850},
  doi     = {10.1016/j.aim.2016.07.022},
}

@article{CaoLaiZhan2021,
  author  = {Cao, Daomin and Lai, Shanfa and Zhan, Weicheng},
  title   = {Traveling vortex pairs for {2D} incompressible {E}uler equations},
  journal = {Calc. Var. Partial Differential Equations},
  volume  = {60},
  year    = {2021},
  number  = {5},
  pages   = {Paper No. 190},
  doi     = {10.1007/s00526-021-02068-5},
}

@article{CaoWanWangZhan2021,
  author  = {Cao, Daomin and Wan, Jie and Wang, Guodong and Zhan, Weicheng},
  title   = {Rotating vortex patches for the planar {E}uler equations in a disk},
  journal = {J. Differential Equations},
  volume  = {275},
  year    = {2021},
  pages   = {509--532},
  doi     = {10.1016/j.jde.2020.11.027},
}

@article{ChoiJeong2022,
  author  = {Choi, Kyudong and Jeong, In-Jee},
  title   = {Stability and instability of {K}elvin waves},
  journal = {Calc. Var. Partial Differential Equations},
  volume  = {61},
  year    = {2022},
  number  = {6},
  doi     = {10.1007/s00526-022-02334-0},
}

@article{ElgindiJeong2023,
  author  = {Elgindi, Tarek M. and Jeong, In-Jee},
  title   = {On singular vortex patches, {I}: {W}ell-posedness issues},
  journal = {Mem. Amer. Math. Soc.},
  volume  = {283},
  year    = {2023},
  number  = {1400},
  pages   = {1--102},
  doi     = {10.1090/memo/1400},
}

@article{ElgindiJeong2020,
  author  = {Elgindi, Tarek M. and Jeong, In-Jee},
  title   = {On singular vortex patches, {II}: long-time dynamics},
  journal = {Trans. Amer. Math. Soc.},
  volume  = {373},
  year    = {2020},
  number  = {9},
  pages   = {6757--6775},
  doi     = {10.1090/tran/8134},
}

@article{Garcia2020karman,
  author  = {Garc\'{\i}a, Claudia},
  title   = {K\'{a}rm\'{a}n vortex street in incompressible fluid models},
  journal = {Nonlinearity},
  volume  = {33},
  year    = {2020},
  number  = {4},
  pages   = {1625--1676},
  doi     = {10.1088/1361-6544/ab6309},
}

@article{Garcia2021choreography,
  author  = {Garc\'{\i}a, Claudia},
  title   = {Vortex patches choreography for active scalar equations},
  journal = {J. Nonlinear Sci.},
  volume  = {31},
  year    = {2021},
  number  = {5},
  pages   = {Paper No. 75, 31},
  doi     = {10.1007/s00332-021-09729-x},
}

@article{GarciaHaziot2023,
  author  = {Garc\'{\i}a, Claudia and Haziot, Susanna V.},
  title   = {Global bifurcation for corotating and counter-rotating vortex pairs},
  journal = {Comm. Math. Phys.},
  volume  = {402},
  year    = {2023},
  number  = {2},
  pages   = {1167--1204},
  doi     = {10.1007/s00220-023-04741-6},
}

@article{GarciaHmidiSoler2020,
  author  = {Garc\'{\i}a, Claudia and Hmidi, Taoufik and Soler, Juan},
  title   = {Non uniform rotating vortices and periodic orbits for the two-dimensional {E}uler equations},
  journal = {Arch. Ration. Mech. Anal.},
  volume  = {238},
  year    = {2020},
  number  = {2},
  pages   = {929--1085},
  doi     = {10.1007/s00205-020-01561-z},
}

@article{HassainiaHmidi2021,
  author  = {Hassainia, Zineb and Hmidi, Taoufik},
  title   = {Steady asymmetric vortex pairs for {E}uler equations},
  journal = {Discrete Contin. Dyn. Syst.},
  volume  = {41},
  year    = {2021},
  number  = {4},
  pages   = {1939--1969},
  doi     = {10.3934/dcds.2020348},
}

@article{HassainiaWheeler2022,
  author  = {Hassainia, Zineb and Wheeler, Miles H.},
  title   = {Multipole vortex patch equilibria for active scalar equations},
  journal = {SIAM J. Math. Anal.},
  volume  = {54},
  year    = {2022},
  number  = {6},
  pages   = {6054--6095},
  doi     = {10.1137/21M1415339},
}

@article{HmidiMateu2017,
  author  = {Hmidi, Taoufik and Mateu, Joan},
  title   = {Existence of corotating and counter-rotating vortex pairs for active scalar equations},
  journal = {Comm. Math. Phys.},
  volume  = {350},
  year    = {2017},
  number  = {2},
  pages   = {699--747},
  doi     = {10.1007/s00220-016-2784-7},
}

@article{Turkington1985,
  author  = {Turkington, Bruce},
  title   = {Corotating steady vortex flows with {$n$}-fold symmetry},
  journal = {Nonlinear Anal.},
  volume  = {9},
  year    = {1985},
  number  = {4},
  pages   = {351--369},
  doi     = {10.1016/0362-546X(85)90079-8},
}

@article{Park2022,
  author  = {Park, Jaemin},
  title   = {Quantitative estimates for uniformly-rotating vortex patches},
  journal = {Adv. Math.},
  volume  = {411},
  year    = {2022},
  pages   = {Paper No. 108779, 44},
  doi     = {10.1016/j.aim.2022.108779},
}

@article{GomezSerranoParkShi2025,
  author  = {G\'{o}mez-Serrano, Javier and Park, Jaemin and Shi, Jia},
  title   = {Existence of non-trivial non-concentrated compactly supported stationary solutions of the {2D} {E}uler equation with finite energy},
  journal = {Mem. Amer. Math. Soc.},
  volume  = {311},
  year    = {2025},
  number  = {1577},
  pages   = {82},
  doi     = {10.1090/memo/1577},
}

@article{CastroCordobaGomezSerrano2016,
  author  = {\'Angel Castro and Diego C\'ordoba and Javier G\'omez-Serrano},
  title   = {Uniformly rotating analytic global patch solutions for active scalars},
doi = {10.1007/s40818-016-0007-3},
  journal = {Annals of PDE},
  volume  = {2},
  number  = {1},
  year    = {2016},
  pages   = {1--34},
}

@article{HmidiMateuVerdera2013,
  author  = {Taoufik Hmidi and Joan Mateu and Joan Verdera},
  title   = {Boundary regularity of rotating vortex patches},
  journal = {Archive for Rational Mechanics and Analysis},
  volume  = {209},
  number  = {1},
  year    = {2013},
  pages   = {171--208},
  doi = {10.1007/s00205-013-0618-8},
}

@article{Burbea1982,
  author  = {Jacob Burbea},
  title   = {Motions of vortex patches},
  journal = {Letters in Mathematical Physics},
  volume  = {6},
  number  = {1},
  doi={10.1007/BF02281165},
  year    = {1982},
  pages   = {1--16},
}

@article{DeemZabusky1978,
  author  = {George S. Deem and Norman J. Zabusky},
  title   = {Vortex waves: Stationary ``V-states'', interactions, recurrence, and breaking},
  journal = {Physical Review Letters},
  volume  = {40},
  number  = {13},
  year    = {1978},
  pages   = {859--862},
  doi = {10.1103/PhysRevLett.40.859}
}

@article{BertozziConstantin1993,
  author  = {Andrea L. Bertozzi and Peter Constantin},
  title   = {Global regularity for vortex patches},
  journal = {Communications in Mathematical Physics},
  doi     = {10.1007/BF02097055},
  volume  = {152},
  number  = {1},
  year    = {1993},
  pages   = {19--28},
}

@book{Chemin1995,
  author    = {Jean-Yves Chemin},
  title     = {Fluides parfaits incompressibles},
  series    = {Ast\'erisque},
  volume    = {230},
  publisher = {Soci\'et\'e Math\'ematique de France},
  year      = {1995},
}

@article{Chemin1993,
  author  = {Jean-Yves Chemin},
  title   = {Persistance de structures g\'eom\'etriques dans les fluides incompressibles bidimensionnels},
  journal = {Annales de l'\'Ecole Normale Sup\'erieure},
  volume  = {26},
  doi = {10.24033/asens.1679},
  year    = {1993},
  pages   = {517--542},
}

@article{HunterIfrimTataruWong2015,
  author  = {John K. Hunter and Marius Ifrim and Daniel Tataru and Thomas K. Wong},
  title   = {Long time solutions for a Burgers--Hilbert equation via a modified energy method},
  journal = {Proceedings of the American Mathematical Society},
  volume  = {143},
  year    = {2015},
  pages   = {3407--3412},
  doi = {10.1090/proc/12215}
}

@article{DahGose2023,
   title={Highest Cusped Waves for the Burgers–Hilbert Equation},
   volume={247},
   ISSN={1432-0673},
   url={http://dx.doi.org/10.1007/s00205-023-01904-6},
   DOI={10.1007/s00205-023-01904-6},
   number={5},
   journal={Archive for Rational Mechanics and Analysis},
   publisher={Springer Science and Business Media LLC},
   author={Dahne, Joel and G\'omez-Serrano, Javier},
   year={2023},
   month=aug
}

@article{HunterIfrim2012,  author  = {John K. Hunter and Marius Ifrim},  title   = {Enhanced life span of smooth solutions of a Burgers--Hilbert equation},  journal = {SIAM Journal on Mathematical Analysis},  volume  = {44},  year    = {2012},  pages   = {2039--2052},doi={10.1137/110849791}}

@article{KrupaVasseur2020,
  author  = {Sam G. Krupa and Alexis F. Vasseur},
  title   = {Stability and uniqueness for piecewise smooth solutions to a nonlocal scalar conservation law with applications to Burgers--Hilbert equation},
  journal = {SIAM Journal on Mathematical Analysis},
  volume  = {52},
  number  = {3},
  year    = {2020},
  pages   = {2491--2530},
  doi = {10.1137/19M1257883}
}

@article{BressanZhang2017,  author  = {Alberto Bressan and Tianyou Zhang},  title   = {Piecewise smooth solutions to the Burgers--Hilbert equation}, doi={10.4310/CMS.2017.v15.n1.a7},  journal = {Communications in Mathematical Sciences},  volume  = {15},  number  = {1},  year    = {2017},  pages   = {165--184},}

@article{BressanNguyen2014,  author  = {Alberto Bressan and Khai T. Nguyen},  title   = {Global existence of weak solutions for the Burgers--Hilbert equation},  journal = {SIAM Journal on Mathematical Analysis}, doi = {10.1137/140957536},  volume  = {46},  number  = {4},  year    = {2014},  pages   = {2884--2904},}

@article{Yang,
  author  = {Yang, Ruoxuan},
  title   = {Shock Formation of the Burgers--Hilbert Equation},
  journal = {SIAM Journal on Mathematical Analysis},
  volume  = {53},
  number  = {5},
  year    = {2021},
  pages   = {5756--5802},
  doi     = {10.1137/21M1399348}
}

@article {SautWang,
    AUTHOR = {Saut, Jean-Claude and Wang, Yuexun},
     TITLE = {The wave breaking for {W}hitham-type equations revisited},
   JOURNAL = {SIAM J. Math. Anal.},
  FJOURNAL = {SIAM Journal on Mathematical Analysis},
    VOLUME = {54},
      YEAR = {2022},
    NUMBER = {2},
     PAGES = {2295--2319},
      ISSN = {0036-1410,1095-7154},
   MRCLASS = {76B15 (35A20 35S30 76B03)},
  MRNUMBER = {4409228},
MRREVIEWER = {Peter\ N.\ Zhevandrov},
       DOI = {10.1137/20M1345207},
       URL = {https://doi.org/10.1137/20M1345207},
}

@article{CastroCordobaGancedo2010,  author  = {{\'A}ngel Castro and Diego C{\'o}rdoba and Francisco Gancedo},  title   = {Singularity formations for a surface wave model},
  doi = {10.1088/0951-7715/23/11/006},
journal = {Nonlinearity},  volume  = {23},  year    = {2010},  pages   = {2835--2847},
}

@article{KleinSaut2015,
  author  = {Christoph Klein and Jean-Claude Saut},
  title   = {A numerical approach to blow-up issues for dispersive perturbations of Burgers' equation},
  journal = {Physica D: Nonlinear Phenomena},
  volume  = {295--296},
  year    = {2015},
  pages   = {46--65},
  doi = {10.1016/j.physd.2014.12.004}
}

@incollection{Hunter2016,
  author    = {John K. Hunter},
  title     = {The Burgers--Hilbert equation},
  booktitle = {Theory, Numerics and Applications of Hyperbolic Problems II},
  series    = {Springer Proceedings in Mathematics and Statistics},
  volume    = {237},
  publisher = {Springer},
  address   = {Cham},
  year      = {2018},
  pages     = {41--57},
  doi = {10.1007/978-3-319-91548-7_3},
}

@article {HMRSJZ,
    AUTHOR = {Hunter, John K. and Moreno-Vasquez, Ryan C. and Shu, Jingyang
              and Zhang, Qingtian},
     TITLE = {On the approximation of vorticity fronts by the
              {B}urgers-{H}ilbert equation},
   JOURNAL = {Asymptot. Anal.},
  FJOURNAL = {Asymptotic Analysis},
    VOLUME = {129},
      YEAR = {2022},
    NUMBER = {2},
     PAGES = {141--177},
      ISSN = {0921-7134,1875-8576},
   MRCLASS = {35Q31},
  MRNUMBER = {4465915},
       DOI = {10.3233/asy-211724},
       URL = {https://doi.org/10.3233/asy-211724},
}

@article {BielloHunter,
    AUTHOR = {Biello, Joseph and Hunter, John K.},
     TITLE = {Nonlinear {H}amiltonian waves with constant frequency and
              surface waves on vorticity discontinuities},
   JOURNAL = {Comm. Pure Appl. Math.},
  FJOURNAL = {Communications on Pure and Applied Mathematics},
    VOLUME = {63},
      YEAR = {2010},
    NUMBER = {3},
     PAGES = {303--336},
      ISSN = {0010-3640,1097-0312},
   MRCLASS = {35Q53 (37K05 37N10 76D17)},
  MRNUMBER = {2599457},
MRREVIEWER = {Allen\ Parker},
       DOI = {10.1002/cpa.20304},
       URL = {https://doi.org/10.1002/cpa.20304},
}

@incollection {MW,
    AUTHOR = {Marsden, Jerrold and Weinstein, Alan},
     TITLE = {Coadjoint orbits, vortices, and {C}lebsch variables for
              incompressible fluids},
      NOTE = {Order in chaos (Los Alamos, N.M., 1982)},
   JOURNAL = {Phys. D},
  FJOURNAL = {Physica D. Nonlinear Phenomena},
    VOLUME = {7},
      YEAR = {1983},
    NUMBER = {1-3},
     PAGES = {305--323},
      ISSN = {0167-2789,1872-8022},
   MRCLASS = {58F05 (58D25 76C05)},
  MRNUMBER = {719058},
MRREVIEWER = {Joseph\ Klein},
       DOI = {10.1016/0167-2789(83)90134-3},
       URL = {https://doi.org/10.1016/0167-2789(83)90134-3},
}

@book{ince1956ordinary,
  title={Ordinary Differential Equations},
  author={Ince, Edward Lindsay},
  year={1956},
  publisher={Dover Publications},
  address={New York},
  pages={558},
  isbn={0486603490},
  note={Reprint of the 1944 edition}
}

@book{bhatia2013matrix,
  title={Matrix analysis},
  author={Bhatia, Rajendra},
  series={Graduate Texts in Mathematics},
  volume={169},
  year={1997},
  publisher={Springer Science \& Business Media}
}

@article{Gershgorin31,
 URL = {http://mi.mathnet.ru/eng/im5235},
 author = {Gerschgorin, Semyon Aranovich},
 journal = {Bulletin de l'Acad\'emie des Sciences de l'URSS. Classe des sciences math\'ematiques et na},
 number = {6},
 pages = {749--754},
 title = {\"Uber die Abgrenzung der Eigenwerte einer Matrix},
 urldate = {2026-02-04},
 issue = {6},
 year = {1931}
}

@book{ahlfors1979complex,
  title={Complex analysis},
  author={Ahlfors, Lars Valerian},
  edition={3},
  year={1979},
  publisher={McGraw-Hill New York}
}

@book{whittaker1996modern,
  title={A Course of Modern Analysis},
  author={Whittaker, Edmund Taylor and Watson, George Neville},
  year={1996},
  publisher={Cambridge University Press},
  address={Cambridge},
  series={Cambridge Mathematical Library},
  isbn={9780521588072},
  pages={608},
  note={Reprint of the 4th edition (1927)}
}

@article {CCZ2,
    AUTHOR = {Castro, \'Angel and C\'ordoba, Diego and Zheng, Fan},
     TITLE = {Stability of traveling waves for the {B}urgers-{H}ilbert
              equation},
   JOURNAL = {Anal. PDE},
  FJOURNAL = {Analysis \& PDE},
    VOLUME = {16},
      YEAR = {2023},
    NUMBER = {9},
     PAGES = {2109--2145},
      ISSN = {2157-5045,1948-206X},
   MRCLASS = {35F25 (35C07 76B47)},
  MRNUMBER = {4668089},
MRREVIEWER = {Weicheng\ Zhan},
       DOI = {10.2140/apde.2023.16.2109},
       URL = {https://doi.org/10.2140/apde.2023.16.2109},
}

@article{gomez2021any,
  title={Any three eigenvalues do not determine a triangle},
  author={G{\'o}mez-Serrano, Javier and Orriols, Gerard},
  journal={Journal of Differential Equations},
  volume={275},
  pages={920--938},
  year={2021},
  publisher={Elsevier},
  doi = {10.1016/j.jde.2020.11.002}
}

@book{stewart1990matrix,
  address = {Boston},
  author = {Stewart, Gilbert W. and Sun, Ji-guang},
  isbn = {0126702306},
  publisher = {Academic},
  refid = {864877974},
  series = {Computer Science and Scientific Computing},
  title = {Matrix Perturbation Theory},
  url = {https://www.worldcat.org/title/matrix-perturbation-theory/oclc/908946968},
  year = {1990}
}

@book{kato1980perturbation,
  title     = {Perturbation Theory for Linear Operators},
  author    = {Kato, Tosio},
  year      = {1980},
  edition   = {2nd},
  publisher = {Springer-Verlag},
  address   = {Berlin, Heidelberg, New York},
  isbn      = {3-540-58661-X}
}

@article{Arioli-Gazzola-Koch:uniqueness-bifurcation-ns,
  author  = {Arioli, G. and Gazzola, F. and Koch, H.},
  title   = {Uniqueness and bifurcation branches for planar steady {N}avier--{S}tokes equations under {N}avier boundary conditions},
  journal = {Journal of Mathematical Fluid Mechanics},
  year    = {2021},
  volume  = {23},
  number  = {3},
  pages   = {49},
  doi     = {10.1007/s00021-021-00572-4}
}

@article{Arioli-Koch:cap-stationary-ks,
  author  = {Arioli, G. and Koch, H.},
  title   = {Computer-assisted methods for the study of stationary solutions in dissipative systems, applied to the {K}uramoto-{S}ivashinski equation},
  journal = {Arch. Ration. Mech. Anal.},
  year    = {2010},
  volume  = {197},
  number  = {3},
  pages   = {1033--1051},
  doi = {10.1007/s00205-010-0309-7},
}

@article{Bedrossian-PunshonSmith:chaos-stochastic-2d-galerkin-ns,
  author  = {Bedrossian, J. and Punshon-Smith, S.},
  title   = {Chaos in stochastic 2d {G}alerkin-{N}avier-{S}tokes},
  journal = {Communications in Mathematical Physics},
  year    = {2024},
  volume  = {405},
  number  = {4},
  pages   = {107},
  doi     = {10.1007/s00220-024-04949-0}
}

@article{Buckmaster-CaoLabora-GomezSerrano:implosion-compressible,
  author  = {Buckmaster, T. and Cao-Labora, G. and G\'{o}mez-Serrano, J.},
  title   = {Smooth imploding solutions for 3d compressible fluids},
  journal = {Forum of Mathematics, Pi},
  year    = {2025},
  volume  = {13},
  pages   = {e6},
  doi     = {10.1017/fmp.2024.12}
}

@article{Cadiot:proofs-existence-stability-capillary-gravity-whitham,
  author  = {Cadiot, M.},
  title   = {Constructive proofs of existence and stability of solitary waves in the {W}hitham and capillary--gravity {W}hitham equations},
  journal = {Nonlinearity},
  year    = {2025},
  volume  = {38},
  number  = {3},
  pages   = {035021},
  doi     = {10.1088/1361-6544/adb5e8}
}

@article{Castelli-Gameiro-Lessard:rigorous-numerics-ill-posed-pde,
  author  = {Castelli, R. and Gameiro, M. and Lessard, J.-P.},
  title   = {Rigorous numerics for ill-posed {PDE}s: periodic orbits in the {B}oussinesq equation},
  journal = {Arch. Ration. Mech. Anal.},
  year    = {2018},
  volume  = {228},
  number  = {1},
  pages   = {129--157},
  doi     = {10.1007/s00205-017-1186-0}
}

@article{Castro-Cordoba-GomezSerrano:global-smooth-solutions-sqg,
  author  = {Castro, A. and C\'{o}rdoba, D. and G\'{o}mez-Serrano, J.},
  title   = {Global smooth solutions for the inviscid {S}{Q}{G} equation},
  journal = {Memoirs of the American Mathematical Society},
  year    = {2020},
  volume  = {266},
  number  = {1292},
  doi     = {10.1090/memo/1292}
}

@article{Chen-Hou-Huang:blowup-degregorio,
  author  = {Chen, J. and Hou, T. Y. and Huang, D.},
  title   = {On the {F}inite {T}ime {B}lowup of the {D}e {G}regorio {M}odel for the 3{D} {E}uler {E}quations},
  journal = {Communications on Pure and Applied Mathematics},
  year    = {2021},
  volume  = {74},
  number  = {6},
  pages   = {1282--1350},
  doi     = {10.1002/cpa.21991}
}

@article{Day-Lessard-Mischaikow:validated-continuation-equilibria-pde,
  author  = {Day, S. and Lessard, J.-P. and Mischaikow, K.},
  title   = {Validated continuation for equilibria of {PDE}s},
  journal = {SIAM Journal on Numerical Analysis},
  year    = {2007},
  volume  = {45},
  number  = {4},
  pages   = {1398--1424},
  doi     = {10.1137/050645968}
}

@article{Figueras-DeLaLLave:cap-periodic-orbits-kuramoto,
  author  = {Figueras, J.-L. and de la Llave, R.},
  title   = {Numerical computations and computer assisted proofs of periodic orbits of the {K}uramoto-{S}ivashinsky equation},
  journal = {SIAM Journal on Applied Dynamical Systems},
  year    = {2017},
  volume  = {16},
  number  = {2},
  pages   = {834--852},
  doi     = {10.1137/16M1073790}
}

@article{Figueras-Gameiro-Lessard-DeLaLLave:framework-cap-invariant-objects,
  author  = {Figueras, J.-L. and Gameiro, M. and Lessard, J.-P. and de la Llave, R.},
  title   = {A framework for the numerical computation and a posteriori verification of invariant objects of evolution equations},
  journal = {SIAM Journal on Applied Dynamical Systems},
  year    = {2017},
  volume  = {16},
  number  = {2},
  pages   = {1070--1088},
  doi     = {10.1137/16M1073777}
}

@article{Gameiro-Lessard:periodic-orbits-ks,
  author  = {Gameiro, M. and Lessard, J.-P.},
  title   = {A posteriori verification of invariant objects of evolution equations: periodic orbits in the {K}uramoto-{S}ivashinsky {PDE}},
  journal = {SIAM Journal on Applied Dynamical Systems},
  year    = {2017},
  volume  = {16},
  number  = {1},
  pages   = {687--728},
  doi     = {10.1137/16M1073789}
}

@article{Gameiro-Lessard-Mischaikow:validated-continuation-large-parameter-ranges-pde,
  author  = {Gameiro, M. and Lessard, J.-P. and Mischaikow, K.},
  title   = {Validated continuation over large parameter ranges for equilibria of {PDE}s},
  journal = {Mathematics and Computers in Simulation},
  year    = {2008},
  volume  = {79},
  number  = {4},
  pages   = {1368--1382},
  doi     = {10.1016/j.matcom.2008.03.014}
}

@article{Guo-Hadzic-Jang-Schrecker:gravitational-collapse-stars-self-similar,
  author  = {Guo, Y. and Had\v{z}i\'{c}, M. and Jang, J. and Schrecker, M.},
  title   = {Gravitational collapse for polytropic gaseous stars: self-similar solutions},
  journal = {Arch. Ration. Mech. Anal.},
  year    = {2022},
  volume  = {246},
  number  = {2},
  pages   = {957--1066},
  doi     = {10.1007/s00205-022-01827-8}
}

@article{Kobayashi:global-uniqueness-stokes,
  author  = {Kobayashi, K.},
  title   = {On the global uniqueness of {S}tokes' wave of extreme form},
  journal = {IMA Journal of Applied Mathematics},
  year    = {2010},
  volume  = {75},
  number  = {5},
  pages   = {647--675},
  doi     = {10.1093/imamat/hxq037}
}

@article{vandenBerg-Breden-Lessard-vanVeen:periodic-orbits-ns,
  author  = {van den Berg, J. B. and Breden, M. and Lessard, J.-P. and van Veen, L.},
  title   = {Spontaneous periodic orbits in the {N}avier--{S}tokes flow},
  journal = {Journal of Nonlinear Science},
  year    = {2021},
  volume  = {31},
  number  = {2},
  pages   = {41},
  doi     = {10.1007/s00332-021-09695-4}
}

@article{vandenBerg-Lessard:chaotic-braided-solutions-swift-hohenberg,
  author  = {van den Berg, J. B. and Lessard, J.-P.},
  title   = {Chaotic braided solutions via rigorous numerics: chaos in the {S}wift-{H}ohenberg equation},
  journal = {SIAM Journal on Applied Dynamical Systems},
  year    = {2008},
  volume  = {7},
  number  = {3},
  pages   = {988--1031},
  doi     = {10.1137/070709128}
}

@article{Zgliczynski:periodic-orbit-kuramoto,
  author  = {Zgliczy\'nski, P.},
  title   = {Rigorous numerics for dissipative partial differential equations. {II}. {P}eriodic orbit for the {K}uramoto-{S}ivashinsky {PDE}---a computer-assisted proof},
  journal = {Foundations of Computational Mathematics},
  year    = {2004},
  volume  = {4},
  number  = {2},
  pages   = {157--185},
  doi     = {10.1007/s10208-002-0080-8}
}

@article{Zgliczynski-Mischaikow:rigorous-numerics-kuramoto,
  author  = {Zgliczy{\'n}ski, P. and Mischaikow, K.},
  title   = {Rigorous numerics for partial differential equations: the {K}uramoto-{S}ivashinsky equation},
  journal = {Foundations of Computational Mathematics},
  year    = {2001},
  volume  = {1},
  number  = {3},
  pages   = {255--288},
  doi     = {10.1007/s002080010010}
}

@article{ElgindiPasqualotto2025_invertibility_boussinesq,
  author    = {Elgindi, Tarek M. and Pasqualotto, Federico},
  title     = {Invertibility of a Linearized {B}oussinesq Flow: A Symbolic Approach},
  journal   = {Communications in Mathematical Physics},
  year      = {2025},
  volume    = {406},
  number    = {11},
  pages     = {261},
  doi       = {10.1007/s00220-025-05367-6}
}

@misc{Chen-Hou:nearly-self-similar-blowup-boussinesq-euler-analysis,
      title={Stable nearly self-similar blowup of the {2D} {B}oussinesq and {3D} {E}uler equations with smooth data {I}: {A}nalysis},
      author={Chen, Jiajie and Hou, Thomas Y.},
      year={2022},
      eprint={2210.07191},
      archivePrefix={arXiv},
      primaryClass={math.AP},
      url={https://arxiv.org/abs/2210.07191}
}

@article{Chen-Hou:nearly-self-similar-blowup-boussinesq-euler-numerics,
  author  = {Chen, Jiajie and Hou, Thomas Y.},
  title   = {Stable nearly self-similar blowup of the {2D} {B}oussinesq and {3D} {E}uler equations with smooth data {II}: {R}igorous numerics},
  journal = {Multiscale Model. Simul.},
  year    = {2025},
  volume  = {23},
  number  = {1},
  pages   = {25--130},
  doi     = {10.1137/23M1580395},
}

@article{Chen-Hou:singularity-formation-3d-euler-smooth-data,
  author  = {Chen, Jiajie and Hou, Thomas Y.},
  title   = {Singularity formation in {3D} {E}uler equations with smooth initial data and boundary},
  journal = {Proc. Natl. Acad. Sci. USA},
  volume  = {122},
  number  = {27},
  pages   = {e2500940122},
  year    = {2025},
  doi     = {10.1073/pnas.2500940122},
}

\begin{flushleft}
	\bigskip
	\'Angel Castro\\
	\textsc{Instituto de Ciencias Matemáticas\\
		28049 Madrid, Spain}\\
	\textit{E-mail address:} \nolinkurl{angel_castro@icmat.es}
\end{flushleft}

\begin{flushleft}
	\bigskip
	Javier G\'{o}mez-Serrano\\
	\textsc{Department of Mathematics, Brown University\\
		Providence, RI 02912, USA}\\
	\textit{E-mail address:} \nolinkurl{javier_gomez_serrano@brown.edu}
\end{flushleft}

\begin{flushleft}
	\bigskip
	Miguel M.G. Pascual-Caballo\\
	\textsc{Instituto de Ciencias Matemáticas\\
		28049 Madrid, Spain}\\
	\textit{E-mail address:} \nolinkurl{miguel.martinez@icmat.es}
\end{flushleft}

\end{document}